\def\wh{\widehat}
\def\wt{\widetilde}
\def\R{\mathbb R}
\def\C{\mathbb C}
\def\Z{\mathbb Z}
\def\O{\mathcal O}
\def\q{\mathbf q}
\def\f{\mathbf f}
\def\x{\mathbf x}
\def\v{\mathbf v}
\def\a{\mathbf a}
\def\s{\mathbf s}
\def\p{\mathbf p}
\def\r{\mathbf r}
\def\b{\mathbf b}
\def\c{\mathbf c}
\def\w{\mathbf w}
\def\z{\mathbf z}
\def\u{\mathbf u}
\def\e{\mathbf e}
\def\s{\mathbf s}
\def\r{\mathbf r}
\def\c{\mathbf c}
\def\V{\mathbf V}
\def\bups{\boldsymbol \upsilon}
\def\btheta{\boldsymbol \vartheta}
\def\D{\mathbf D}
\def\1{\mathbf 1}
\def\M{\mathcal M}
\def\V{\mathbf V}
\def\O{\mathcal O}
\def\bzeta{\boldsymbol \zeta}
\def\bphi{\boldsymbol \phi}
\def\bpsi{\boldsymbol \psi}
\def\brho{\boldsymbol \rho}
\def\D{\mathbf D}
\def\1{\mathbf 1}
\def\M{\mathcal M}
\def\1{\bold 1}
\def\rot{\mathrm{curl}\,}
\def\div{\mathrm{div}\,}
\def\eps{\varepsilon}
\def\Dom{\mathrm{Dom}\,}
\def\bvarphi{\boldsymbol \varphi}
\def\le{\leqslant}
\theoremstyle{theorem}
\newtheorem{theorem}{Theorem}[section]
\newtheorem{proposition}[theorem]{Proposition}
\newtheorem{lemma}[theorem]{Lemma}
\newtheorem{condition}[theorem]{Condition}
\newtheorem{remark}[theorem]{Remark}
\newtheorem{definition}[theorem]{Definition}
\begin{document}

\title[Homogenization of the stationary Maxwell system]{Homogenization of the stationary Maxwell system \\
with periodic coefficients in a 
bounded domain}

\author{T.~A.~Suslina}

\keywords{Periodic differential operators,  homogenization, operator error estimates, stationary Maxwell system}

\address{St. Petersburg State University, Universitetskaya nab. 7/9,  St.~Petersburg, 199034, Russia}

\email{t.suslina@spbu.ru}

\subjclass[2000]{Primary 35B27}

\begin{abstract}
In a bounded domain  $\mathcal{O}\subset\mathbb{R}^3$ of class $C^{1,1}$, we consider a stationary Maxwell system with 
the perfect conductivity boundary conditions. It is assumed that the dielectric permittivity and the magnetic permeability are given by 
  $\eta(\x/\eps)$ and $\mu(\x/\eps)$,
where  $\eta(\x)$ and $\mu(\x)$ are symmetric  $(3 \times 3)$-matrix-valued functions; they are periodic with respect to some lattice, 
bounded and positive definite. Here $\eps >0$ is the small parameter. 
We use the following notation for the solutions of the Maxwell system: $\u_\eps$
is the electric field intensity, $\v_\eps$ is the magnetic field intensity, 
$\w_\eps$ is the electric displacement vector, and $\z_\eps$ is the magnetic displacement vector. 
It is known that $\u_\eps$, $\v_\eps$, $\w_\eps$, and $\z_\eps$ weakly converge in 
$L_2(\O)$ to the corresponding homogenized fields $\u_0$, $\v_0$, $\w_0$,~and $\z_0$ (the solutions of the homogenized Maxwell system with the effective coefficients), as $\eps \to 0$. We improve the classical results and find approximations for  $\u_\eps$, $\v_\eps$, $\w_\eps$, and $\z_\eps$ 
in the~$L_2(\O)$-norm. The error terms do not exceed $C \sqrt{\eps} (\|\q\|_{L_2}+\|\r\|_{L_2})$, where 
the divergence free vector-valued functions $\q$ and $\r$ are the right-hand sides of the Maxwell equations.
\end{abstract}

\thanks{Supported by Russian Science Foundation (project 17-11-01069)}

\maketitle

\numberwithin{equation}{section}

\section*{Introduction}

The paper concerns homogenization theory for differential operators (DO's) with periodic coefficients. 
The bibliography on homogenization is rather extensive; we mention the monographs  \cite{BeLPap, BaPa,Sa, ZhKO}.

\subsection{Operator error estimates}

Let $\Gamma \subset \R^d$ be a lattice. For any $\Gamma$-periodic function $f$ in $\R^d$, we denote 
$$
f^\eps(\x):= f(\x/\eps),\quad \eps >0.
$$ 

In a series of papers \cite{BSu1,BSu2,BSu3} by Birman and Suslina, 
an operator-theoretic approach to homogenization problems was suggested and developed. 
A wide class of matrix second order strongly elliptic operators $A_\eps$ 
acting in $L_2(\R^d;\C^n)$ and admitting a factorization of the form 
\begin{equation}
\label{0.1}
A_\varepsilon = b(\D)^* g^\eps(\x) b(\D)
\end{equation}
was studied.
Here a matrix-valued function $g(\x)$ is bounded, positive definite, and $\Gamma$-periodic; $b(\D)$ is a matrix first order DO of the form
$b(\D)= \sum_{j=1}^d b_j D_j$ such that its symbol has maximal rank. 
The simplest example of the operator \eqref{0.1} is the scalar  elliptic operator  
$A_\eps = - \div g^\eps(\x) \nabla = \D^* g^\eps(\x)\D$ (the acoustics operator).
The elasticity operator, as well as  an auxiliary operator  
$A_\eps = \rot a^\eps(\x) \rot - \nabla \nu^\eps(\x) \div$ arising in electrodynamics, 
also can be written as  \eqref{0.1}.

In \cite{BSu1}, it was shown that, as  $\eps \to 0$, the resolvent $(A_\eps +I)^{-1}$ converges  
to the resolvent of the \textit{effective operator} $A^0= b(\D)^* g^0 b(\D)$ in the operator norm in $L_2(\R^d;\C^n)$. 
Here $g^0$ is a constant positive matrix called the \textit{effective matrix}.
We have 
\begin{equation}
\label{0.2}
\| (A_\varepsilon +I)^{-1} - (A^0 +I)^{-1} \|_{L_2(\R^d) \to L_2(\R^d)} \le C \eps. 
\end{equation}
In \cite{BSu3}, approximation for the resolvent $(A_\eps +I)^{-1}$ in the norm of operators acting from $L_2(\R^d;\C^n)$ to the Sobolev space
$H^1(\R^d;\C^n)$ was found:
\begin{equation}
\label{0.3}
\| (A_\varepsilon +I)^{-1} - (A^0 +I)^{-1} - \eps K(\eps) \|_{L_2(\R^d) \to H^1(\R^d)} \le C \eps. 
\end{equation}
Here $K(\eps)$ is the so called \textit{corrector}. It contains a rapidly oscillating factor, so that 
$
\|K(\eps)\|_{L_2 \to H^1}= O(\eps^{-1}).
$

Estimates \eqref{0.2} and \eqref{0.3} are order-sharp. 
The results of such type are called the \textit{operator error estimates} in homogenization theory. 
Another approach to operator error estimates (the modified method of the first order approximation or the shift method)  was suggested by Zhikov. 
In \cite{Zh1, ZhPas1}, by this method, estimates \eqref{0.2} and \eqref{0.3} were proved for the acoustics and elasticity operators. 
Further results are discussed in the survey \cite{ZhPas2}.

Operator error estimates were also studied for the boundary value problems in a bounded domain $\O \subset \R^d$ with sufficiently smooth boundary; 
see \cite{ZhPas1, ZhPas2, Gr1, Gr2, KeLiS, PSu, Su13, Su_SIAM, Su15}.
Let $A_{D,\eps}$ and $A_{N,\eps}$ be the operators in $L_2(\O;\C^n)$ given by the expression $b(\D)^* g^\eps(\x) b(\D)$
with the Dirichlet or Neumann boundary conditions. Let $A_D^0$ and $A_N^0$ be the corresponding effective operators. Then we have 
\begin{align}
\label{0.4}
& \| (A_{\flat,\varepsilon} +I)^{-1} - (A_\flat^0 +I)^{-1}  \|_{L_2(\O) \to L_2(\O)} \le C \eps, 
\\
\label{0.5}
& \| (A_{\flat,\varepsilon} +I)^{-1} - (A_\flat^0 +I)^{-1} - \eps K_\flat(\eps) \|_{L_2(\O) \to H^1(\O)} \le C \eps^{1/2}. 
\end{align}
Here $\flat = D,N$, and $K_\flat(\eps)$ is the corresponding corrector. Estimate \eqref{0.4} is of sharp order $O(\eps)$ 
(the order is the same as for the similar problem in  $\R^d$).
The order of estimate \eqref{0.5} is worse compared with  \eqref{0.3}; this is explained by the boundary influence. 

In \cite{ZhPas1}, by the shift method, estimate \eqref{0.5}
and the analog of \eqref{0.4} with error term $O(\sqrt{\eps})$ were obtained for the acoustics and elasticity operators. 
Independently, Griso \cite{Gr1, Gr2} obtained similar results for the acoustics operator using the unfolding method. 
For the first time, the sharp order estimate  \eqref{0.4} was proved in  \cite{Gr2}. 
The case of matrix elliptic operators was studied in \cite{KeLiS} (where uniformly elliptic operators under some regularity assumptions on the coefficients were considered)  and in \cite{PSu, Su13, Su_SIAM, Su15} (where estimates \eqref{0.4} and \eqref{0.5} were proved 
for strongly elliptic operators described above). 

\subsection{Homogenization of the Maxwell system in~$\R^3$\label{sec0.2}}

Now, we discuss homogenization problem for the stationary Maxwell system in  $\R^3$.

Suppose that the dielectric permittivity and the magnetic permeability are given by the matrix-valued functions 
$\eta^\eps(\x)$ and $\mu^\eps(\x)$, where $\eta(\x)$ and $\mu(\x)$ are bounded, positive definite, and periodic with respect to some lattice~$\Gamma$. 
Let~$J(\R^3)$ denote the subspace of vector-valued functions ${\mathbf f} \in L_2(\R^3; \C^3)$ such that $\div {\mathbf f} =0$ 
(in the sense of distributions).
By $\u_\eps$ and $\v_\eps$ we denote the electric field intensity and the magnetic field intensity; $\w_\eps = \eta^\eps \u_\eps$ and $\z_\eps = \mu^\eps \v_\eps$ are the electric and magnetic displacement vectors. We write the Maxwell operator~$M_\eps$ in terms of the displacement vectors, 
assuming that $\w_\eps$ and $\z_\eps$ are divergence free.
Then the operator $M_\eps$ acts in the space $J(\R^3)\oplus J(\R^3)$ and is given by 
$$
M_\eps = \begin{pmatrix}
0 & i \rot (\mu^\eps)^{-1} \cr - i \rot (\eta^\eps)^{-1} & 0
\end{pmatrix}
$$
on the natural domain. The operator $M_\eps$ is selfadjoint, if $J(\R^3)\oplus J(\R^3)$ is considered as a subspace of the weighted space 
$L_2(\R^3;\C^3; (\eta^\eps)^{-1}) \oplus L_2(\R^3;\C^3; (\mu^\eps)^{-1})$. The point $\lambda =i$ is a regular point for $M_\eps$.

We discuss the problem about the behavior of the resolvent $(M_\eps - iI)^{-1}$ for small $\eps$. 
In other words, we are interested in the behavior of the solutions $(\w_\eps, \z_\eps)$ of the Maxwell system
\begin{equation}
\label{0.6}
(M_\varepsilon - iI) \begin{pmatrix} \w_\eps \cr \z_\eps \end{pmatrix} = \begin{pmatrix} \q \cr \r \end{pmatrix}, 
\quad  \q, \r \in J(\R^3;\C^3),
\end{equation}
and the fields $\u_\eps = (\eta^\eps)^{-1}\w_\eps$,  $\v_\eps = (\mu^\eps)^{-1}\z_\eps$.

The homogenized Maxwell operator $M^0$ has the coefficients $\eta^0$, $\mu^0$; it is well known that the effective matrices $\eta^0$ and $\mu^0$ 
are the same as for the scalar elliptic operators $-\div \eta^\eps \nabla$ and $-\div \mu^\eps \nabla$. Let 
$(\w_0, \z_0)$ be the solution of the homogenized Maxwell system
\begin{equation*}
(M^0 - iI) \begin{pmatrix} \w_0 \cr \z_0 \end{pmatrix} = \begin{pmatrix} \q \cr \r \end{pmatrix}, 
\end{equation*}
and let $\u_0 = (\eta^0)^{-1}\w_0$, $\v_0 = (\mu^0)^{-1}\z_0$.
The classical results (see, e.~g., \cite{BeLPap, Sa, ZhKO}) say that 
the functions $\u_\eps, \w_\eps, \v_\eps, \z_\eps$ 
\textit{weakly converge} in $L_2(\R^3;\C^3)$ to the corresponding homogenized fields $\u_0, \w_0, \v_0, \z_0$, as $\eps \to 0$.

Operator error estimates for the Maxwell system  \eqref{0.6} have been studied in  \cite[Chapter 7]{BSu1}, \cite[\S 14]{BSu2}, \cite[\S 22]{BSu3}, \cite{BSu4} (in  the case of constant permeability), and in 
  \cite{Su1, Su2} (in the general case ).
The method was to reduce the problem to the study of some auxiliary second order operator. 
The solution of system \eqref{0.6} can be written as  $\w_\eps = \w_\eps^{(\q)}+\w_\eps^{(\r)}$, $\z_\eps = \z_\eps^{(\q)}+\z_\eps^{(\r)}$,
where $(\w_\eps^{(\q)},\z_\eps^{(\q)})$ is the solution of this system with  $\r=0$, 
and $(\w_\eps^{(\r)},\z_\eps^{(\r)})$ is the solution of this system with $\q=0$. 
For instance, let us consider $(\w_\eps^{(\r)},\z_\eps^{(\r)})$. Substituting  the first equation  
$\w_\eps^{(\r)} = \rot (\mu^\eps)^{-1} \z_\eps^{(\r)}$ in the second one, we arrive at the following problem for $\z_\eps^{(\r)}$:
$$
\rot (\eta^\eps)^{-1}\rot (\mu^\eps)^{-1} \z_\eps^{(\r)} + \z_\eps^{(\r)} = i \r, \quad \div \z_\eps^{(\r)} =0. 
$$
Putting $\f_\eps^{(\r)} = (\mu^\eps)^{-1/2}\z_\eps^{(\r)}$ and lifting the divergence free condition, we see that $\f_\eps^{(\r)}$ is the solution of the second order elliptic 
equation
\begin{equation}
\label{0.8}
L_\eps \f_\eps^{(\r)}+ \f_\eps^{(\r)}= i (\mu^\eps)^{-1/2}\r,
\end{equation}
where
\begin{equation}
\label{0.9}
L_\eps = (\mu^\eps)^{-1/2}\rot (\eta^\eps)^{-1}\rot (\mu^\eps)^{-1/2} - (\mu^\eps)^{1/2} \nabla \div (\mu^\eps)^{1/2}. 
\end{equation}
 The field $\w_\eps^{(\r)}$ is expressed in terms of the derivatives of the solution: 
$$
\w_\eps^{(\r)} = \rot (\mu^\eps)^{-1/2} \f_\eps^{(\r)}.
$$

If $\mu$ is constant,  the operator \eqref{0.9} is of the form~\eqref{0.1}, which allows one to apply 
general results of \cite{BSu1, BSu2, BSu3} to  equation  \eqref{0.8}.
In the case of variable $\mu$, this is not so, however, it is possible to use the abstract scheme from \cite{BSu1, BSu2, BSu3} to study the operator 
\eqref{0.9}. By this way, in \cite{Su1, Su2}, approximation of the resolvent  ${(M_\eps - iI)^{-1}}$ was found. 
In contrast to the resolvent of the operator \eqref{0.1}, in the general case, this resolvent has no  limit in the operator norm.
However, this resolvent can be approximated by the sum of $(M^0 - iI)^{-1}$ and some zero order corrector (which weakly tends to zero);
the corresponding error estimate is of sharp order $O(\eps)$. 
In terms of the solutions, this implies approximations in the $L_2(\R^3;\C^3)$-norm for all physical fields with error estimates of order $O(\eps)$.
For instance, we write down the result for $\u_\eps$:
$$
\| \u_\eps - \u_0 - \u_\eps^{(1)}\|_{L_2(\R^3)} \le C \eps (\| \q\|_{L_2(\R^3)} + \| \r \|_{L_2(\R^3)}).
$$
Here $\u_\eps^{(1)}$ weakly converges to zero and  is interpreted as a corrector of zero order; it is given in terms of $\u_0$, the solution of some ``correction'' Maxwell system, and 
some rapidly oscillating factor.

\subsection{Statement of the problem. Main results}
In the present paper, we study homogenization of the stationary Maxwell system in a bounded domain 
$\O \subset \R^3$ of class $C^{1,1}$. We rely on the general theory of the Maxwell operator in arbitrary domains developed by Birman and Solomyak \cite{BS1,BS2}.

As above, we assume that  the dielectric permittivity and the magnetic permeability are given by the rapidly oscillating matrix-valued functions
$\eta^\eps(\x)$ and $\mu^\eps(\x)$. 
For the physical fields we use the same notation as in Subsection~\ref{sec0.2}.
The Maxwell operator $\M_\eps$ written in terms of the displacement vectors acts in the space $J(\O) \oplus J_0(\O)$,
where $J(\O)$ and $J_0(\O)$ are divergence free subspaces of $L_2(\O;\C^3)$ defined below by 
\eqref{5.1},~\eqref{5.2}. The operator $\M_\eps$ is given by 
$$
\M_\eps = \begin{pmatrix}
0 & i \rot (\mu^\eps)^{-1} \cr - i \rot (\eta^\eps)^{-1} & 0
\end{pmatrix}
$$
on the natural domain with the perfect conductivity boundary conditions (see \eqref{Dom_M_eps}  below). 
The operator $\M_\eps$ is selfadjoint if $J(\O)\oplus J_0(\O)$ is treated as a subspace of the weighted space 
$$
L_2(\O;\C^3; (\eta^\eps)^{-1}) \oplus L_2(\O;\C^3; (\mu^\eps)^{-1}).
$$ 

We study the resolvent $(\M_\eps - iI)^{-1}$. In other words, we are interested in the behavior of the solutions $(\w_\eps, \z_\eps)$ of the Maxwell system
\begin{equation}
\label{0.8a}
(\M_\varepsilon - iI) \begin{pmatrix} \w_\eps \cr \z_\eps \end{pmatrix} = \begin{pmatrix} \q \cr \r \end{pmatrix}, 
\quad \q \in J(\O),\ \r \in J_0(\O),
\end{equation}
and the fields $\u_\eps = (\eta^\eps)^{-1}\w_\eps$ and $\v_\eps = (\mu^\eps)^{-1}\z_\eps$.

Let $\M^0$ be the homogenized Maxwell operator with the coefficients $\eta^0$ and $\mu^0$.
The homogenized Maxwell system has the form
\begin{equation*}
(\M^0 - iI) \begin{pmatrix} \w_0 \cr \z_0 \end{pmatrix} = \begin{pmatrix} \q \cr \r \end{pmatrix}.
\end{equation*}
We put $\u_0 = (\eta^0)^{-1}\w_0$, $\v_0 = (\mu^0)^{-1}\z_0$.
As for the problem in  $\R^3$, the classical results  (see \cite{BeLPap, Sa, ZhKO}) give weak convergence in 
$L_2(\O;\C^3)$ of the vector-valued functions $\u_\eps, \w_\eps, \v_\eps, \z_\eps$ 
to the corresponding homogenized fields $\u_0, \w_0, \v_0, \z_0$.

We find approximations in the $L_2(\O;\C^3)$-norm for all four fields $\u_\eps, \w_\eps, \v_\eps, \z_\eps$.
These approximations are similar to each other.
For instance, for $\u_\eps$ we have 
\begin{equation}
\label{0.9a}
\| \u_\eps - \u_0 - \u_\eps^{(1)}\|_{L_2(\O)} \le C \eps^{1/2} (\| \q\|_{L_2(\O)} + \| \r \|_{L_2(\O)}).
\end{equation}
The term $\u_\eps^{(1)}$ weakly converges to zero and can be interpreted as a corrector of zero order; 
it is expressed in terms of $\u_0$, the solution of some ``correction'' Maxwell system, and some rapidly oscillating factor. 
 The order of estimate  \eqref{0.9a} deteriorates as compared with the problem in $\R^3$, this is explained by the boundary influence.

In the case where the  magnetic permeability is given by the constant matrix $\mu_0$ and $\q=0$ in the right-hand side of  
\eqref{0.8a}, the result can be improved. This case has been studied in  \cite{Su6}.
It turns out that, under such assumptions,  $\v_\eps$ and $\z_\eps$ converge in the $L_2(\O;\C^3)$-norm to  $\v_0$ and $\z_0$, respectively,  
and the error terms are estimated by $C \eps \|\r\|_{L_2(\O)}$.
For the fields $\u_\eps$ and $\w_\eps$, approximations with the error terms not exceeding  $C \eps^{1/2} \|\r\|_{L_2(\O)}$ were found.

\subsection{Method} 
As for the problem in $\R^3$, the method is based on reduction to the study of some boundary value problems for
second order equations. In the case where $\q=0$, the following problem arises:
\begin{equation}\label{0.11}
\left\{\begin{matrix}
& \rot (\eta^\eps)^{-1} \rot (\mu^\eps)^{-1} \z_\eps^{(\r)}  + \z_\eps^{(\r)} = i \r,
\quad \div \z_\eps^{(\r)} =0,
\cr
& 
(\z_\eps^{(\r)})_n \vert_{\partial \O} =0, 
\quad  ((\eta^\eps)^{-1} \rot (\mu^\eps)^{-1} \z_\eps^{(\r)})_\tau\vert_{\partial \O} =0.
\end{matrix}
\right.
\end{equation}
In the case where $\r=0$, the question is reduced to the problem
\begin{equation}\label{0.12}
\left\{\begin{matrix}
& \rot (\mu^\eps)^{-1} \rot (\eta^\eps)^{-1} \w_\eps^{(\q)}  + \w_\eps^{(\q)} = i \q,\quad
\div \w_\eps^{(\q)} =0,
\cr
& ( (\eta^\eps)^{-1} \w_\eps^{(\q)})_\tau \vert_{\partial \O} =0, 
\quad  (\rot (\eta^\eps)^{-1} \w_\eps^{(\q)})_n\vert_{\partial \O} =0.
\end{matrix}
\right.
\end{equation}
Problems \eqref{0.11} and \eqref{0.12} are similar to each other, but the boundary conditions are different.
We study these problems separately. 

For instance, let us discuss problem \eqref{0.11}.  We rely on the results in $\R^3$ and look for approximation of the solution 
$\z_\eps^{(\r)}$ as the sum of three terms: the effective field $\z_0^{(\r)}$, the corrector (similar to the corrector in $\R^3$), and the boundary layer correction term.
The last term is the solution of some boundary value problem for the equation with rapidly oscillating coefficients. 
It turns out that the error of such approximation is of sharp order $O(\eps)$ in the energy norm. 
However, it is difficult to control the boundary layer correction term.
The main technical work is related to estimation of this term. We show  that this term is of order   
$O(\sqrt{\eps})$ in the energy norm. These considerations allow us to approximate the solution $\z_\eps^{(\r)}$ 
by the sum of the effective field and the corrector with error term of order $O(\sqrt{\eps})$.

Problem \eqref{0.12} is studied similarly. Combining the results for problems \eqref{0.11} and \eqref{0.12}, 
we deduce the results for the Maxwell system.

\subsection{Plan of the paper} 
The paper contains seven sections. Preliminaries are given in Section~\ref{Sec1}. 
In Section \ref{Sec2}, we formulate the statement of the problem  and the main results.
 In Section 3, the question is reduced to the study of the boundary value problems \eqref{0.11} and \eqref{0.12}. 
 In  Section 4,  problem  \eqref{0.11}  is studied. 
 The effective problem is described, the first order approximation to the solution is defined, and the boundary layer correction term is introduced. 
 Theorem 4.6 about estimate for the boundary layer correction term is formulated; 
  the final results for the Maxwell system with  $\q=0$ are deduced from this theorem. Section 5 is devoted to the proof of Theorem 4.6.
 The similar study of  problem \eqref{0.12} (the case where $\r=0$) is given in Sections 6 and 7.

\subsection{Notation} 
Let $\mathfrak{H}$ and $\mathfrak{H}_*$ be complex separable Hilbert spaces. The symbols $(\,\cdot\, ,\,\cdot\,)_\mathfrak{H}$ and $\Vert \,\cdot\,\Vert _\mathfrak{H}$ 
stand for the inner product and the norm in  $\mathfrak{H}$; the symbol $\Vert \,\cdot\,\Vert _{\mathfrak{H}\rightarrow\mathfrak{H}_*}$ denotes the norm of 
a linear continuous operator from $\mathfrak{H}$ to $\mathfrak{H}_*$.

The symbols $\langle \,\cdot\, ,\,\cdot\,\rangle$ and $\vert \,\cdot\,\vert$ stand for the inner product and the norm in  $\mathbb{C}^n$, 
$\mathbf{1}= \mathbf{1}_n$ is the identity $(n\times n)$-matrix. If $a$ is an $(n\times n)$-matrix, then the symbol $\vert a\vert$ means the norm of 
$a$ viewed as a linear operator in  $\mathbb{C}^n$. 
We denote $\mathbf{x}=(x_1,x_2, x_3)\in\mathbb{R}^3$, $iD_j=\partial _j =\partial /\partial x_j$, $j=1,2,3$, $\mathbf{D}=-i\nabla=(D_1,D_2,D_3)$. 
The $L_p$-classes of  $\mathbb{C}^n$-valued functions in the domain $\mathcal{O}\subset\mathbb{R}^3$ are denoted by 
$L_p(\mathcal{O};\mathbb{C}^n)$, $1\leqslant p\leqslant \infty$. The Sobolev classes of  $\mathbb{C}^n$-valued functions in the domain  $\mathcal{O}$ are denoted by
 $H^s(\mathcal{O};\mathbb{C}^n)$. The symbol $H^1_0(\mathcal{O};\mathbb{C}^n)$ stands for the closure of $C_0^\infty (\mathcal{O};\mathbb{C}^n)$ in $H^1(\mathcal{O};\mathbb{C}^n)$. If $n=1$, we write simply $L_p(\mathcal{O})$, $H^s(\mathcal{O})$, etc., but sometimes we use such simple notation also 
 for the spaces of vector-valued or matrix-valued functions. 
Various constants in estimates are denoted by  $c$, $\mathfrak c$, $C$, $\mathcal{C}$, $\mathfrak{C}$ 
(possibly, with indices and marks).

\section{Preliminaries\label{Sec1}}

\subsection{Lattice\label{Sec1.1}} Let $\Gamma\subset\mathbb{R}^3$ be a lattice generated by the basis $\a_1, \a_2, \a_3$, i.~e., 
$$
\Gamma = \big\{ \a \in \R^3:\ \a = z_1 \a_1 + z_2 \a_2 + z_3 \a_3,\ z_j \in \Z\big\}. 
$$
By $\Omega \subset \R^3$ we denote the elementary cell of the lattice $\Gamma$:
$$
\Omega = \big\{ \x \in \R^3:\ \x = t_1 \a_1 + t_2 \a_2 + t_3 \a_3,\ -1/2 < t_j < 1/2 \big\}. 
$$
Let $\b_1, \b_2, \b_3 \in \R^3$ be the basis dual to  $\a_1, \a_2, \a_3$, i.~e., 
$\langle \b_l, \a_j \rangle = 2\pi \delta_{lj}$.  Denote
$$
2r_0 = \min_{j=1,2,3} |\b_j|, \quad 2r_1 = \operatorname{diam}\Omega.
$$

For $\Gamma$-periodic functions $f(\x)$ in $\mathbb{R}^3$ we denote $f^\varepsilon (\mathbf{x}):=f(\mathbf{x}/\varepsilon)$,  $\varepsilon >0$.
For square periodic matrix-valued functions $f(\x)$, we put
$$
\overline{f}:=\vert \Omega\vert ^{-1}\int_\Omega f(\mathbf{x})\,d\mathbf{x}, \quad \underline{f}:=\Big(\vert \Omega\vert ^{-1}\int_\Omega f(\mathbf{x})^{-1}\,d\mathbf{x}
\Big)^{-1}.
$$
In the definition of $\overline{f}$, it is assumed that $f \in L_{1,\text{loc}}(\R^3)$; in the definition of $\underline{f}$, it is assumed that the matrix $f(\x)$ is non-degenerate
and $f^{-1} \in L_{1,\text{loc}}(\R^3)$.

Let $\widetilde{H}^1(\Omega;\C^n)$ be the subspace of $H^1(\Omega;\C^n)$ consisting of functions whose $\Gamma$-periodic extension to $\mathbb{R}^3$ belongs to 
$H^1_{\mathrm{loc}}(\mathbb{R}^3;\C^n)$.

\subsection{The Steklov smoothing\label{Sec1.2a}} 
 We define the operator $S_\varepsilon^{(n)}\!,$ ${\varepsilon \!>\!0}$, acting in $L_2(\mathbb{R}^3;\mathbb{C}^n)$ (where $n \in\mathbb{N}$) 
 and given by 
 \begin{equation}
\label{S_eps}
\begin{split}
(S_\varepsilon^{(n)} \mathbf{u})(\mathbf{x})=\vert \Omega \vert ^{-1}\int_\Omega \mathbf{u}(\mathbf{x}-\varepsilon \mathbf{y})\,d\mathbf{y},\quad \mathbf{u}\in L_2(\mathbb{R}^3;\mathbb{C}^n);
\end{split}
\end{equation}
$S_\eps$ is called the \textit{Steklov smoothing operator}.
We drop the index $n$ and write simply $S_\varepsilon$. Obviously,
$S_\varepsilon \mathbf{D}^\alpha \mathbf{u}=\mathbf{D}^\alpha S_\varepsilon \mathbf{u}$ for $\mathbf{u}\in H^\sigma(\mathbb{R}^3;\mathbb{C}^n)$ 
and any multiindex $\alpha$ such that $\vert \alpha\vert \leqslant \sigma$. Note that 
\begin{equation}
\label{S_eps <= 1}
\Vert S_\varepsilon \Vert _{L_2(\mathbb{R}^3)\rightarrow L_2(\mathbb{R}^3)}\leqslant 1.
\end{equation}
We need the following properties of the operator $S_\varepsilon$
(see \cite[Lemmas~1.1 and~1.2]{ZhPas1} or \cite[Propositions 3.1 and 3.2]{PSu}).

\begin{proposition}
\label{prop_Seps - I}
For any function $\mathbf{u}\in H^1(\mathbb{R}^3;\mathbb{C}^n)$, we have
\begin{equation*}
\Vert S_\varepsilon \mathbf{u}-\mathbf{u}\Vert _{L_2(\mathbb{R}^3)}\leqslant \varepsilon r_1\Vert \mathbf{D}\mathbf{u}\Vert _{L_2(\mathbb{R}^3)},
\end{equation*}
where $2r_1=\mathrm{diam}\,\Omega$.
\end{proposition}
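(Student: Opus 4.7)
The plan is to establish the estimate by a direct computation using the fundamental theorem of calculus, first for smooth compactly supported functions and then by density for all of $H^1(\R^3;\C^n)$.

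First I would write the difference in a form amenable to estimation. For $\mathbf{u}\in C_0^\infty(\R^3;\C^n)$, since $|\Omega|^{-1}\int_\Omega d\mathbf{y} = 1$, I have
\begin{equation*}
(S_\varepsilon\mathbf{u})(\mathbf{x}) - \mathbf{u}(\mathbf{x}) = |\Omega|^{-1}\int_\Omega \bigl(\mathbf{u}(\mathbf{x}-\varepsilon\mathbf{y}) - \mathbf{u}(\mathbf{x})\bigr)\,d\mathbf{y},
\end{equation*}
and the fundamental theorem of calculus along the segment from $\mathbf{x}$ to $\mathbf{x}-\varepsilon\mathbf{y}$ gives
\begin{equation*}
\mathbf{u}(\mathbf{x}-\varepsilon\mathbf{y}) - \mathbf{u}(\mathbf{x}) = -\varepsilon\int_0^1 \langle\mathbf{y},\nabla\mathbf{u}(\mathbf{x}-t\varepsilon\mathbf{y})\rangle\,dt.
\end{equation*}

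Next I would use that the cell $\Omega$ is symmetric about the origin, so for every $\mathbf{y}\in\Omega$ also $-\mathbf{y}\in\Omega$; hence $2|\mathbf{y}| \le \operatorname{diam}\Omega = 2r_1$, i.e.\ $|\mathbf{y}|\le r_1$. Applying the Cauchy--Schwarz inequality twice (once in the $t$-integral, once in the $\mathbf{y}$-average) yields the pointwise estimate
\begin{equation*}
|(S_\varepsilon\mathbf{u})(\mathbf{x}) - \mathbf{u}(\mathbf{x})|^2 \le \varepsilon^2 r_1^2\,|\Omega|^{-1}\int_\Omega\int_0^1 |\nabla\mathbf{u}(\mathbf{x}-t\varepsilon\mathbf{y})|^2\,dt\,d\mathbf{y}.
\end{equation*}

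Finally I would integrate over $\mathbf{x}\in\R^3$ and use Fubini together with the translation invariance of Lebesgue measure: for each fixed $(t,\mathbf{y})$, the change of variable $\mathbf{x}\mapsto \mathbf{x}+t\varepsilon\mathbf{y}$ shows that the inner integral equals $\|\mathbf{D}\mathbf{u}\|_{L_2(\R^3)}^2$. This collapses the $t$- and $\mathbf{y}$-integrations to a factor $1$, yielding $\|S_\varepsilon\mathbf{u}-\mathbf{u}\|_{L_2}^2 \le \varepsilon^2 r_1^2\|\mathbf{D}\mathbf{u}\|_{L_2}^2$, and extracting the square root gives the claim. The extension from $C_0^\infty$ to $H^1(\R^3;\C^n)$ follows from density together with \eqref{S_eps <= 1}, which guarantees that both sides of the estimate depend continuously on $\mathbf{u}$ in the $H^1$-norm. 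I do not anticipate any serious obstacle here; the only place requiring a moment of care is the geometric observation that the symmetry of $\Omega$ lets one bound $|\mathbf{y}|$ by $r_1$ rather than $2r_1$, which is exactly what produces the stated constant.
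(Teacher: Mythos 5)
Your proof is correct and is the standard argument (fundamental theorem of calculus along segments, the symmetry observation $\mathbf{y}\in\Omega\Rightarrow -\mathbf{y}\in\Omega$ giving $|\mathbf{y}|\le r_1$, Cauchy--Schwarz, then Fubini with translation invariance, and finally density using \eqref{S_eps <= 1}). The paper does not prove the proposition itself but cites \cite{ZhPas1} and \cite{PSu}, and the proof there proceeds along exactly these lines, so there is nothing further to compare.
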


\begin{proposition}
\label{prop f^eps S_eps}
Let $f$ be a $\Gamma$-periodic function in $\mathbb{R}^3$ such that $f \in L_2(\Omega)$. Let $[f ^\varepsilon ]$
be the operator of multiplication by the function $f^\eps(\x)$. Then the operator $[f ^\varepsilon ]S_\varepsilon $ is continuous in $L_2(\mathbb{R}^3)$ and
\begin{equation*}
\Vert [f^\varepsilon]S_\varepsilon \Vert _{L_2(\mathbb{R}^3)\rightarrow L_2(\mathbb{R}^3)}\leqslant \vert \Omega \vert ^{-1/2}\Vert f \Vert _{L_2(\Omega)}.
\end{equation*}
\end{proposition}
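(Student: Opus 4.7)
The plan is to bound $\|[f^\varepsilon]S_\varepsilon \u\|_{L_2(\R^3)}$ directly by combining a pointwise Cauchy--Schwarz bound on the Steklov mean with a Fubini swap and the $\Gamma$-periodicity of $|f|^2$. The continuity claim will then follow from the resulting inequality.

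First, I would apply the Cauchy--Schwarz inequality to the defining integral \eqref{S_eps}:
\begin{equation*}
|(S_\varepsilon \u)(\x)|^2 \leq |\Omega|^{-1} \int_\Omega |\u(\x - \varepsilon \y)|^2\, d\y,
\end{equation*}
which is valid pointwise a.e. Multiplying by $|f^\varepsilon(\x)|^2$ and integrating over $\R^3$, I get
\begin{equation*}
\|[f^\varepsilon] S_\varepsilon \u\|_{L_2(\R^3)}^2 \leq |\Omega|^{-1} \int_\Omega \!\!\int_{\R^3} |f(\x/\varepsilon)|^2 \,|\u(\x-\varepsilon \y)|^2 \, d\x\, d\y,
\end{equation*}
after swapping the order of integration by Fubini (everything is nonnegative).

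Next, for each fixed $\y \in \Omega$, I substitute $\t = \x - \varepsilon \y$ in the inner integral, turning $f(\x/\varepsilon)$ into $f(\t/\varepsilon + \y)$. The crucial point is that the function $\y \mapsto |f(\t/\varepsilon + \y)|^2$ is $\Gamma$-periodic, so its integral over the fundamental cell $\Omega$ equals its integral over any translate of $\Omega$; in particular it equals $\|f\|_{L_2(\Omega)}^2$ and is independent of $\t$ and $\varepsilon$. Combining these two integrations via a second Fubini step yields
\begin{equation*}
\|[f^\varepsilon] S_\varepsilon \u\|_{L_2(\R^3)}^2 \leq |\Omega|^{-1} \|f\|_{L_2(\Omega)}^2 \,\|\u\|_{L_2(\R^3)}^2,
\end{equation*}
which is precisely the claimed bound.

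There is no real obstacle here; the only mildly delicate point is the use of periodicity after the change of variables, which is why the constant comes out as $|\Omega|^{-1/2} \|f\|_{L_2(\Omega)}$ rather than something depending on $\varepsilon$ or on the location of $\t$. Once this is handled, the continuity of $[f^\varepsilon] S_\varepsilon$ on $L_2(\R^3)$ is immediate.
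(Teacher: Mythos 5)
Your argument is correct and is the standard proof of this smoothing estimate. Each step is sound: the Cauchy--Schwarz bound $|(S_\varepsilon\u)(\x)|^2\le|\Omega|^{-1}\int_\Omega|\u(\x-\varepsilon\y)|^2\,d\y$, the Tonelli swap (all integrands nonnegative), the substitution $\t=\x-\varepsilon\y$ for fixed $\y$, the second Fubini swap, and finally the observation that
\begin{equation*}
\int_\Omega |f(\t/\varepsilon+\y)|^2\,d\y=\int_{\t/\varepsilon+\Omega}|f(\mathbf{s})|^2\,d\mathbf{s}=\|f\|^2_{L_2(\Omega)}
\end{equation*}
because $|f|^2$ is $\Gamma$-periodic and $\Omega$ is a fundamental domain, so integration over any of its translates gives the same value. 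Note that the paper does not supply its own proof of this proposition but cites it from the references; what you wrote is precisely the classical argument those references use, so there is no divergence in approach to report.
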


\subsection{Functional classes}
Let ${\O \subset \R^3}$ be a bounded domain. 
If the boundary $\partial \O$ and the vector-valued function $\u(\x)$ are sufficiently smooth, 
then the normal component $\u_n$ and the tangential component $\u_\tau$ of $\u$ on the boundary are correctly defined.
 In the nonsmooth situation, relations $\u_n\vert_{\partial \O}=0$ and $\u_\tau\vert_{\partial \O}=0$ can be understood 
 in the generalized sense. Recall the following definitions; see \cite{BS1, BS2}.

\begin{definition}\label{def1}
Let $\u \in L_2(\O;\C^3)$ and $\div \u \in L_2(\O)$. By definition, the relation  \hbox{$\u_n \vert_{\partial \O}=0$} means that 
$$
(\u, \nabla \omega)_{L_2(\O)} = - (\div \u, \omega)_{L_2(\O)},\quad \forall \omega \in H^1(\O). 
$$
\end{definition}

\begin{definition}\label{def2}
Let $\u \in L_2(\O;\C^3)$ and $\rot \u \in L_2(\O;\C^3)$. By definition, the relation $\u_\tau \vert_{\partial \O}=0$ means that
$$
(\u, \rot \z)_{L_2(\O)} = (\rot \u, \z)_{L_2(\O)},\quad \forall \z \in L_2(\O;\C^3): \ \rot \z \in L_2(\O;\C^3). 
$$
\end{definition}

Let $s(\x)$ be a symmetic $(3 \times 3)$-matrix-valued function in $\O$ with real entries and such that $s, s^{-1} \in L_\infty$ and $s(\x)>0$. 
Besides the ordinary space $L_2(\O;\C^3)$, we need to define the weighted space 
$
L_2(\O;s) = L_2(\O;\C^3;s)
$
with the inner product
$$
({\mathbf f}_1, {\mathbf f}_2)_{L_2(\O;s)} = \int_\O \langle s(\x) {\mathbf f}_1(\x), {\mathbf f}_2(\x) \rangle \,d\x.
$$

We introduce two divergence free subspaces in $L_2(\O;\C^3)$:
\begin{align}
\label{5.1}
J(\O) :=& \bigl\{ {\mathbf u} \in L_2(\O;\C^3): \ \int_\O \langle \u, \nabla \omega \rangle \, d\x =0, \ \forall \omega \in H^1_0(\O) \bigr\},
\\
\label{5.2}
J_0(\O) :=& \bigl\{ {\mathbf u} \in L_2(\O;\C^3): \ \int_\O \langle \u, \nabla \omega \rangle \, d\x =0, \ \forall \omega \in H^1(\O) \bigr\}.
\end{align}
The subspace \eqref{5.1} consists of all functions ${\mathbf u} \in L_2(\O;\C^3)$ such that $\div \u =0$ in the sense of distributions. 
The subspace  \eqref{5.2} consists of all functions  ${\mathbf u} \in L_2(\O;\C^3)$ such that  $\div \u =0$ and  
$\u_n\vert_{\partial \O}=0$ (in the sense of Definition \ref{def1}). 
The sets  \eqref{5.1} and \eqref{5.2} can be viewed as subspaces of the weighted space $L_2(\O;s)$.

\subsection{Estimates in the neighborhood of the boundary}
In this subsection,  we formulate two auxiliary statements that are valid for Lipschitz bounded domains  $\O \subset \R^3$; see \cite{ZhPas1} and \cite[Section 5]{PSu}. 
More precisely, we assume the following. 

\begin{condition}\label{cond1}
Let $\O \subset \R^3$ be a bounded domain. We put $(\partial \O)_\eps = \{ \x \in \R^3: \operatorname{dist} \{ \x; \partial \O\} < \eps\}$.
Suppose that there exists a number  $\eps_0 \in (0,1]$ such that the set
 $(\partial \O)_{2\eps_0}$ can be covered by a finite number of  open sets admitting diffeomorphisms of class $C^{0,1}$ rectifying the boundary $\partial \O$. Denote $\eps_1 = \eps_0(1+r_1)^{-1}$, where $2r_1 = \operatorname{diam} \Omega$.
\end{condition}

Condition \ref{cond1} is ensured by the fact that the boundary is Lipschitz. 
The number $\eps_0$ depends only on the domain  $\O$, while $\eps_1$ depends on the domain $\O$ and the parameters of the lattice $\Gamma$.

\begin{lemma}\label{lem01}
Suppose that Condition {\rm \ref{cond1}} is satisfied. Denote
 $B_{2\eps} = (\partial \O)_{2\eps} \cap \O$. Then the following statements hold. 
 
 $1^\circ$. For any function $u \in H^1(\O)$, we have 
 $$
\int_{B_{2\eps}} |u(\x)|^2 \, d\x \leqslant \beta_0 \eps \|u\|_{H^1(\O)} \|u\|_{L_2(\O)},
\quad 0< \eps \leqslant \eps_0.
$$

$2^\circ$. For any function $u \in H^1(\R^3)$, we have
$$
\int_{(\partial \O)_{2\eps}} |u(\x)|^2 \, d\x \leqslant \beta_0 \eps \|u\|_{H^1(\R^3)} \|u\|_{L_2(\R^3)},
\quad 0< \eps \leqslant \eps_0.
$$
The constant $\beta_0$ depends only on the domain $\O$.
\end{lemma}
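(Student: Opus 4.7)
The plan is to reduce to the model half-space via the bi-Lipschitz charts furnished by Condition~\ref{cond1}, then invoke a one-dimensional strip inequality. Cover $(\partial\O)_{2\eps_0}$ by the finite family $\{U_j\}_{j=1}^N$ from the hypothesis, with maps $\Phi_j\colon U_j\to V_j\subset\R^3$ straightening $\partial\O\cap U_j$ to $\{y_3=0\}$ and sending $\O\cap U_j$ into $\{y_3>0\}$. Fix a smooth partition of unity $\{\psi_j\}$ subordinate to $\{U_j\}_{j=1}^N$ together with one interior patch whose cutoff vanishes on $(\partial\O)_{2\eps_0}$; since $B_{2\eps}\subset(\partial\O)_{2\eps_0}$ for $\eps\le\eps_0$, we have $u=\sum_{j=1}^N\psi_j u$ on $B_{2\eps}$.

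The key analytic ingredient is the following half-space strip inequality: for any $v\in H^1(\R^3_+)$ of bounded support and any $\delta>0$,
\[
\int_0^\delta\!\!\int_{\R^2}|v(y',y_3)|^2\,dy'\,dy_3 \le 2\delta\,\|v\|_{L_2(\R^3_+)}\,\|\partial_{y_3}v\|_{L_2(\R^3_+)}.
\]
This follows from the pointwise bound $|v(y',y_3)|^2=-\int_{y_3}^\infty\partial_t|v(y',t)|^2\,dt\le 2\int_0^\infty|v(y',t)||\partial_t v(y',t)|\,dt$: integration in $y_3\in(0,\delta)$ produces the factor $\delta$, and the Cauchy--Schwarz inequality in the variables $(y',t)\in\R^3_+$ gives the claimed bound.

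To conclude part~$1^\circ$, set $v_j=(\psi_j u)\circ\Phi_j^{-1}$, extended by zero to $\R^3_+$. Its $H^1$- and $L_2$-norms are equivalent to those of $\psi_j u$ with constants depending only on the Lipschitz moduli of $\Phi_j^{\pm1}$. The image $\Phi_j(B_{2\eps}\cap U_j)$ lies in a half-space strip $\{0<y_3<L\eps\}$ with $L$ depending only on $\O$. Applying the strip inequality with $\delta=L\eps$ to $v_j$ and pulling back via $\Phi_j^{-1}$ yields
\[
\int_{B_{2\eps}\cap U_j}|\psi_j u|^2\,d\x \le C\eps\,\|\psi_j u\|_{H^1(\O\cap U_j)}\|\psi_j u\|_{L_2(\O\cap U_j)}.
\]
Summing over $j$ and using $\|\psi_j u\|_{H^1(\O)}\le C\|u\|_{H^1(\O)}$ and $\|\psi_j u\|_{L_2(\O)}\le\|u\|_{L_2(\O)}$ gives $1^\circ$ with $\beta_0$ depending only on $\O$. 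Part $2^\circ$ is identical in structure: now $u\in H^1(\R^3)$ is defined on both sides of $\partial\O$, so in each chart one applies the strip inequality separately on the two half-spaces $\{\pm y_3>0\}$ and sums.

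The only genuine technical point — and the expected obstacle — is the bookkeeping of constants through the bi-Lipschitz change of variables: one must verify that $\Phi_j(B_{2\eps}\cap U_j)$ is contained in a strip of width $O(\eps)$ (immediate from the Lipschitz bound on $\Phi_j^{-1}$) and that $\|\nabla v_j\|_{L_2}\le C\|\nabla(\psi_j u)\|_{L_2}$ via the chain rule (using a.e.\ boundedness of $D\Phi_j$). Since the Lipschitz moduli of the $\Phi_j$ depend only on the domain, the final constant $\beta_0$ depends only on $\O$, as asserted.
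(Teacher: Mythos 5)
Your proof is correct and follows the standard argument: straighten the boundary via the $C^{0,1}$ charts, localize with a partition of unity, and apply the one-dimensional integration-by-parts estimate in the normal direction. The paper itself gives no proof but cites \cite{ZhPas1} and \cite[Section~5]{PSu}, and your argument is essentially the one used there.

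One small imprecision in the bookkeeping you flag as the technical point: the containment $\Phi_j(B_{2\eps}\cap\operatorname{supp}\psi_j)\subset\{0<y_3<L\eps\}$ does not follow from the Lipschitz bound on $\Phi_j^{-1}$ alone, and in fact it is the Lipschitz constant of $\Phi_j$ that enters. Moreover, one must know that the nearest boundary point $\x_0$ to $\x\in\operatorname{supp}\psi_j\cap B_{2\eps}$ lies inside $U_j$ in order to apply $\Phi_j$ to it; this requires $2\eps<\operatorname{dist}(\operatorname{supp}\psi_j,\R^3\setminus U_j)$. For the remaining range of $\eps$ (bounded below by a constant depending only on the cover, hence on $\O$), the estimate is trivial since $\int_{B_{2\eps}}|u|^2\le\|u\|_{L_2(\O)}\|u\|_{H^1(\O)}\le C\eps\|u\|_{L_2(\O)}\|u\|_{H^1(\O)}$. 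With this case split the argument closes, and the constant indeed depends only on~$\O$.
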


\begin{lemma}\label{lem02}
Suppose that Condition {\rm \ref{cond1}} is satisfied. Let $f(\x)$ be a $\Gamma$-periodic function in  $\R^3$ such that $f\in L_2(\Omega)$. 
Let $S_\eps$ be given by  \eqref{S_eps}. 
Then for $0< \eps \leqslant \eps_1$   and any function $\u \in H^1(\R^3;\C^n)$ we have
$$
\int_{(\partial \O)_{2\eps}} |f^\eps(\x)|^2 | (S_\eps \u)(\x)|^2 \, d\x \leqslant \beta_* \eps |\Omega|^{-1} \|f\|^2_{L_2(\Omega)} \|\u\|_{H^1(\R^3)} \|\u\|_{L_2(\R^3)}.
$$
Here $\beta_* = \beta_0 (1+r_1)$.
\end{lemma}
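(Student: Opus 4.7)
The plan is to combine a Cauchy--Schwarz bound for $S_\eps\u$ with the boundary-strip estimate of Lemma \ref{lem01}($2^\circ$), using the $\Gamma$-periodicity of $f$ to absorb the factor $|f^\eps|^2$ into the constant $|\Omega|^{-1}\|f\|_{L_2(\Omega)}^2$. The argument is essentially the proof of Proposition \ref{prop f^eps S_eps} localized to a boundary collar.

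First I would apply Cauchy--Schwarz in $\y$ to the defining formula \eqref{S_eps}, yielding the pointwise bound
\[
|(S_\eps \u)(\x)|^2 \le |\Omega|^{-1} \int_\Omega |\u(\x-\eps\y)|^2\,d\y.
\]
Multiplying by $|f^\eps(\x)|^2$, integrating over $(\partial\O)_{2\eps}$, and applying Fubini gives
\[
\int_{(\partial\O)_{2\eps}} |f^\eps|^2\,|S_\eps\u|^2\,d\x \le |\Omega|^{-1} \int_\Omega\! \int_{(\partial\O)_{2\eps}} |f^\eps(\x)|^2\,|\u(\x-\eps\y)|^2\,d\x\,d\y.
\]

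Next, for each fixed $\y \in \Omega$ I would change variable $\w = \x-\eps\y$. Then $f^\eps(\x) = f(\w/\eps+\y)$, and since $|\eps\y|\le \eps\operatorname{diam}\Omega = 2\eps r_1$, the image of the $\x$-region is contained in the enlarged strip $(\partial\O)_{2\eps(1+r_1)}$. Swapping the order of integration once more and using the $\Gamma$-periodicity of $f$ --- which gives $\int_\Omega |f(\w/\eps+\y)|^2\,d\y = \|f\|_{L_2(\Omega)}^2$ for every $\w$, since the region of integration is always a translate of the fundamental cell --- the double integral collapses to
\[
|\Omega|^{-1}\|f\|_{L_2(\Omega)}^2 \int_{(\partial\O)_{2\eps(1+r_1)}} |\u(\w)|^2\,d\w.
\]

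Finally, setting $\tilde\eps := \eps(1+r_1)$, the assumption $\eps\le\eps_1 = \eps_0(1+r_1)^{-1}$ ensures $\tilde\eps\le\eps_0$, which is exactly the range in which Lemma \ref{lem01}($2^\circ$) is valid; applying it bounds the remaining integral by
\[
\beta_0\tilde\eps\,\|\u\|_{H^1(\R^3)}\|\u\|_{L_2(\R^3)} = \beta_0(1+r_1)\eps\,\|\u\|_{H^1(\R^3)}\|\u\|_{L_2(\R^3)},
\]
giving the claim with $\beta_* = \beta_0(1+r_1)$. The only real subtlety is keeping track of how the translation by $\eps\y$ inflates the boundary strip and matching the inflated width $2\tilde\eps$ to the range in which Lemma \ref{lem01} applies --- this is precisely what dictates the choice $\eps_1 = \eps_0(1+r_1)^{-1}$ in Condition \ref{cond1}.
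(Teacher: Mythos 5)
Your argument is correct and is the standard proof of this estimate (the paper simply cites \cite{ZhPas1} and \cite[Section 5]{PSu} without reproducing it): Cauchy--Schwarz to linearize $S_\eps\u$, Fubini, change of variable to move $\u$ off the convolution, periodicity of $f$ to collapse the $\y$-integral to $\|f\|_{L_2(\Omega)}^2$, and then Lemma \ref{lem01}($2^\circ$) on the inflated collar $(\partial\O)_{2\eps(1+r_1)}$. The way the inflation factor $1+r_1$ appears simultaneously in $\beta_*$ and in $\eps_1=\eps_0(1+r_1)^{-1}$ is a hallmark of exactly this mechanism, so you have recovered the intended proof.
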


\section{Statement of the problem. Main results\label{Sec2}}

\subsection{Statement of the problem}

Suppose that $\eta(\x)$ and $\mu(\x)$ are 
 symmetric $(3\times 3)$-matrix-valued functions in $\R^3$ with  real entries, periodic with respect to the lattice $\Gamma$, and such that 
 \begin{equation*}
\eta, \eta^{-1} \in L_\infty,\quad \eta(\x)>0;\quad 
\mu, \mu^{-1} \in L_\infty,\quad \mu(\x)>0.
\end{equation*}

Let  ${\O \subset \R^3}$ be a bounded domain of class  $C^{1,1}$. 
We study the electromagnetic resonator filling the domain $\O$.
Suppose that the dielectric permittivity and the magnetic permeability are given by the matrix-valued functions 
$\eta^\eps(\x) = \eta(\eps^{-1}\x)$ and $\mu^\eps(\x) = \mu(\eps^{-1}\x)$.

The intensities of the electric and magnetic fields are denoted by  $\u_\eps(\x)$ and $\v_\eps(\x)$, respectively.
The electric and magnetic displacement vectors $\w_\eps$ and $\z_\eps$ are given by  
$\w_\eps(\x) = \eta^\eps(\x) \u_\eps(\x)$, $\z_\eps(\x)= \mu^\eps(\x) \v_\eps(\x)$. 

We write the Maxwell operator  $\M_\eps$ in terms of the displacement vectors.
This operator acts in the space $J(\O) \oplus J_0(\O)$ viewed as a subspace of
 the weighted space
$$
L_2\big(\O;\C^3; (\eta^\eps)^{-1}\big) \oplus L_2\big(\O;\C^3; (\mu^\eps)^{-1}\big),
$$
and is given by 
\begin{equation*}
\M_\eps = \begin{pmatrix} 0 & i \rot (\mu^\eps)^{-1} \cr -i \rot (\eta^\eps)^{-1} & 0 \end{pmatrix}
\end{equation*}
on the domain
\begin{equation}\label{Dom_M_eps}
\begin{aligned}
\Dom \M_\eps = &\big\{ (\w,\z) \in J(\O) \oplus J_0(\O): \ \rot (\eta^\eps)^{-1} \w \in L_2(\O;\C^3),
\\ 
&\rot (\mu^\eps)^{-1} \z \in L_2(\O;\C^3),
\ ((\eta^\eps)^{-1} \w)_\tau \vert_{\partial \O}=0 \big\}. 
\end{aligned}
\end{equation}
Here the boundary condition for  $\w$ is understood in the sense of Definition~\ref{def2}. 
Note that in general $\Dom \M_\eps$ is not contained in $H^1(\O;\C^6)$, since the coefficients are not assumed to be smooth.

The operator $\M_\eps$ is selfadjoint; see \cite{BS1,BS2}. Therefore, $\lambda =i$ is a regular point of the operator $\M_\eps$. 
\textit{Our goal} is to study the behavior of the resolvent $(\M_\eps - i I)^{-1}$.
In other words, we are interested in the behavior of the solutions $(\w_\eps,\z_\eps)$ of the equation  
\begin{equation}\label{M1}
( \M_\eps - iI) \begin{pmatrix} \w_\eps \cr \z_\eps \end{pmatrix} = \begin{pmatrix} \q \cr \r \end{pmatrix}, \quad \q \in J(\O),\ \r \in J_0(\O),
\end{equation}
and  the fields $\u_\eps = (\eta^\eps)^{-1}\w_\eps$, $\v_\eps = (\mu^\eps)^{-1}\z_\eps$.  
In details, the Maxwell system  \eqref{M1} takes the form 
\begin{equation*}
\left\{\begin{matrix}
& i \, \rot (\mu^\eps)^{-1} \z_\eps - i \w_\eps = \q,
\cr
& -i \, \rot (\eta^\eps)^{-1} \w_\eps - i \z_\eps = \r,
\cr
& \div \w_\eps=0,\ \div \z_\eps =0,
\cr
& ((\eta^\eps)^{-1} \w_\eps)_\tau\vert_{\partial \O} =0,\  (\z_\eps)_n \vert_{\partial \O} =0.
\end{matrix}
\right.
\end{equation*}

\begin{remark}
Instead of $\lambda =i$, one could take any other regular point for the operator ${\mathcal M}_\eps$.
\end{remark}

\subsection{The effective matrices $\eta^0$ and  $\mu^0$\label{eff_mat}}
To define the effective matrix $\eta^0$, we consider the auxiliary problem on the cell $\Omega$. Let $\e_1,\e_2,\e_3$ be the standard orthonormal basis in $\R^3$. Let $\Phi_j\in \widetilde{H}^1(\Omega)$ be the periodic solution of the problem 
\begin{equation}
\label{eff1}
\div \eta(\x) (\nabla \Phi_j(\x) + \e_j)=0, \quad \int_\Omega \Phi_j(\x)\,d\x=0.
\end{equation}
(The solution is understood in the weak sense.) 
Let $Y_\eta(\x)$ be the $(3\times 3)$-matrix with the columns $\nabla \Phi_j(\x)$, $j=1,2,3$. We put
\begin{equation}
\label{eff2}
\widetilde{\eta}(\x) := \eta(\x) (Y_\eta(\x)+\1).
\end{equation}
The effective matrix $\eta^0$ is defined by 
\begin{equation*}
\eta^0 := |\Omega|^{-1} \int_{\Omega} \wt{\eta}(\x)\,d\x. 
\end{equation*}
It turns out that the matrix $\eta^0$ is positive. We also need to define the matrix
\begin{equation*}
G_\eta(\x) := \wt{\eta}(\x) (\eta^0)^{-1} - \1. 
\end{equation*}

The positive effective matrix $\mu^0$ is defined in a similar way.
Let $\Psi_j\in \widetilde{H}^1(\Omega)$ be the periodic solution of the problem
\begin{equation}
\label{eff4}
\div \mu(\x) (\nabla \Psi_j(\x) + \e_j)=0, \quad \int_\Omega \Psi_j(\x)\,d\x=0.
\end{equation}
Let $Y_\mu(\x)$ be the $(3\times 3)$-matrix with the columns $\nabla \Psi_j(\x)$, $j=1,2,3$. Denote
\begin{equation}
\label{eff5}
\widetilde{\mu}(\x) := \mu(\x) (Y_\mu(\x)+\1).
\end{equation}
The effective matrix $\mu^0$ is given by 
\begin{equation}
\label{eff6}
\mu^0 := |\Omega|^{-1} \int_{\Omega} \wt{\mu}(\x)\,d\x. 
\end{equation}
We also define the matrix
\begin{equation*}
G_\mu(\x) := \wt{\mu}(\x) (\mu^0)^{-1} - \1. 
\end{equation*}

Let us mention some properties of the effective matrices and the properties of the solutions of problems \eqref{eff1} and \eqref{eff4}.

\begin{remark}
\label{rem2.1}
{\rm 1)} The following inequalities for the effective matrices are known as the Voigt--Reuss bracketing{\rm :}
$$
\underline{\eta} \leqslant \eta^0 \leqslant \overline{\eta}, \quad 
\underline{\mu} \leqslant \mu^0 \leqslant \overline{\mu}.
$$
This implies 
$$
|\eta^0|  \leqslant \| \eta \|_{L_\infty}, \  | (\eta^0)^{-1}|  \leqslant \| \eta^{-1} \|_{L_\infty};
\quad |\mu^0|  \leqslant \| \mu \|_{L_\infty},\ | (\mu^0)^{-1} |  \leqslant \| \mu^{-1} \|_{L_\infty}.
$$
{\rm 2)} The matrix-valued functions $Y_\eta$, $G_\eta$, $Y_\mu$, and $G_\mu$ are periodic and have zero mean values.

\noindent{\rm 3)} It is easy to check that
\begin{equation}
\label{eff6b}
\begin{aligned}
\| Y_\eta \|_{L_2(\Omega)}  \leqslant \| \eta \|_{L_\infty}^{1/2} \|\eta^{-1} \|_{L_\infty}^{1/2} |\Omega|^{1/2},
\\
\| Y_\mu\|_{L_2(\Omega)}  \leqslant \|\mu \|_{L_\infty}^{1/2} \|\mu^{-1} \|_{L_\infty}^{1/2} |\Omega|^{1/2}, 
\end{aligned}
\end{equation}
\begin{equation}
\label{efff}
\begin{aligned}
\| \Phi_j \|_{L_2(\Omega)}  \leqslant (2r_0)^{-1} \| \eta \|_{L_\infty}^{1/2} \|\eta^{-1} \|_{L_\infty}^{1/2} |\Omega|^{1/2},
\\
\| \Psi_j\|_{L_2(\Omega)}  \leqslant (2r_0)^{-1} \|\mu \|_{L_\infty}^{1/2} \|\mu^{-1} \|_{L_\infty}^{1/2} |\Omega|^{1/2}.
\end{aligned}
\end{equation}

\noindent{\rm 4)} According to \cite[Chapter~3, Theorem 3.1]{LaUr}, the periodic solution $\Phi_j$ of problem 
\eqref{eff1} and the periodic solution $\Psi_j$ of  problem \eqref{eff4} are bounded and satyisfy estimates
$$
\| \Phi_j\|_{L_\infty} \leqslant \wh{C}_\eta, \quad \| \Psi_j\|_{L_\infty} \leqslant \wh{C}_\mu, \quad j=1,2,3.
$$
The constant $\wh{C}_\eta$ depends only on $\|\eta\|_{L_\infty}$, $\| \eta^{-1}\|_{L_\infty}$, and $\Omega${\rm ;}
the constant $\wh{C}_\mu$ depends only on $\|\mu \|_{L_\infty}$, $\| \mu^{-1}\|_{L_\infty}$, and $\Omega$.
\end{remark}

We need the following  property of the matrix-valued functions $Y_\eta^\eps$ and $Y_\mu^\eps$;
see \cite[Corollary 2.4]{PSu}.

\begin{lemma}\label{lem_PSu}
For any function $u \in H^1(\R^3)$ and $\eps>0$ we have
\begin{align*}
\int_{\R^3} |Y_\eta^\eps(\x)|^2 |u(\x)|^2\,d\x \leqslant \beta_{1,\eta} \int_{\R^3} |u(\x)|^2 \,d\x +
\beta_{2,\eta} \eps^2 \wh{C}_\eta^2 \int_{\R^3} |\nabla u(\x)|^2 \,d\x,
\\
\int_{\R^3} |Y_\mu^\eps(\x)|^2 |u(\x)|^2\,d\x \leqslant \beta_{1,\mu} \int_{\R^3} |u(\x)|^2 \,d\x +
\beta_{2,\mu} \eps^2 \wh{C}_\mu^2 \int_{\R^3} |\nabla u(\x)|^2 \,d\x.
\end{align*}
The constants $\beta_{1,\eta}$ and $\beta_{2,\eta}$ depend only on $\|\eta\|_{L_\infty}$ and $\|\eta^{-1}\|_{L_\infty}${\rm ;} the constants
$\beta_{1,\mu}$ and $\beta_{2,\mu}$ depend only on $\|\mu\|_{L_\infty}$ and $\|\mu^{-1}\|_{L_\infty}$.
\end{lemma}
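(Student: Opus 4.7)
The plan is to reduce each of the two bounds to a Caccioppoli-type energy estimate applied to the individual correctors, exploiting the $L_\infty$-bound from Remark \ref{rem2.1}(4). The two inequalities are structurally identical, so I would sketch only the one for $Y_\eta^\eps$; the argument for $Y_\mu^\eps$ is the same with $\eta,\Phi_j$ replaced by $\mu,\Psi_j$.

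The first step is to reduce to estimates on each corrector. Since the columns of $Y_\eta(\y)$ are $\nabla_\y \Phi_j(\y)$, the chain rule gives $(\nabla_\y \Phi_j)(\x/\eps) = \eps\, \nabla_\x \Phi_j^\eps(\x)$, whence
$$|Y_\eta^\eps(\x)|^2 \leqslant \sum_{j=1}^3 |(\nabla \Phi_j)^\eps(\x)|^2 = \eps^2 \sum_{j=1}^3 |\nabla \Phi_j^\eps(\x)|^2.$$
Thus it suffices to prove, for each fixed $j$ and with constants depending only on $\|\eta\|_{L_\infty}$ and $\|\eta^{-1}\|_{L_\infty}$, an inequality of the form
$$\eps^2 \int_{\R^3} |\nabla \Phi_j^\eps|^2 |u|^2\, d\x \leqslant C_1 \|u\|_{L_2(\R^3)}^2 + C_2 \wh C_\eta^2 \eps^2 \|\nabla u\|_{L_2(\R^3)}^2.$$

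The second step is to obtain this bound by testing the rescaled corrector equation. Rescaling \eqref{eff1} shows that $\Phi_j^\eps$ satisfies, weakly on $\R^3$,
$$\int_{\R^3} \eta^\eps\bigl(\nabla \Phi_j^\eps + \eps^{-1}\e_j\bigr) \cdot \nabla \zeta\, d\x = 0, \qquad \zeta \in H^1_{\mathrm{c}}(\R^3).$$
For $u \in C_0^\infty(\R^3)$, I would plug in $\zeta = \eps\, \Phi_j^\eps |u|^2$, which is admissible because $\|\Phi_j^\eps\|_{L_\infty} \leqslant \wh C_\eta$. Expanding $\nabla \zeta$ by the product rule gives four terms: the leading one is bounded below by uniform ellipticity,
$$\eps \int \eta^\eps\, \nabla \Phi_j^\eps \cdot \nabla \Phi_j^\eps\, |u|^2\, d\x \;\geqslant\; \eps\, \|\eta^{-1}\|_{L_\infty}^{-1} \int |\nabla \Phi_j^\eps|^2 |u|^2\, d\x,$$
while the remaining three are controlled by the Cauchy--Schwarz inequality using the $L_\infty$-bound on $\Phi_j^\eps$ and the identity $\nabla|u|^2 = 2\Re(\bar u \nabla u)$.

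Finally, multiplying through by $\eps$, setting $R = \eps^2 \int |\nabla \Phi_j^\eps|^2 |u|^2$, and applying Young's inequality to each cross-term allow one to absorb a fraction $\tfrac12\|\eta^{-1}\|_{L_\infty}^{-1} R$ into the left-hand side; the leftover bounds $R$ by an expression of the required form, with explicit constants that are polynomial in $\|\eta\|_{L_\infty}$ and $\|\eta^{-1}\|_{L_\infty}$. Summing over $j=1,2,3$ and passing from $C_0^\infty$ to $H^1$ by density finishes the first estimate; the second is proved verbatim with the data $(\mu,\Psi_j,\wh C_\mu)$. The only mildly delicate point is the admissibility of the test function $\eps \Phi_j^\eps |u|^2$, but working first with $u \in C_0^\infty$ — so that $|u|^2$ is a bounded compactly supported Lipschitz factor — circumvents this without difficulty, and no single step is a serious obstacle.
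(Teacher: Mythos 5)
The paper does not actually prove this lemma; it refers the reader to \cite[Corollary 2.4]{PSu}. Your Caccioppoli-type argument is correct and is the standard route by which such $H^1\to L_2$ multiplier estimates for $(\nabla\Phi_j)^\eps$ are obtained in that source: you reduce to a bound on $R_j:=\eps^2\int|\nabla\Phi_j^\eps|^2|u|^2$, test the rescaled cell equation with $\eps\,\Phi_j^\eps|u|^2$ (admissible since $\Phi_j^\eps\in H^1_{\mathrm{loc}}\cap L_\infty$ and $|u|^2$ is compactly supported Lipschitz for $u\in C_0^\infty$), use uniform ellipticity for the leading quadratic term, the bound $\|\Phi_j\|_{L_\infty}\leqslant\wh C_\eta$ for the three cross-terms, and Young's inequality with parameters chosen so that the absorbed pieces stay below $R_j$ and the remainder has exactly the form $C_1\|u\|_{L_2}^2 + C_2\eps^2\wh C_\eta^2\|\nabla u\|_{L_2}^2$ with $C_1,C_2$ depending only on $\|\eta\|_{L_\infty}$ and $\|\eta^{-1}\|_{L_\infty}$. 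Summing over $j$ and passing to $u\in H^1(\R^3)$ (by density and Fatou) gives the stated bound, and the $\mu$-case is verbatim with $(\mu,\Psi_j,\wh C_\mu)$. No gaps.
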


Lemma \ref{lem_PSu} implies that the matrix-valued functions $Y_\eta^\eps$ and $Y_\mu^\eps$ are multipliers from the Sobolev space
$H^1(\R^3;\C^3)$ to $L_2(\R^3;\C^3)$.

\subsection{The effective Maxwell operator\label{sec_eff}}
\textit{The effective Maxwell operator} $\M^0$ acts in the space $J(\O) \oplus J_0(\O)$ viewed as a subspace of the weighted space
$$
L_2\big(\O;\C^3; (\eta^0)^{-1}\big) \oplus L_2\big(\O;\C^3; (\mu^0)^{-1}\big),
$$
and is given by
\begin{equation*}
\M^0 = \begin{pmatrix} 0 & i \rot (\mu^0)^{-1} \cr -i \rot (\eta^0)^{-1} & 0 \end{pmatrix}
\end{equation*}
on the domain
\begin{equation*}
\begin{aligned}
\Dom \M^0 = &\big\{ (\w,\z) \in J(\O) \oplus J_0(\O): \ \rot (\eta^0)^{-1} \w \in L_2(\O;\C^3),
\\ 
&\rot (\mu^0)^{-1} \z \in L_2(\O;\C^3),
\ ((\eta^0)^{-1} \w)_\tau \vert_{\partial \O}=0 \big\}. 
\end{aligned}
\end{equation*}
Since $\partial \O \in C^{1,1}$, the domain $\Dom \M^0$ is contained in $H^1(\O;\C^6)$ and can be represented as
\begin{equation}\label{Dom_M^0_2}
\begin{aligned}
\Dom \M^0 = &\big\{ (\w,\z) \in H^1(\O;\C^6): \ \div \w =0, \ \div \z =0,
\\ 
&
((\eta^0)^{-1} \w)_\tau \vert_{\partial \O}=0,\  \z_n \vert_{\partial \O}=0 \big\}. 
\end{aligned}
\end{equation}
Here the boundary conditions on $\w$ and $\z$ are understood in the sense of trace theorem.
This property was proved in  \cite[Theorem 2.3]{BS1} under the assumption that $\partial \O \in C^2$ and in  
\cite[Theorem 2.6]{F} under the assumption that $\partial \O \in C^{3/2+\delta}$, $\delta >0$.

We consider the effective Maxwell system
\begin{equation}\label{M1eff}
( \M^0 - iI) \begin{pmatrix} \w_0 \cr \z_0 \end{pmatrix} = \begin{pmatrix} \q \cr \r \end{pmatrix},
\end{equation}
and put $\u_0 = (\eta^0)^{-1}\w_0$, $\v_0 = (\mu^0)^{-1}\z_0$.  
In details, the Maxwell system \eqref{M1eff} is given by
\begin{equation*}
\left\{\begin{matrix}
& i \, \rot (\mu^0)^{-1} \z_0 - i \w_0 = \q,
\cr
& -i \, \rot (\eta^0)^{-1} \w_0 - i \z_0 = \r,
\cr
& \div \w_0 =0,\ \div \z_0 =0,
\cr
& ((\eta^0)^{-1} \w_0)_\tau\vert_{\partial \O} =0,\  (\z_0)_n \vert_{\partial \O} =0.
\end{matrix}
\right.
\end{equation*}

The classical results (see \cite{BeLPap, Sa, ZhKO}) show that, as $\eps \to 0$,
the vector-valued functions $\u_\eps, \w_\eps, \v_\eps, \z_\eps$ 
\textit{weakly converge} in $L_2(\O;\C^3)$ to the corresponding homogenized fields $\u_0, \w_0, \v_0, \z_0$.

\subsection{Main results}
We find approximations for the filelds 
$\u_\eps, \w_\eps, \v_\eps, \z_\eps$ in the $L_2(\O;\C^3)$-norm. 
To formulate the results, we need one more  Maxwell system
\begin{equation}\label{M_corr}
( \M^0 - iI) \begin{pmatrix} \wh{\w}_\eps \cr \wh{\z}_\eps \end{pmatrix} = \begin{pmatrix} \q_\eps \cr \r_\eps \end{pmatrix},
\end{equation}
which is called the ``correction'' Maxwell system.
This system has effective coefficients, but the vector-valued functions $\q_\eps$ and $\r_\eps$ in the right-hand side
depend on $\eps$. They are defined as follows.
We extend the functions $\q$ and $\r$ by zero to $\R^3 \setminus \O$:
$$
\wt{\q}(\x) = \begin{cases} \q(\x), & \x \in \O,
\\ 0, &\x \in \R^3 \setminus \O,
\end{cases}
\quad
\wt{\r}(\x) = \begin{cases} \r(\x), & \x \in \O,
\\ 0, &\x \in \R^3 \setminus \O.
\end{cases}
$$
Next, consider the vector-valued functions $S_\eps(Y_\eta^\eps)^* \wt{\q}$ and $S_\eps(Y_\mu^\eps)^* \wt{\r}$. These functions belong to 
 $L_2(\R^3;\C^3)$, since the operators $S_\eps[(Y_\eta^\eps)^*] = ([Y_\eta^\eps] S_\eps)^*$ 
and $S_\eps[(Y_\mu^\eps)^*] = ([Y_\mu^\eps] S_\eps)^*$ are continuous in $L_2(\R^3;\C^3)$ due to Proposition 
\ref{prop f^eps S_eps}.
Let  ${\mathcal P}_{\eta^0}$ be the orthogonal projection of $L_2(\O;(\eta^0)^{-1})$ onto $J(\O)$
and let ${\mathcal P}^0_{\mu^0}$ be the orthogonal projection of  $L_2(\O;(\mu^0)^{-1})$ onto $J_0(\O)$.
Restricting the functions  $S_\eps(Y_\eta^\eps)^* \wt{\q}$ and $S_\eps(Y_\mu^\eps)^* \wt{\r}$ to the domain $\O$
and applying the projections ${\mathcal P}_{\eta^0}$ and ${\mathcal P}^0_{\mu^0}$, respectively, we define the functions
\begin{equation}
\label{right}
\q_\eps := {\mathcal P}_{\eta^0} S_\eps(Y_\eta^\eps)^* \wt{\q}, 
\quad 
\r_\eps := {\mathcal P}^0_{\mu^0} S_\eps(Y_\mu^\eps)^* \wt{\r}.
\end{equation}
Thus, $\q_\eps \in J(\O)$ and $\r_\eps \in J_0(\O)$.
Using Proposition  \ref{prop f^eps S_eps} and inequalities \eqref{eff6b},
we have
\begin{equation}
\label{right_est}
\begin{aligned}
\| \q_\eps \|_{L_2(\O)} \leqslant \|\eta\|_{L_\infty(\Omega)}  \|\eta^{-1}\|_{L_\infty(\Omega)}
\|\q\|_{L_2(\O)}, 
\\
\| \r_\eps \|_{L_2(\O)} \leqslant \|\mu \|_{L_\infty(\Omega)}  \| \mu^{-1} \|_{L_\infty(\Omega)}
\|\r \|_{L_2(\O)}.
\end{aligned}
\end{equation}

In terms of the solutions of system \eqref{M_corr}, we define the ``correction'' fields
\begin{equation}
\label{corr_u_v}
\wh{\u}_\eps = (\eta^0)^{-1} \wh{\w}_\eps, 
\quad 
\wh{\v}_\eps = (\mu^0)^{-1} \wh{\z}_\eps.
\end{equation}
By \eqref{Dom_M^0_2}, we have $\wh{\u}_\eps, \wh{\w}_\eps, \wh{\v}_\eps, \wh{\z}_\eps \in H^1(\O;\C^3)$.

 \begin{remark}\label{rem2.2}
 The ``correction'' fields $\wh{\u}_\eps, \wh{\w}_\eps, \wh{\v}_\eps, \wh{\z}_\eps$ weakly converge to zero in  $L_2(\O;\C^3)$, as $\eps \to 0$. 
 This can be easily checked by using the ``mean value property'' and the fact that the right-hand sides $\q_\eps, \r_\eps$ 
 of system \eqref{M_corr} contain rapidly oscillating factors with zero mean values{\rm ;} see \eqref{right}. 
 \end{remark}

Our main result is the following theorem. 

\begin{theorem}\label{main_th}
Let $(\w_\eps, \z_\eps)$ be the solution of system \eqref{M1}. Let $\u_\eps = (\eta^\eps)^{-1}\w_\eps$ and  $\v_\eps = (\mu^\eps)^{-1}\z_\eps$. 
Let $(\w_0, \z_0)$ be the solution of the effective system \eqref{M1eff} and let 
$\u_0 = (\eta^0)^{-1}\w_0$,  $\v_0 = (\mu^0)^{-1}\z_0$. Let  
$(\wh{\w}_\eps, \wh{\z}_\eps)$ be the solution of the correction system \eqref{M_corr}, and let $\wh{\u}_\eps$,
$\wh{\v}_\eps$ be given by  \eqref{corr_u_v}. Let $Y_\eta$, $G_\eta$, $Y_\mu$, and $G_\mu$ be the periodic matrix-valued functions
defined in Subsection  {\rm \ref{eff_mat}}. 
Suppose that $\eps_1$ is subject to Condition~{\rm \ref{cond1}}.
Then for $0 < \eps \le \eps_1$ we have 
\begin{align}
\label{main_1}
\| \u_\eps - (\1 + Y_\eta^\eps)(\u_0 + \wh{\u}_\eps) \|_{L_2(\O)} &\le C_1 \eps^{1/2} \left( \| \q \|_{L_2(\O)} + 
\| \r \|_{L_2(\O)}\right),
\\
\label{main_2}
\| \w_\eps - (\1 + G_\eta^\eps)(\w_0 + \wh{\w}_\eps) \|_{L_2(\O)} &\le C_2 \eps^{1/2} 
\left( \| \q \|_{L_2(\O)} + \| \r \|_{L_2(\O)}\right),
\\
\label{main_3}
\| \v_\eps - (\1 + Y_\mu^\eps)(\v_0 + \wh{\v}_\eps) \|_{L_2(\O)} &\le C_3 \eps^{1/2} \left( \| \q \|_{L_2(\O)} + 
\| \r \|_{L_2(\O)}\right),
\\
\label{main_4}
\| \z_\eps - (\1 + G_\mu^\eps)(\z_0 + \wh{\z}_\eps) \|_{L_2(\O)} &\le C_4 \eps^{1/2} \left( \| \q \|_{L_2(\O)} + 
\| \r \|_{L_2(\O)}\right).
\end{align}
The constants $C_1, C_2, C_3, C_4$ depend on the norms $\|\eta\|_{L_\infty}$, $\|\eta^{-1}\|_{L_\infty}$,
$\|\mu\|_{L_\infty}$, $\|\mu^{-1}\|_{L_\infty}$, the parameters of the lattice $\Gamma$, and the domain $\O$. 
\end{theorem}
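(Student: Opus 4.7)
My plan is to exploit linearity and split $(\w_\eps,\z_\eps) = (\w_\eps^{(\q)},\z_\eps^{(\q)}) + (\w_\eps^{(\r)},\z_\eps^{(\r)})$, where the first summand solves \eqref{M1} with $\r=0$ and the second with $\q=0$; the effective fields $(\w_0,\z_0)$ and the correction fields $(\wh\w_\eps,\wh\z_\eps)$ split analogously, because the data $\q_\eps,\r_\eps$ defined by \eqref{right} depend only on $\q,\r$ separately. This reduces each of \eqref{main_1}--\eqref{main_4} to two estimates, one per subproblem, whose structures are the boundary value problems \eqref{0.11} and \eqref{0.12}. I would treat both by the same template, discussed here for $\q=0$.

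For \eqref{0.11}, substituting $\w_\eps^{(\r)}=\rot(\mu^\eps)^{-1}\z_\eps^{(\r)}$ into the second Maxwell equation and setting $\f_\eps^{(\r)} = (\mu^\eps)^{-1/2}\z_\eps^{(\r)}$ produces the second order elliptic equation \eqref{0.8}--\eqref{0.9} with the boundary conditions inherited from the perfect conductivity conditions. I would use the bulk ansatz inspired by the $\R^3$ theory of \cite{BSu1,BSu2,BSu3}: effective solution $\f_0^{(\r)}$ plus a first-order corrector built from $\Psi_j$, i.e.\ from $Y_\mu^\eps$ acting on $S_\eps$-smoothed gradients of $\f_0^{(\r)}$, and then add a \emph{boundary layer correction} that restores the boundary conditions mismatched by the corrector. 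The ``correction'' Maxwell system \eqref{M_corr} with source $\r_\eps = \mathcal P^0_{\mu^0} S_\eps (Y_\mu^\eps)^* \wt\r$ is engineered so that its solution absorbs precisely the zero-order contribution of the first-order corrector to $\z_\eps^{(\r)}$, whose weak limit is nonzero.

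Granting Theorem 4.6 (energy-norm bound $O(\sqrt\eps)(\|\q\|_{L_2}+\|\r\|_{L_2})$ on the boundary layer term) and its analog in Sections 6--7 for \eqref{0.12}, I obtain $L_2$-approximations of $\z_\eps^{(\r)}$ and $\w_\eps^{(\q)}$ in the form effective-plus-corrector-plus-correction-field with error $O(\sqrt\eps)$. To pass from the displacement vectors to the intensities I use the algebraic identities $\wt\eta=\eta(Y_\eta+\1)$ and $\wt\eta=(G_\eta+\1)\eta^0$ from \eqref{eff2}, which yield
\begin{equation*}
(\eta^\eps)^{-1}(\1+G_\eta^\eps) = (\1+Y_\eta^\eps)(\eta^0)^{-1},
\end{equation*}
and the analogous identity for $\mu$. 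Inserting these converts an approximation of $\w_\eps$ in the form \eqref{main_2} into one of $\u_\eps$ in the form \eqref{main_1}, and similarly on the magnetic side for \eqref{main_3}--\eqref{main_4}. The $L_2$-continuity of $[Y_\eta^\eps]S_\eps$ from Proposition \ref{prop f^eps S_eps}, together with the bounds \eqref{right_est} and the $H^1$-regularity of $\wh\u_\eps,\wh\v_\eps,\wh\w_\eps,\wh\z_\eps$ guaranteed by \eqref{Dom_M^0_2}, absorbs the remainders within the same $O(\sqrt\eps)$ bound.

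The main obstacle is clearly Theorem 4.6 (and its counterpart for \eqref{0.12}): controlling the boundary layer term in the energy norm by $O(\sqrt\eps)$. In $\R^3$ the bulk corrector gives $O(\eps)$ error, but in $\O$ the trace on $\partial\O$ of the expression $\eps Y_\mu^\eps (S_\eps \nabla \f_0^{(\r)})$ is not small, and the associated boundary mismatch lives on the strip $(\partial\O)_{2\eps}\cap\O$. The quantitative $\sqrt\eps$ loss comes from applying Lemmas \ref{lem01} and \ref{lem02}, combined with Proposition \ref{prop_Seps - I} to trade $\nabla \f_0^{(\r)}$ for its Steklov smoothing, and with the projections $\mathcal P_{\eta^0},\mathcal P^0_{\mu^0}$ of \eqref{right} to enforce the divergence-free constraint. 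Carrying out this energy argument, using coercivity of $L_\eps+I$ from \eqref{0.9} and the uniform bounds of Remark \ref{rem2.1}, is the technical heart; once it is established, the algebraic assembly described above is routine.
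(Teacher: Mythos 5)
Your plan matches the paper's proof exactly: linearity gives the decomposition into the $\q=0$ and $\r=0$ subproblems; each is reduced to the second-order problems \eqref{Mrr}, \eqref{Mqq}; the first order approximation plus boundary layer correction is set up precisely as in Sections 4 and 6; and the final assembly uses the algebraic identity $(\eta^\eps)^{-1}(\1+G_\eta^\eps)=(\1+Y_\eta^\eps)(\eta^0)^{-1}$ together with the change of fields $\v_\eps^{(\r)}=(\mu^\eps)^{-1/2}\bvarphi_\eps$, $\z_\eps^{(\r)}=(\mu^\eps)^{1/2}\bvarphi_\eps$, etc., exactly as in Theorems~\ref{main_th_q=0} and \ref{main_th_r=0}. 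So as a reduction of Theorem~\ref{main_th} to the boundary-layer estimates of Theorems~\ref{lem4.6} and \ref{lem6.6}, the proposal is correct and follows the paper's route.

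One caveat on your sketch of the deferred Theorem~\ref{lem4.6}: you invoke ``coercivity of $L_\eps+I$'' as the driver of the energy argument, but $\Dom{\mathfrak l}_\eps$ is not contained in $H^1(\O;\C^3)$ when the coefficients are merely bounded measurable, so there is no $H^1$-coercivity for ${\mathfrak l}_\eps$. What the paper actually uses is coercivity of the \emph{effective} form ${\mathfrak l}_0$ (estimate \eqref{coercive}) combined with the identification operator $T_\eps:\Dom{\mathfrak l}_\eps\to\Dom{\mathfrak l}_0$ of Lemma~\ref{lem5.1}, which preserves $\div(\mu^{\cdot})^{1/2}$ and $\rot(\mu^{\cdot})^{-1/2}$ and allows the test function to be replaced by an $H^1$ object. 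Together with the skew-symmetric potentials $M_{lj}^{(i)}$ for $\wt\mu-\mu^0$ and the auxiliary Neumann problem for $\xi_\eps$ (Lemma~\ref{lem5.7}) that lifts the boundary condition, this is where the real technical content lies. None of that appears in your sketch, so while your reduction is sound, the part you ``grant'' is not merely a routine energy estimate and the tool you name would not by itself carry it out.
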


\begin{remark}
{\rm 1)} Note that $(\1 + Y_\eta^\eps)(\u_0 + \wh{\u}_\eps) \in L_2(\O;\C^3)$, since  
$\u_0 + \wh{\u}_\eps \in H^1(\O;\C^3)$, and the matrix-valued function
$Y_\eta^\eps$ is a multiplier from  $H^1(\O;\C^3)$ to $L_2(\O;\C^3)${\rm ;} 
see Lemma {\rm \ref{lem_PSu}}. Similarly, approximations for the fields $\w_\eps, \v_\eps, \z_\eps$ also belong to $L_2(\O;\C^3)$.

\noindent {\rm 2)} Approximations for $\u_\eps$, $\w_\eps$, $\v_\eps$, and $\z_\eps$ are similar to each other.
For instance, the field $\u_\eps$ is approximated by the sum of four terms{\rm :}
$$
\u_\eps \sim \u_0  + Y_\eta^\eps \u_0 +\wh{\u}_\eps + Y_\eta^\eps \wh{\u}_\eps.
$$
Here the first term is the homogenized field, and other three terms weakly tend to zero and can be interpreted as correctors of zero order. 

\noindent {\rm 3)} The order of estimates from Theorem~{\rm \ref{main_th}} is worse that the order of similar estimates in $\R^3${\rm ;} 
this is explained by the boundary influence.  

\noindent {\rm 4)} The result of Theorem~{\rm \ref{main_th}} can be formulated in the operator terms:
$$
\| (\M_\eps - i I)^{-1} - (I + {\mathcal G}^\eps) ( \M^0 - i I)^{-1} (I + {\mathcal Z}_\eps)\|
\le C \eps^{1/2},
$$
where
$$
{\mathcal G}^\eps = \begin{pmatrix} G_\eta^\eps & 0 \\  0 & G_\mu^\eps \end{pmatrix},
\quad 
{\mathcal Z}_\eps = \begin{pmatrix} {\mathcal P}_{\eta^0} S_\eps (Y_\eta^\eps)^* \Pi & 0 \\  0 & 
{\mathcal P}^0_{\mu^0} S_\eps (Y_\mu^\eps)^* \Pi  \end{pmatrix},
$$
and $\Pi$ is the operator of extension by zero. 
\end{remark}

\section{Reduction of the problem to the second order equations}

We represent the solution of  system \eqref{M1} as 
\begin{equation*}
\w_\eps = \w_\eps^{(\q)} + \w_\eps^{(\r)}, \quad \z_\eps = \z_\eps^{(\q)} + \z_\eps^{(\r)},
\end{equation*}
where $(\w_\eps^{(\q)},  \z_\eps^{(\q)})$ is the solution of system \eqref{M1} with $\r=0$, and
$(\w_\eps^{(\r)},  \z_\eps^{(\r)})$ is the solution of  system \eqref{M1} with $\q =0$.
We put 
\begin{equation*}
\begin{aligned}
\u_\eps^{(\q)} = (\eta^\eps)^{-1}\w_\eps^{(\q)},
\quad  
\v_\eps^{(\q)} = (\mu^\eps)^{-1}\z_\eps^{(\q)},
\\
\u_\eps^{(\r)} = (\eta^\eps)^{-1}\w_\eps^{(\r)},
\quad   
\v_\eps^{(\r)} = (\mu^\eps)^{-1}\z_\eps^{(\r)}.
\end{aligned}
\end{equation*}

Then  $(\w_\eps^{(\r)},  \z_\eps^{(\r)})$ is the solution of the problem 
\begin{equation}\label{Mr}
\left\{\begin{matrix}
& \w_\eps^{(\r)} = \rot (\mu^\eps)^{-1} \z_\eps^{(\r)},
\cr
& \rot (\eta^\eps)^{-1} \w_\eps^{(\r)} + \z_\eps^{(\r)} = i \r,
\cr
& \div \w_\eps^{(\r)} =0,\ \div \z_\eps^{(\r)} =0,
\cr
& ((\eta^\eps)^{-1} \w_\eps^{(\r)})_\tau\vert_{\partial \O} =0,\  (\z_\eps^{(\r)})_n \vert_{\partial \O} =0.
\end{matrix}
\right.
\end{equation}
The pair  $(\w_\eps^{(\q)},  \z_\eps^{(\q)})$ is the solution of the problem 
\begin{equation}\label{Mq}
\left\{\begin{matrix}
& \z_\eps^{(\q)} = - \rot (\eta^\eps)^{-1} \w_\eps^{(\q)},
\cr
& \rot (\mu^\eps)^{-1} \z_\eps^{(\q)} - \w_\eps^{(\q)} = - i \q,
\cr
& \div \w_\eps^{(\q)} =0,\ \div \z_\eps^{(\q)} =0,
\cr
& ((\eta^\eps)^{-1} \w_\eps^{(\q)})_\tau\vert_{\partial \O} =0,\  (\z_\eps^{(\q)})_n \vert_{\partial \O} =0.
\end{matrix}
\right.
\end{equation}
From \eqref{Mr} it follows that $\z_\eps^{(\r)}$ is the solution of the following boundary value problem\begin{equation}\label{Mrr}
\left\{\begin{matrix}
& \rot (\eta^\eps)^{-1} \rot (\mu^\eps)^{-1} \z_\eps^{(\r)}  + \z_\eps^{(\r)} = i \r, \quad \div \z_\eps^{(\r)} =0,
\cr
& (\z_\eps^{(\r)})_n \vert_{\partial \O} =0, 
\  ((\eta^\eps)^{-1} \rot (\mu^\eps)^{-1} \z_\eps^{(\r)})_\tau\vert_{\partial \O} =0.
\end{matrix}
\right.
\end{equation}
By \eqref{Mq}, the function $\w_\eps^{(\q)}$ is the solution of the problem
\begin{equation}\label{Mqq}
\left\{\begin{matrix}
& \rot (\mu^\eps)^{-1} \rot (\eta^\eps)^{-1} \w_\eps^{(\q)}  + \w_\eps^{(\q)} = i \q,
\quad \div \w_\eps^{(\q)} =0,
\cr
&  ( (\eta^\eps)^{-1} \w_\eps^{(\q)})_\tau \vert_{\partial \O} =0, 
\  (\rot (\eta^\eps)^{-1} \w_\eps^{(\q)})_n\vert_{\partial \O} =0.
\end{matrix}
\right.
\end{equation}
Problems \eqref{Mrr} and \eqref{Mqq} are similar to each other, however,  the role of the coefficients $\eta^\eps$ and $\mu^\eps$,
the boundary conditions, and the conditions on the right-hand sides ($\r \in J_0(\O)$, $\q \in J(\O)$) are different. 
We study these problems separately and next combine the results. 

The effective fields $\u_0, \w_0, \v_0, \z_0$ and the correction fields $\wh{\u}_\eps, \wh{\w}_\eps, \wh{\v}_\eps, 
\wh{\z}_\eps$ are also represented as the sum of two terms, with indices $\q$ and $\r$ respectively.
The terms with index $\q$ correspond to the case $\r=0$, 
and the terms with index $\r$ correspond to the case $\q=0$.

\section{The study of the problem with  $\q=0$}

\subsection{Symmetrization\label{sim}}
Putting $\bvarphi_\eps = (\mu^\eps)^{-1/2}\z_\eps^{(\r)}$, we reduce  problem \eqref{Mrr} to  
\begin{equation}\label{Mrr1}
\left\{\begin{matrix}
& (\mu^\eps)^{-1/2} \rot (\eta^\eps)^{-1} \rot (\mu^\eps)^{-1/2} \bvarphi_\eps  + \bvarphi_\eps = 
i (\mu^\eps)^{-1/2} \r,   \ \div (\mu^\eps)^{1/2} \bvarphi_\eps =0,
\\ 
&((\mu^\eps)^{1/2} \bvarphi_\eps)_n \vert_{\partial \O} =0, 
\  ((\eta^\eps)^{-1} \rot (\mu^\eps)^{-1/2} \bvarphi_\eps)_\tau\vert_{\partial \O} =0.
\end{matrix}
\right.
\end{equation}
Here $\r \in J_0(\O)$. 
Automatically, $\bvarphi_\eps$ is also the solution of the elliptic equation
\begin{equation}\label{Mrr2}
({\mathcal L}_\eps +I) \bvarphi_\eps = i (\mu^\eps)^{-1/2} \r,
\end{equation}
where the operator ${\mathcal L}_\eps$ is formally given by the differential expression 
\begin{equation*}
{\mathcal L}_\eps =  (\mu^\eps)^{-1/2} \rot (\eta^\eps)^{-1} \rot (\mu^\eps)^{-1/2} - 
(\mu^\eps)^{1/2} \nabla \div (\mu^\eps)^{1/2}
\end{equation*}
with the boundary conditions from \eqref{Mrr1}. Strictly speaking, ${\mathcal L}_\eps$ is the selfadjoint operator in $L_2(\O;\C^3)$ 
corresponding to the closed nonnegative quadratic form
\begin{equation}\label{forma}
\begin{aligned}
{\mathfrak l}_\eps[\bvarphi, \bvarphi] = & \int_\O \left( \langle (\eta^\eps)^{-1} \rot (\mu^\eps)^{-1/2} \bvarphi, 
\rot (\mu^{\eps})^{-1/2} \bvarphi \rangle + | \div (\mu^{\eps})^{1/2} \bvarphi |^2 \right)\, d\x,
\\
\Dom {\mathfrak l}_\eps =\{& \bvarphi \in L_2(\O;\C^3): \quad  \div (\mu^{\eps})^{1/2} \bvarphi \in L_2(\O),
\\ 
&\rot (\mu^{\eps})^{-1/2} \bvarphi \in L_2(\O;\C^3), \quad 
 ((\mu^\eps)^{1/2} \bvarphi)_n \vert_{\partial \O} =0\}.
\end{aligned}
\end{equation}
From the results of \cite{BS1, BS2} it follows that the form \eqref{forma} is closed.

\begin{remark}
{\rm 1)} In general, $\operatorname{Dom} {\mathfrak l}_\eps \not\subset H^1(\O;\C^3)$.

\noindent{\rm 2)} The second boundary condition in \eqref{Mrr1} is natural, it is not reflected in the domain of the quadratic form ${\mathfrak l}_\eps$. 

\noindent {\rm 3)} 
The form ${\mathfrak l}_\eps$ and the operator ${\mathcal L}_\eps$ are reduced by the orthogonal decomposition
$$
L_2(\O;\C^3) = {\mathcal J}_0(\O;\mu^\eps) \oplus {\mathcal G}(\O;\mu^\eps),
$$
where
$$
\begin{aligned}
{\mathcal J}_0(\O;\mu^\eps) &= \{ \f: \ (\mu^\eps)^{1/2} \f \in J_0(\O) \}, 
\\
 {\mathcal G}(\O;\mu^\eps) &= \{ (\mu^\eps)^{1/2} \nabla \omega: \ \omega \in H^1(\O) \}.
\end{aligned}
$$
\end{remark}

\subsection{The effective problem\label{sec4.2}}
Let $\eta^0$ and $\mu^0$ be the effective matrices defined in Subsection~\ref{eff_mat}.
We put $\bvarphi_0 = (\mu^0)^{-1/2} \z_0^{(\r)}$. Then $\bvarphi_0$ is the solution of the problem 
\begin{equation}\label{Mrr4}
\left\{\begin{matrix}
& (\mu^0)^{-1/2} \rot (\eta^0)^{-1} \rot (\mu^0)^{-1/2} \bvarphi_0  + \bvarphi_0 = 
i (\mu^0)^{-1/2} \r, \ \div (\mu^0)^{1/2} \bvarphi_0 =0,
\\ 
&((\mu^0)^{1/2} \bvarphi_0)_n \vert_{\partial \O} =0, 
\  ((\eta^0)^{-1} \rot (\mu^0)^{-1/2} \bvarphi_0)_\tau\vert_{\partial \O} =0.
\end{matrix}
\right.
\end{equation}
Automatically,  $\bvarphi_0$ is also the solution of the elliptic equation 
\begin{equation}\label{Mrr5}
({\mathcal L}^0 +I) \bvarphi_0 = i (\mu^0)^{-1/2} \r,
\end{equation}
where ${\mathcal L}^0$ is the selfadjoint operator in  $L_2(\O;\C^3)$ corresponding to the closed nonnegative quadratic form
\begin{equation}
\label{forma_eff}
\begin{aligned}
{\mathfrak l}_0 [\bvarphi, \bvarphi] =& \int_\O \left( \langle (\eta^0)^{-1} \rot (\mu^0)^{-1/2} \bvarphi, 
\rot (\mu^{0})^{-1/2} \bvarphi \rangle + | \div (\mu^{0})^{1/2} \bvarphi |^2 \right)\, d\x,
\\
 \Dom {\mathfrak l}_0 =& \{ \bvarphi \in L_2(\O;\C^3):\  
\div (\mu^{0})^{1/2} \bvarphi \in L_2(\O),
\\ 
&\rot (\mu^{0})^{-1/2} \bvarphi \in L_2(\O;\C^3), \ 
 ((\mu^0)^{1/2} \bvarphi)_n \vert_{\partial \O} =0\}.
\end{aligned}
\end{equation}
Since $\partial \O \in C^{1,1}$, the set $\Dom {\mathfrak l}_0$ coincides with
$$
\Dom {\mathfrak l}_0 = \{ \bvarphi \in H^1(\O;\C^3):   \  ((\mu^0)^{1/2} \bvarphi)_n \vert_{\partial \O} =0\}.
$$
The form \eqref{forma_eff} is coercive: the following two-sided estimates hold:
\begin{equation}\label{coercive}
{\mathfrak c}_1  \| \bvarphi \|^2_{H^1(\O)} \leqslant
{\mathfrak l}_0 [\bvarphi, \bvarphi] + \| \bvarphi \|^2_{L_2(\O)} \leqslant 
{\mathfrak c}_2  \| \bvarphi \|^2_{H^1(\O)},\quad \bvarphi \in \Dom {\mathfrak l}_0. 
\end{equation}
The constant  ${\mathfrak c}_1$ depends on  $\| \mu \|_{L_\infty}$, $\| \mu^{-1} \|_{L_\infty}$,
$\| \eta \|_{L_\infty}$, and the domain $\O$, and ${\mathfrak c}_2$ depends on 
$\| \mu \|_{L_\infty}$, $\| \mu^{-1} \|_{L_\infty}$, $\| \eta^{-1} \|_{L_\infty}$, and the domain $\O$.
These properties were proved in  \cite[Theorem 2.3]{BS1} under the assumption $\partial \O \in C^2$
and in  \cite[Theorem 2.6]{F} under the assumption $\partial \O \in C^{3/2+\delta}$, $\delta>0$.

The operator ${\mathcal L}^0$ is a strongly elliptic operator with constant coefficients.
The smoothness of the boundary ($\partial \O \in C^{1,1}$) ensures the following regularity:
the resolvent $({\mathcal L}^0+I)^{-1}$ is continuous from $L_2(\O;\C^3)$ to $H^2(\O;\C^3)$. We have
\begin{equation}\label{Mrr6}
\| ({\mathcal L}^0+I)^{-1} \|_{L_2(\O) \to H^2(\O)} \leqslant c_*,
\end{equation}
where the constant  $c_*$ depends only on  $\|\eta\|_{L_\infty}$, $\|\eta^{-1}\|_{L_\infty}$,
$\|\mu\|_{L_\infty}$, $\|\mu^{-1}\|_{L_\infty}$, and the domain $\O$.
Thus, the operator ${\mathcal L}^0$ can be given by the differential expression
\begin{equation*}
{\mathcal L}^0 =  (\mu^0)^{-1/2} \rot (\eta^0)^{-1} \rot (\mu^0)^{-1/2} - 
(\mu^0)^{1/2} \nabla \div (\mu^0)^{1/2}
\end{equation*}
on the domain
\begin{equation*}
\Dom {\mathcal L}^0 = \{ \bvarphi \in H^2(\O;\C^3): \ ((\mu^0)^{1/2} \bvarphi)_n \vert_{\partial \O} =0,\ 
((\eta^0)^{-1} \rot (\mu^0)^{-1/2} \bvarphi)_\tau\vert_{\partial \O} =0 \}.
\end{equation*}

\begin{remark}\label{rem4.2}
{\rm 1)} Under the assumption that $\partial \O \in C^{1,1}$ {\rm (}and for sufficiently smooth coefficients{\rm )},
the $H^2$-regularity of the solutions of the Dirichlet or Neumann problems 
 for strongly elliptic second order equations can be found, e.~g., in the book \cite[Chapter~4]{McL}. 
 The proof is based on the method of difference quotients  and relies on the coercivity of the quadratic form. 
 In our case, the coefficients of the operator ${\mathcal L}^0$ are constant and the coercivity condition \eqref{coercive}
 holds. However, the boundary conditions are of mixed type. It is easy to prove the required regularity 
 for the operator  ${\mathcal L}^0$ by the same method. 

\noindent{\rm 2)} The form ${\mathfrak l}_0$ and the operator  
${\mathcal L}^0$ are reduced by the orthogonal decomposition 
$$
L_2(\O;\C^3) = {\mathcal J}_0(\O;\mu^0) \oplus {\mathcal G}(\O;\mu^0),
$$
where 
$$
\begin{aligned}
{\mathcal J}_0(\O;\mu^0) &= \{ \f: \ (\mu^0)^{1/2} \f \in J_0(\O) \}, 
\\
 {\mathcal G}(\O;\mu^0) &= \{ (\mu^0)^{1/2} \nabla \omega: \ \omega \in H^1(\O) \}.
\end{aligned}
$$
\end{remark}

By \eqref{Mrr5} and \eqref{Mrr6}, $\bvarphi_0 \in H^2(\O;\C^3)$ and 
\begin{equation}\label{Mrr9}
   \| \bvarphi_0 \|_{H^2(\O)} \leqslant c_* \| \mu^{-1}\|^{1/2}_{L_\infty} \| \r \|_{L_2(\O)}.
\end{equation}

Let $P_\O: H^2(\O;\C^3) \to H^2(\R^3;\C^3)$ be the linear continuous extension operator. Denote
\begin{equation}\label{Mrr10}
   \| P_\O \|_{H^2(\O) \to H^2(\R^3)} =: C_\O.
   \end{equation}
We put $\wt{\bvarphi}_0 := P_\O \bvarphi_0 \in H^2(\R^3;\C^3)$. According to  \eqref{Mrr9} and \eqref{Mrr10},
\begin{equation}\label{Mrr11}
   \| \wt{\bvarphi}_0 \|_{H^2(\R^3)} \leqslant {\mathcal C}_1 \| \r \|_{L_2(\O)},
\end{equation}
where ${\mathcal C}_1 = C_\O c_* \| \mu^{-1}\|^{1/2}_{L_\infty}$.

\subsection{The correction problem\label{sec4.3}} We put $\brho_\eps = (\mu^0)^{-1/2} \wh{\z}_\eps^{(\r)}$. 
Then $\brho_\eps$ is the solution of the problem 
\begin{equation}\label{Mrr11a}
\left\{\begin{matrix}
& (\mu^0)^{-1/2} \rot (\eta^0)^{-1} \rot (\mu^0)^{-1/2} \brho_\eps   + \brho_\eps  = 
i (\mu^0)^{-1/2} \r_\eps, \ \div (\mu^0)^{1/2} \brho_\eps  =0,
\\ 
&((\mu^0)^{1/2} \brho_\eps )_n \vert_{\partial \O} =0, 
\  ((\eta^0)^{-1} \rot (\mu^0)^{-1/2} \brho_\eps )_\tau\vert_{\partial \O} =0.
\end{matrix}
\right.
\end{equation}
Automatically,  $\brho_\eps$ is also the solution of the elliptic equation
\begin{equation*}
({\mathcal L}^0 +I) \brho_\eps = i (\mu^0)^{-1/2} \r_\eps. 
\end{equation*}
By   \eqref{right_est} and \eqref{Mrr6}, we have $\brho_\eps \in H^2(\O;\C^3)$ and 
\begin{equation}\label{Mrr13}
   \| \brho_\eps \|_{H^2(\O)} \leqslant c_* \| \mu\|_{L_\infty} \| \mu^{-1}\|^{3/2}_{L_\infty} \| \r \|_{L_2(\O)}.
\end{equation}
We put $\wt{\brho}_\eps := P_\O \brho_\eps \in H^2(\R^3; \C^3)$. According to \eqref{Mrr10} and \eqref{Mrr13},
\begin{equation}\label{Mrr14}
   \| \wt{\brho}_\eps \|_{H^2(\R^3)} \leqslant {\mathcal C}_2 \| \r \|_{L_2(\O)},
\end{equation}
where ${\mathcal C}_2 = C_\O c_* \| \mu \|_{L_\infty} \| \mu^{-1}\|^{3/2}_{L_\infty}$.

\subsection{The first order approximation for $\bvarphi_\eps$\label{sec4.4}}
Let $\bvarphi_\eps$ be the solution of equation \eqref{Mrr2}. 
We look for the first order approximation  $\bpsi_\eps$ of the solution $\bvarphi_\eps$ in the form similar to the 
case  of  $\R^3$ (studied in \cite{Su2}). First, we introduce the necessary objects. 
Let $W_\mu^*(\x)$ be the $\Gamma$-periodic $(3 \times 3)$-matrix-valued function given by 
\begin{equation}\label{Mrr15}
   W_\mu^*(\x) = \mu(\x)^{-1/2} \wt{\mu}(\x) (\mu^0)^{-1/2}
   = \mu(\x)^{1/2} (\1 + Y_\mu (\x)) (\mu^0)^{-1/2},
\end{equation}
where $\wt{\mu}(\x)$ is the matrix \eqref{eff5}. Denote $\c_j:= (\mu^0)^{-1/2} \e_j$, $j=1,2,3$.
Let $\check{\Psi}_j(\x)$ be the $\Gamma$-periodic solution of the problem
 \begin{equation*}
   \div \mu(\x) (\nabla \check{\Psi}_j(\x) + \c_j)=0,
   \quad \int_\Omega \check{\Psi}_j(\x)\, d\x =0.
\end{equation*}
Let ${\f}_{lj}(\x)$ (where $l,j=1,2,3$) be the $\Gamma$-periodic solution of the problem
 \begin{equation}\label{Mrr17}
\begin{aligned}
   & \mu(\x)^{-1/2} \rot \eta(\x)^{-1} \left( \rot \mu(\x)^{-1/2} \f_{lj}(\x) + 
   i \e_l \times (\nabla \check{\Psi}_j(\x) + \c_j) \right) 
   \\
   & - \mu(\x)^{1/2} \nabla \left( \div \mu(\x)^{1/2} \f_{lj}(\x) +
   i \e_l \cdot (\mu(\x) (\nabla \check{\Psi}_j(\x) + \c_j) \right) = 0,
   \\
   & \int_\Omega \f_{lj} (\x)\, d\x =0.
   \end{aligned}
\end{equation}
 Let $\Lambda_l(\x)$ (where $l=1,2,3$) be the  $\Gamma$-periodic  $(3\times 3)$-matrix-valued function 
 with the columns $\f_{lj}(\x)$, $j=1,2,3$. Note that
 \begin{equation}\label{Mrr17a}
\| \Lambda_l\|_{L_2(\Omega)} \leqslant C_\Lambda |\Omega|^{1/2},  
\end{equation}
where the constant $C_\Lambda$ depends on  $\|\eta\|_{L_\infty}$, $\|\eta^{-1}\|_{L_\infty}$, $\|\mu\|_{L_\infty}$, $\|\mu^{-1}\|_{L_\infty}$, and the parameters of the lattice $\Gamma$; see  \cite[Subsection 5.3]{Su2}.

\begin{remark}\label{rem4.3}
It is easily seen {\rm (}see \cite[Subsection 4.4]{Su1}{\rm )} that the solution of problem  \eqref{Mrr17} satisfies
\begin{align}
\label{div_f}
&\div \mu(\x)^{1/2} \f_{lj}(\x) = i \e_l \cdot ((\mu^0)^{1/2}\e_j) - i \e_l \cdot (\wt{\mu}(\x) (\mu^0)^{-1/2} \e_j),
\\
\label{rot_f}
\begin{split}
&\eta(\x)^{-1} \rot \mu(\x)^{-1/2} \f_{lj}(\x) = 
i (\1+Y_\eta(\x)) (\eta^0)^{-1} \left( \e_l \times ((\mu^0)^{-1/2}\e_j) \right) 
\\
&+ i \eta(\x)^{-1} \left( \left((\1+Y_\mu(\x)) (\mu^0)^{-1/2} \e_j \right) \times \e_l\right).
\end{split}
\end{align}
\end{remark}

Suppose that the functions  $\wt{\bvarphi}_0, \wt{\brho}_\eps \in H^2(\R^3;\C^3)$  are introduced in Subsections \ref{sec4.2} and \ref{sec4.3}, respectively. 
Let $S_\eps$ be the Steklov smoothing operator (see \eqref{S_eps}).
We look for the first order approximation $\bpsi_\eps$ of the solution $\bvarphi_\eps$ of equation \eqref{Mrr2} in the form
\begin{equation}\label{Mrr18}
\begin{aligned}
   & \wt{\bpsi}_\eps = (W_\mu^\eps)^{*} S_\eps (\wt{\bvarphi}_0 + \wt{\brho}_\eps)
   + \eps \sum_{l=1}^3 \Lambda_l^\eps S_\eps D_l  (\wt{\bvarphi}_0 + \wt{\brho}_\eps),  
   \\
   & \bpsi_\eps = \wt{\bpsi}_\eps \vert_{\O}.
   \end{aligned}
\end{equation}
Expression \eqref{Mrr18} is prompted by the form of the first order approximation for the solution of the 
similar equation in $\R^3$; see \cite{Su2}.

\begin{lemma}\label{lem4.3}
We have
$\bpsi_\eps \in L_2(\O;\C^3)$, $\rot (\mu^\eps)^{-1/2} \bpsi_\eps \in L_2(\O;\C^3)$,
$\div (\mu^\eps)^{1/2} \bpsi_\eps \in L_2(\O)$, and
\begin{align}
\label{lem1}
&\| {\bpsi}_\eps - (W_\mu^\eps)^{*} S_\eps (\wt{\bvarphi}_0 + \wt{\brho}_\eps) \|_{L_2(\O)}
\leqslant {\mathcal C}_3 \eps \| \r \|_{L_2(\O)},
\\
\label{lem2}
\begin{split}
&\| (\eta^\eps)^{-1}\rot (\mu^\eps)^{-1/2}{\bpsi}_\eps - 
(\1 + Y_\eta^\eps) (\eta^0)^{-1} \rot (\mu^0)^{-1/2}S_\eps 
 (\wt{\bvarphi}_0 + \wt{\brho}_\eps) \|_{L_2(\O)}
\leqslant {\mathcal C}_4 \eps \| \r \|_{L_2(\O)},
\end{split}
\\
\label{lem3a}
&\| \div (\mu^\eps)^{1/2}{\bpsi}_\eps  \|_{L_2(\O)}
\leqslant {\mathcal C}_5 \eps \| \r \|_{L_2(\O)}.
\end{align}
The constants ${\mathcal C}_3$, ${\mathcal C}_4$, and ${\mathcal C}_5$ depend only on the norms $\|\eta\|_{L_\infty}$, 
$\|\eta^{-1}\|_{L_\infty}$, $\|\mu\|_{L_\infty}$, $\|\mu^{-1}\|_{L_\infty}$, the parameters of the lattice $\Gamma$, and the domain $\O$.
\end{lemma}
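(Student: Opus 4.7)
The plan is to exploit the cell-problem identities \eqref{div_f} and \eqref{rot_f} for exact cancellations, handling $O(\eps)$ residuals via Proposition \ref{prop f^eps S_eps} and the $H^2$-regularity of the limiting fields. Set $\bt_\eps := \wt{\bvarphi}_0 + \wt{\brho}_\eps \in H^2(\R^3;\C^3)$; by \eqref{Mrr11} and \eqref{Mrr14}, $\|\bt_\eps\|_{H^2(\R^3)} \leq ({\mathcal C}_1 + {\mathcal C}_2)\|\r\|_{L_2(\O)}$. Definition \eqref{Mrr15} and $\wt{\mu} = \mu(\1+Y_\mu)$ yield the factorizations $(\mu^\eps)^{-1/2}(W_\mu^\eps)^* = (\1 + Y_\mu^\eps)(\mu^0)^{-1/2}$ and $(\mu^\eps)^{1/2}(W_\mu^\eps)^* = \wt{\mu}^\eps(\mu^0)^{-1/2}$, which drive everything that follows. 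Estimate \eqref{lem1} and the claim $\bpsi_\eps \in L_2(\O;\C^3)$ are immediate: the difference $\bpsi_\eps - (W_\mu^\eps)^* S_\eps \bt_\eps$ coincides on $\O$ with $\eps \sum_l \Lambda_l^\eps S_\eps D_l \bt_\eps$, and by \eqref{Mrr17a} together with Proposition \ref{prop f^eps S_eps} each summand has $L_2(\R^3)$-norm at most $\eps C_\Lambda \|D_l\bt_\eps\|_{L_2(\R^3)} \leq C\eps\|\r\|_{L_2(\O)}$; the same multiplier bound applied to both terms of \eqref{Mrr18} gives the membership statement.

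For estimate \eqref{lem3a}, I would apply $\div$ to $(\mu^\eps)^{1/2}\wt{\bpsi}_\eps = \wt{\mu}^\eps (\mu^0)^{-1/2} S_\eps \bt_\eps + \eps \sum_l (\mu^\eps)^{1/2}\Lambda_l^\eps S_\eps D_l \bt_\eps$. The columns of $\wt{\mu}$ are divergence-free by \eqref{eff4}, so in the first summand the derivative acts only on $S_\eps \bt_\eps$. In the second, Leibniz combined with the scaling $\eps \div [(\mu^\eps)^{1/2}\f_{lj}^\eps] = (\div \mu^{1/2}\f_{lj})(\x/\eps)$ and identity \eqref{div_f} splits the divergence into (i) a constant-coefficient piece summing to $S_\eps \div [(\mu^0)^{1/2}\bt_\eps]$, (ii) a $\wt{\mu}^\eps$-piece that cancels the first summand exactly, and (iii) an $O(\eps)$ Leibniz remainder controlled by Proposition \ref{prop f^eps S_eps} and \eqref{Mrr17a}. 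Setting $g := \div [(\mu^0)^{1/2} \wt{\bt}_\eps] \in H^1(\R^3)$, the divergence-free conditions in \eqref{Mrr4} and \eqref{Mrr11a} force $g|_\O = 0$, so Proposition \ref{prop_Seps - I} gives
\[
\|S_\eps g\|_{L_2(\O)} = \|(S_\eps g - g)|_\O\|_{L_2(\O)} \leq \eps r_1 \|\nabla g\|_{L_2(\R^3)} \leq C\eps \|\r\|_{L_2(\O)},
\]
which closes \eqref{lem3a}.

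The main work is estimate \eqref{lem2}. Setting $\bh_\eps := (\mu^0)^{-1/2} S_\eps \bt_\eps$, the factorization yields $(\mu^\eps)^{-1/2}\wt{\bpsi}_\eps = (\1 + Y_\mu^\eps)\bh_\eps + \eps \sum_l (\mu^\eps)^{-1/2}\Lambda_l^\eps S_\eps D_l \bt_\eps$. Since each column of $Y_\mu^\eps$ is a gradient, Leibniz produces $\rot[(\1+Y_\mu^\eps)\bh_\eps] = \rot \bh_\eps + \sum_j \nabla h_{\eps,j} \times (Y_\mu^\eps \e_j)$. For the corrector, the scaling $\eps \rot[(\mu^\eps)^{-1/2}\f_{lj}^\eps] = (\rot \mu^{-1/2}\f_{lj})(\x/\eps)$ together with identity \eqref{rot_f} rewrites $\eps(\eta^\eps)^{-1}\rot$ of the corrector as a $(\1+Y_\eta^\eps)(\eta^0)^{-1}$-piece, a $(\eta^\eps)^{-1}(\1+Y_\mu^\eps)$-piece, and an $\eps$-small Leibniz remainder $\sim \eps(\eta^\eps)^{-1}(\mu^\eps)^{-1/2}\f_{lj}^\eps \times \nabla(S_\eps D_l\bt_\eps)_j$. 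After algebraic manipulation using the vector identity $\sum_j (S_\eps \nabla \bt_{\eps,j}) \times ((\mu^0)^{-1/2}\e_j) = \rot \bh_\eps$ and the symmetry of $(\mu^0)^{-1/2}$, the $\1$-part of the $(\eta^\eps)^{-1}(\1+Y_\mu^\eps)$-piece cancels the $(\eta^\eps)^{-1}\rot \bh_\eps$ from the main term, and its $Y_\mu^\eps$-part cancels $(\eta^\eps)^{-1}\sum_j\nabla h_{\eps,j}\times(Y_\mu^\eps\e_j)$. Only $(\1+Y_\eta^\eps)(\eta^0)^{-1}\rot \bh_\eps$ survives, which is the target, plus $\eps$-small residuals bounded via Proposition \ref{prop f^eps S_eps}, \eqref{Mrr17a}, and $\|\bt_\eps\|_{H^2(\R^3)}$. \emph{The hard part} will be this precise bookkeeping: the cancellations are exactly those encoded in the defining problem \eqref{Mrr17} for $\f_{lj}$, mirroring the whole-space analysis of \cite{Su2}; the bounded-domain setting requires, in addition, the absorption of boundary-sensitive terms $S_\eps g$ with $g|_\O = 0$ via Proposition \ref{prop_Seps - I}, as already used for \eqref{lem3a}.
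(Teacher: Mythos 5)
Your proposal is correct and follows essentially the same route as the paper: directly bound the corrector term for \eqref{lem1}; expand $\div(\mu^\eps)^{1/2}\wt{\bpsi}_\eps$ and $\rot(\mu^\eps)^{-1/2}\wt{\bpsi}_\eps$ by Leibniz, exploit $\div\wt{\mu}=0$ and $\rot(\1+Y_\mu^\eps)=0$, feed in the scaled cell identities \eqref{div_f}, \eqref{rot_f} to produce the exact cancellations, control the $O(\eps)$ remainders by Proposition \ref{prop f^eps S_eps} and the $H^2$-bounds \eqref{Mrr11}, \eqref{Mrr14}, and for \eqref{lem3a} convert the leftover $S_\eps\,\div[(\mu^0)^{1/2}\wt{\bt}_\eps]$ into $(S_\eps-I)$ using $\div(\mu^0)^{1/2}(\bvarphi_0+\brho_\eps)=0$ in $\O$ and Proposition \ref{prop_Seps - I}. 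The only (immaterial) differences from the paper are bookkeeping: you split $F_1$ into its $\1$- and $Y_\mu^\eps$-parts and track the two sub-cancellations explicitly, whereas the paper keeps $F_1+F_2$ as a single sum; also note that the $(S_\eps-I)$ absorption is only needed for \eqref{lem3a} — the target in \eqref{lem2} already carries $S_\eps$, so no boundary term arises there.
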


\begin{proof}
By Proposition \ref{prop f^eps S_eps} and \eqref{Mrr17a},  \eqref{Mrr18}, 
$$
\| {\bpsi}_\eps - (W_\mu^\eps)^{*} S_\eps (\wt{\bvarphi}_0 + \wt{\brho}_\eps) \|_{L_2(\O)}
= \eps \bigl\| \sum_{l=1}^3 \Lambda_l^\eps S_\eps D_l  (\wt{\bvarphi}_0 + \wt{\brho}_\eps) \bigr\|_{L_2(\O)}
\leqslant \eps C_\Lambda \sqrt{3} \| \wt{\bvarphi}_0 + \wt{\brho}_\eps \|_{H^1(\O)}.
$$
Together with \eqref{Mrr11} and \eqref{Mrr14}, this implies estimate \eqref{lem1} with  ${\mathcal C}_3=
C_\Lambda \sqrt{3} ({\mathcal C}_1 + {\mathcal C}_2)$.

Now, we check \eqref{lem3a}. According to \eqref{Mrr15} and \eqref{Mrr18}, 
$$
(\mu^\eps)^{1/2} \wt{\bpsi}_\eps = \wt{\mu}^\eps (\mu^0 )^{-1/2} S_\eps  (\wt{\bvarphi}_0 + \wt{\brho}_\eps)
+ \eps \sum_{l=1}^3 (\mu^\eps)^{1/2} \Lambda_l^\eps S_\eps D_l  (\wt{\bvarphi}_0 + \wt{\brho}_\eps).
$$
From \eqref{eff4} it follows that $\div \wt{\mu}(\x)=0$ (i.~e., the divergence of the columns of $\wt{\mu}(\x)$ is equal to zero). Hence,
\begin{equation}\label{lem4}
\begin{aligned}
\div (\mu^\eps)^{1/2} \wt{\bpsi}_\eps &= \sum_{j=1}^3 (\wt{\mu}^\eps (\mu^0 )^{-1/2} \e_j) 
\cdot \nabla [S_\eps  (\wt{\bvarphi}_0 + \wt{\brho}_\eps)]_j 
\\
&+ \eps
\sum_{l=1}^3 [\div (\mu^\eps)^{1/2} \Lambda_l^\eps] S_\eps D_l  (\wt{\bvarphi}_0 + \wt{\brho}_\eps)
\\
&+ \eps \sum_{l,j=1}^3 ((\mu^\eps)^{1/2} \f_{lj}^\eps) \cdot \nabla D_l [S_\eps  (\wt{\bvarphi}_0 + \wt{\brho}_\eps)]_j.
   \end{aligned}
\end{equation}
Here $[S_\eps  (\wt{\bvarphi}_0 + \wt{\brho}_\eps)]_j$ is the $j$th coordinate of the vector-valued function 
$S_\eps  (\wt{\bvarphi}_0 + \wt{\brho}_\eps)$, and $\div (\mu^\eps)^{1/2} \Lambda_l^\eps$ is the row (with the entries  
$\div (\mu^\eps)^{1/2} \f_{lj}^\eps$, $j=1,2,3$). Denote the consecutive summands in \eqref{lem4} by $J_1(\eps)$, $J_2(\eps)$, and $J_3(\eps)$. 
The last term is estimated by Proposition  \ref{prop f^eps S_eps} and \eqref{Mrr17a}:
\begin{equation}\label{lem5}
\| J_3(\eps) \|_{L_2(\O)} \leqslant 3 \eps \|\mu\|^{1/2}_{L_\infty} C_\Lambda\| \wt{\bvarphi}_0 + \wt{\brho}_\eps \|_{H^2(\R^3)} \leqslant {\mathcal C}_5' \eps \| \r\|_{L_2(\O)},
\end{equation}
where ${\mathcal C}_5' = 3\|\mu\|^{1/2}_{L_\infty} C_\Lambda ({\mathcal C}_1 + {\mathcal C}_2)$.
We have taken \eqref{Mrr11} and \eqref{Mrr14} into account.

By \eqref{div_f}, $\eps\div (\mu^\eps)^{1/2} \Lambda_l^\eps$ is the row with the entries 
$$
i \e_l \cdot ((\mu^0)^{1/2} \e_j) - i \e_l \cdot (\wt{\mu}^\eps (\mu^0)^{-1/2} \e_j), \quad j=1,2,3.
$$
This implies that the expression for $J_1(\eps)+ J_2(\eps)$ simplifies:
$$
J_1(\eps)+ J_2(\eps) = \sum_{j=1}^3 \left( \nabla [S_\eps (\wt{\bvarphi}_0 + \wt{\brho}_\eps)]_j\right)
\cdot ((\mu^0)^{1/2} \e_j) = \div (\mu^0)^{1/2} S_\eps(\wt{\bvarphi}_0 + \wt{\brho}_\eps).
$$
By Proposition~\ref{prop_Seps - I} and \eqref{Mrr11},  \eqref{Mrr14}, 
$$
\| \div (\mu^0)^{1/2} (S_\eps-I) (\wt{\bvarphi}_0 + \wt{\brho}_\eps)\|_{L_2(\R^3)} 
\leqslant
\eps r_1 \|\mu\|^{1/2}_{L_\infty} \| \wt{\bvarphi}_0 + \wt{\brho}_\eps \|_{H^2(\R^3)} \leqslant 
{\mathcal C}_5'' \eps \|\r\|_{L_2(\O)},
$$
where ${\mathcal C}_5'' = r_1 \|\mu\|^{1/2}_{L_\infty} ({\mathcal C}_1 + {\mathcal C}_2)$.
Since  $\div (\mu^0)^{1/2}(\bvarphi_0 + \brho_\eps)=0$ in the domain  $\O$ (see \eqref{Mrr4}, \eqref{Mrr11a}),   
\begin{equation}\label{lem6}
\|  J_1(\eps)+ J_2(\eps) \|_{L_2(\O)} \leqslant {\mathcal C}_5'' \eps \|\r\|_{L_2(\O)}.
\end{equation}
Relations \eqref{lem5} and \eqref{lem6} imply estimate \eqref{lem3a} with the constant ${\mathcal C}_5 =
{\mathcal C}_5' + {\mathcal C}_5''$.

It remains to check  \eqref{lem2}. We have
$$
(\mu^\eps)^{-1/2} \wt{\bpsi}_\eps = (\1+Y_\mu^\eps )  (\mu^0 )^{-1/2} S_\eps  (\wt{\bvarphi}_0 + \wt{\brho}_\eps)
+ \eps \sum_{l=1}^3 (\mu^\eps)^{-1/2} \Lambda_l^\eps S_\eps D_l  (\wt{\bvarphi}_0 + \wt{\brho}_\eps).
$$
Since $Y_\mu(\x)$ is the matrix with the columns $\nabla \Psi_j(\x)$, then $\rot (Y_\mu^\eps + \1)=0$. Hence,
\begin{equation}\label{lem7}
\begin{aligned}
&(\eta^\eps)^{-1}\rot (\mu^\eps)^{-1/2} \wt{\bpsi}_\eps 
\\
&= 
(\eta^\eps)^{-1}\sum_{j=1}^3
\left(\nabla [S_\eps  (\wt{\bvarphi}_0 + \wt{\brho}_\eps)]_j\right) 
 \times \left( (Y_\mu^\eps + \1)(\mu^0 )^{-1/2} \e_j \right)  
\\
&+ \eps (\eta^\eps)^{-1}
\sum_{l=1}^3 [\rot (\mu^\eps)^{-1/2} \Lambda_l^\eps] S_\eps D_l  (\wt{\bvarphi}_0 + \wt{\brho}_\eps)
\\
&+ \eps (\eta^\eps)^{-1}\sum_{l,j=1}^3 b_j (\mu^\eps)^{-1/2} \Lambda_{l}^\eps S_\eps  D_l D_j (\wt{\bvarphi}_0 + \wt{\brho}_\eps).
   \end{aligned}
\end{equation}
Here we have used the representation  $\rot = \sum_{j=1}^3 b_j D_j$, where 
$$
b_1 = \begin{pmatrix} 0 & 0 & 0 \cr 0 & 0 & -i \cr 0 & i & 0\end{pmatrix},
\quad 
b_2 = \begin{pmatrix} 0 & 0 & i \cr 0 & 0 & 0 \cr -i & 0 & 0\end{pmatrix},
\quad
b_3 = \begin{pmatrix} 0 & -i & 0 \cr i & 0 & 0 \cr 0 & 0 & 0\end{pmatrix}.
$$
The expression $\rot (\mu^\eps)^{-1/2} \Lambda_l^\eps$ is the matrix with the columns
$\rot (\mu^\eps)^{-1/2} \f_{lj}^\eps$, $j=1,2,3$. 
The consecutive summands in  \eqref{lem7} are denoted by ${\mathbf F}_1(\eps)$, 
${\mathbf F}_2(\eps)$, ${\mathbf F}_3(\eps)$. By Proposition \ref{prop f^eps S_eps} and 
\eqref{Mrr11}, \eqref{Mrr14}, \eqref{Mrr17a},
\begin{equation}\label{lem8}
\| {\mathbf F}_3(\eps) \|_{L_2(\O)} \leqslant \eps \| \eta^{-1}\|_{L_\infty} \| \mu^{-1}\|^{1/2}_{L_\infty}
3 C_\Lambda \|\wt{\bvarphi}_0 + \wt{\brho}_\eps \|_{H^2(\R^3)} \leqslant
{\mathcal C}_4 \eps \|\r\|_{L_2(\O)},
\end{equation}
where ${\mathcal C}_4= \| \eta^{-1}\|_{L_\infty} \| \mu^{-1}\|^{1/2}_{L_\infty}
3 C_\Lambda ({\mathcal C}_1 + {\mathcal C}_2)$.

According to \eqref{rot_f}, $\eps (\eta^\eps)^{-1} \rot (\mu^\eps)^{-1/2}\Lambda_l^\eps$
is the matrix with the columns 
 $$
 i (\1+ Y_\eta^\eps) (\eta^0)^{-1} \left( \e_l \times \left((\mu^0)^{-1/2} \e_j \right) \right)
 + i (\eta^\eps)^{-1} \left( \left((\1+Y_\mu^\eps) (\mu^0)^{-1/2} \e_j \right) \times \e_l \right).
 $$
 This allows us to simplify the expression for the sum ${\mathbf F}_1(\eps)+{\mathbf F}_2(\eps)$:
 $$
 {\mathbf F}_1(\eps)+{\mathbf F}_2(\eps) = (\1+ Y_\eta^\eps) (\eta^0)^{-1} \rot (\mu^0)^{-1/2}
 S_\eps   (\wt{\bvarphi}_0 + \wt{\brho}_\eps).
 $$
Together with \eqref{lem8}, this yields the required estimate \eqref{lem2}.
\end{proof}

\subsection{Introduction of the boundary layer correction term}
Denote
\begin{equation}
\label{pogr0}
\begin{aligned}
 {\mathcal R}_\eps[\bzeta] &:=  ((\1+Y_\eta^\eps) (\eta^0)^{-1} \rot   (\mu^0)^{-1/2}
 S_\eps   (\wt{\bvarphi}_0 + \wt{\brho}_\eps), \rot (\mu^\eps)^{-1/2} \bzeta )_{L_2(\O)} 
\\
&+ ((W_\mu^\eps)^* S_\eps   (\wt{\bvarphi}_0 + \wt{\brho}_\eps), \bzeta)_{L_2(\O)}
- i ((\mu^\eps)^{-1/2} \r, \bzeta )_{L_2(\O)}, \quad \bzeta \in \Dom {\mathfrak l}_\eps.
\end{aligned} 
\end{equation}
Recall that  $\Dom {\mathfrak l}_\eps$ is defined by \eqref{forma}.
We introduce the boundary layer correction term  $\s_\eps$ as the vector-valued function in the domain $\O$
such that
\begin{equation}
\label{pogr1}
\s_\eps \in L_2(\O;\C^3),\ \div (\mu^\eps)^{1/2} \s_\eps \in L_2(\O), \ 
\rot (\mu^\eps)^{-1/2} \s_\eps \in L_2(\O;\C^3),
\end{equation}
satisfying the identity 
\begin{equation}
\label{pogr2}
\begin{aligned}
&((\eta^\eps)^{-1} \rot (\mu^\eps)^{-1/2} \s_\eps, \rot (\mu^\eps)^{-1/2} \bzeta )_{L_2(\O)}
\\
&+ ( \div (\mu^\eps)^{1/2} \s_\eps, \div (\mu^\eps)^{1/2} \bzeta )_{L_2(\O)}
+ (\s_\eps,  \bzeta )_{L_2(\O)} =  {\mathcal R}_\eps[\bzeta],
\ \forall \bzeta \in \Dom {\mathfrak l}_\eps,
\end{aligned} 
\end{equation}
and the boundary condition 
\begin{equation}
\label{pogr3}
((\mu^\eps)^{1/2} \s_\eps )_n \vert_{\partial \O} = ((\mu^\eps)^{1/2} \bpsi_\eps )_n \vert_{\partial \O}.
\end{equation}

\begin{lemma}\label{lem4.5}
Let $\bvarphi_\eps$ be the solution of  problem \eqref{Mrr1}. Let $\bpsi_\eps$ be the first order approximation for the solution defined by \eqref{Mrr18}. 
Let $\s_\eps$ be the correction term satisfying relations  
\eqref{pogr1}--\eqref{pogr3}. We put $\V_\eps := \bvarphi_\eps - \bpsi_\eps + \s_\eps$. Then  $\V_\eps \in \Dom {\mathfrak l}_\eps$ and 
\begin{equation}
\label{pogr4}
\| \V_\eps \|_{L_2(\O)} + \| \div (\mu^\eps)^{1/2} \V_\eps \|_{L_2(\O)}
+ \| \rot (\mu^\eps)^{-1/2} \V_\eps \|_{L_2(\O)} \leqslant {\mathcal C}_6 \eps \| \r \|_{L_2(\O)}.
\end{equation}
The constant ${\mathcal C}_6$ depends only on the norms $\|\eta\|_{L_\infty}$, 
$\|\eta^{-1}\|_{L_\infty}$, $\|\mu\|_{L_\infty}$, $\|\mu^{-1}\|_{L_\infty}$, the parameters of the lattice $\Gamma$, and the domain $\O$.
\end{lemma}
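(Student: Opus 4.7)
The plan is to estimate the energy norm of $\V_\eps$ by testing the bilinear form identity against $\V_\eps$ itself and exploiting the cancellation that the definition of ${\mathcal R}_\eps$ in \eqref{pogr0} was designed to produce.

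\textbf{Step 1: Membership in the form domain.} The $L_2$ regularity of $\V_\eps$, of $\rot(\mu^\eps)^{-1/2}\V_\eps$ and of $\div(\mu^\eps)^{1/2}\V_\eps$ follows from the corresponding properties of $\bvarphi_\eps$ (from \eqref{Mrr1}), of $\bpsi_\eps$ (Lemma \ref{lem4.3}), and of $\s_\eps$ (from \eqref{pogr1}). For the essential boundary condition, \eqref{Mrr1} gives $((\mu^\eps)^{1/2}\bvarphi_\eps)_n\vert_{\partial\O}=0$, while \eqref{pogr3} yields $((\mu^\eps)^{1/2}(\s_\eps-\bpsi_\eps))_n\vert_{\partial\O}=0$, so that $((\mu^\eps)^{1/2}\V_\eps)_n\vert_{\partial\O}=0$ and thus $\V_\eps\in\Dom{\mathfrak l}_\eps$.

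\textbf{Step 2: Energy identity.} Taking $\bzeta=\V_\eps$ in the bilinear form, I expand $\V_\eps=\bvarphi_\eps-\bpsi_\eps+\s_\eps$ in the first argument of ${\mathfrak l}_\eps[\cdot,\V_\eps]+(\cdot,\V_\eps)_{L_2(\O)}$. The contribution from $\bvarphi_\eps$ equals $i((\mu^\eps)^{-1/2}\r,\V_\eps)_{L_2(\O)}$, as one sees by pairing the operator identity \eqref{Mrr2} with $\V_\eps\in\Dom{\mathfrak l}_\eps$. The contribution from $\s_\eps$ equals ${\mathcal R}_\eps[\V_\eps]$ by the defining identity \eqref{pogr2}. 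The remaining contribution from $-\bpsi_\eps$ is expanded explicitly using the integral expression of ${\mathfrak l}_\eps$. Substituting the definition \eqref{pogr0} of ${\mathcal R}_\eps[\V_\eps]$, the two $\pm i((\mu^\eps)^{-1/2}\r,\V_\eps)_{L_2(\O)}$ terms cancel and one is left with
\begin{equation*}
\begin{aligned}
{\mathfrak l}_\eps[\V_\eps,\V_\eps]+\|\V_\eps\|^2_{L_2(\O)}
=\ &\bigl((W_\mu^\eps)^*S_\eps(\wt{\bvarphi}_0+\wt{\brho}_\eps)-\bpsi_\eps,\V_\eps\bigr)_{L_2(\O)}\\
&+\bigl((\1+Y_\eta^\eps)(\eta^0)^{-1}\rot(\mu^0)^{-1/2}S_\eps(\wt{\bvarphi}_0+\wt{\brho}_\eps)-(\eta^\eps)^{-1}\rot(\mu^\eps)^{-1/2}\bpsi_\eps,\rot(\mu^\eps)^{-1/2}\V_\eps\bigr)_{L_2(\O)}\\
&-\bigl(\div(\mu^\eps)^{1/2}\bpsi_\eps,\div(\mu^\eps)^{1/2}\V_\eps\bigr)_{L_2(\O)}.
\end{aligned}
\end{equation*}

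\textbf{Step 3: Estimation via Lemma \ref{lem4.3}.} The three first-argument factors are precisely the quantities controlled by \eqref{lem1}, \eqref{lem2}, and \eqref{lem3a}, each bounded by $C\eps\|\r\|_{L_2(\O)}$. Applying the Cauchy--Schwarz inequality and then Young's inequality with a small parameter, and using the lower bound ${\mathfrak l}_\eps[\V_\eps,\V_\eps]\geq\|\eta\|^{-1}_{L_\infty}\|\rot(\mu^\eps)^{-1/2}\V_\eps\|^2_{L_2(\O)}+\|\div(\mu^\eps)^{1/2}\V_\eps\|^2_{L_2(\O)}$, I absorb the three norms of $\V_\eps$ into the left-hand side. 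This yields ${\mathfrak l}_\eps[\V_\eps,\V_\eps]+\|\V_\eps\|^2_{L_2(\O)}\leq C\eps^2\|\r\|^2_{L_2(\O)}$, which is equivalent to \eqref{pogr4}.

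I do not expect a serious obstacle: the whole argument reduces to a single energy identity, the cancellation of the right-hand side was engineered into \eqref{pogr0}, and the three residuals are exactly those already estimated in Lemma \ref{lem4.3}. The only subtle point is the correct interpretation of ${\mathfrak l}_\eps[\bpsi_\eps,\V_\eps]$ as an integral pairing (since $\bpsi_\eps$ need not satisfy the essential boundary condition), but this is legitimate because all required $L_2$-norms of divergence and curl are finite by Lemma \ref{lem4.3}.
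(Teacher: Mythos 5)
Your proof is correct and follows essentially the same path as the paper: establish $\V_\eps\in\Dom\mathfrak{l}_\eps$ from \eqref{pogr3} and the homogeneous boundary condition in \eqref{Mrr1}, derive the energy identity from the weak formulation of \eqref{Mrr1}, \eqref{pogr2}, and the definition \eqref{pogr0} (with the $i((\mu^\eps)^{-1/2}\r,\cdot)$ terms cancelling by design), and then estimate the three residuals via Lemma \ref{lem4.3} and absorb. The paper keeps $\bzeta$ general and substitutes $\bzeta=\V_\eps$ only at the end, using $(a+b+c)^2\le 3(a^2+b^2+c^2)$ rather than Young's inequality, but this is a cosmetic difference; your remark that the integral pairings involving $\bpsi_\eps$ are well defined even though $\bpsi_\eps$ need not lie in $\Dom\mathfrak{l}_\eps$ is a correct and worthwhile observation that the paper handles implicitly.
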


\begin{proof}
From the conditions on  $\bvarphi_\eps$, $\bpsi_\eps$, and $\s_\eps$ it follows that
$$
\V_\eps \in L_2(\O;\C^3), \ \div (\mu^\eps)^{1/2} \V_\eps \in L_2(\O), \ \rot (\mu^\eps)^{-1/2} \V_\eps \in L_2(\O;\C^3).
$$
By \eqref{Mrr1}, the solution $\bvarphi_\eps$ satisfies the boundary condition
 $( (\mu^\eps)^{1/2} \bvarphi_\eps )_n \vert_{\partial \O} =0$. Together with  \eqref{pogr3}, this implies
 $( (\mu^\eps)^{1/2} \V_\eps )_n \vert_{\partial \O} =0$.
Hence, $\V_\eps \in \Dom {\mathfrak l}_\eps$.

Next, the function $\bvarphi_\eps$ satisfies the identity
\begin{equation*}
((\eta^\eps)^{-1} \rot (\mu^\eps)^{-1/2} \bvarphi_\eps, \rot (\mu^\eps)^{-1/2} \bzeta )_{L_2(\O)}
+ (\bvarphi_\eps,  \bzeta )_{L_2(\O)} 
=   i ((\mu^\eps)^{-1/2} \r, \bzeta )_{L_2(\O)},
\quad \forall \bzeta \in \Dom {\mathfrak l}_\eps,
\end{equation*}
and the condition $\div (\mu^\eps)^{1/2} \bvarphi_\eps =0$.
Combining this with \eqref{pogr0} and \eqref{pogr2}, we arrive at the identity
\begin{equation*}
\begin{aligned}
&{\mathfrak l}_\eps[\V_\eps, \bzeta] + (\V_\eps, \bzeta)_{L_2(\O)}
= - ((\eta^\eps)^{-1} \rot (\mu^\eps)^{-1/2} \bpsi_\eps, \rot (\mu^\eps)^{-1/2} \bzeta)_{L_2(\O)}
\\
&- ( \div (\mu^\eps)^{1/2} \bpsi_\eps, \div (\mu^\eps)^{1/2} \bzeta )_{L_2(\O)}
- (\bpsi_\eps,  \bzeta )_{L_2(\O)}
\\
&+  ((\1+Y_\eta^\eps) (\eta^0)^{-1} \rot   (\mu^0)^{-1/2}
 S_\eps   (\wt{\bvarphi}_0 + \wt{\brho}_\eps), \rot (\mu^\eps)^{-1/2} \bzeta )_{L_2(\O)} 
\\
&+ ((W_\mu^\eps)^* S_\eps   (\wt{\bvarphi}_0 + \wt{\brho}_\eps), \bzeta)_{L_2(\O)},
\quad \forall \bzeta \in \Dom {\mathfrak l}_\eps.
\end{aligned} 
\end{equation*}
Together with Lemma \ref{lem4.3}, this implies
$$
\begin{aligned}
& \left|{\mathfrak l}_\eps[\V_\eps, \bzeta] + (\V_\eps, \bzeta)_{L_2(\O)}\right|
\\
&\leqslant {\mathcal C}_6' \eps \| \r \|_{L_2(\O)}
\left( \|\bzeta\|_{L_2(\O)} + \|\div (\mu^\eps)^{1/2} \bzeta\|_{L_2(\O)} + \|\rot (\mu^\eps)^{-1/2} \bzeta\|_{L_2(\O)} \right),
\end{aligned}
$$
where ${\mathcal C}_6' ={\mathcal C}_3 + {\mathcal C}_4 + {\mathcal C}_5$. Substituting $\bzeta = \V_\eps$,
we arrive at estimate \eqref{pogr4} with the constant ${\mathcal C}_6 = 3 {\mathcal C}_6' \max\{\|\eta\|_{L_\infty};1\}$.
\end{proof}

Lemma \ref{lem4.5} shows that the difference $\bpsi_\eps -\s_\eps$ gives  approximation for the solution 
$\bvarphi_\eps$ with the error of sharp order $O(\eps)$. 
However, it is difficult to control the correction term $\s_\eps$, because it is solution of the elliptic equation with rapidly oscillating coefficients. 
But it is possible to estimate $\s_\eps$ in the ``energy'' norm.

\begin{theorem}\label{lem4.6}
Suppose that $\s_\eps$  satisfies relations 
\eqref{pogr1}--\eqref{pogr3}. Suppose that $\eps_1$ satisfies Condition {\rm \ref{cond1}}.
Then for $0< \eps \leqslant \eps_1$ we have
\begin{equation}
\label{pogr6a}
\| \s_\eps \|_{L_2(\O)} + \| \div (\mu^\eps)^{1/2} \s_\eps \|_{L_2(\O)}
+ \| \rot (\mu^\eps)^{-1/2} \s_\eps \|_{L_2(\O)} \leqslant {\mathcal C}_7 \eps^{1/2} \| \r \|_{L_2(\O)}.
\end{equation}
The constant  ${\mathcal C}_7$ depends only on the norms $\|\eta\|_{L_\infty}$, 
$\|\eta^{-1}\|_{L_\infty}$, $\|\mu\|_{L_\infty}$, $\|\mu^{-1}\|_{L_\infty}$, the parameters of the lattice $\Gamma$, and the domain $\O$.
\end{theorem}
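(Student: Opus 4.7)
The plan is to apply the energy method to \eqref{pogr2} with a test function adjusted to belong to $\Dom\mathfrak l_\eps$. I would introduce a Lipschitz cutoff $\theta_\eps$ with $\theta_\eps\equiv 1$ on $\partial\O$, $\mathrm{supp}\,\theta_\eps\subset(\partial\O)_{2\eps}$, $|\nabla\theta_\eps|\le C\eps^{-1}$, and set $\bpsi_\eps^{\mathrm{bl}}:=\theta_\eps\bpsi_\eps$. A short calculation using Definition~\ref{def1} and $\theta_\eps|_{\partial\O}=1$ shows $((\mu^\eps)^{1/2}\bpsi_\eps^{\mathrm{bl}})_n|_{\partial\O}=((\mu^\eps)^{1/2}\bpsi_\eps)_n|_{\partial\O}$, hence $\bzeta_\eps:=\s_\eps-\bpsi_\eps^{\mathrm{bl}}$ lies in $\Dom\mathfrak l_\eps$ by \eqref{pogr3}. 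Substituting $\bzeta=\bzeta_\eps$ into \eqref{pogr2} yields
\begin{equation*}
\mathfrak l_\eps[\bzeta_\eps,\bzeta_\eps]+\|\bzeta_\eps\|^2_{L_2(\O)} = \mathcal R_\eps[\bzeta_\eps] - \mathfrak l_\eps[\bpsi_\eps^{\mathrm{bl}},\bzeta_\eps] - (\bpsi_\eps^{\mathrm{bl}},\bzeta_\eps)_{L_2(\O)},
\end{equation*}
and the left-hand side dominates $c\mathcal N_\eps^2$ with $\mathcal N_\eps:=\|\bzeta_\eps\|_{L_2(\O)}+\|\rot(\mu^\eps)^{-1/2}\bzeta_\eps\|_{L_2(\O)}+\|\div(\mu^\eps)^{1/2}\bzeta_\eps\|_{L_2(\O)}$, the constant $c$ depending only on the $L_\infty$-norms of $\eta^{\pm 1}$ and $\mu^{\pm 1}$.

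The next step is to bound the right-hand side by $C\sqrt\eps\|\r\|_{L_2(\O)}\mathcal N_\eps$. For the $\bpsi_\eps^{\mathrm{bl}}$-terms, support in $B_{2\eps}$ combined with Lemma~\ref{lem02} applied to each summand of \eqref{Mrr18} and the $H^2$-bounds \eqref{Mrr11}, \eqref{Mrr14} gives $\|\bpsi_\eps^{\mathrm{bl}}\|_{L_2(\O)}\le C\sqrt\eps\|\r\|_{L_2(\O)}$. The Leibniz expansion of $\rot(\mu^\eps)^{-1/2}\bpsi_\eps^{\mathrm{bl}}$ and $\div(\mu^\eps)^{1/2}\bpsi_\eps^{\mathrm{bl}}$ splits into a piece with derivatives on $\bpsi_\eps$ (controlled via Lemma~\ref{lem4.3} localized to $B_{2\eps}$) and a $\nabla\theta_\eps$-piece whose $\eps^{-1}$ is absorbed by $\eps^{1/2}$ from Lemma~\ref{lem02} and another $\eps^{1/2}$ from the $\eps$-strip support. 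For $\mathcal R_\eps[\bzeta_\eps]$, I would integrate by parts in \eqref{pogr0} to transfer $\rot(\mu^\eps)^{-1/2}$ onto the first slot and reorganize using the cell-problem identities \eqref{div_f}, \eqref{rot_f} together with the equations \eqref{Mrr5}, \eqref{Mrr11a} for $(\bvarphi_0,\brho_\eps)$. The interior bulk cancels by the same algebra that makes the analogous $\R^3$ residual vanish in \cite{Su2}, leaving a remainder whose integrand is of the form ``bounded $\Gamma$-periodic factor $\cdot\; S_\eps$ of an $H^1(\R^3)$-function,'' supported effectively in the $\eps$-strip; Lemmas \ref{lem01}, \ref{lem02} and Proposition \ref{prop f^eps S_eps} then deliver the $\sqrt\eps$-bound.

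Young's inequality absorbs the $\mathcal N_\eps$-factors on the right, yielding $\mathcal N_\eps\le C\sqrt\eps\|\r\|_{L_2(\O)}$; the triangle inequality together with the already-established bound on $\bpsi_\eps^{\mathrm{bl}}$ then produces \eqref{pogr6a}. The main obstacle is the structural reduction of $\mathcal R_\eps[\bzeta_\eps]$ to a boundary-strip residue: the bulk cancellation is algebraically intricate, requiring a precise matching of the $\rot$-terms via \eqref{rot_f} and the $\div$-terms via \eqref{div_f}, and the boundary traces generated by the integration by parts must be shown to vanish using $((\mu^0)^{1/2}(\bvarphi_0+\brho_\eps))_n|_{\partial\O}=0$ and the tangential condition in \eqref{Mrr4}, \eqref{Mrr11a}, which is where the $C^{1,1}$-regularity of $\partial\O$ and Condition~\ref{cond1} enter the argument.
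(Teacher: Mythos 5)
There is a genuine gap in the handling of the boundary-layer corrector, and it invalidates the overall estimate. You set $\bpsi_\eps^{\mathrm{bl}}=\theta_\eps\bpsi_\eps$ and claim the $\nabla\theta_\eps$-piece in $\div(\mu^\eps)^{1/2}\bpsi_\eps^{\mathrm{bl}}$ (and likewise in $\rot(\mu^\eps)^{-1/2}\bpsi_\eps^{\mathrm{bl}}$) picks up ``$\eps^{1/2}$ from Lemma~\ref{lem02} and another $\eps^{1/2}$ from the $\eps$-strip support.'' This double-counts: Lemma~\ref{lem02} already \emph{is} the $\eps$-strip estimate, yielding a single factor $\eps^{1/2}$, so $\|\nabla\theta_\eps\cdot(\mu^\eps)^{1/2}\bpsi_\eps\|_{L_2}\lesssim \kappa\eps^{-1}\cdot\eps^{1/2}\|\r\|_{L_2}=\eps^{-1/2}\|\r\|_{L_2}$, which blows up. Consequently the ``energy norm'' of $\bpsi_\eps^{\mathrm{bl}}$ is $O(\eps^{-1/2})$, not $O(\eps^{1/2})$; after absorbing $\mathfrak l_\eps[\bpsi_\eps^{\mathrm{bl}},\bzeta_\eps]$ by Young you only obtain $\|\div(\mu^\eps)^{1/2}\s_\eps\|+\|\rot(\mu^\eps)^{-1/2}\s_\eps\|\lesssim \eps^{-1/2}\|\r\|_{L_2}$. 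The paper avoids this by taking the corrector to be $\f_\eps=(\mu^\eps)^{1/2}\nabla\xi_\eps$, where $\xi_\eps$ solves the Neumann problem \eqref{q6}: this $\f_\eps$ matches the normal trace of $\bpsi_\eps$, has $\rot(\mu^\eps)^{-1/2}\f_\eps=0$ and $\div(\mu^\eps)^{1/2}\f_\eps=\div(\mu^\eps)^{1/2}\bpsi_\eps=O(\eps)$ (by \eqref{lem3a}), and its $L_2$-bound $\|\f_\eps\|\lesssim\eps^{1/2}\|\r\|_{L_2}$ (Lemma~\ref{lem5.7}) exploits that the leading part of $(\mu^\eps)^{1/2}\bpsi_\eps$ lies, up to small errors, in $J_0(\O)\perp\nabla H^1(\O)$ — a structural orthogonality that the blunt cutoff discards.

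The second serious omission is the identification lemma (Lemma~\ref{lem5.1}), which constructs $T_\eps\colon \Dom\mathfrak l_\eps\to\Dom\mathfrak l_0$ with $\rot(\mu^0)^{-1/2}T_\eps\bzeta=\rot(\mu^\eps)^{-1/2}\bzeta$, $\div(\mu^0)^{1/2}T_\eps\bzeta=\div(\mu^\eps)^{1/2}\bzeta$, and $H^1$-control of $T_\eps\bzeta$. Your sketch proposes to ``integrate by parts in \eqref{pogr0} to transfer $\rot(\mu^\eps)^{-1/2}$ onto the first slot,'' but $\bzeta_\eps\in\Dom\mathfrak l_\eps$ is in general not in $H^1(\O;\C^3)$, so this transfer is not available; moreover the first slot contains the non-smooth multiplier $\1+Y_\eta^\eps$, and a naive integration by parts would produce products of singular factors. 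What actually makes the bulk of $\mathcal R_\eps[\bzeta]$ tractable in the paper is replacing $\rot(\mu^\eps)^{-1/2}\bzeta$ by $\rot(\mu^0)^{-1/2}\bzeta^0_\eps$ with $\bzeta^0_\eps=T_\eps\bzeta\in H^1$, then invoking the effective equations \eqref{Mrr4}, \eqref{Mrr11a} and the identity \eqref{lemm12}, together with an antisymmetric-potential representation of $\wt\mu-\mu^0$ and a separate argument for the $\r_\eps$-term involving the projection ${\mathcal P}^0_{\mu^0}$. The claim that ``the interior bulk cancels by the same algebra'' as in $\R^3$ is not accurate even heuristically — in $\R^3$ the residual is $O(\eps)$, not zero — and obscures where the $\eps^{1/2}$ is actually gained. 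Condition~\ref{cond1} and the $C^{1,1}$-smoothness enter not through boundary-trace cancellations in an integration by parts (those traces are zero by construction of the test space), but through the $H^2$-regularity \eqref{Mrr6} of the effective resolvent and through Lemmas~\ref{lem01}--\ref{lem02}.
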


The proof of Theorem \ref{lem4.6} requires a big technical work; 
it is given in Section~\ref{sec5}.

\subsection{Approximation of the function $\bvarphi_\eps$}

Approximation for the function  $\bvarphi_\eps$ is deduced from Lemma \ref{lem4.5} and Theorem \ref{lem4.6}.

\begin{theorem}
\label{th_varphi}
Let $\bvarphi_\eps$ be the solution of problem  \eqref{Mrr1}. 
Suppose that $\eps_1$ is subject to Condition {\rm \ref{cond1}}.
Then for $0< \eps \leqslant \eps_1$ we have
\begin{align}
\label{res01}
&\| \bvarphi_\eps - (W^\eps_\mu)^* (\bvarphi_0 + \brho_\eps)\|_{L_2(\O)} \leqslant {\mathcal C}_8 \eps^{1/2} \|\r\|_{L_2(\O)},
\\
\label{res02}
&\| (\eta^\eps)^{-1} \rot (\mu^\eps)^{-1/2}\bvarphi_\eps - (\1 + Y_\eta^\eps) 
(\eta^0)^{-1} \rot (\mu^0)^{-1/2} (\bvarphi_0 + \brho_\eps)\|_{L_2(\O)} 
\leqslant {\mathcal C}_9 \eps^{1/2} \|\r\|_{L_2(\O)}.
\end{align}
The constants ${\mathcal C}_8$ and ${\mathcal C}_9$ depend only on the norms $\|\eta\|_{L_\infty}$, 
$\|\eta^{-1}\|_{L_\infty}$, $\|\mu\|_{L_\infty}$, $\|\mu^{-1}\|_{L_\infty}$, the parameters of the lattice $\Gamma$, and the domain $\O$.
\end{theorem}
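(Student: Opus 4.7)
The plan is to combine Lemma~\ref{lem4.5} and Theorem~\ref{lem4.6} with the construction of $\bpsi_\eps$ in Lemma~\ref{lem4.3}, and then pass from the smoothed object $S_\eps(\wt\bvarphi_0 + \wt\brho_\eps)$ to $\bvarphi_0 + \brho_\eps$ at the cost of an $O(\eps)$ error. Concretely, I decompose
$$
\bvarphi_\eps - (W_\mu^\eps)^*(\bvarphi_0 + \brho_\eps) = (\bvarphi_\eps - \bpsi_\eps) + \bigl(\bpsi_\eps - (W_\mu^\eps)^* S_\eps(\wt{\bvarphi}_0 + \wt{\brho}_\eps)\bigr) + (W_\mu^\eps)^*(S_\eps - I)(\wt{\bvarphi}_0 + \wt{\brho}_\eps),
$$
using that $\wt{\bvarphi}_0 = \bvarphi_0$ and $\wt{\brho}_\eps = \brho_\eps$ on $\O$. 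A parallel decomposition starting from \eqref{lem2} will deliver \eqref{res02}, with $(\1+Y_\eta^\eps)(\eta^0)^{-1}\rot$ playing the role of $(W_\mu^\eps)^*$.

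The first summand is controlled by writing $\bvarphi_\eps - \bpsi_\eps = \V_\eps - \s_\eps$. The $\V_\eps$ contribution is $O(\eps)$ in the full energy norm by Lemma~\ref{lem4.5}, while $\s_\eps$ is $O(\sqrt{\eps})$ in the same norm by Theorem~\ref{lem4.6}; in particular both the $L_2$-norm and $\|\rot (\mu^\eps)^{-1/2}(\cdot)\|_{L_2(\O)}$ of $\bvarphi_\eps - \bpsi_\eps$ are of order $\sqrt{\eps}\|\r\|_{L_2(\O)}$. For the second summand the bound is built into Lemma~\ref{lem4.3}: \eqref{lem1} gives $O(\eps)$ for the plain difference, and \eqref{lem2} gives $O(\eps)$ for its curl analog. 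Multiplying the curl estimate by $(\eta^\eps)^{-1}$, which is $L_\infty$-bounded, is harmless.

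The third, residual term is where the real work is. I write $(W_\mu^\eps)^* = (\mu^\eps)^{1/2}(\1 + Y_\mu^\eps)(\mu^0)^{-1/2}$. The piece without $Y_\mu^\eps$ is a pointwise bounded multiplier, so Proposition~\ref{prop_Seps - I} applied to $\wt{\bvarphi}_0 + \wt{\brho}_\eps \in H^2(\R^3)$ gives an $O(\eps)\|\wt{\bvarphi}_0 + \wt{\brho}_\eps\|_{H^1}$ contribution, hence $O(\eps)\|\r\|_{L_2(\O)}$ via \eqref{Mrr11} and \eqref{Mrr14}. For the $Y_\mu^\eps$ piece I set $u := (\mu^0)^{-1/2}(S_\eps - I)(\wt{\bvarphi}_0 + \wt{\brho}_\eps)$ and apply Lemma~\ref{lem_PSu} componentwise. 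Since $S_\eps$ commutes with $\nabla$, Proposition~\ref{prop_Seps - I} applied first to $\wt{\bvarphi}_0 + \wt{\brho}_\eps$ and then to its gradient yields $\|u\|_{L_2(\R^3)} \le C \eps \|\wt{\bvarphi}_0 + \wt{\brho}_\eps\|_{H^1}$ and $\|\nabla u\|_{L_2(\R^3)} \le C \eps \|\wt{\bvarphi}_0 + \wt{\brho}_\eps\|_{H^2}$, whence Lemma~\ref{lem_PSu} gives $\|Y_\mu^\eps u\|_{L_2(\R^3)} \le C \eps \|\wt{\bvarphi}_0 + \wt{\brho}_\eps\|_{H^2}$. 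Restricting to $\O$ and using \eqref{Mrr11},~\eqref{Mrr14} closes estimate \eqref{res01}. The curl residual for \eqref{res02} is handled identically, but applied to $\g := \rot (\mu^0)^{-1/2}(\wt{\bvarphi}_0 + \wt{\brho}_\eps) \in H^1(\R^3)$; here one uses the plain bound $\|S_\eps\|_{L_2 \to L_2} \le 1$ of \eqref{S_eps <= 1} to control $\nabla u$ in Lemma~\ref{lem_PSu} (since $\g$ is only in $H^1$). The main obstacle is conceptual rather than technical: the matrix-valued correctors $Y_\mu^\eps, Y_\eta^\eps$ are not in $L_\infty$, so the $S_\eps-I$ bound cannot be dispatched by a trivial multiplier estimate and must be coupled with the multiplier inequality of Lemma~\ref{lem_PSu} and the $H^2$-regularity of the effective and correction fields guaranteed by \eqref{Mrr6}.
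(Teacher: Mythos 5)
Your proposal follows the same three-term decomposition and the same chain of lemmas as the paper's own proof: Lemma~\ref{lem4.5} and Theorem~\ref{lem4.6} for $\bvarphi_\eps-\bpsi_\eps = \V_\eps - \s_\eps$, Lemma~\ref{lem4.3} for $\bpsi_\eps - (W_\mu^\eps)^*S_\eps(\wt\bvarphi_0+\wt\brho_\eps)$, and Lemma~\ref{lem_PSu} together with Proposition~\ref{prop_Seps - I} and \eqref{S_eps <= 1} for the $(S_\eps-I)$ residual. It is correct, matches the paper's argument, and correctly observes that the $Y^\eps$-factor forces the multiplier lemma rather than a crude $L_\infty$-bound (the only nit is that the $\|\eta^{-1}\|_{L_\infty}$ factor enters via the first summand, not the second, since \eqref{lem2} already contains $(\eta^\eps)^{-1}$, but this does not affect the estimate).
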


\begin{proof}
Lemma \ref{lem4.5} and Theorem \ref{lem4.6} directly imply that
$$
\| \bvarphi_\eps - \bpsi_\eps \|_{L_2(\O)} 
+ \| \rot (\mu^\eps)^{-1/2} (\bvarphi_\eps - \bpsi_\eps) \|_{L_2(\O)} \leqslant
({\mathcal C}_6+ {\mathcal C}_7) \eps^{1/2} \| \r \|_{L_2(\O)}
$$
for $0 < \eps \leqslant \eps_1$. Together with Lemma \ref{lem4.3}, this yields the following inequalities:
\begin{align}
\label{res1}
&\| \bvarphi_\eps - (W^\eps_\mu)^* S_\eps (\wt{\bvarphi}_0 + \wt{\brho}_\eps)\|_{L_2(\O)} \leqslant {\mathcal C}'_8 \eps^{1/2} \|\r\|_{L_2(\O)}, \quad 0< \eps \leqslant \eps_1,
\\
\label{res2}
\begin{split}
&\| (\eta^\eps)^{-1} \rot (\mu^\eps)^{-1/2}\bvarphi_\eps - (\1 + Y_\eta^\eps) 
(\eta^0)^{-1} \rot (\mu^0)^{-1/2} S_\eps (\wt{\bvarphi}_0 + \wt{\brho}_\eps)\|_{L_2(\O)} 
\\
&
\leqslant {\mathcal C}'_9 \eps^{1/2} \|\r\|_{L_2(\O)}, \quad 0< \eps \leqslant \eps_1,
\end{split}
\end{align}
where ${\mathcal C}_8'= {\mathcal C}_3 + {\mathcal C}_6 + {\mathcal C}_7$ and 
${\mathcal C}_9'= {\mathcal C}_4 +( {\mathcal C}_6 + {\mathcal C}_7) \| \eta^{-1} \|_{L_\infty}$.

It remains to show that the smoothing operator $S_\eps$ in  \eqref{res1}, \eqref{res2}
can be replaced by the identity operator; this will only affect the constants in the estimates.
By \eqref{Mrr15} and Lemma~\ref{lem_PSu},
$$
\begin{aligned}
&\| (W^\eps_\mu)^* (S_\eps -I) (\wt{\bvarphi}_0 + \wt{\brho}_\eps) \|_{L_2(\O)}
\\
&=
\| (\mu^\eps)^{1/2}(\1+ Y^\eps_\mu) (\mu^0)^{-1/2} (S_\eps -I) (\wt{\bvarphi}_0 + \wt{\brho}_\eps) \|_{L_2(\O)}
\\
&\leqslant
\| \mu\|_{L_\infty}^{1/2} \| \mu^{-1}\|_{L_\infty}^{1/2} \| (S_\eps -I) (\wt{\bvarphi}_0 + \wt{\brho}_\eps) \|_{L_2(\R^3)}
\\
&
+ \| \mu\|_{L_\infty}^{1/2} 
\| Y^\eps_\mu (\mu^0)^{-1/2} (S_\eps -I) (\wt{\bvarphi}_0 + \wt{\brho}_\eps) \|_{L_2(\R^3)}
\\
& \leqslant \| \mu\|_{L_\infty}^{1/2} \| \mu^{-1}\|_{L_\infty}^{1/2} \left( (1+ \sqrt{\beta_{1,\mu}})
\| (S_\eps -I) (\wt{\bvarphi}_0 + \wt{\brho}_\eps) \|_{L_2(\R^3)} \right.
\\
&
\left.+ \eps \sqrt{\beta_{2,\mu}} \wh{C}_\mu 
\| (S_\eps -I) \nabla (\wt{\bvarphi}_0 + \wt{\brho}_\eps) \|_{L_2(\R^3)} \right). 
\end{aligned}
$$
The first term in the parentheses is estimated with the help of Proposition~\ref{prop_Seps - I},
and the second one is estimated by using \eqref{S_eps <= 1}. We also take \eqref{Mrr11} and \eqref{Mrr14} into account. 
This yields
\begin{equation}
\label{res3}
\| (W^\eps_\mu)^* (S_\eps -I) (\wt{\bvarphi}_0 + \wt{\brho}_\eps) \|_{L_2(\O)} 
\leqslant {\mathcal C}_8'' \eps \|\r\|_{L_2(\O)},
\end{equation}
where ${\mathcal C}_8'' =\| \mu\|_{L_\infty}^{1/2} \| \mu^{-1}\|_{L_\infty}^{1/2} \left( (1+ \sqrt{\beta_{1,\mu}}) r_1 + 
2 \sqrt{\beta_{2,\mu}} \wh{C}_\mu\right) ({\mathcal C}_1 + {\mathcal C}_2)$.
Relations \eqref{res1} and \eqref{res3} imply the required estimate  \eqref{res01} with the constant
${\mathcal C}_8={\mathcal C}_8' + {\mathcal C}_8''$.

Similarly, using Lemma \ref{lem_PSu}, Proposition \ref{prop_Seps - I}, estimate \eqref{S_eps <= 1}, and inequalities \eqref{Mrr11}, \eqref{Mrr14}, we deduce 
\begin{equation}
\label{res4}
\| (\1 + Y_\eta^\eps) 
(\eta^0)^{-1} \rot (\mu^0)^{-1/2} (S_\eps -I) (\wt{\bvarphi}_0 + \wt{\brho}_\eps)\|_{L_2(\O)} 
\leqslant 
{\mathcal C}_9'' \eps \|\r\|_{L_2(\O)},
\end{equation}
where
${\mathcal C}_9''=  \| \eta^{-1}\|_{L_\infty} \| \mu^{-1}\|^{1/2}_{L_\infty} \left( r_1 (1+ \sqrt{\beta_{1,\eta}})
+ 2 \sqrt{\beta_{2,\eta}} \wh{C}_\eta \right)({\mathcal C}_1 + {\mathcal C}_2)$.
Combining \eqref{res2} and \eqref{res4}, we arrive at estimate \eqref{res02} with the constant ${\mathcal C}_9=
{\mathcal C}_9' + {\mathcal C}_9''$.
\end{proof}

\subsection{The final result in the case where $\q=0$}

Let us express the fields with index $\r$ in terms of the function $\bvarphi_\eps$ introduced in Subsection~\ref{sim}:
$$
\begin{aligned}
&\v_\eps^{(\r)}= (\mu^\eps)^{-1/2} \bvarphi_\eps,
\quad
\z_\eps^{(\r)}= (\mu^\eps)^{1/2} \bvarphi_\eps,
\\
&\u_\eps^{(\r)}= (\eta^\eps)^{-1}\rot (\mu^\eps)^{-1/2} \bvarphi_\eps,
\quad
\w_\eps^{(\r)}= \rot (\mu^\eps)^{-1/2} \bvarphi_\eps.
\end{aligned}
$$
Similarly, the effective fields with  index $\r$ are related to the function $\bvarphi_0$ introduced in Subsection~\ref{sec4.2}:
$$
\begin{aligned}
&\v_0^{(\r)}= (\mu^0)^{-1/2} \bvarphi_0,
\quad
\z_0^{(\r)}= (\mu^0)^{1/2} \bvarphi_0,
\\
&\u_0^{(\r)}= (\eta^0)^{-1}\rot (\mu^0)^{-1/2} \bvarphi_0,
\quad
\w_0^{(\r)}= \rot (\mu^0)^{-1/2} \bvarphi_0.
\end{aligned}
$$
The correction fields with  index $\r$ are expressed in terms of  $\brho_\eps$ (see Subsection \ref{sec4.3}):
$$
\begin{aligned}
&\wh{\v}_\eps^{(\r)}= (\mu^0)^{-1/2} \brho_\eps,
\quad
\wh{\z}_\eps^{(\r)}= (\mu^0)^{1/2} \brho_\eps,
\\
&\wh{\u}_\eps^{(\r)}= (\eta^0)^{-1}\rot (\mu^0)^{-1/2} \brho_\eps,
\quad
\wh{\w}_\eps^{(\r)}= \rot (\mu^0)^{-1/2} \brho_\eps.
\end{aligned}
$$
Combining these relations with Theorem~\ref{th_varphi}, we arrive at the final result in the case where $\q=0$.

\begin{theorem}\label{main_th_q=0}
Let $(\w_\eps^{(\r)}, \z_\eps^{(\r)})$ be the solution of system \eqref{M1} with $\q=0$ and let 
$\u_\eps^{(\r)} = (\eta^\eps)^{-1}\w_\eps^{(\r)}$,  $\v_\eps^{(\r)} = (\mu^\eps)^{-1}\z_\eps^{(\r)}$. 
Let $(\w_0^{(\r)}, \z_0^{(\r)})$ be the solution of the effective system \eqref{M1eff} with $\q=0$ and let 
$\u_0^{(\r)} = (\eta^0)^{-1}\w_0^{(\r)}$,  $\v_0^{(\r)} = (\mu^0)^{-1}\z_0^{(\r)}$. Let  
$(\wh{\w}^{(\r)}_\eps, \wh{\z}^{(\r)}_\eps)$ be the solution of the correction system \eqref{M_corr} with $\q=0$ and let 
$\wh{\u}_\eps^{(\r)} =  (\eta^0)^{-1}\wh{\w}^{(\r)}_\eps$,
$\wh{\v}_\eps^{(\r)} = (\mu^0)^{-1}\wh{\z}^{(\r)}_\eps$. Suppose that $Y_\eta$, $G_\eta$, $Y_\mu$, and $G_\mu$
are the periodic matrix-valued functions introduced in Subsection  {\rm \ref{eff_mat}}. 
Suppose that $\eps_1$ is subject to Condition {\rm \ref{cond1}}.
Then for $0 < \eps \le \eps_1$ we have 
\begin{align*}
\| \u_\eps^{(\r)} - (\1 + Y_\eta^\eps)(\u_0^{(\r)} + \wh{\u}_\eps^{(\r)}) \|_{L_2(\O)} &\le {\mathcal C}_9 \eps^{1/2}  
\| \r \|_{L_2(\O)},
\\
\| \w_\eps^{(\r)} - (\1 + G_\eta^\eps)(\w_0^{(\r)} + \wh{\w}_\eps^{(\r)}) \|_{L_2(\O)} &\le {\mathcal C}_9 \|\eta\|_{L_\infty} \eps^{1/2} 
\| \r \|_{L_2(\O)},
\\
\| \v_\eps^{(\r)} - (\1 + Y_\mu^\eps)(\v_0^{(\r)} + \wh{\v}_\eps^{(\r)}) \|_{L_2(\O)} &\le {\mathcal C}_8 \|\mu^{-1}\|_{L_\infty}^{1/2} \eps^{1/2} \| \r \|_{L_2(\O)},
\\
\| \z_\eps^{(\r)} - (\1 + G_\mu^\eps)(\z_0^{(\r)} + \wh{\z}_\eps^{(\r)}) \|_{L_2(\O)} &\le {\mathcal C}_8 \|\mu\|_{L_\infty}^{1/2} \eps^{1/2}  
\| \r \|_{L_2(\O)}.
\end{align*}
The constants ${\mathcal C}_8$ and ${\mathcal C}_9$ depend only on the norms $\|\eta\|_{L_\infty}$, $\|\eta^{-1}\|_{L_\infty}$,
$\|\mu\|_{L_\infty}$, $\|\mu^{-1}\|_{L_\infty}$, the parameters of the lattice $\Gamma$, and the domain $\O$. 
\end{theorem}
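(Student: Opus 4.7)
The plan is to deduce the four stated estimates directly from Theorem~\ref{th_varphi}, by expressing each of the physical fields $\u_\eps^{(\r)},\w_\eps^{(\r)},\v_\eps^{(\r)},\z_\eps^{(\r)}$ (and the corresponding effective and correction fields) in terms of the auxiliary functions $\bvarphi_\eps$, $\bvarphi_0$, and $\brho_\eps$ introduced in Subsections~\ref{sim}--\ref{sec4.3}, and then invoking the two bounds \eqref{res01} and \eqref{res02}. The identities listed right before the statement, together with the definitions \eqref{eff2}, \eqref{eff5}, \eqref{Mrr15} of $\wt{\eta}$, $\wt{\mu}$, $W_\mu^{*}$, and the definitions of $G_\eta$ and $G_\mu$, will do all the work.

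I would first handle $\u_\eps^{(\r)}$. By construction $\u_\eps^{(\r)}=(\eta^\eps)^{-1}\rot(\mu^\eps)^{-1/2}\bvarphi_\eps$, while $\u_0^{(\r)}+\wh{\u}_\eps^{(\r)}=(\eta^0)^{-1}\rot(\mu^0)^{-1/2}(\bvarphi_0+\brho_\eps)$. Thus $\u_\eps^{(\r)}-(\1+Y_\eta^\eps)(\u_0^{(\r)}+\wh{\u}_\eps^{(\r)})$ is literally the expression bounded in \eqref{res02}, which yields the first inequality with constant ${\mathcal C}_9$. Next, since $\w_\eps^{(\r)}=\eta^\eps\,\u_\eps^{(\r)}$ and, by \eqref{eff2} and the definition of $G_\eta$, one has $(\1+G_\eta^\eps)(\w_0^{(\r)}+\wh{\w}_\eps^{(\r)})=\wt{\eta}^\eps(\eta^0)^{-1}\rot(\mu^0)^{-1/2}(\bvarphi_0+\brho_\eps)=\eta^\eps(\1+Y_\eta^\eps)(\u_0^{(\r)}+\wh{\u}_\eps^{(\r)})$, factoring out $\eta^\eps$ reduces the bound for $\w_\eps^{(\r)}$ to the one just established, with an additional factor of $\|\eta\|_{L_\infty}$.

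For $\v_\eps^{(\r)}$ the relevant identity, via the definition \eqref{Mrr15} of $W_\mu^{*}$, is $(\1+Y_\mu^\eps)(\v_0^{(\r)}+\wh{\v}_\eps^{(\r)})=(\1+Y_\mu^\eps)(\mu^0)^{-1/2}(\bvarphi_0+\brho_\eps)=(\mu^\eps)^{-1/2}(W_\mu^\eps)^{*}(\bvarphi_0+\brho_\eps)$. Since $\v_\eps^{(\r)}=(\mu^\eps)^{-1/2}\bvarphi_\eps$, factoring out $(\mu^\eps)^{-1/2}$ and invoking \eqref{res01} produces the third inequality with a factor $\|\mu^{-1}\|_{L_\infty}^{1/2}$. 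The field $\z_\eps^{(\r)}=(\mu^\eps)^{1/2}\bvarphi_\eps$ is treated in the same way: from $\wt{\mu}^\eps=\mu^\eps(\1+Y_\mu^\eps)$ one obtains $(\1+G_\mu^\eps)(\z_0^{(\r)}+\wh{\z}_\eps^{(\r)})=\wt{\mu}^\eps(\mu^0)^{-1/2}(\bvarphi_0+\brho_\eps)=(\mu^\eps)^{1/2}(W_\mu^\eps)^{*}(\bvarphi_0+\brho_\eps)$, so factoring out $(\mu^\eps)^{1/2}$ and applying \eqref{res01} once more gives the last inequality with factor $\|\mu\|_{L_\infty}^{1/2}$.

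There is no substantive obstacle at this stage: all four assertions reduce to elementary algebraic rewritings combined with the pointwise bounds on $\mu^\eps$, $\eta^\eps$ and their inverses. All of the genuine analysis, namely the construction of the first-order approximation $\bpsi_\eps$ (Lemma~\ref{lem4.3}), the introduction of the boundary layer correction $\s_\eps$ (Lemma~\ref{lem4.5}), the sharp-order energy identity, and above all the boundary-layer estimate of Theorem~\ref{lem4.6}, has already been carried out in Theorem~\ref{th_varphi}; the present theorem is a short consequence of it.
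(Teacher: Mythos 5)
Your argument is correct and is exactly the paper's: the paper lists the same field identities in terms of $\bvarphi_\eps$, $\bvarphi_0$, $\brho_\eps$ immediately before the theorem and then states that combining them with Theorem~\ref{th_varphi} yields the four estimates. You have merely spelled out the algebraic reductions (factoring out $\eta^\eps$, $(\mu^\eps)^{\pm1/2}$ and using $\wt{\eta}^\eps=\eta^\eps(\1+Y_\eta^\eps)$, $\wt{\mu}^\eps(\mu^0)^{-1/2}=(\mu^\eps)^{1/2}(W_\mu^\eps)^{*}$) that the paper leaves implicit, and the constants come out exactly as claimed.
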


\section{Estimation of the boundary layer correction term\label{sec5}}

This section contains the proof of Theorem~\ref{lem4.6}.

\subsection{Identification of  $\Dom {\mathfrak l}_\eps$ and $\Dom {\mathfrak l}_0$}
The following lemma plays the key role for what follows.

\begin{lemma}\label{lem5.1}
There exists a linear operator $T_\eps: \Dom {\mathfrak l}_\eps \to \Dom {\mathfrak l}_0$ such that 
the function ${\bzeta}^0_\eps = T_\eps \bzeta$, $\bzeta \in \Dom {\mathfrak l}_\eps$, satisfies relations
\begin{equation}\label{lemmm1}
\div (\mu^0)^{1/2} {\bzeta}^0_\eps = \div (\mu^\eps)^{1/2} \bzeta,
\quad \rot (\mu^0)^{-1/2} {\bzeta}^0_\eps = \rot (\mu^\eps)^{-1/2} \bzeta,
\end{equation}
and estimates
\begin{align}
\label{lemmm2}
&\| {\bzeta}^0_\eps \|_{L_2(\O)} \leqslant {\mathcal C}_{10} \| \bzeta \|_{L_2(\O)},
\\
\label{lemmm3}
&\| {\bzeta}^0_\eps \|_{H^1(\O)}  \leqslant   {\mathcal C}_{11} \left(\| \bzeta \|_{L_2(\O)} 
+ \|\div (\mu^\eps)^{1/2} \bzeta \|_{L_2(\O)}
+ \|\rot (\mu^\eps)^{-1/2} \bzeta \|_{L_2(\O)}\right).
\end{align}
The constant  $ {\mathcal C}_{10}$ depends only on  
$\|\mu\|_{L_\infty}$, $\|\mu^{-1}\|_{L_\infty}$, and ${\mathcal C}_{11}$ depends on the same parameters and the domain $\O$. 
\end{lemma}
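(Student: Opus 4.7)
The plan is to build $T_\eps$ by adding a single gradient correction to $(\mu^\eps)^{-1/2}\bzeta$ that absorbs the coefficient mismatch between $\mu^\eps$ and $\mu^0$. Given $\bzeta \in \Dom {\mathfrak l}_\eps$, put $\w := (\mu^\eps)^{1/2}\bzeta$ and $\y := (\mu^\eps)^{-1/2}\bzeta$, so that $\w, \y \in L_2(\O;\C^3)$, $F := \div\w \in L_2(\O)$, $\G := \rot \y \in L_2(\O;\C^3)$, and $\w_n\vert_{\partial \O}=0$ in the sense of Definition~\ref{def1}. Set $\boldsymbol{\alpha} := \y - (\mu^0)^{-1}\w = \bigl((\mu^\eps)^{-1} - (\mu^0)^{-1}\bigr)\w \in L_2(\O;\C^3)$, and let $\lambda \in H^1(\O)/\R$ be the unique solution of the constant-coefficient Neumann variational problem
\[
\int_\O \langle \mu^0 \nabla\lambda, \nabla \phi\rangle\, d\x = -\int_\O \langle \mu^0 \boldsymbol{\alpha}, \nabla\phi\rangle\, d\x,
\quad \forall \phi \in H^1(\O),
\]
produced by Lax--Milgram (the right-hand side annihilates constants), together with $\|\nabla\lambda\|_{L_2} \le c\|\boldsymbol{\alpha}\|_{L_2}$, where $c$ depends only on $\mu^0$. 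I then set
\[
T_\eps\bzeta := \bzeta^0_\eps := (\mu^0)^{1/2}\bigl(\y + \nabla \lambda\bigr).
\]

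\textbf{Verifying \eqref{lemmm1} and \eqref{lemmm2}.} By construction $(\mu^0)^{-1/2}\bzeta^0_\eps = \y + \nabla\lambda$, so $\rot(\mu^0)^{-1/2}\bzeta^0_\eps = \rot \y = \G$. For the divergence I rewrite $(\mu^0)^{1/2}\bzeta^0_\eps = \w + \delta\w$ with $\delta\w := \mu^0(\boldsymbol{\alpha} + \nabla\lambda)$, using $\mu^0\y=\w+\mu^0\boldsymbol{\alpha}$. The variational identity applied to an arbitrary $\phi \in H^1(\O)$ now reads $\int_\O \delta\w \cdot \nabla \phi\, d\x = 0$, which encodes at once $\div\delta\w = 0$ (distributionally, hence in $L_2$) and $(\delta\w)_n\vert_{\partial \O} = 0$ in the sense of Definition~\ref{def1}. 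Combined with the data for $\w$, this gives $\div(\mu^0)^{1/2}\bzeta^0_\eps = F$ and $((\mu^0)^{1/2}\bzeta^0_\eps)_n\vert_{\partial \O} = 0$, so $\bzeta^0_\eps \in \Dom{\mathfrak l}_0$ and \eqref{lemmm1} holds. Estimate \eqref{lemmm2} follows by chaining $\|\y\|_{L_2} \le \|\mu^{-1}\|_{L_\infty}^{1/2}\|\bzeta\|_{L_2}$, $\|\boldsymbol{\alpha}\|_{L_2} \le C\|\w\|_{L_2} \le C'\|\mu\|_{L_\infty}^{1/2}\|\bzeta\|_{L_2}$, and $\|\nabla\lambda\|_{L_2}\le c\|\boldsymbol{\alpha}\|_{L_2}$, all with constants depending only on $\|\mu\|_{L_\infty}$ and $\|\mu^{-1}\|_{L_\infty}$.

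\textbf{The $H^1$-bound and the main obstacle.} Since $\partial \O \in C^{1,1}$ and $\bzeta^0_\eps \in \Dom{\mathfrak l}_0$, the $H^1$-characterization of this domain recalled in Subsection~\ref{sec4.2} (following \cite{BS1, F}) yields
\[
\|\bzeta^0_\eps\|_{H^1(\O)} \le C\bigl(\|\bzeta^0_\eps\|_{L_2(\O)} + \|\div(\mu^0)^{1/2}\bzeta^0_\eps\|_{L_2(\O)} + \|\rot(\mu^0)^{-1/2}\bzeta^0_\eps\|_{L_2(\O)}\bigr)
\]
with $C$ depending only on $\mu^0$ and $\O$; inserting \eqref{lemmm1} and \eqref{lemmm2} produces \eqref{lemmm3}. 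The hardest technical point will be that $\div(\mu^0\boldsymbol{\alpha})$ is not expected to lie in $L_2$, since $\boldsymbol{\alpha}$ involves the rough coefficient $(\mu^\eps)^{-1}$, so no classical strong form of the Neumann problem for $\lambda$ is available. Staying entirely inside the variational formulation resolves this: the single identity defining $\lambda$ against \emph{all} $\phi \in H^1(\O)$ simultaneously encodes the distributional divergence-free condition on $\delta\w$ and the vanishing of its generalized normal trace, which is exactly what is needed to place $\bzeta^0_\eps$ in $\Dom{\mathfrak l}_0$ without assuming any additional regularity of $\mu^\eps$ or $\bzeta$.
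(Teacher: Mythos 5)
Your operator $T_\eps$ is in fact identical to the paper's: the paper writes $\bzeta^0_\eps = (\mu^0)^{1/2}\bigl((\mu^\eps)^{-1/2}\bzeta + \nabla(\phi_{\eps,1}-\phi_{\eps,2})\bigr)$, where $\phi_{\eps,1}$ and $\phi_{\eps,2}$ solve two separate Neumann problems, and comparing the defining variational identities shows that $\phi_{\eps,1}-\phi_{\eps,2}$ satisfies exactly your single weak Neumann problem for $\lambda$. So the construction and the verification of \eqref{lemmm1} and \eqref{lemmm2} agree with the paper; the remark following Lemma~\ref{lem5.1} (formula \eqref{lemm12}, used later at \eqref{t10} and \eqref{ttt10}) holds for your $T_\eps$ as well, with $\nabla\lambda$ in place of $\nabla(\phi_{\eps,1}-\phi_{\eps,2})$.

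Where you genuinely diverge is the route to the $H^1$-bound \eqref{lemmm3}. The paper splits $\bzeta^0_\eps = \bzeta^0_{\eps,1}+\bzeta^0_{\eps,2}$ with $\bzeta^0_{\eps,1}=(\mu^0)^{1/2}\nabla\phi_{\eps,1}$ curl-free and $\bzeta^0_{\eps,2}$ divergence-free, estimates $\bzeta^0_{\eps,1}$ via $H^2$-regularity of the \emph{strong} Neumann problem \eqref{lemm1} (so its $H^1$-norm is controlled purely by $\|\div(\mu^\eps)^{1/2}\bzeta\|_{L_2}$), and then applies the Gaffney-type coercivity estimate only to $\bzeta^0_{\eps,2}$. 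You instead apply that coercivity estimate once, to $\bzeta^0_\eps$ itself, after noting that \eqref{lemmm1} and the boundary condition put $\bzeta^0_\eps$ into $\Dom\mathfrak{l}_0$. That is correct and more economical: you never need $H^2$-regularity, only the Gaffney inequality for vector fields with vanishing normal trace in a $C^{1,1}$ domain (the basis of \eqref{coercive}). One small caution: the constant in \eqref{coercive} as printed nominally involves $\eta^0$, so to justify that $\mathcal{C}_{11}$ depends only on $\mu$ and $\O$ (as the lemma asserts) you should invoke the Gaffney estimate in the form depending only on the weight matrices $(\mu^0)^{\pm 1/2}$ — exactly what \cite{BS1, F} supply — rather than literally the coercivity of $\mathfrak{l}_0$. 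The paper's own proof, where $c_2$ in \eqref{lemm11} is claimed to depend only on $\mu$ and $\O$ despite being ``cf.~\eqref{coercive}'', makes the same silent appeal.
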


\begin{proof}
We consider two auxiliary problems. 

\textbf{The first auxiliary problem}. Let $\bzeta \in \Dom {\mathfrak l}_\eps$.
Denote $f_\eps := \div (\mu^\eps)^{1/2} \bzeta \in L_2(\O)$.
Suppose that $\phi_{\eps,1} \in H^1(\O)$ is the  solution of the Neumann problem
\begin{equation}
\label{lemm1}
\div \mu^0 \nabla \phi_{\eps,1}(\x) = f_\eps(\x), \ \x \in \O; \quad (\mu^0 \nabla \phi_{\eps,1})_n\vert_{\partial \O}=0. 
\end{equation}
The solvability condition is fulfilled, since
$$
\int_\O f_\eps(\x)\, d\x = \int_\O \div (\mu^\eps)^{1/2} \bzeta \,d\x =0,
$$
by the condition  $((\mu^\eps)^{1/2} \bzeta)_n\vert_{\partial \O}=0$.
The solution of problem \eqref{lemm1} is defined up a constant summand; 
we need only the gradient of the solution. The solution satisfies the identity
\begin{equation}
\label{lemm2}
\int_\O \langle \mu^0 \nabla \phi_{\eps,1}, \nabla \omega  \rangle \, d\x = 
\int_O \langle (\mu^\eps)^{1/2} \bzeta, \nabla \omega  \rangle \, d\x,
\quad \forall \omega \in H^1(\O).  
\end{equation}

We put ${\bzeta}^0_{\eps,1} = (\mu^0)^{1/2} \nabla \phi_{\eps,1}$. Then 
\begin{equation}
\label{lemm3}
\div (\mu^0)^{1/2}  {\bzeta}^0_{\eps,1} = \div (\mu^\eps)^{1/2} \bzeta, 
 \quad 
\rot (\mu^0)^{-1/2}  {\bzeta}^0_{\eps,1} = 0,
\quad  
((\mu^0)^{1/2}  {\bzeta}^0_{\eps,1})_n \vert_{\partial \O}=0.
\end{equation}
Substituting $\omega = \phi_{\eps,1}$ in \eqref{lemm2}, we arrive at the estimate 
\begin{equation}
\label{lemm4}
\| {\bzeta}^0_{\eps,1} \|_{L_2(\O)} \leqslant \|\mu\|_{L_\infty}^{1/2} \|\mu^{-1}\|_{L_\infty}^{1/2} \| \bzeta \|_{L_2(\O)}.
\end{equation}
The smoothness of the boundary  ($\partial \O \in C^{1,1}$) ensures the regularity of the solution 
of problem \eqref{lemm1}: we have $\phi_{\eps,1} \in H^2(\O)$ and 
\begin{equation}
\label{lemm5}
\| {\bzeta}^0_{\eps,1} \|_{H^1(\O)} \leqslant  {c}_1 \| f_\eps \|_{L_2(\O)} = c_1 
\| \div (\mu^\eps)^{1/2} \bzeta \|_{L_2(\O)}.
\end{equation}
The constant $c_1$ depends only on the norms $\| \mu \|_{L_\infty}$, $\| \mu^{-1} \|_{L_\infty}$, and the domain $\O$.

\textbf{The second auxiliary problem}. Let $\bzeta \in \Dom {\mathfrak l}_\eps$.
Denote $g_\eps := \div \mu^0 (\mu^\eps)^{-1/2} \bzeta \in H^{-1}(\O)$.
Let $\phi_{\eps,2} \in H^1(\O)$ be the solution of the Neumann problem
\begin{equation*}
\div \mu^0 \nabla \phi_{\eps,2}(\x) = g_\eps(\x), \ \x \in \O; 
\quad (\mu^0 \nabla \phi_{\eps,2})_n\vert_{\partial \O}= (\mu^0 (\mu^\eps)^{-1/2} \bzeta)_n\vert_{\partial \O}. 
\end{equation*}
Strictly speaking, the solution $\phi_{\eps,2}$ is understood as an element of  $H^1(\O)$ satisfying the identity
\begin{equation}
\label{lemm7}
\int_\O \langle \mu^0 \nabla \phi_{\eps,2}, \nabla \omega  \rangle \, d\x = 
\int_O \langle \mu^0 (\mu^\eps)^{-1/2} \bzeta, \nabla \omega  \rangle \, d\x,
\quad \forall \omega \in H^1(\O).  
\end{equation}
The solution is defined up to a constant summand; we need only the gradient of the solution.
We put ${\bzeta}^0_{\eps,2} = (\mu^0)^{1/2} ((\mu^\eps)^{-1/2}\bzeta - \nabla \phi_{\eps,2})$. Then 
\begin{equation}
\label{lemm8}
\div (\mu^0)^{1/2}  {\bzeta}^0_{\eps,2} = 0, 
\quad 
\rot (\mu^0)^{-1/2}  {\bzeta}^0_{\eps,2} = \rot (\mu^\eps)^{-1/2}\bzeta,
\quad  
((\mu^0)^{1/2}  {\bzeta}^0_{\eps, 2})_n \vert_{\partial \O}=0.
\end{equation}
 Substituting $\omega = \phi_{\eps,2}$ in \eqref{lemm7}, we arrive at the estimate
 \begin{equation}
\label{lemm9}
\| (\mu^0)^{1/2} \nabla \phi_{\eps,2}  \|_{L_2(\O)} \leqslant \|\mu\|_{L_\infty}^{1/2} \|\mu^{-1}\|_{L_\infty}^{1/2} 
\| \bzeta \|_{L_2(\O)}.
\end{equation}
Hence,
\begin{equation}
\label{lemm10}
\| {\bzeta}^0_{\eps,2}   \|_{L_2(\O)} \leqslant 2 \|\mu\|_{L_\infty}^{1/2} \|\mu^{-1}\|_{L_\infty}^{1/2} \| \bzeta \|_{L_2(\O)}.
\end{equation}
From \eqref{lemm8} and \eqref{lemm10} it follows that  ${\bzeta}^0_{\eps,2} \in \Dom {\mathfrak l}_0 \subset H^1(\O;\C^3)$. 
We have  (cf. \eqref{coercive})
\begin{equation}
\label{lemm11}
\| {\bzeta}^0_{\eps,2}   \|_{H^1(\O)} \leqslant c_2 ( \|\rot (\mu^\eps)^{-1/2}\bzeta \|_{L_2(\O)} + \| \bzeta \|_{L_2(\O)}).
\end{equation}
The constant $c_2$ depends only on the norms $\| \mu \|_{L_\infty}$, $\| \mu^{-1} \|_{L_\infty}$, and the domain $\O$.

We put ${\bzeta}^0_\eps = {\bzeta}^0_{\eps,1} + {\bzeta}^0_{\eps,2}$. Then relations \eqref{lemm3} and \eqref{lemm8}
imply \eqref{lemmm1}. Combining \eqref{lemm4} and \eqref{lemm10}, we obtain 
estimate \eqref{lemmm2} with the constant ${\mathcal C}_{10}= 3 \|\mu\|^{1/2}_{L_\infty} \| \mu^{-1}\|^{1/2}_{L_\infty}$. 
Finally, \eqref{lemm5} and \eqref{lemm11} imply inequality \eqref{lemmm3} with the constant ${\mathcal C}_{11}=\max \{c_1, c_2\}$.
\end{proof}

\begin{remark}
Under the assumptions of Lemma~{\rm \ref{lem5.1}}, we have
\begin{equation}
\label{lemm12}
 (\mu^0)^{-1/2} {\bzeta}^0_\eps - (\mu^\eps)^{-1/2} \bzeta = \nabla (\phi_{\eps,1} - \phi_{\eps,2}).
 \end{equation}
\end{remark}

Let $0< \eps \leqslant \eps_0$ (where $\eps_0$ is chosen according to Condition \ref{cond1}). 
We fix a cut-off function $\theta_\eps(\x)$ onto the $(2\eps)$-neighborhood of the boundary $\partial \O$
satisfying the following conditions:
\begin{equation}
\label{srezka}
\begin{aligned}
\theta_\eps \in C_0^\infty(\R^3), \quad 0 \leqslant \theta_\eps(\x) \leqslant 1, 
\quad \operatorname{supp} \theta_\eps \subset (\partial  \O)_{2\eps},
\\
\theta_\eps(\x)=1 \ \text{for}\ \x \in (\partial \O)_\eps, \quad  \eps |\nabla \theta_\eps(\x)| \leqslant \kappa=
\operatorname{Const}.
\end{aligned}
 \end{equation}
The constant $\kappa$ depends only on the domain $\O$.

\subsection{Analysis of the first term in ${\mathcal R}_\eps[\bzeta]$}
Denote the first summand in  \eqref{pogr0} by ${\mathcal J}_\eps[\bzeta]$
and represent it as the sum of four terms:
\begin{equation}
\label{rhs1}
{\mathcal J}_\eps[\bzeta] = \sum_{l=1}^4 {\mathcal J}_\eps^{(l)}[\bzeta], \quad  \bzeta \in \Dom {\mathfrak l}_\eps,
 \end{equation}
where 
\begin{align}
\label{rhs2}
{\mathcal J}_\eps^{(1)}[\bzeta] &=
(Y_\eta^\eps (\eta^0)^{-1} \rot   (\mu^0)^{-1/2}
 S_\eps   (\wt{\bvarphi}_0 + \wt{\brho}_\eps), \rot (\mu^\eps)^{-1/2} \bzeta )_{L_2(\O)},
\\
\label{rhs3}
{\mathcal J}_\eps^{(2)}[\bzeta] &=
( (\eta^0)^{-1} \rot   (\mu^0)^{-1/2}
  {\bvarphi}_0, \rot (\mu^\eps)^{-1/2} \bzeta )_{L_2(\O)},
\\
\label{rhs4}
{\mathcal J}_\eps^{(3)}[\bzeta] &=
((\eta^0)^{-1} \rot   (\mu^0)^{-1/2}
 {\brho}_\eps, \rot (\mu^\eps)^{-1/2} \bzeta )_{L_2(\O)},
\\
\label{rhs4a}
{\mathcal J}_\eps^{(4)}[\bzeta] &=
( (\eta^0)^{-1} \rot   (\mu^0)^{-1/2}
 (S_\eps-I) (\wt{\bvarphi}_0+  \wt{\brho}_\eps), \rot (\mu^\eps)^{-1/2} \bzeta )_{L_2(\O)}.
\end{align}

\begin{lemma}\label{lem5.3}
For $0< \eps \leqslant \eps_1$ the term \eqref{rhs2} satisfies
\begin{equation}
\label{t0}
|{\mathcal J}_\eps^{(1)}[\bzeta]| \leqslant {\mathcal C}_{12} \eps^{1/2} \|\r\|_{L_2(\O)}
\|  \rot (\mu^\eps)^{-1/2} \bzeta \|_{L_2(\O)}, \quad  \bzeta \in \Dom {\mathfrak l}_\eps. 
\end{equation}
The constant ${\mathcal C}_{12}$ depends on the norms $\| \eta \|_{L_\infty}$,   $\| \eta^{-1} \|_{L_\infty}$,
$\| \mu \|_{L_\infty}$, $\| \mu^{-1} \|_{L_\infty}$, the domain $\O$, and the parameters of the lattice $\Gamma$.
\end{lemma}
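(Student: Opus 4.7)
The plan is to split using a boundary cut-off, treating the $2\eps$-neighborhood of $\partial\O$ by brute force via Lemma~\ref{lem02} and the interior by exploiting the gradient structure $(Y_\eta^\eps)_{kj}=\eps\,\partial_k\Phi_j^\eps$. First I would set $\h := (\eta^0)^{-1}\rot(\mu^0)^{-1/2} S_\eps(\wt\bvarphi_0 + \wt\brho_\eps)$ and $\bxi := \rot(\mu^\eps)^{-1/2}\bzeta$, so that ${\mathcal J}_\eps^{(1)}[\bzeta] = (Y_\eta^\eps\h, \bxi)_{L_2(\O)}$. Since $S_\eps$ commutes with $\rot$ and with multiplication by constant matrices, one has $\h = S_\eps\tilde\h$ with $\tilde\h := (\eta^0)^{-1}\rot(\mu^0)^{-1/2}(\wt\bvarphi_0+\wt\brho_\eps)\in H^1(\R^3;\C^3)$ and $\|\tilde\h\|_{H^1(\R^3)} \le C\|\r\|_{L_2(\O)}$ by \eqref{Mrr11}, \eqref{Mrr14}; in particular $\|\h\|_{H^1(\O)} \le C\|\r\|_{L_2(\O)}$. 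With the cut-off $\theta_\eps$ from \eqref{srezka} I decompose
$$
{\mathcal J}_\eps^{(1)}[\bzeta] = (\theta_\eps Y_\eta^\eps\h, \bxi)_{L_2(\O)} + ((1-\theta_\eps)Y_\eta^\eps\h, \bxi)_{L_2(\O)}.
$$

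For the boundary piece I would apply Cauchy--Schwarz and estimate $\int_{(\partial\O)_{2\eps}}|Y_\eta^\eps\h|^2\,d\x$ by expanding in components: Lemma~\ref{lem02} applied scalarly to each $|(Y_\eta^\eps)_{kj}|^2|S_\eps\tilde h_l|^2$, summed over $k,j,l$ and combined with the bound \eqref{eff6b} for $\|Y_\eta\|_{L_2(\Omega)}$, yields $\|\theta_\eps Y_\eta^\eps\h\|_{L_2(\O)} \le C\eps^{1/2}\|\r\|_{L_2(\O)}$, which already gives the required $\eps^{1/2}$-bound for this part.

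For the interior piece I would exploit the identity $(Y_\eta^\eps)_{kj}(\x) = \eps\,\partial_k(\Phi_j^\eps)(\x)$ and use the Leibniz rule to write
$$
Y_\eta^\eps\h = \eps\,\nabla\omega_\eps - \eps\sum_{j=1}^{3}\Phi_j^\eps\,\nabla h_j, \qquad \omega_\eps := \sum_{j=1}^{3} h_j\,\Phi_j^\eps.
$$
The second sum, paired with $(1-\theta_\eps)\bxi$, contributes $O(\eps\|\r\|_{L_2(\O)}\|\bxi\|_{L_2(\O)})$ in view of the bound $\|\Phi_j^\eps\|_{L_\infty}\le\wh C_\eta$ from Remark~\ref{rem2.1}(4) and $\|\nabla\h\|_{L_2(\O)}\le C\|\r\|_{L_2(\O)}$. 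For the gradient term I decompose $(1-\theta_\eps)\nabla\omega_\eps = \nabla((1-\theta_\eps)\omega_\eps) + \omega_\eps\,\nabla\theta_\eps$. The function $(1-\theta_\eps)\omega_\eps$ lies in $H^1_0(\O)$ since it vanishes in an $\eps$-neighborhood of $\partial\O$ and $\omega_\eps\in H^1(\O)$ (the latter because $Y_\eta^\eps$ is a multiplier $H^1(\R^3)\to L_2(\R^3)$ by Lemma~\ref{lem_PSu}); moreover $\bxi=\rot(\mu^\eps)^{-1/2}\bzeta$ is distributionally divergence free. Hence
$$
\int_\O \langle\nabla((1-\theta_\eps)\omega_\eps), \bxi\rangle\,d\x = 0.
$$
The residual $\eps\omega_\eps\,\nabla\theta_\eps$ is supported in $B_{2\eps}$, and using $\eps|\nabla\theta_\eps|\le\kappa$, $|\omega_\eps|\le\sqrt{3}\,\wh C_\eta|\h|$, together with Lemma~\ref{lem01}(1) applied to $\h$, this term contributes $O(\eps^{1/2}\|\r\|_{L_2(\O)}\|\bxi\|_{L_2(\O)})$.

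The hardest part will be the vanishing integration-by-parts step: one must verify $\omega_\eps\in H^1(\O)$, so that $(1-\theta_\eps)\omega_\eps\in H^1_0(\O)$, and then invoke the distributional identity that gradients of $H^1_0$-functions are $L_2$-orthogonal to curl-fields. This is the only place where the distributional divergence freeness of $\bxi$ (rather than any boundary regularity of $\bzeta$) is used, which is essential since $\Dom{\mathfrak l}_\eps$ need not be contained in $H^1(\O;\C^3)$. Collecting the four contributions gives \eqref{t0} with a constant ${\mathcal C}_{12}$ depending only on the stated parameters.
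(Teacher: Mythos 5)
Your proof is correct and follows essentially the same route as the paper: the key ingredients in both are the gradient structure $Y_\eta^\eps=\eps\nabla\Phi^\eps$, the Leibniz rule, the cut-off $\theta_\eps$, and the vanishing of $\int_\O\langle\nabla\phi,\rot(\mu^\eps)^{-1/2}\bzeta\rangle\,d\x$ for $\phi\in H^1_0(\O)$, with boundary estimates via Lemmas~\ref{lem01}--\ref{lem02}. The only differences are organizational (you apply the cut-off before the Leibniz expansion, the paper after, and you invoke the $L_\infty$ bound $\wh{C}_\eta$ on $\Phi_j$ where the paper uses the $L_2(\Omega)$ bound via Lemma~\ref{lem02}), but the resulting set of terms and their estimates coincide.
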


\begin{proof}
Denote $h_{\eps,j}:=[(\eta^0)^{-1} \rot   (\mu^0)^{-1/2}
 S_\eps   (\wt{\bvarphi}_0 + \wt{\brho}_\eps)]_j$, $j=1,2,3$. 
The columns of the matrix $Y_\eta^\eps$ are given by  $(\nabla \Phi_j)^\eps=\eps \nabla \Phi_j^\eps$, $j=1,2,3$.
Obviously, $\eps (\nabla \Phi_j^\eps) h_{\eps,j} = \eps \nabla (\Phi_j^\eps h_{\eps,j}) - 
\eps \Phi_j^\eps \nabla h_{\eps,j}$. Hence,
\begin{equation}
\label{rhs5}
{\mathcal J}_\eps^{(1)}[\bzeta] = \wh{\mathcal J}_\eps^{(1)}[\bzeta]
- \wt{\mathcal J}_\eps^{(1)}[\bzeta],
\end{equation}
where
\begin{align}
\label{rhs6}
\wh{\mathcal J}_\eps^{(1)}[\bzeta]=
\eps \sum_{j=1}^3 \left(  \nabla (\Phi_j^\eps h_{\eps,j}), \rot (\mu^\eps)^{-1/2} \bzeta \right)_{L_2(\O)},
\\
\wt{\mathcal J}_\eps^{(1)}[\bzeta]=
\eps \sum_{j=1}^3 \left( \Phi_j^\eps \nabla h_{\eps,j}, \rot (\mu^\eps)^{-1/2} \bzeta \right)_{L_2(\O)}.
\label{rhs7}
\end{align}

Suppose that $\theta_\eps$ is the cut-off function satisfying \eqref{srezka}. 
The term \eqref{rhs6} can be written as
\begin{equation}
\label{rhs8}
\wh{\mathcal J}_\eps^{(1)}[\bzeta]=
\eps \sum_{j=1}^3 \left(  \nabla ( \theta_\eps \Phi_j^\eps h_{\eps,j}), \rot (\mu^\eps)^{-1/2} \bzeta \right)_{L_2(\O)},
\end{equation}
since
$$
\left(  \nabla ( (1-\theta_\eps) \Phi_j^\eps h_{\eps,j}), \rot (\mu^\eps)^{-1/2} \bzeta \right)_{L_2(\O)}=0,
$$
which can be checked by integration by parts and using the identity $\div \rot=0$.
(When checking, we can replace $\rot (\mu^\eps)^{-1/2} \bzeta$ by $\rot (\mu^0)^{-1/2} {\bzeta}^0_\eps$,
where ${\bzeta}^0_\eps \in \Dom {\mathfrak l}_0$. First we assume that  ${\bzeta}^0_\eps \in H^2(\O;\C^3)$, 
and next we  close the result by continuity.)

Next, we have
\begin{equation}
\label{rhs9}
\eps \nabla ( \theta_\eps \Phi_j^\eps h_{\eps,j}) = (\eps \nabla  \theta_\eps) \Phi_j^\eps h_{\eps,j}+
 \theta_\eps (\nabla \Phi_j)^\eps h_{\eps,j} + \eps \theta_\eps \Phi_j^\eps \nabla h_{\eps,j}. 
\end{equation}
The first term on the right is estimated with the help of \eqref{srezka} and Lemma \ref{lem02}:
$$
\begin{aligned}
&\| (\eps \nabla  \theta_\eps) \Phi_j^\eps h_{\eps,j} \|_{L_2(\O)}
\leqslant \kappa \| \Phi_j^\eps h_{\eps,j}\|_{L_2((\partial \O)_{2\eps} )} 
\\
&\leqslant
\eps^{1/2} \kappa \beta_*^{1/2} |\Omega|^{-1/2} \| \Phi_j\|_{L_2(\Omega)} \| \eta^{-1}\|_{L_\infty}
\| \mu^{-1}\|_{L_\infty}^{1/2} \| \wt{\bvarphi}_0 + \wt{\brho}_\eps\|_{H^2(\R^3)}
\end{aligned}
$$
for $0< \eps \leqslant \eps_1$.
Together with  \eqref{efff}, \eqref{Mrr11}, and \eqref{Mrr14}, this implies 
$$
\| (\eps \nabla  \theta_\eps) \Phi_j^\eps h_{\eps,j} \|_{L_2(\O)}
\leqslant {\mathcal C}_{12}' \eps^{1/2} \| \r \|_{L_2( \O )},\quad 0< \eps \leqslant \eps_1,
$$
where ${\mathcal C}_{12}' = \kappa \beta_*^{1/2}  (2 r_0)^{-1} \|\eta\|_{L_\infty}^{1/2} \| \eta^{-1}\|^{3/2}_{L_\infty}
\| \mu^{-1}\|_{L_\infty}^{1/2}({\mathcal C}_1 + {\mathcal C}_2)$.

To estimate the second term in the right-hand side of \eqref{rhs9}, we apply 
\eqref{srezka} and Lemma \ref{lem02}:
$$
\begin{aligned}
&\|  \theta_\eps (\nabla \Phi_j)^\eps h_{\eps,j} \|_{L_2(\O)}
\leqslant \| (\nabla \Phi_j)^\eps h_{\eps,j}\|_{L_2((\partial \O)_{2\eps} )} 
\\
&\leqslant
\eps^{1/2} \beta_*^{1/2} |\Omega|^{-1/2} \| \nabla \Phi_j\|_{L_2(\Omega)} \| \eta^{-1}\|_{L_\infty}
\| \mu^{-1}\|_{L_\infty}^{1/2} \| \wt{\bvarphi}_0 + \wt{\brho}_\eps\|_{H^2(\R^3)}
\end{aligned}
$$
for $0< \eps \leqslant \eps_1$.
 Taking  \eqref{eff6b}, \eqref{Mrr11}, and \eqref{Mrr14} into account, we arrive at
 $$
\| \theta_\eps ( \nabla \Phi_j)^\eps h_{\eps,j} \|_{L_2(\O)}
\leqslant {\mathcal C}_{12}'' \eps^{1/2} \| \r \|_{L_2( \O)},
\quad 0< \eps \leqslant \eps_1,
$$
where ${\mathcal C}_{12}'' =  \beta_*^{1/2}  \|\eta\|_{L_\infty}^{1/2} \| \eta^{-1}\|^{3/2}_{L_\infty}
\| \mu^{-1}\|_{L_\infty}^{1/2}({\mathcal C}_1 + {\mathcal C}_2)$.
 
Finally, the third term in the right-hand side of \eqref{rhs9} is estimated by using Proposition 
\ref{prop f^eps S_eps}:
\begin{equation}
\label{t1}
\begin{aligned}
&\eps \| \theta_\eps \Phi_j^\eps \nabla h_{\eps,j}\|_{L_2(\O)}
\leqslant \eps \| \Phi_j^\eps \nabla h_{\eps,j}\|_{L_2(\R^3)}
\\
&\leqslant \eps |\Omega|^{-1/2} \| \Phi_j \|_{L_2(\Omega)} \| \eta^{-1}\|_{L_\infty}
\| \mu^{-1}\|_{L_\infty}^{1/2} \| \wt{\bvarphi}_0 + \wt{\brho}_\eps\|_{H^2(\R^3)}.
\end{aligned}
\end{equation}
Together with \eqref{efff}, \eqref{Mrr11}, and \eqref{Mrr14}, this yields
\begin{equation}
\label{t2}
\eps \| \theta_\eps \Phi_j^\eps \nabla h_{\eps,j}\|_{L_2(\O)}
\leqslant {\mathcal C}_{12}''' \eps \| \r \|_{L_2(\O)},
\end{equation}
where ${\mathcal C}_{12}''' =  (2 r_0)^{-1} \|\eta\|_{L_\infty}^{1/2} \| \eta^{-1}\|^{3/2}_{L_\infty}
\| \mu^{-1}\|_{L_\infty}^{1/2}({\mathcal C}_1 + {\mathcal C}_2)$.

As a result, we obtain
\begin{equation}
\label{t2a}
\| \eps \nabla ( \theta_\eps \Phi_j^\eps h_{\eps,j}) \|_{L_2(\O)} \leqslant \wh{\mathcal C}_{12} \eps^{1/2}\|\r\|_{L_2(\O)},
\quad 0< \eps \leqslant \eps_1,
\end{equation}
where $\wh{\mathcal C}_{12} = {\mathcal C}_{12}' + {\mathcal C}_{12}''  + {\mathcal C}_{12}'''$. 
This imples  the following estimate for the term \eqref{rhs8}:
\begin{equation}
\label{t3}
|\wh{\mathcal J}_\eps^{(1)}[\bzeta]| \leqslant 3 \wh{\mathcal C}_{12}
\eps^{1/2} \| \r\|_{L_2(\O)} \| \rot (\mu^\eps)^{-1/2} \bzeta \|_{L_2(\O)},\quad 0< \eps \leqslant \eps_1.
\end{equation}

Now, we consider the term  \eqref{rhs7}. Similarly to  \eqref{t1}, \eqref{t2}, 
$$
\|  \Phi_j^\eps \nabla h_{\eps,j} \|_{L_2(\O)} \leqslant {\mathcal C}_{12}''' \|\r\|_{L_2(\O)}.
$$
Consequently,
\begin{equation}
\label{t4}
|\wt{\mathcal J}_\eps^{(1)}[\bzeta]| \leqslant 3 {\mathcal C}'''_{12}
\eps \| \r\|_{L_2(\O)} \| \rot (\mu^\eps)^{-1/2} \bzeta \|_{L_2(\O)}.
\end{equation}

As a result, relations \eqref{rhs5}, \eqref{t3}, and \eqref{t4} imply the required  estimate \eqref{t0} with the constant
${\mathcal C}_{12} = 3 (\wh{\mathcal C}_{12} + {\mathcal C}_{12}''')$. 
\end{proof}

The term \eqref{rhs4a} is estimated by Proposition~\ref{prop_Seps - I}:
$$
|{\mathcal J}_\eps^{(4)}[\bzeta]| \leqslant \eps r_1
\| \eta^{-1}\|_{L_\infty}
\| \mu^{-1}\|_{L_\infty}^{1/2} \| \wt{\bvarphi}_0 + \wt{\brho}_\eps\|_{H^2(\R^3)}   \| \rot (\mu^\eps)^{-1/2} \bzeta \|_{L_2(\O)}.
$$
Taking  \eqref{Mrr11} and \eqref{Mrr14} into account, we arrive at 
\begin{equation}
\label{t44}
|{\mathcal J}_\eps^{(4)}[\bzeta]| 
\leqslant {\mathcal C}_{13} \eps \| \r \|_{L_2( \O)}  \| \rot (\mu^\eps)^{-1/2} \bzeta \|_{L_2(\O)},
\end{equation}
where ${\mathcal C}_{13} =  r_1   \| \eta^{-1}\|_{L_\infty}
\| \mu^{-1}\|_{L_\infty}^{1/2}({\mathcal C}_1 + {\mathcal C}_2)$.

We transform the term \eqref{rhs3}, using Lemma \ref{lem5.1}. Let ${\bzeta}^0_\eps = T_\eps \bzeta$.
  Then ${\bzeta}^0_\eps \in \Dom {\mathfrak l}_0$ and 
  $\rot (\mu^\eps)^{-1/2} \bzeta = \rot (\mu^0)^{-1/2} {\bzeta}^0_\eps$.
Hence,
\begin{equation*}
{\mathcal J}_\eps^{(2)}[\bzeta] = 
((\eta^0)^{-1} \rot (\mu^0)^{-1/2} \bvarphi_0, \rot (\mu^0)^{-1/2} {\bzeta}^0_\eps)_{L_2(\O)}.
\end{equation*}
Since $\bvarphi_0$ is the  solution of  problem \eqref{Mrr4}, we have  
\begin{equation}
\label{t6}
{\mathcal J}_\eps^{(2)}[\bzeta] = 
- ( \bvarphi_0,  {\bzeta}^0_\eps)_{L_2(\O)} + i ((\mu^0)^{-1/2} \r, {\bzeta}^0_\eps)_{L_2(\O)}.
\end{equation}
  
 The term \eqref{rhs4} is transformed similarly:
\begin{equation*}
{\mathcal J}_\eps^{(3)}[\bzeta] = 
((\eta^0)^{-1} \rot (\mu^0)^{-1/2} \brho_\eps, \rot (\mu^0)^{-1/2} {\bzeta}^0_\eps)_{L_2(\O)}.
\end{equation*}
Since $\brho_\eps$ is the  solution of  problem \eqref{Mrr11a}, we have 
\begin{equation}
\label{t8}
{\mathcal J}_\eps^{(3)}[\bzeta] = 
- ( \brho_\eps, {\bzeta}^0_\eps)_{L_2(\O)} + i ((\mu^0)^{-1/2} \r_\eps, {\bzeta}^0_\eps)_{L_2(\O)}.
\end{equation}

\subsection{Further analysis of  ${\mathcal R}_\eps[\bzeta]$}
From \eqref{rhs1}, \eqref{t6}, and \eqref{t8} it follows that the functional \eqref{pogr0} can be represented as
\begin{equation}
\label{t9}
\begin{aligned}
{\mathcal R}_\eps[\bzeta]=&
{\mathcal J}_\eps^{(1)}[\bzeta] + {\mathcal J}_\eps^{(4)}[\bzeta] 
 +((W^\eps_\mu)^* S_\eps (\wt{\bvarphi}_0 + \wt{\brho}_\eps),\bzeta)_{L_2(\O)}
- ( \bvarphi_0, {\bzeta}^0_\eps)_{L_2(\O)}
\\  
&- ( \brho_\eps, {\bzeta}^0_\eps)_{L_2(\O)} + i ((\mu^0)^{-1/2} \r_\eps, {\bzeta}^0_\eps)_{L_2(\O)}
+i (\r, (\mu^0)^{-1/2} {\bzeta}^0_\eps - (\mu^\eps)^{-1/2} \bzeta)_{L_2(\O)}.
\end{aligned}
\end{equation}

By \eqref{lemm12}, the last summand in  \eqref{t9} is equal to zero: 
\begin{equation}
\label{t10}
(\r, (\mu^0)^{-1/2} {\bzeta}^0_\eps - (\mu^\eps)^{-1/2} \bzeta)_{L_2(\O)} =
(\r, \nabla (\phi_{\eps,1} - \phi_{\eps,2}))_{L_2(\O)}=0,
\end{equation}
since $\r \in J_0(\O)$ (see \eqref{5.2}).

Using \eqref{Mrr15}, we represent the third term in  \eqref{t9} as
\begin{equation}
\label{t11}
\begin{aligned}
&((W^\eps_\mu)^* S_\eps (\wt{\bvarphi}_0 + \wt{\brho}_\eps),\bzeta)_{L_2(\O)} =
( \wt{\mu}^\eps (\mu^0)^{-1/2} S_\eps (\wt{\bvarphi}_0 + \wt{\brho}_\eps), (\mu^\eps)^{-1/2}\bzeta)_{L_2(\O)}
\\
&= 
{\mathcal J}_\eps^{(5)}[\bzeta] + {\mathcal J}_\eps^{(6)}[\bzeta]
+ 
(  (\mu^0)^{1/2}  ({\bvarphi}_0 + {\brho}_\eps), (\mu^\eps)^{-1/2}\bzeta)_{L_2(\O)},
\end{aligned}
\end{equation}
where
\begin{align}
\label{t12}
{\mathcal J}_\eps^{(5)}[\bzeta] &=( (\wt{\mu}^\eps - \mu^0) (\mu^0)^{-1/2} S_\eps (\wt{\bvarphi}_0 + \wt{\brho}_\eps), (\mu^\eps)^{-1/2}\bzeta)_{L_2(\O)},
\\
\label{t13}
{\mathcal J}_\eps^{(6)}[\bzeta] &= ( (\mu^0)^{1/2} (S_\eps -I) (\wt{\bvarphi}_0 + \wt{\brho}_\eps), (\mu^\eps)^{-1/2}\bzeta)_{L_2(\O)}.
\end{align}
Now, relations \eqref{t9}--\eqref{t11} imply that
\begin{equation}
\label{ttt9}
\begin{aligned}
&{\mathcal R}_\eps[\bzeta]=
{\mathcal J}_\eps^{(1)}[\bzeta] + {\mathcal J}_\eps^{(4)}[\bzeta] 
 + {\mathcal J}_\eps^{(5)}[\bzeta] + {\mathcal J}_\eps^{(6)}[\bzeta]
 \\
 &+ i ((\mu^0)^{-1/2} \r_\eps, {\bzeta}^0_\eps)_{L_2(\O)}
+ ((\mu^0)^{1/2}  ({\bvarphi}_0 + {\brho}_\eps), (\mu^\eps)^{-1/2} \bzeta - (\mu^0)^{-1/2} {\bzeta}^0_\eps)_{L_2(\O)}.
\end{aligned}
\end{equation}
By \eqref{lemm12}, the last summand in  \eqref{ttt9} is equal to zero: 
\begin{equation}
\label{ttt10}
\begin{aligned}
&((\mu^0)^{1/2}  ({\bvarphi}_0 + {\brho}_\eps), (\mu^\eps)^{-1/2} \bzeta - (\mu^0)^{-1/2} {\bzeta}^0_\eps)_{L_2(\O)}
\\
&=((\mu^0)^{1/2}  ({\bvarphi}_0 + {\brho}_\eps), \nabla (\phi_{\eps,2} - \phi_{\eps,1}))_{L_2(\O)}=0,
\end{aligned}
\end{equation}
because $ (\mu^0)^{1/2}({\bvarphi}_0 + {\brho}_\eps)  \in J_0(\O)$.
According to  \eqref{ttt9} and \eqref{ttt10},
\begin{align}
\label{ttt11}
{\mathcal R}_\eps[\bzeta]=&
{\mathcal J}_\eps^{(1)}[\bzeta] + {\mathcal J}_\eps^{(4)}[\bzeta] 
 + {\mathcal J}_\eps^{(5)}[\bzeta] + {\mathcal J}_\eps^{(6)}[\bzeta]
 +{\mathcal J}_\eps^{(7)}[\bzeta],
 \\
 \label{ttt12}
 {\mathcal J}_\eps^{(7)}[\bzeta] :=&
 i ((\mu^0)^{-1/2} \r_\eps,  {\bzeta}^0_\eps)_{L_2(\O)}.
\end{align}

\subsection{Estimates for the terms ${\mathcal J}_\eps^{(5)}[\bzeta]$ and ${\mathcal J}_\eps^{(6)}[\bzeta]$}
The following lemma is traditional for homogenization theory. For completeness, we give  the proof.

\begin{lemma}
Suppose that $\wt{\mu}(\x)$ is given by  \eqref{eff5} and $\mu^0$ is the effective matrix \eqref{eff6}.
There exist functions $M_{lj}^{(i)}\in \wt{H}^1(\Omega)$, $i,l,j=1,2,3,$ such that
\begin{align}
\label{t14}
\wt{\mu}_{li}(\x) - \mu^0_{li} = \sum_{j=1}^3 \partial_j M^{(i)}_{lj}(\x), \quad l, i =1,2,3, 
\\
\label{t15}
M_{lj}^{(i)}(\x) = - M^{(i)}_{jl}(\x),\quad i,l,j=1,2,3.
\end{align}
We have
\begin{equation}
\label{t16}
\begin{aligned}
\| M_{lj}^{(i)}   \|_{L_2(\Omega)} \leqslant C_M' |\Omega|^{1/2},
\quad C_M' := r_0^{-1} \| \mu\|_{L_\infty},\quad i,l,j=1,2,3,
\\
\| \nabla M_{lj}^{(i)}   \|_{L_2(\Omega)} \leqslant C_M'' |\Omega|^{1/2},
\quad C_M'' :=  2 \| \mu\|_{L_\infty},\quad i,l,j=1,2,3.
\end{aligned}
\end{equation}
\end{lemma}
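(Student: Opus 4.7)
The plan is to construct each matrix $M^{(i)}_{lj}$ via a \emph{vector potential} for the $i$-th column of $\wt{\mu}-\mu^0$, exploiting the $3$-dimensional structure that lets one identify antisymmetric tensors with vectors through the Levi--Civita symbol.

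Fix $i\in\{1,2,3\}$ and introduce the periodic vector-valued function $v^{(i)}(\x)$ with components $v^{(i)}_l(\x):=\wt{\mu}_{li}(\x)-\mu^0_{li}$, i.e.\ the $i$-th column of $\wt{\mu}-\mu^0$. Two properties are crucial. First, $v^{(i)}$ has zero mean over $\Omega$, directly from the definition \eqref{eff6} of $\mu^0$. Second, $v^{(i)}$ is divergence free: by \eqref{eff4} and \eqref{eff5},
\begin{equation*}
\sum_{l=1}^3 \partial_l v^{(i)}_l(\x) = \div\bigl(\wt{\mu}(\x)\e_i\bigr) = \div\bigl(\mu(\x)(\nabla\Psi_i(\x)+\e_i)\bigr)=0.
\end{equation*}

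Next, I would construct a periodic vector potential $W^{(i)}\in\widetilde{H}^1(\Omega;\C^3)$ as the unique zero-mean $\Gamma$-periodic solution of the vector Poisson equation $-\Delta W^{(i)}=\rot v^{(i)}$; solvability is automatic since the right-hand side is a derivative of a periodic function, hence has zero mean. Two identities then follow. Taking the divergence of the equation gives $-\Delta(\div W^{(i)})=0$, so $\div W^{(i)}$ is a periodic harmonic function with zero mean, thus identically zero. Applying $\rot$ to the equation and using $\rot\rot=\nabla\div-\Delta$ together with $\div v^{(i)}=0$ yields $-\Delta(\rot W^{(i)}-v^{(i)})=0$; both functions are periodic with zero mean, so $\rot W^{(i)}=v^{(i)}$.

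Now define
\begin{equation*}
M^{(i)}_{lj}(\x):=\sum_{k=1}^3 \epsilon_{ljk}\,W^{(i)}_k(\x),
\end{equation*}
where $\epsilon_{ljk}$ is the Levi--Civita symbol. The antisymmetry \eqref{t15} is immediate from $\epsilon_{jlk}=-\epsilon_{ljk}$, and the representation \eqref{t14} drops out:
\begin{equation*}
\sum_{j=1}^3 \partial_j M^{(i)}_{lj}(\x) = \sum_{j,k} \epsilon_{ljk}\,\partial_j W^{(i)}_k(\x) = (\rot W^{(i)})_l(\x)=v^{(i)}_l(\x).
\end{equation*}

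For the estimates \eqref{t16}, I would first use the energy identity obtained by testing $-\Delta W^{(i)}=\rot v^{(i)}$ against $W^{(i)}$ and integrating by parts twice (using periodicity): this gives $\|\nabla W^{(i)}\|_{L_2(\Omega)}^2=(v^{(i)},\rot W^{(i)})_{L_2(\Omega)}=\|v^{(i)}\|_{L_2(\Omega)}^2$. Since for fixed $l\neq j$ the function $M^{(i)}_{lj}$ equals $\pm W^{(i)}_k$ for the unique $k$ with $\epsilon_{ljk}\neq 0$, I get $\|\nabla M^{(i)}_{lj}\|_{L_2(\Omega)}\leq \|v^{(i)}\|_{L_2(\Omega)}$. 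To estimate $\|v^{(i)}\|_{L_2(\Omega)}$, I would combine the pointwise bound $|\wt{\mu}\e_i|\le\|\mu\|_{L_\infty}|\nabla\Psi_i+\e_i|$ with the energy estimate obtained from testing \eqref{eff4} against $\Psi_i$, namely $\int_\Omega \langle\mu(\nabla\Psi_i+\e_i),\nabla\Psi_i+\e_i\rangle\,d\x=\mu^0_{ii}|\Omega|\le \|\mu\|_{L_\infty}|\Omega|$, whence $\|\nabla\Psi_i+\e_i\|_{L_2(\Omega)}^2\le \|\mu^{-1}\|_{L_\infty}\|\mu\|_{L_\infty}|\Omega|$. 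The $L_2$ bound on $M^{(i)}_{lj}$ then follows from the periodic Poincar\'e inequality $\|M^{(i)}_{lj}\|_{L_2(\Omega)}\le (2r_0)^{-1}\|\nabla M^{(i)}_{lj}\|_{L_2(\Omega)}$, where $2r_0=\min_j|\b_j|$. The main delicate point is cleanly matching the stated constants $C_M'=r_0^{-1}\|\mu\|_{L_\infty}$ and $C_M''=2\|\mu\|_{L_\infty}$; this is purely a matter of tracking constants through the energy identities and the Poincar\'e inequality.
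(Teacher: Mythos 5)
Your construction is valid and proves the lemma, but it takes a genuinely different route from the paper's. The paper solves nine scalar periodic Poisson problems $\Delta U_{li}=\wt{\mu}_{li}-\mu^0_{li}$, sets $M^{(i)}_{lj}=\partial_j U_{li}-\partial_l U_{ji}$, and obtains \eqref{t14} by showing that $f_i:=\sum_j\partial_j U_{ji}$ is a mean-zero periodic harmonic function, hence zero; this is where the divergence-free property of the columns of $\wt{\mu}-\mu^0$ enters. You instead construct a genuine vector potential $W^{(i)}$ from $-\Delta W^{(i)}=\rot v^{(i)}$ and identify $M^{(i)}$ with the associated antisymmetric tensor via the Levi--Civita symbol; in fact $W^{(i)}=-\rot(U_{1i},U_{2i},U_{3i})$, so the two constructions produce the same $M$. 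Your route is specific to $d=3$, whereas the paper's works in any dimension; in exchange, your energy identity $\|\nabla W^{(i)}\|_{L_2(\Omega)}^2=\|v^{(i)}\|_{L_2(\Omega)}^2$ is a clean substitute for the paper's Fourier-series bound on $\|\nabla^2 U_{li}\|_{L_2(\Omega)}$, and it even saves a factor $2$ when passing to $M^{(i)}_{lj}$.

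The one concrete slip is in your estimate of $\|v^{(i)}\|_{L_2(\Omega)}$. The chain you propose, namely the pointwise bound $|\wt{\mu}\e_i|\le\|\mu\|_{L_\infty}|\nabla\Psi_i+\e_i|$ followed by $\|\nabla\Psi_i+\e_i\|_{L_2(\Omega)}^2\le\|\mu\|_{L_\infty}\|\mu^{-1}\|_{L_\infty}|\Omega|$, yields $\|v^{(i)}\|_{L_2(\Omega)}\le\|\mu\|^{3/2}_{L_\infty}\|\mu^{-1}\|^{1/2}_{L_\infty}|\Omega|^{1/2}$, which carries an extraneous $\|\mu^{-1}\|^{1/2}_{L_\infty}$ and therefore does \emph{not} produce the stated constants $C_M'$, $C_M''$. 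The correct bound bypasses the pointwise step: from $\int_\Omega\wt{\mu}_{li}\,d\x=\mu^0_{li}|\Omega|$ one gets $\|\wt{\mu}_{li}-\mu^0_{li}\|_{L_2(\Omega)}\le\|\wt{\mu}_{li}\|_{L_2(\Omega)}$, and then
\begin{equation*}
\|\wt{\mu}\e_i\|^2_{L_2(\Omega)}=\int_\Omega|\mu(\nabla\Psi_i+\e_i)|^2\,d\x\le\|\mu\|_{L_\infty}\int_\Omega\langle\mu(\nabla\Psi_i+\e_i),\nabla\Psi_i+\e_i\rangle\,d\x=\|\mu\|_{L_\infty}\mu^0_{ii}|\Omega|\le\|\mu\|^2_{L_\infty}|\Omega|,
\end{equation*}
the last inequality by Voigt--Reuss. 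Hence $\|v^{(i)}\|_{L_2(\Omega)}\le\|\mu\|_{L_\infty}|\Omega|^{1/2}$, and combined with your energy identity and the periodic Poincar\'e inequality this gives $\|\nabla M^{(i)}_{lj}\|_{L_2(\Omega)}\le\|\mu\|_{L_\infty}|\Omega|^{1/2}$ and $\|M^{(i)}_{lj}\|_{L_2(\Omega)}\le(2r_0)^{-1}\|\mu\|_{L_\infty}|\Omega|^{1/2}$, which are even slightly sharper than the stated $C_M'$, $C_M''$.
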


\begin{proof}
Let $U_{li}(\x)$ be the  $\Gamma$-periodic solution of the equation
\begin{equation}
\label{t17}
\Delta U_{li}(\x) = \wt{\mu}_{li} (\x) - \mu^0_{li}.
\end{equation}
The solvability condition is ensured by  \eqref{eff6}.
Since the right-hand side of  \eqref{t17} belongs to  $L_2(\Omega)$, then $U_{li} \in \wt{H}^2(\Omega)$. 
We put
\begin{equation}
\label{t18}
M_{lj}^{(i)}(\x) := \partial_j U_{li}(\x) - \partial_l U_{ji}(\x), \quad i,l,j =1,2,3.
\end{equation}
Obviously,  \eqref{t15} holds. 
By \eqref{eff4}, the divergence of the columns of the matrix $\wt{\mu}(\x) - \mu^0$ is equal to zero, i.~e., 
$$
\sum_{j=1}^3 \partial_j (\wt{\mu}_{ji}(\x) - \mu^0_{ji})=0, \quad i=1,2,3.
$$
Denote $f_i(\x) := \sum_{j=1}^3 \partial_j U_{ji}(\x)$. Then, according to \eqref{t17},
$$
\Delta f_i(\x) = \sum_{j=1}^3 \partial_j \Delta U_{ji}(\x) = \sum_{j=1}^3 \partial_j (\wt{\mu}_{ji}(\x) - \mu^0_{ji}) =0.
$$
Thus, $f_i(\x)$ is the periodic solution of the Laplace equation and it has zero mean value.
Hence, $f_i(\x)=0$. Taking \eqref{t18} into account, we obtain 
$$
\sum_{j=1}^3 \partial_j M^{(i)}_{lj}(\x) = \sum_{j=1}^3 \partial_j^2 U_{li}(\x) - \sum_{j=1}^3 
\partial_j \partial_l U_{ji}(\x) 
= \Delta U_{li}(\x) - \partial_l f_i(\x) = \Delta U_{li}(\x).
$$
Together with   \eqref{t17}, this implies the required identity \eqref{t14}.

By the Fourier series, it is easily seen that  the periodic solution of equation  \eqref{t17} satisfies
$$
\begin{aligned}
\| \nabla U_{li}\|_{L_2(\Omega)} &\leqslant (2r_0)^{-1} \| \wt{\mu}_{li} - \mu^0_{li}\|_{L_2(\Omega)},
\\
\| \nabla_2 U_{li}\|_{L_2(\Omega)} &\leqslant \| \wt{\mu}_{li} - \mu^0_{li}\|_{L_2(\Omega)}.
\end{aligned}
$$
In its turn, it follows from equation \eqref{eff4}  that 
$$
\| \wt{\mu}_{li} - \mu^0_{li}\|_{L_2(\Omega)} \leqslant \| \wt{\mu}_{li}\|_{L_2(\Omega)}
\leqslant \|\mu\|_{L_\infty} |\Omega|^{1/2}. 
$$
This leads to the required estimates \eqref{t16}.
\end{proof}

\begin{lemma}\label{lem5.5}
For $0< \eps \leqslant \eps_1$ the term \eqref{t12} satisfies 
\begin{equation}\label{t19}
| {\mathcal J}_\eps^{(5)}[\bzeta]| \leqslant {\mathcal C}_{14} \eps^{1/2} \| \r \|_{L_2(\O)}
\left( \|\bzeta\|_{L_2(\O)} + \|\div (\mu^\eps)^{1/2} \bzeta\|_{L_2(\O)}
+ \|\rot (\mu^\eps)^{-1/2} \bzeta\|_{L_2(\O)} \right).
\end{equation}
The constant ${\mathcal C}_{14}$ depends on the norms $\| \eta \|_{L_\infty}$, $\| \eta^{-1} \|_{L_\infty}$,
$\| \mu \|_{L_\infty}$, $\| \mu^{-1} \|_{L_\infty}$, the parameters of the lattice $\Gamma$, and the domain $\O$.
\end{lemma}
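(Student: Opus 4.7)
The plan rests on the divergence representation $\wt{\mu}^\eps_{li}-\mu^0_{li} = \eps\sum_{j=1}^3 \partial_j M^{(i)\eps}_{lj}$ from the preceding lemma, combined with the antisymmetry $M^{(i)}_{lj}=-M^{(i)}_{jl}$ and the uniform bounds \eqref{t16}. Write $\bpsi := (\mu^0)^{-1/2}S_\eps(\wt{\bvarphi}_0+\wt{\brho}_\eps)$ and $\g := (\mu^\eps)^{-1/2}\bzeta$, and introduce the boundary cut-off $\theta_\eps$ from \eqref{srezka} to split
$$
{\mathcal J}_\eps^{(5)}[\bzeta] = \int_\O \theta_\eps \langle (\wt{\mu}^\eps-\mu^0)\bpsi, \g\rangle\, d\x + \int_\O (1-\theta_\eps) \langle (\wt{\mu}^\eps-\mu^0)\bpsi, \g\rangle\, d\x =: I_{\mathrm{bdy}} + I_{\mathrm{int}}.
$$
The boundary-layer piece $I_{\mathrm{bdy}}$ is supported in $(\partial\O)_{2\eps}$; by Cauchy--Schwarz, the $L_\infty$-bound on $\wt{\mu}^\eps - \mu^0$, and Lemma \ref{lem02} applied to the Steklov-smoothed function $\bpsi$, one gets $|I_{\mathrm{bdy}}|\le C\eps^{1/2}\|\r\|_{L_2(\O)}\|\bzeta\|_{L_2(\O)}$ after invoking \eqref{Mrr11} and \eqref{Mrr14}.

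For $I_{\mathrm{int}}$, I insert the divergence representation and apply Leibniz to obtain
$$
(1-\theta_\eps)(\wt{\mu}^\eps_{li}-\mu^0_{li})\bpsi_i = \eps\sum_j \partial_j\bigl[(1-\theta_\eps) M^{(i)\eps}_{lj}\bpsi_i\bigr] + \eps\sum_j (\partial_j\theta_\eps) M^{(i)\eps}_{lj}\bpsi_i - \eps(1-\theta_\eps)\sum_j M^{(i)\eps}_{lj}\partial_j \bpsi_i.
$$
For each fixed $i$, antisymmetry in $(l,j)$ of $B^{(i)}_{lj}:=(1-\theta_\eps)M^{(i)\eps}_{lj}\bpsi_i$ means that its row-wise divergence $\sum_j \partial_j B^{(i)}_{lj}$ coincides with $(\rot A^{(i)}_\eps)_l$ for the dual vector field $A^{(i)}_{\eps,k} := \tfrac{1}{2}\sum_{l,j}\epsilon_{klj} B^{(i)}_{lj}$, which vanishes in an $\eps$-neighborhood of $\partial\O$. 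Since $(A^{(i)}_\eps)_\tau\vert_{\partial\O}=0$ and $\rot A^{(i)}_\eps \in L_2(\O)$, Definition \ref{def2} justifies the duality $(\rot A^{(i)}_\eps,\g)_{L_2(\O)} = (A^{(i)}_\eps,\rot \g)_{L_2(\O)}$. Proposition \ref{prop f^eps S_eps} together with \eqref{t16}, \eqref{Mrr11}, \eqref{Mrr14} controls $\|A^{(i)}_\eps\|_{L_2(\O)}\le C\|\r\|_{L_2(\O)}$, so the leading term is bounded by $C\eps\|\r\|_{L_2}\|\rot(\mu^\eps)^{-1/2}\bzeta\|_{L_2}$. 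The remaining terms are routine: the $\partial_j\theta_\eps$ summand is supported in the boundary layer and Lemma \ref{lem02} gives a bound $C\eps^{1/2}\|\r\|_{L_2}\|\bzeta\|_{L_2}$ (using $\eps|\nabla \theta_\eps|\le \kappa$), while the $\partial_j\bpsi_i$ summand is $O(\eps\|\r\|_{L_2}\|\bzeta\|_{L_2})$ via Proposition \ref{prop f^eps S_eps}, since $\partial_j\bpsi = (\mu^0)^{-1/2}S_\eps \partial_j(\wt{\bvarphi}_0+\wt{\brho}_\eps)$ and $\|\partial_j(\wt{\bvarphi}_0+\wt{\brho}_\eps)\|_{L_2(\R^3)}\le C\|\r\|_{L_2(\O)}$.

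The main technical obstacle is the curl-duality step. Because $\g$ lives only in $L_2(\O)$ and need not admit a tangential trace, one cannot integrate by parts in $\int_\O \rot A^{(i)}_\eps\cdot\bar{\g}$ by classical means. The antisymmetry of $M^{(i)}_{lj}$ is precisely what recasts the otherwise scalar-valued divergence $\sum_j\partial_j B^{(i)}_{lj}$ as a curl, so that Definition \ref{def2} --- which demands only $\rot\g \in L_2(\O)$ --- becomes applicable. The delicate points are therefore the verifications that the cut-off ensures $(A^{(i)}_\eps)_\tau\vert_{\partial\O}=0$ and that $\rot A^{(i)}_\eps\in L_2(\O)$ (both of which can be read off from the Leibniz identity above), since these are the prerequisites for converting an ill-defined $\partial_j \g_l$ into the well-defined combination appearing in $\rot\g$.
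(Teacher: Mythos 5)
Your proof is correct in substance, and the route is genuinely different from the paper's. The paper also applies the divergence representation $\wt{\mu}^\eps_{li}-\mu^0_{li}=\eps\sum_j\partial_j(M^{(i)}_{lj})^\eps$ with a cut-off, but for the interior piece it invokes the operator $T_\eps$ of Lemma~\ref{lem5.1}: it replaces $(\mu^\eps)^{-1/2}\bzeta$ by $(\mu^0)^{-1/2}\bzeta^0_\eps\in H^1(\O;\C^3)$, shows the discarded gradient part $\nabla(\phi_{\eps,2}-\phi_{\eps,1})$ drops out by antisymmetry \eqref{t15} after a preliminary integration by parts (which itself needs a density-closure step), and then integrates by parts classically against $\bzeta^0_\eps$, ending up with the $H^1$-bound \eqref{lemmm3} that produces all three norms of $\bzeta$. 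You use the same antisymmetry \eqref{t15} but in a different way: you observe that for a matrix antisymmetric in $(l,j)$, the row-wise divergence $\sum_j\partial_j B^{(i)}_{lj}$ is identically the $l$-th component of the curl of a dual vector field $A^{(i)}_\eps$ that vanishes near $\partial\O$, and then transfer the curl onto $\g=(\mu^\eps)^{-1/2}\bzeta$ via Definition~\ref{def2}, which only requires $\rot\g\in L_2(\O)$. This bypasses $T_\eps$ entirely; the estimate for the interior term comes out directly as $C\eps\|\r\|\|\rot(\mu^\eps)^{-1/2}\bzeta\|$, and the final bound you obtain involves only $\|\bzeta\|$ and $\|\rot(\mu^\eps)^{-1/2}\bzeta\|$, which is slightly sharper than (and trivially implies) \eqref{t19}. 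That is a real simplification for this particular lemma.

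Two small imprecisions, neither fatal. First, $\wt{\mu}$ is in general \emph{not} bounded (its entries involve $\nabla\Psi_j\in L_2(\Omega)$ only), so the reference to an ``$L_\infty$-bound on $\wt{\mu}^\eps-\mu^0$'' for $I_{\mathrm{bdy}}$ is wrong; but the bound you want follows directly from Cauchy--Schwarz, Lemma~\ref{lem02} with $f=(\wt{\mu}-\mu^0)(\mu^0)^{-1/2}\in L_2(\Omega)$, and \eqref{Mrr11}, \eqref{Mrr14}, without any $L_\infty$ assumption on $\wt{\mu}$. Second, the statement that both $(A^{(i)}_\eps)_\tau|_{\partial\O}=0$ and $\rot A^{(i)}_\eps\in L_2(\O)$ ``can be read off from the Leibniz identity'' is accurate for the second fact but not quite for the first: that $\rot A^{(i)}_\eps\in L_2$ follows because the other three terms in the rearranged Leibniz identity are $L_2$ by Proposition~\ref{prop f^eps S_eps}, but that $(A^{(i)}_\eps)_\tau|_{\partial\O}=0$ in the sense of Definition~\ref{def2} requires an approximation argument (mollify $A^{(i)}_\eps$, which has compact support in $\O$, so the mollifications remain in $C_0^\infty(\O)$ for small parameter and converge in the graph norm $\|\cdot\|_{L_2}+\|\rot\cdot\|_{L_2}$; for $C_0^\infty$ fields the identity of Definition~\ref{def2} is classical). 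You correctly flag this as the delicate point, so I take the sketch as intended, but in a written-up version this density step should be stated explicitly.
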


\begin{proof}
Denote $[(\mu^0)^{-1/2} S_\eps (\wt{\bvarphi}_0 + \wt{\brho}_\eps ) ]_i =: q_{\eps, i}$, $i=1,2,3$.
We represent the term \eqref{t12} as 
\begin{equation}\label{t19a}
{\mathcal J}_\eps^{(5)}[\bzeta] = \sum_{l,i=1}^3 \left( (\wt{\mu}_{li}^\eps - \mu^0_{li}) q_{\eps,i},
[ (\mu^\eps)^{-1/2} \bzeta]_l \right)_{L_2(\O)}.
\end{equation}
By \eqref{t14}, 
\begin{equation}\label{t19b}
\wt{\mu}_{li}^\eps - \mu^0_{li} = \eps \sum_{j=1}^3 \partial_j (M_{lj}^{(i)})^\eps,
\quad l, i =1,2,3.
\end{equation}
Consequently,
\begin{equation}\label{t20}
{\mathcal J}_\eps^{(5)}[\bzeta] = \wh{\mathcal J}_\eps^{(5)}[\bzeta] - \wt{\mathcal J}_\eps^{(5)}[\bzeta],
\end{equation}
where 
\begin{align}\label{t21}
\wh{\mathcal J}_\eps^{(5)}[\bzeta] &= \eps \sum_{l,i,j=1}^3 \left( \partial_j ((M_{lj}^{(i)})^\eps q_{\eps,i}),
[ (\mu^\eps)^{-1/2} \bzeta]_l \right)_{L_2(\O)},
\\
\label{t22}
\wt{\mathcal J}_\eps^{(5)}[\bzeta] &= \eps \sum_{l,i,j=1}^3 \left(  (M_{lj}^{(i)})^\eps  \partial_j q_{\eps,i},
[ (\mu^\eps)^{-1/2} \bzeta]_l \right)_{L_2(\O)}.
\end{align}

Let $\theta_\eps$ be the cut-off function satisfying \eqref{srezka}. 
We represent the term  \eqref{t21} as
\begin{align}
\label{t23}
&\wh{\mathcal J}_\eps^{(5)}[\bzeta] = {\mathcal J}_\eps^{(8)}[\bzeta]
+ {\mathcal J}_\eps^{(9)}[\bzeta],
\\
\label{t24}
&{\mathcal J}_\eps^{(8)}[\bzeta]=
\eps \sum_{l,i,j=1}^3 \left( \partial_j ( \theta_\eps (M_{lj}^{(i)})^\eps q_{\eps,i}),
[ (\mu^\eps)^{-1/2} \bzeta]_l \right)_{L_2(\O)}, 
\\
\label{t25}
&{\mathcal J}_\eps^{(9)}[\bzeta]=
\eps \sum_{l,i,j=1}^3 \left( \partial_j ( (1-\theta_\eps) (M_{lj}^{(i)})^\eps q_{\eps,i}),
[ (\mu^\eps)^{-1/2} \bzeta]_l \right)_{L_2(\O)}.
\end{align}

Consider the term \eqref{t24}. We have
\begin{equation}\label{t26}
\eps \partial_j ( \theta_\eps (M_{lj}^{(i)})^\eps q_{\eps,i})
= \eps (\partial_j  \theta_\eps) (M_{lj}^{(i)})^\eps q_{\eps,i} +
\theta_\eps (\partial_j M_{lj}^{(i)})^\eps q_{\eps,i} + 
\eps \theta_\eps (M_{lj}^{(i)})^\eps \partial_j q_{\eps,i}. 
\end{equation}
The first summand on the right is estimated by \eqref{srezka} and Lemma \ref{lem02}:
$$
\begin{aligned}
&\| \eps (\partial_j  \theta_\eps) (M_{lj}^{(i)})^\eps q_{\eps,i} \|_{L_2(\O)}
\leqslant \kappa \| (M_{lj}^{(i)})^\eps q_{\eps,i} \|_{L_2((\partial \O)_{2\eps} )} 
\\
&\leqslant
\eps^{1/2} \kappa \beta_*^{1/2} |\Omega|^{-1/2} \| M_{lj}^{(i)} \|_{L_2(\Omega)} 
\| \mu^{-1}\|_{L_\infty}^{1/2} \| \wt{\bvarphi}_0 + \wt{\brho}_\eps\|_{H^1(\R^3)},
\ 0 < \eps \le \eps_1.
\end{aligned}
$$
Together with \eqref{Mrr11}, \eqref{Mrr14}, and \eqref{t16}, this implies
$$
\|  \eps (\partial_j  \theta_\eps) (M_{lj}^{(i)})^\eps q_{\eps,j} \|_{L_2(\O)}
\leqslant {\mathcal C}_{14}' \eps^{1/2} \| \r \|_{L_2( \O )},
\quad 0 < \eps \le \eps_1,
$$
where ${\mathcal C}_{14}' = \kappa \beta_*^{1/2}  C_M' 
\| \mu^{-1}\|_{L_\infty}^{1/2}({\mathcal C}_1 + {\mathcal C}_2)$.
Similarly, to estimate the second term in  \eqref{t26}, we apply  \eqref{srezka} and Lemma \ref{lem02}:
$$
\begin{aligned}
&\| \theta_\eps (\partial_j M_{lj}^{(i)})^\eps q_{\eps,i} \|_{L_2(\O)}
\\
&\leqslant \eps^{1/2} \beta_*^{1/2} |\Omega|^{-1/2} \| \partial_j M_{lj}^{(i)}\|_{L_2(\Omega)} 
\| \mu^{-1}\|_{L_\infty}^{1/2} \| \wt{\bvarphi}_0 + \wt{\brho}_\eps\|_{H^1(\R^3)}
\end{aligned}
$$
for $0 < \eps \le \eps_1$. 
Taking \eqref{Mrr11}, \eqref{Mrr14}, and \eqref{t16} into account, we obtain
$$
\| \theta_\eps (\partial_j M_{lj}^{(i)})^\eps q_{\eps,i} \|_{L_2(\O)}
\leqslant {\mathcal C}_{14}'' \eps^{1/2} \| \r \|_{L_2( \O )},
\quad 0 < \eps \le \eps_1,
$$
where ${\mathcal C}_{14}'' = \beta_*^{1/2}  C_M'' 
\| \mu^{-1}\|_{L_\infty}^{1/2}({\mathcal C}_1 + {\mathcal C}_2)$.
The third term in the right-hand side of \eqref{t26} is estimated with the help of Proposition \ref{prop f^eps S_eps}:
\begin{equation}\label{tt1}
\eps \|\theta_\eps (M_{lj}^{(i)})^\eps \partial_j q_{\eps,i} \|_{L_2(\O)}
\leqslant 
 \eps |\Omega|^{-1/2} \| M_{lj}^{(i)}  \|_{L_2(\Omega)} 
 \| \mu^{-1}\|_{L_\infty}^{1/2} \| \wt{\bvarphi}_0 + \wt{\brho}_\eps\|_{H^1(\R^3)}.
\end{equation}
Together with  \eqref{Mrr11}, \eqref{Mrr14}, and \eqref{t16}, this yields
\begin{equation}\label{tt2}
\eps \|\theta_\eps (M_{lj}^{(i)})^\eps \partial_j q_{\eps,i} \|_{L_2(\O)}
\leqslant  {\mathcal C}_{14}''' \eps \| \r \|_{L_2(\O)},
\end{equation}
where ${\mathcal C}_{14}''' =  C_M' \| \mu^{-1}\|_{L_\infty}^{1/2}({\mathcal C}_1 + {\mathcal C}_2)$.
As a result, we arrive at the estimate
\begin{equation}\label{tt2a}
\eps \| \partial_j ( \theta_\eps (M_{lj}^{(i)})^\eps q_{\eps,i}) \|_{L_2(\O)}
\leqslant \check{\mathcal C}_{14} \eps^{1/2} \| \r \|_{L_2(\O)}, \quad 0< \eps \leqslant \eps_1,
\end{equation}
where $\check{\mathcal C}_{14} = {\mathcal C}_{14}'+ {\mathcal C}_{14}'' + {\mathcal C}_{14}'''$.
This implies the following estimate for the term \eqref{t24}: 
\begin{equation}\label{tt3}
| {\mathcal J}_\eps^{(8)}[\bzeta]| \leqslant 9 \sqrt{3} \check{\mathcal C}_{14} \|\mu^{-1}\|_{L_\infty}^{1/2} \eps^{1/2} \|\r\|_{L_2(\O)} \| \bzeta \|_{L_2(\O)},
\quad 0< \eps \leqslant \eps_1.
\end{equation}

Now, we consider the term \eqref{t25}. By  \eqref{lemm12},
$(\mu^\eps)^{-1/2} \bzeta = (\mu^0)^{-1/2} {\bzeta}^0_\eps + \nabla(\phi_{\eps,2} - \phi_{\eps,1})$.
This allows us to write the term \eqref{t25} in the form
\begin{equation}\label{t27}
{\mathcal J}_\eps^{(9)}[\bzeta]=
\eps \sum_{l,i,j=1}^3 \left( \partial_j ( (1-\theta_\eps) (M_{lj}^{(i)})^\eps q_{\eps,i}),
[ (\mu^0)^{-1/2} {\bzeta}^0_\eps]_l \right)_{L_2(\O)}.
\end{equation}
We have taken into account that
$$
 \begin{aligned}
 &\sum_{l,j=1}^3 \left( \partial_j ( (1-\theta_\eps) (M_{lj}^{(i)})^\eps q_{\eps,i}),
 \partial_l  (\phi_{\eps,2} - \phi_{\eps,1}) \right)_{L_2(\O)}
 \\
& = - \sum_{l,j=1}^3 \left(  (1-\theta_\eps) (M_{lj}^{(i)})^\eps q_{\eps,i},
 \partial_j \partial_l  (\phi_{\eps,2} - \phi_{\eps,1}) \right)_{L_2(\O)}=0,
\end{aligned}
$$
which can be checked by  integration by parts and using  \eqref{t15}.
(When checking, we can replace the function $\phi_{\eps,2} - \phi_{\eps,1}$ by $\phi \in H^2(\O)$,
and next we close the result by continuity.)

Relation ${\bzeta}^0_\eps \in H^1(\O;\C^3)$ allows us to integrate by parts in   \eqref{t27}:
\begin{equation*}
{\mathcal J}_\eps^{(9)}[\bzeta]=
- \eps \sum_{l,i,j=1}^3 \left(  (1-\theta_\eps) (M_{lj}^{(i)})^\eps q_{\eps,i},
\partial_j [ (\mu^0)^{-1/2} {\bzeta}^0_\eps]_l \right)_{L_2(\O)}.
\end{equation*}
Similarly to \eqref{tt1} and \eqref{tt2}, we have
$$
 \| (1-\theta_\eps) (M_{lj}^{(i)})^\eps q_{\eps,i}\|_{L_2(\O)} \leqslant {\mathcal C}_{14}''' \| \r \|_{L_2(\O)}.
$$
Together with  \eqref{lemmm3}, this implies
\begin{equation}\label{t28}
| {\mathcal J}_\eps^{(9)}[\bzeta] | \leqslant 9 {\mathcal C}'''_{14} {\mathcal C}_{11} \|\mu^{-1}\|_{L_\infty}^{1/2} \eps 
\| \r \|_{L_2(\O)} 
\left( \|\bzeta\|_{L_2(\O)} + \|\div (\mu^\eps)^{1/2} \bzeta\|_{L_2(\O)}
+ \|\rot (\mu^\eps)^{-1/2} \bzeta\|_{L_2(\O)} \right).
\end{equation}

Finally, the term \eqref{t22} is estimated similarly to \eqref{tt1}, \eqref{tt2}:
\begin{equation}\label{t29}
| \wt{\mathcal J}_\eps^{(5)}[\bzeta] | \leqslant \wt{\mathcal C}_{14} \eps 
\| \r \|_{L_2(\O)} \| \bzeta\|_{L_2(\O)},
\end{equation}
where $\wt{\mathcal C}_{14} = 9 \sqrt{3} {\mathcal C}_{14}''' \|\mu^{-1}\|_{L_\infty}^{1/2}$.

As a result, relations \eqref{t20}, \eqref{t23}, \eqref{tt3}, \eqref{t28}, and \eqref{t29} imply estimate  \eqref{t19} with the constant 
${\mathcal C}_{14}= 9 \|\mu^{-1}\|_{L_\infty}^{1/2} (\sqrt{3} \check{\mathcal C}_{14} + {\mathcal C}'''_{14} {\mathcal C}_{11})+ \wt{\mathcal C}_{14}$.
\end{proof}

The term \eqref{t13} is estimated by Proposition \ref{prop_Seps - I}:
$$
| {\mathcal J}_\eps^{(6)}[\bzeta] | \leqslant \eps r_1 \|\mu\|^{1/2}_{L_\infty} \|\mu^{-1}\|^{1/2}_{L_\infty}
\| \wt{\bvarphi}_0 + \wt{\brho}_\eps \|_{H^1(\R^3)} \| \bzeta \|_{L_2(\O)}.
$$
Together with \eqref{Mrr11} and \eqref{Mrr14}, this yields
\begin{equation}\label{t30}
| {\mathcal J}_\eps^{(6)}[\bzeta] | \leqslant {\mathcal C}_{15} \eps 
\| \r \|_{L_2(\O)} \| \bzeta\|_{L_2(\O)},
\end{equation}
where ${\mathcal C}_{15} =  r_1 ({\mathcal C}_{1}+ {\mathcal C}_2) \|\mu\|_{L_\infty}^{1/2}\|\mu^{-1}\|_{L_\infty}^{1/2}$.

\subsection{Estimate for the term ${\mathcal J}_\eps^{(7)}[\bzeta]$\label{sec5.5}}
Using \eqref{right}, we represent the term  \eqref{ttt12} in the form
\begin{equation}\label{t31}
 {\mathcal J}_\eps^{(7)}[\bzeta] =
 i ((\mu^0)^{-1/2} {\mathcal P}^0_{\mu^0} S_\eps (Y_\mu^\eps)^* \wt{\r}, {\bzeta}^0_\eps)_{L_2(\O)}.
\end{equation}

\begin{remark}\label{rem5.6}
Let  ${\mathcal P}^0_{\mu^0}$ be the orthogonal projection of  $L_2(\O;(\mu^0)^{-1})$ onto $J_0(\O)$.
It is easily seen that   ${\mathcal P}^0_{\mu^0} \f \in H^1(\O;\C^3)$ if $\f \in H^1(\O;\C^3)$, and 
\begin{equation}\label{t32}
 \| {\mathcal P}^0_{\mu^0} \f \|_{H^1(\O)} \leqslant {\mathfrak c} \| \f \|_{H^1(\O)}.
\end{equation}
The constant $\mathfrak c$ depends only on the norms $\| \mu \|_{L_\infty}$, $\| \mu^{-1} \|_{L_\infty}$, and the domain $\O$.
\end{remark}

  Since the operator  ${\mathcal P}^0_{\mu^0}$ is selfadjoint in the weighted space $L_2(\O;(\mu^0)^{-1})$,
  the functional \eqref{t31} can be written as
 \begin{equation}\label{t33}
 {\mathcal J}_\eps^{(7)}[\bzeta] =
 i ((\mu^0)^{-1}  S_\eps (Y_\mu^\eps)^* \wt{\r},
 {\mathcal P}^0_{\mu^0} (\mu^0)^{1/2} {\bzeta}^0_\eps)_{L_2(\O)}.
\end{equation}

 Let $\theta_\eps$ be the cut-off function satisfying \eqref{srezka}.
We write the term \eqref{t33} as the sum of two terms:
\begin{align}\label{t34}
 &{\mathcal J}_\eps^{(7)}[\bzeta] = \wh{\mathcal J}_\eps^{(7)}[\bzeta] + \wt{\mathcal J}_\eps^{(7)}[\bzeta],
 \\
\label{t35} 
&\wh{\mathcal J}_\eps^{(7)}[\bzeta]: =
 i ((\mu^0)^{-1}  S_\eps (Y_\mu^\eps)^* \wt{\r},
 {\theta}_\eps {\mathcal P}^0_{\mu^0} (\mu^0)^{1/2} {\bzeta}^0_\eps)_{L_2(\O)},
\\
\label{t36}
&\wt{\mathcal J}_\eps^{(7)}[\bzeta] :=
 i ((\mu^0)^{-1}  S_\eps (Y_\mu^\eps)^* \wt{\r}, (1- {\theta}_\eps)
 {\mathcal P}^0_{\mu^0} (\mu^0)^{1/2} {\bzeta}^0_\eps)_{L_2(\O)}.
 \end{align}

 The term \eqref{t35} is estimated with the help of Proposition \ref{prop f^eps S_eps}, \eqref{eff6b}, and Lemma \ref{lem01}:
 $$
 \begin{aligned}
 &| \wh{\mathcal J}_\eps^{(7)}[\bzeta] | \leqslant \|\mu^{-1}\|_{L_\infty} \| S_\eps (Y_\mu^\eps)^* \wt{\r} \|_{L_2(\R^3)}
 \| {\mathcal P}^0_{\mu^0} (\mu^0)^{1/2} {\bzeta}^0_\eps \|_{L_2(B_{2\eps})}
  \\
  &\leqslant \eps^{1/2} 
  \|\mu \|^{1/2}_{L_\infty} \|\mu^{-1}\|^{3/2}_{L_\infty}  \beta_0^{1/2} \|\r\|_{L_2(\O)}
  \|{\mathcal P}^0_{\mu^0} (\mu^0)^{1/2} {\bzeta}^0_\eps \|_{H^1(\O)}, \quad 0< \eps \le \eps_0.
 \end{aligned}
 $$
Taking  \eqref{lemmm3} and \eqref{t32} into account, we arrive at the estimate
\begin{equation}
\label{q00}
 | \wh{\mathcal J}_\eps^{(7)}[\bzeta] | \leqslant 
 {\mathcal C}_{16} \eps^{1/2} \| \r \|_{L_2(\O)}
 \left( \| \bzeta\|_{L_2(\O)} + \| \div (\mu^\eps)^{1/2}\bzeta\|_{L_2(\O)}
 + \| \rot (\mu^\eps)^{-1/2} \bzeta\|_{L_2(\O)} \right)
 \end{equation}
 for $0< \eps \le \eps_0$,
where ${\mathcal C}_{16} =  {\mathfrak c} \|\mu \|_{L_\infty} \|\mu^{-1}\|^{3/2}_{L_\infty}  \beta_0^{1/2} {\mathcal C}_{11}$.

Now, we consider  the term \eqref{t36}. 
 The function $(1- {\theta}_\eps){\mathcal P}^0_{\mu^0} (\mu^0)^{1/2} {\bzeta}^0_\eps$ belongs to $H^1(\O;\C^3)$ 
 and is equal to zero on  $\partial \O$. We extend this function by zero to $\R^3 \setminus \O$;
the extended function is denoted by  $\p_\eps$. Note that $\p_\eps \in H^1(\R^3;\C^3)$.
We rewrite the term \eqref{t36} in the form
\begin{equation*}
\wt{\mathcal J}_\eps^{(7)}[\bzeta] :=
 i ( {\r}, Y_\mu^\eps S_\eps (\mu^0)^{-1} \p_\eps )_{L_2(\O)}.
\end{equation*}
 The columns of the matrix $Y_\mu^\eps$ are given by $\eps \nabla \Psi_j^\eps$, $j=1,2,3$.
Hence,
\begin{equation*}
\wt{\mathcal J}_\eps^{(7)}[\bzeta] =
  i \eps \sum_{j=1}^3 ( {\r}, \nabla \left( \Psi_j^\eps [S_\eps (\mu^0)^{-1} \p_\eps]_j\right) )_{L_2(\O)}
- i \eps \sum_{j=1}^3 ( {\r},  \Psi_j^\eps \nabla [S_\eps (\mu^0)^{-1} \p_\eps]_j )_{L_2(\O)}.
\end{equation*}
The first summand on the right is equal to zero, since $\r \in J_0(\O)$.
Then, by Proposition \ref{prop f^eps S_eps} and \eqref{efff}, we obtain
\begin{equation}
\label{q1}
| \wt{\mathcal J}_\eps^{(7)}[\bzeta] | \leqslant  3\eps  (2r_0)^{-1} \|\mu\|_{L_\infty}^{1/2} 
\|\mu^{-1} \|_{L_\infty}^{3/2}\| \r \|_{L_2(\O)} \| \nabla \p_\eps\|_{L_2(\O)}. 
\end{equation}
Consider the derivatives
$$
\begin{aligned}
\partial_l \p_\eps =
-  (\partial_l \theta_\eps) {\mathcal P}^0_{\mu^0} (\mu^0)^{1/2} {\bzeta}^0_\eps 
+  (1-\theta_\eps)  \partial_l   {\mathcal P}^0_{\mu^0} (\mu^0)^{1/2} {\bzeta}^0_\eps.
\end{aligned}
$$
 Consequently, from \eqref{srezka},  Lemma \ref{lem01}, and \eqref{t32} it follows that
 $$
 \begin{aligned}
&\eps \| \nabla \p_\eps\|_{L_2(\O)} \leqslant \kappa \| {\mathcal P}^0_{\mu^0} (\mu^0)^{1/2} {\bzeta}^0_\eps
\|_{L_2(B_{2\eps})} + \eps \| {\mathcal P}^0_{\mu^0} (\mu^0)^{1/2} {\bzeta}^0_\eps\|_{H^1(\O)}
\\
&\leqslant (\eps^{1/2} \kappa \beta_0^{1/2} + \eps) {\mathfrak c} \| \mu\|_{L_\infty}^{1/2} 
\| {\bzeta}^0_\eps\|_{H^1(\O)}, \quad 0< \eps \le \eps_0.
\end{aligned}
 $$
Together with \eqref{lemmm3}, this implies
 \begin{equation}
 \label{q2}
 \eps \| \nabla \p_\eps\|_{L_2(\O)} \leqslant 
 {\mathcal C}_{17} \eps^{1/2} 
 \left( \| \bzeta\|_{L_2(\O)} + \| \div (\mu^\eps)^{1/2}\bzeta \|_{L_2(\O)} 
 + \| \rot (\mu^\eps)^{-1/2}\bzeta \|_{L_2(\O)}\right)
\end{equation}
for $0< \eps \le \eps_0$,
where ${\mathcal C}_{17}= (\kappa \beta_0^{1/2} +1)  {\mathfrak c} \| \mu\|_{L_\infty}^{1/2} {\mathcal C}_{11}$.
Now, relations \eqref{q1} and \eqref{q2} yield
\begin{equation}
\label{q3}
| \wt{\mathcal J}_\eps^{(7)}[\bzeta] | \leqslant {\mathcal C}_{18} \eps^{1/2}
\| \r \|_{L_2(\O)}
 \left( \| \bzeta\|_{L_2(\O)} + \| \div (\mu^\eps)^{1/2}\bzeta \|_{L_2(\O)} 
 + \| \rot (\mu^\eps)^{-1/2}\bzeta \|_{L_2(\O)}\right)
\end{equation}
for $0< \eps \le \eps_0$, where
${\mathcal C}_{18}= 3 (2r_0)^{-1} \|\mu\|_{L_\infty}^{1/2} 
\|\mu^{-1} \|_{L_\infty}^{3/2} {\mathcal C}_{17}$.

Combining \eqref{t34}, \eqref{q00}, and \eqref{q3}, we obtain
\begin{equation}
\label{q4}
\begin{aligned}
&| {\mathcal J}_\eps^{(7)}[\bzeta] | \leqslant ({\mathcal C}_{16} + {\mathcal C}_{18}) \eps^{1/2}
\| \r \|_{L_2(\O)}
 \\
 &\times
  \left( \| \bzeta\|_{L_2(\O)} + \| \div (\mu^\eps)^{1/2}\bzeta \|_{L_2(\O)} 
 + \| \rot (\mu^\eps)^{-1/2}\bzeta \|_{L_2(\O)}\right),\quad 0< \eps \le \eps_0.
 \end{aligned}
 \end{equation}

\subsection{Taking the boundary condition into account. Completion of the proof of  Theorem \ref{lem4.6}}
Finally, relations \eqref{t0}, \eqref{t44}, \eqref{ttt11}, \eqref{t19}, \eqref{t30}, and \eqref{q4} imply the following estimate
for the functional \eqref{pogr0}:
\begin{equation}
\label{q5}
| {\mathcal R}_\eps [\bzeta] | \leqslant {\mathcal C}^\circ \eps^{1/2}
\| \r \|_{L_2(\O)}
  \left( \| \bzeta\|_{L_2(\O)} + \| \div (\mu^\eps)^{1/2}\bzeta \|_{L_2(\O)} 
 + \| \rot (\mu^\eps)^{-1/2}\bzeta \|_{L_2(\O)}\right),\ 0< \eps \le \eps_1,
 \end{equation}
where ${\mathcal C}^\circ={\mathcal C}_{12} + {\mathcal C}_{13} + {\mathcal C}_{14} + {\mathcal C}_{15}
+ {\mathcal C}_{16} + {\mathcal C}_{18}$.

To take into account the boundary condition \eqref{pogr3}, we consider the  solution $\xi_\eps$ of the Neumann problem
\begin{equation}
\label{q6}
 \div \mu^\eps \nabla \xi_\eps = 
\div (\mu^\eps)^{1/2}   \bpsi_\eps, \quad
(\mu^\eps \nabla \xi_\eps)_n\vert_{\partial \O}=
((\mu^\eps)^{1/2} \bpsi_\eps)_n\vert_{\partial \O}.
 \end{equation}
The solution $\xi_\eps \in H^1(\O)$ satisfies the identity
\begin{equation}
\label{q7}
 \int_\O \langle \mu^\eps \nabla \xi_\eps, \nabla \omega \rangle \, d\x  = 
 \int_\O \langle (\mu^\eps)^{1/2}  \bpsi_\eps, \nabla \omega \rangle \, d\x,
 \quad \forall \omega \in H^1(\O). 
 \end{equation}

\begin{lemma}\label{lem5.7}
The solution $\xi_\eps$ of problem \eqref{q6} satisfies the estimate
\begin{equation}
\label{q8}
 \| (\mu^\eps)^{1/2} \nabla  \xi_\eps \|_{L_2(\O)} \le {\mathcal C}_{19} \eps^{1/2} \|\r\|_{L_2(\O)},
 \quad 0 < \eps \le \eps_1.
 \end{equation}
The constant ${\mathcal C}_{19}$ depends on the norms $\| \eta \|_{L_\infty}$,  $\| \eta^{-1} \|_{L_\infty}$,
$\| \mu \|_{L_\infty}$, $\| \mu^{-1} \|_{L_\infty}$, the parameters of the lattice $\Gamma$, and the domain $\O$.
\end{lemma}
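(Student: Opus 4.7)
The plan is to set $\omega = \xi_\eps$ in the weak formulation \eqref{q7}, giving the energy identity
\begin{equation*}
\|(\mu^\eps)^{1/2}\nabla\xi_\eps\|_{L_2(\O)}^2 = \int_\O \langle (\mu^\eps)^{1/2}\bpsi_\eps,\, \nabla\xi_\eps\rangle\, d\x =: \mathcal{I}_\eps,
\end{equation*}
and to bound $|\mathcal{I}_\eps| \leqslant C\eps^{1/2}\|\r\|_{L_2(\O)}\|(\mu^\eps)^{1/2}\nabla\xi_\eps\|_{L_2(\O)}$; dividing by the energy norm then yields \eqref{q8}. The key observation is that the right-hand side pairs $(\mu^\eps)^{1/2}\bpsi_\eps$ with a pure gradient, so any component lying in $J_0(\O)$ will drop out by the very definition \eqref{5.2}.

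First I would invoke Lemma \ref{lem4.3}, inequality \eqref{lem1}, together with the identity $(\mu^\eps)^{1/2}(W_\mu^\eps)^* = \wt\mu^\eps(\mu^0)^{-1/2}$ coming from \eqref{Mrr15}, to replace $(\mu^\eps)^{1/2}\bpsi_\eps$ by $\wt\mu^\eps(\mu^0)^{-1/2}S_\eps(\wt\bvarphi_0 + \wt\brho_\eps)$ at the cost of an $O(\eps)\|\r\|_{L_2(\O)}\|\nabla\xi_\eps\|_{L_2(\O)}$ error. I then decompose $\wt\mu^\eps(\mu^0)^{-1/2} = (\mu^0)^{1/2} + (\wt\mu^\eps - \mu^0)(\mu^0)^{-1/2}$. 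The leading term gives rise to $\int_\O \langle (\mu^0)^{1/2}S_\eps(\wt\bvarphi_0 + \wt\brho_\eps),\,\nabla\xi_\eps\rangle d\x$. Splitting $S_\eps = I + (S_\eps - I)$, the $I$ piece equals $\int_\O \langle (\mu^0)^{1/2}(\bvarphi_0 + \brho_\eps),\,\nabla\xi_\eps\rangle d\x$ and vanishes because $(\mu^0)^{1/2}(\bvarphi_0 + \brho_\eps)$ lies in $J_0(\O)$, by the divergence and normal-trace conditions in \eqref{Mrr4} and \eqref{Mrr11a}; the $(S_\eps - I)$ piece is $O(\eps)\|\r\|_{L_2(\O)}\|\nabla\xi_\eps\|_{L_2(\O)}$ via Proposition \ref{prop_Seps - I} and the $H^2$ bounds \eqref{Mrr11}, \eqref{Mrr14}.

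The remaining corrector piece involving $(\wt\mu^\eps - \mu^0)(\mu^0)^{-1/2}S_\eps(\wt\bvarphi_0 + \wt\brho_\eps)$ is where the $\eps^{1/2}$-loss appears, and I would handle it by mimicking the cutoff technique from Lemma \ref{lem5.5}. Using the representation \eqref{t14}, the skew-symmetry \eqref{t15}, and the bounds \eqref{t16}, set $q_{\eps,i} := [(\mu^0)^{-1/2}S_\eps(\wt\bvarphi_0 + \wt\brho_\eps)]_i$ and insert the cutoff $\theta_\eps$ from \eqref{srezka}. The boundary-layer portion containing $\theta_\eps$ is bounded verbatim as in \eqref{t24}--\eqref{tt3}, using Lemma \ref{lem02} and Proposition \ref{prop f^eps S_eps}, yielding an $\eps^{1/2}\|\r\|_{L_2(\O)}\|\nabla\xi_\eps\|_{L_2(\O)}$ bound. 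For the interior portion, define $F_i^{lj} := \eps(1-\theta_\eps)(M_{lj}^{(i)})^\eps q_{\eps,i}$: this is in $L_2(\O)$, compactly supported in $\O$, skew-symmetric in $(l,j)$, and has $\partial_j F_i^{lj} \in L_2(\O)$. The vector field $V_i$ with components $(V_i)_l := \sum_j \partial_j F_i^{lj}$ is supported away from $\partial\O$, so its normal trace on $\partial\O$ vanishes trivially; moreover the distributional identity $\div V_i = \sum_{l,j}\partial_l\partial_j F_i^{lj} = 0$ holds thanks to the skew-symmetry of $F_i^{lj}$ in $(l,j)$ combined with the commutativity $\partial_l\partial_j = \partial_j\partial_l$. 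Hence $V_i \in J_0(\O)$, and by \eqref{5.2} the interior integral $\int_\O \langle V_i,\,\nabla\xi_\eps\rangle d\x$ vanishes exactly.

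Collecting all contributions and using $\|\nabla\xi_\eps\|_{L_2(\O)} \leqslant \|\mu^{-1}\|_{L_\infty}^{1/2}\|(\mu^\eps)^{1/2}\nabla\xi_\eps\|_{L_2(\O)}$ yields \eqref{q8}. The main obstacle is the interior-portion step: naively one would integrate by parts to obtain $\partial_l\partial_j\xi_\eps$ and cancel by skew-symmetry, but the rapidly oscillating coefficients in \eqref{q6} yield only $\xi_\eps \in H^1(\O)$, with no such second derivatives available. The trick of recognizing $\div F_i$ itself as a divergence-free, compactly supported vector field in $J_0(\O)$, and then invoking the $L_2$-orthogonality of $J_0(\O)$ to gradients from \eqref{5.2}, sidesteps the missing regularity and makes the cancellation rigorous.
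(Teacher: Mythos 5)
Your proposal follows essentially the same route as the paper's proof: test with $\omega=\xi_\eps$ in \eqref{q7}, replace $(\mu^\eps)^{1/2}\bpsi_\eps$ by $\wt\mu^\eps(\mu^0)^{-1/2}S_\eps(\wt\bvarphi_0+\wt\brho_\eps)$ via \eqref{lem1}, peel off the $(\mu^0)^{1/2}(\bvarphi_0+\brho_\eps)\in J_0(\O)$ piece and the $O(\eps)$ smoothing remainder, and then handle $(\wt\mu^\eps-\mu^0)$ via the skew-symmetric potential $M_{lj}^{(i)}$ and the cutoff $\theta_\eps$. Where you genuinely diverge is the justification for the vanishing of the interior piece: the paper integrates by parts assuming $\omega\in H^2(\O)$ and closes by density (the remark at the end of the $\mathcal J_\eps^{(9)}$ argument in Lemma~\ref{lem5.5} makes this explicit, and the same device is implicit in Lemma~\ref{lem5.7}). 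You instead observe that the compactly supported field $V_i$, with $(V_i)_l=\sum_j\partial_j F_i^{lj}$, is distributionally divergence-free by the skew-symmetry \eqref{t15}, hence lies in $J_0(\O)$, hence is $L_2$-orthogonal to $\nabla\xi_\eps$ by \eqref{5.2}. This is a correct and arguably cleaner packaging of the same cancellation; it buys you the conceptual clarity of not passing through a density argument, at essentially no cost. However, your claim that the naive integration by parts \emph{would fail} is an overstatement --- the paper's density argument is perfectly sound, so the two justifications are equivalent rather than one rescuing a broken step.

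One small bookkeeping imprecision: after applying \eqref{t19b}, your split into ``boundary-layer portion with $\theta_\eps$'' (bounded as in \eqref{t24}--\eqref{tt3}) plus ``interior portion $V_i$'' reproduces only the paper's $\wh{\mathcal I}_\eps^{(1)}[\omega]$; the Leibniz commutator $\wt{\mathcal I}_\eps^{(1)}[\omega]=\eps\sum_{l,i,j}\bigl((M_{lj}^{(i)})^\eps\partial_j q_{\eps,i},\partial_l\omega\bigr)_{L_2(\O)}$ (the paper's \eqref{q18}, estimated in \eqref{q20}) is not visible in your decomposition. It is an $O(\eps)$ term controlled directly by Proposition~\ref{prop f^eps S_eps} and \eqref{t16}, so nothing is at risk, but it should be stated explicitly for completeness.
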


\begin{proof}
We estimate the right-hand side of  \eqref{q7}. By \eqref{lem1}, 
\begin{equation}
\label{q9}
 \left| \int_\O \langle (\mu^\eps)^{1/2}  \bpsi_\eps, \nabla \omega \rangle \, d\x - {\mathcal I}_\eps[\omega]\right|
 \le {\mathcal C}_3 \eps \|\r\|_{L_2(\O)} \| (\mu^\eps)^{1/2} \nabla \omega \|_{L_2(\O)}, 
 \end{equation}
where
 \begin{equation*}
{\mathcal I}_\eps[\omega] =
 \int_\O \langle (\mu^\eps)^{1/2} (W_\mu^\eps)^*   S_\eps (\wt{\bvarphi}_0 + \wt{\brho}_\eps), \nabla \omega \rangle \, d\x.
 \end{equation*}
 According to \eqref{Mrr15},
 $$
 \begin{aligned}
 &(\mu^\eps)^{1/2} (W_\mu^\eps)^*   S_\eps (\wt{\bvarphi}_0 + \wt{\brho}_\eps)=
 \wt{\mu}^\eps (\mu^0)^{-1/2}   S_\eps (\wt{\bvarphi}_0 + \wt{\brho}_\eps)
 \\
 &= (\wt{\mu}^\eps - \mu^0) (\mu^0)^{-1/2}   S_\eps (\wt{\bvarphi}_0 + \wt{\brho}_\eps)
 +  (\mu^0)^{1/2}  ( S_\eps -I) (\wt{\bvarphi}_0 + \wt{\brho}_\eps)
  +  (\mu^0)^{1/2}  (\wt{\bvarphi}_0 + \wt{\brho}_\eps).
\end{aligned}
$$
Since $(\mu^0)^{1/2}  ({\bvarphi}_0 + {\brho}_\eps)\in J_0(\O)$, then  
$((\mu^0)^{1/2}  ({\bvarphi}_0 + {\brho}_\eps), \nabla \omega)_{L_2(\O)}=0$.
Hence,
 \begin{equation}
\label{q11}
{\mathcal I}_\eps[\omega] =
{\mathcal I}_\eps^{(1)}[\omega] + {\mathcal I}_\eps^{(2)}[\omega],
 \end{equation}
where
\begin{align}
\label{q12}
 &{\mathcal I}_\eps^{(1)}[\omega]=
 \int_\O \langle   (\wt{\mu}^\eps - \mu^0) (\mu^0)^{-1/2}  S_\eps (\wt{\bvarphi}_0 + \wt{\brho}_\eps), \nabla \omega \rangle \, d\x,
 \\
 \label{q13}
 &{\mathcal I}_\eps^{(2)}[\omega]=
 \int_\O \langle  (\mu^0)^{1/2}  (S_\eps -I) (\wt{\bvarphi}_0 + \wt{\brho}_\eps), \nabla \omega \rangle \, d\x.
 \end{align}

The term \eqref{q13} is estimated by Proposition \ref{prop_Seps - I} and  
\eqref{Mrr11},  \eqref{Mrr14}:
 \begin{equation}
\label{q14}
| {\mathcal I}_\eps^{(2)}[\omega] | \le {\mathcal C}'_{19} \eps \| \r \|_{L_2(\O)}   \| (\mu^\eps)^{1/2} \nabla \omega \|_{L_2(\O)}, 
 \end{equation}
where ${\mathcal C}_{19}' = \|\mu\|_{L_\infty}^{1/2} \|\mu^{-1}\|_{L_\infty}^{1/2} r_1 ({\mathcal C}_1 + {\mathcal C}_2)$.

The term \eqref{q12} is considered similarly to  \eqref{t19a}--\eqref{tt3}. We have
\begin{equation*}
{\mathcal I}_\eps^{(1)}[\omega]=
\sum_{l,i=1}^3 \left( (\wt{\mu}_{li}^\eps - \mu^0_{li}) q_{\eps,i}, \partial_l \omega \right)_{L_2(\O)}.
\end{equation*}
Together with \eqref{t19b}, this implies
\begin{align}
\label{q16}
&{\mathcal I}_\eps^{(1)}[\omega]=
\wh{\mathcal I}_\eps^{(1)}[\omega] - \wt{\mathcal I}_\eps^{(1)}[\omega],
\\
\nonumber
&\wh{\mathcal I}_\eps^{(1)}[\omega] = 
\eps \sum_{l,i, j=1}^3 \left( \partial_j ( (M_{lj}^{(i)})^\eps q_{\eps,i}), \partial_l \omega \right)_{L_2(\O)},
\\
\label{q18}
&\wt{\mathcal I}_\eps^{(1)}[\omega] = 
\eps \sum_{l,i, j=1}^3 \left( (M_{lj}^{(i)})^\eps \partial_j q_{\eps,i}, \partial_l \omega \right)_{L_2(\O)}.
\end{align}
Let $\theta_\eps$ be the cut-off function satisfying \eqref{srezka}. Then
\begin{equation*}
\wh{\mathcal I}_\eps^{(1)}[\omega] = 
\eps \sum_{l,i, j=1}^3 \left( \partial_j ( \theta_\eps (M_{lj}^{(i)})^\eps q_{\eps,i}), \partial_l \omega \right)_{L_2(\O)}.
\end{equation*}
We have taken into account that
$$
 \sum_{l, j=1}^3 \left( \partial_j ( (1- \theta_\eps) (M_{lj}^{(i)})^\eps q_{\eps,i}), \partial_l \omega \right)_{L_2(\O)}=0,
$$
which can be checked by integration by parts and using  \eqref{t15}.
By \eqref{tt2a},  
 \begin{equation}
\label{q19}
| \wh{\mathcal I}_\eps^{(1)}[\omega] | \le 9 \sqrt{3}\check{\mathcal C}_{14} \| \mu^{-1}\|_{L_\infty}^{1/2} 
 \eps^{1/2} \| \r \|_{L_2(\O)}   \| (\mu^\eps)^{1/2} \nabla \omega \|_{L_2(\O)}, \quad 0 < \eps \le \eps_1,
 \end{equation}
 cf. \eqref{tt3}.
 The term \eqref{q18} is estimated similarly to the term \eqref{t22}:
 \begin{equation}
\label{q20}
| \wt{\mathcal I}_\eps^{(1)}[\omega] | \le \wt{\mathcal C}_{14} \eps \| \r \|_{L_2(\O)}   \| (\mu^\eps)^{1/2} \nabla \omega \|_{L_2(\O)},
 \end{equation}
 cf. \eqref{t29}.
 
 As a result, relations \eqref{q11},  \eqref{q14},  \eqref{q16}, \eqref{q19}, and \eqref{q20} imply that
 $$
 | {\mathcal I}_\eps [\omega] | \le \check{\mathcal C}_{19} \eps^{1/2} 
 \| \r \|_{L_2(\O)}   \| (\mu^\eps)^{1/2} \nabla \omega \|_{L_2(\O)}, \quad 
 0< \eps \le \eps_1,
 $$
where $\check{\mathcal C}_{19} = {\mathcal C}_{19}' + 9 \sqrt{3}\check{\mathcal C}_{14} \| \mu^{-1}\|_{L_\infty}^{1/2}  + \wt{\mathcal C}_{14}$.
Together with  \eqref{q9}, this yields
 \begin{equation}
\label{q21}
\left| \int_\O \langle (\mu^\eps)^{1/2} \bpsi_\eps,\nabla \omega \rangle \, d\x \right|
\le {\mathcal C}_{19} \eps^{1/2} \| \r \|_{L_2(\O)}   \| (\mu^\eps)^{1/2} \nabla \omega \|_{L_2(\O)},
\quad 0< \eps \le \eps_1,
 \end{equation}
where ${\mathcal C}_{19} ={\mathcal C}_{3} + \check{\mathcal C}_{19}$.
Substituting $\omega = \xi_\eps$ in \eqref{q7} and using \eqref{q21}, we arrive at the required inequality
\eqref{q8}.
\end{proof}

We put  $\f_\eps := (\mu^\eps)^{1/2} \nabla \xi_\eps$. By \eqref{q6}, the function $\f_\eps$ satisfies
 \begin{equation}
\label{q22}
\begin{aligned}
&\rot (\mu^\eps)^{-1/2} \f_\eps=0, \quad \div (\mu^\eps)^{1/2} \f_\eps= \div (\mu^\eps)^{1/2} \bpsi_\eps,
\\
 &((\mu^\eps)^{1/2} \f_\eps)_n \vert_{\partial \O} = ((\mu^\eps)^{1/2} \bpsi_\eps)_n \vert_{\partial \O}.
\end{aligned}
\end{equation}
Combining this with  \eqref{pogr1}--\eqref{pogr3}, we see that
 $\s_\eps - \f_\eps \in \Dom {\mathfrak l}_\eps$ and
 \begin{equation}
\label{q23}
{\mathfrak l}_\eps[\s_\eps - \f_\eps, \bzeta] +  (\s_\eps - \f_\eps, \bzeta)_{L_2(\O)}
= \wt{\mathcal R}_\eps[\bzeta], \quad \forall \bzeta \in \Dom {\mathfrak l}_\eps,
\end{equation}
where
\begin{equation*}
\wt{\mathcal R}_\eps[\bzeta] = {\mathcal R}_\eps[\bzeta] - ( \div (\mu^\eps)^{1/2} \bpsi_\eps, 
\div (\mu^\eps)^{1/2}\bzeta)_{L_2(\O)} - (\f_\eps,\bzeta)_{L_2(\O)}.
\end{equation*}
By \eqref{lem3a}, \eqref{q5}, and \eqref{q8},
\begin{equation}
\label{q25}
|\wt{\mathcal R}_\eps[\bzeta] | \le  \wt{\mathcal C}^\circ
\eps^{1/2} \|\r\|_{L_2(\O)}
\left( \|\bzeta\|_{L_2(\O)} + \| \div (\mu^\eps)^{1/2} \bzeta \|_{L_2(\O)} +  
\| \rot (\mu^\eps)^{-1/2} \bzeta \|_{L_2(\O)} \right), \ 0< \eps \le \eps_1,
\end{equation}
where $\wt{\mathcal C}^\circ = {\mathcal C}^\circ + {\mathcal C}_5 + {\mathcal C}_{19}$.
Substituting $\bzeta = \s_\eps - \f_\eps$ in \eqref{q23} and using \eqref{q25}, we obtain
\begin{equation}
\label{q26}
\begin{aligned}
 &\|\s_\eps - \f_\eps \|_{L_2(\O)} + \| \div (\mu^\eps)^{1/2} (\s_\eps - \f_\eps) \|_{L_2(\O)} 
 \\
 &+  
\| \rot (\mu^\eps)^{-1/2} (\s_\eps - \f_\eps) \|_{L_2(\O)} \le {\mathcal C}_7' \eps^{1/2} \| \r \|_{L_2(\O)}, 
\quad 0< \eps \le \eps_1,
\end{aligned}
\end{equation}
where ${\mathcal C}_7'= 3 \max \{ 1, \|\eta\|_{L_\infty}\}\wt{\mathcal C}^\circ$.

Combining \eqref{lem3a}, \eqref{q22}, \eqref{q26}, and Lemma \ref{lem5.7}, we arrive at the required inequality
\eqref{pogr6a} with the constant ${\mathcal C}_7= {\mathcal C}_7'+ {\mathcal C}_5+ {\mathcal C}_{19}$.
This \textit{completes the proof of Theorem} \ref{lem4.6}.

\section{The study of the problem in the case where $\r=0$}

\subsection{Symmetrization\label{sim6}}
Putting $\bphi_\eps = (\eta^\eps)^{-1/2}\w_\eps^{(\q)}$, we reduce problem \eqref{Mqq} to 
\begin{equation}\label{Mqq1}
\left\{\begin{matrix}
& (\eta^\eps)^{-1/2} \rot (\mu^\eps)^{-1} \rot (\eta^\eps)^{-1/2} \bphi_\eps  + \bphi_\eps = 
i (\eta^\eps)^{-1/2} \q, \quad \div (\eta^\eps)^{1/2} \bphi_\eps =0,
\\ 
&((\eta^\eps)^{-1/2} \bphi_\eps)_\tau \vert_{\partial \O} =0, 
\  (\rot (\eta^\eps)^{-1/2} \bphi_\eps)_n\vert_{\partial \O} =0.
\end{matrix}
\right.
\end{equation}
Here $\q \in J(\O)$. Automatically,  $\bphi_\eps$ is also the solution of the elliptic equation
\begin{equation}\label{Mqq2}
(\wh{\mathcal L}_\eps +I) \bphi_\eps = i (\eta^\eps)^{-1/2} \q,
\end{equation}
where the operator $\wh{\mathcal L}_\eps$ is formally given by the differential expression
\begin{equation*}
\wh{\mathcal L}_\eps =  (\eta^\eps)^{-1/2} \rot (\mu^\eps)^{-1} \rot (\eta^\eps)^{-1/2} - 
(\eta^\eps)^{1/2} \nabla \div (\eta^\eps)^{1/2}
\end{equation*}
with the boundary conditions from \eqref{Mqq1}. Strictly speaking, $\wh{\mathcal L}_\eps$ is the  selfadjoint operator in $L_2(\O;\C^3)$ corresponding to 
the closed nonnegative quadratic form
\begin{equation}\label{forma_q}
\begin{aligned}
\wh{\mathfrak l}_\eps[\bphi, \bphi] &= \int_\O \left( \langle (\mu^\eps)^{-1} \rot (\eta^\eps)^{-1/2} \bphi, 
\rot (\eta^{\eps})^{-1/2} \bphi \rangle + | \div (\eta^{\eps})^{1/2} \bphi |^2 \right)\, d\x,
\\
\Dom \wh{\mathfrak l}_\eps = &\{ \bphi \in L_2(\O;\C^3): \quad  \div (\eta^{\eps})^{1/2} \bphi \in L_2(\O),
\\ 
& \rot (\eta^{\eps})^{-1/2} \bphi \in L_2(\O;\C^3), \quad 
 ((\eta^\eps)^{-1/2} \bphi)_\tau \vert_{\partial \O} =0\}.
\end{aligned}
\end{equation}
By the results of  \cite{BS1, BS2}, the form \eqref{forma_q} is closed.

\begin{remark}
{\rm 1)} In general, $\operatorname{Dom} \wh{\mathfrak l}_\eps \not\subset H^1(\O;\C^3)$.

\noindent {\rm 2)} The second boundary condition in  \eqref{Mqq1} is natural, it is not reflected in the domain of the quadratic form  
$\wh{\mathfrak l}_\eps$. 

\noindent {\rm 3)} The form $\wh{\mathfrak l}_\eps$ and the operator 
$\wh{\mathcal L}_\eps$ are reduced by the orthogonal decomposition 
$$
L_2(\O;\C^3) = {\mathcal J}(\O;\eta^\eps) \oplus {\mathcal G}_0(\O;\eta^\eps),
$$
where
$$
\begin{aligned}
{\mathcal J}(\O;\eta^\eps) &= \{ \f: \ (\eta^\eps)^{1/2} \f \in J(\O) \}, 
\\
 {\mathcal G}_0(\O;\eta^\eps) &= \{ (\eta^\eps)^{1/2} \nabla \omega: \ \omega \in H^1_0(\O) \}.
\end{aligned}
$$
\end{remark}

\subsection{The effective problem\label{sec6.2}}
Let $\eta^0$ and $\mu^0$ be the effective matrices defined in Subsection \ref{eff_mat}.
We put $\bphi_0 = (\eta^0)^{-1/2} \w_0^{(\q)}$. Then $\bphi_0$ is the solution of the problem
\begin{equation}\label{Mqq4}
\left\{\begin{matrix}
& (\eta^0)^{-1/2} \rot (\mu^0)^{-1} \rot (\eta^0)^{-1/2} \bphi_0  + \bphi_0 = 
i (\eta^0)^{-1/2} \q,  \ \div (\eta^0)^{1/2} \bphi_0 =0,
\\ 
&((\eta^0)^{-1/2} \bphi_0)_\tau \vert_{\partial \O} =0, 
\  (\rot (\eta^0)^{-1/2} \bphi_0)_n \vert_{\partial \O} =0.
\end{matrix}
\right.
\end{equation}
Automatically, $\bphi_0$ is also the solution of the elliptic equation
\begin{equation}\label{Mqq5}
( \wh{\mathcal L}^0 +I) \bphi_0 = i (\eta^0)^{-1/2} \q,
\end{equation}
where $\wh{\mathcal L}^0$ is the selfadjoint operator in $L_2(\O;\C^3)$ corresponding to the closed nonnegative quadratic form
\begin{equation}
\label{forma_eff_q}
\begin{aligned}
\wh{\mathfrak l}_0 [\bphi, \bphi] =& \int_\O \left( \langle (\mu^0)^{-1} \rot (\eta^0)^{-1/2} \bphi, 
\rot (\eta^{0})^{-1/2} \bphi \rangle + | \div (\eta^{0})^{1/2} \bphi |^2 \right)\, d\x,
\\
 \Dom \wh{\mathfrak l}_0 =& \{ \bphi \in L_2(\O;\C^3):\  
\div (\eta^{0})^{1/2} \bphi \in L_2(\O),
\\ 
&\rot (\eta^{0})^{-1/2} \bphi \in L_2(\O;\C^3), \ 
 ((\eta^0)^{-1/2} \bphi)_\tau \vert_{\partial \O} =0\}.
\end{aligned}
\end{equation}
Due to smoothness of the boundary  ($\partial \O \in C^{1,1}$), the set $\Dom \wh{\mathfrak l}_0$ coincides with 
$$
\Dom \wh{\mathfrak l}_0 = \{ \bphi \in H^1(\O;\C^3):   \  ((\eta^0)^{-1/2} \bphi)_\tau \vert_{\partial \O} =0\}.
$$
The form \eqref{forma_eff_q} is coercive: the following two-sided estimates hold:
\begin{equation}\label{coercive_q}
\wh{\mathfrak c}_1  \| \bphi \|^2_{H^1(\O)} \leqslant
\wh{\mathfrak l}_0 [\bphi, \bphi] + \| \bphi \|^2_{L_2(\O)} \leqslant 
\wh{\mathfrak c}_2  \| \bphi \|^2_{H^1(\O)},\quad \bphi \in \Dom \wh{\mathfrak l}_0. 
\end{equation}
The constant $\wh{\mathfrak c}_1$ depends on $\| \eta \|_{L_\infty}$, $\| \eta^{-1} \|_{L_\infty}$,
$\| \mu \|_{L_\infty}$, and the domain $\O$, and the constant $\wh{\mathfrak c}_2$ depends on  
$\| \eta \|_{L_\infty}$, $\| \eta^{-1} \|_{L_\infty}$, $\| \mu^{-1} \|_{L_\infty}$, and the domain $\O$.
 These properties were proved in  \cite[Theorem 2.3]{BS1} under the assumption that $\partial \O \in C^2$ and in \cite[Theorem 2.3]{F} 
 under the assumption that $\partial \O \in C^{3/2+\delta}$, $\delta>0$.

The operator $\wh{\mathcal L}^0$ is a strongly elliptic operator with constant coefficients.
The smoothness of the boundary ($\partial \O \in C^{1,1}$) ensures the regularity:
the resolvent $(\wh{\mathcal L}^0+I)^{-1}$ is continuous from $L_2(\O;\C^3)$ to $H^2(\O;\C^3)$, and  
\begin{equation}\label{Mqq6}
\| (\wh{\mathcal L}^0+I)^{-1} \|_{L_2(\O) \to H^2(\O)} \leqslant \widehat{c}_*,
\end{equation}
where the constant $\widehat{c}_*$ depends only on $\|\eta\|_{L_\infty}$, $\|\eta^{-1}\|_{L_\infty}$,
$\|\mu\|_{L_\infty}$, $\|\mu^{-1}\|_{L_\infty}$, and the domain $\O$.
Thus, the operator $\wh{\mathcal L}^0$ can be given by the differential expression
\begin{equation*}
\wh{\mathcal L}^0 =  (\eta^0)^{-1/2} \rot (\mu^0)^{-1} \rot (\eta^0)^{-1/2} - 
(\eta^0)^{1/2} \nabla \div (\eta^0)^{1/2}
\end{equation*}
 on the domain
\begin{equation*}
\Dom \wh{\mathcal L}^0 = \{ \bphi \in H^2(\O;\C^3): \ ((\eta^0)^{-1/2} \bphi)_\tau \vert_{\partial \O} =0,
\ (\rot (\eta^0)^{-1/2} \bphi)_n \vert_{\partial \O} =0 \}.
\end{equation*}
This property is checked similarly to Remark \ref{rem4.2}(1).

The form $\wh{\mathfrak l}_0$ and the operator  
$\wh{\mathcal L}^0$ are reduced by the orthogonal decomposition
$$
L_2(\O;\C^3) = {\mathcal J}(\O;\eta^0) \oplus {\mathcal G}_0(\O;\eta^0),
$$
where
$$
\begin{aligned}
{\mathcal J}(\O;\eta^0) &= \{ \f: \ (\eta^0)^{1/2} \f \in J(\O) \}, 
\\
 {\mathcal G}_0 (\O;\eta^0) &= \{ (\eta^0)^{1/2} \nabla \omega: \ \omega \in H^1_0(\O) \}.
\end{aligned}
$$

By \eqref{Mqq5} and \eqref{Mqq6}, we have $\bphi_0 \in H^2(\O;\C^3)$ and 
\begin{equation}\label{Mqq9}
   \| \bphi_0 \|_{H^2(\O)} \leqslant \wh{c}_* \| \eta^{-1}\|^{1/2}_{L_\infty} \| \q \|_{L_2(\O)}.
\end{equation}

Let $P_\O: H^2(\O;\C^3) \to H^2(\R^3;\C^3)$ be the linear continuous extension operator; see Subsection 
\ref{sec4.2}.  We put $\wt{\bphi}_0 := P_\O \bphi_0 \in H^2(\R^3;\C^3)$. According to \eqref{Mrr10} and \eqref{Mqq9},
\begin{equation}\label{Mqq11}
   \| \wt{\bphi}_0 \|_{H^2(\R^3)} \leqslant {\mathfrak C}_1 \| \q \|_{L_2(\O)},
\end{equation}
where ${\mathfrak C}_1 = C_\O \wh{c}_* \| \eta^{-1}\|^{1/2}_{L_\infty}$.

\subsection{The correction problem\label{sec6.3}} We put ${\boldsymbol{\upsilon}}_\eps = (\eta^0)^{-1/2} \wh{\w}_\eps^{(\q)}$. 
Then $\bups_\eps$ is the solution of the problem
\begin{equation}\label{Mqq11a}
\left\{\begin{matrix}
& (\eta^0)^{-1/2} \rot (\mu^0)^{-1} \rot (\eta^0)^{-1/2} \bups_\eps   + \bups_\eps  = 
i (\eta^0)^{-1/2} \q_\eps, \ \div (\eta^0)^{1/2} \bups_\eps  =0,
\\ 
&((\eta^0)^{-1/2} \bups_\eps )_\tau \vert_{\partial \O} =0, 
\  (\rot (\eta^0)^{-1/2} \bups_\eps )_n\vert_{\partial \O} =0.
\end{matrix}
\right.
\end{equation}
Automatically,  $\bups_\eps$ is also the solution of the following elliptic equation
\begin{equation*}
(\wh{\mathcal L}^0 +I) \bups_\eps = i (\eta^0)^{-1/2} \q_\eps. 
\end{equation*}
By   \eqref{right_est} and \eqref{Mqq6}, we have $\bups_\eps \in H^2(\O;\C^3)$ and
\begin{equation}\label{Mqq13}
   \| \bups_\eps \|_{H^2(\O)} \leqslant \wh{c}_* \| \eta\|_{L_\infty} \| \eta^{-1}\|^{3/2}_{L_\infty} \| \q \|_{L_2(\O)}.
\end{equation}
We put $\wt{\bups}_\eps := P_\O \bups_\eps \in H^2(\R^3; \C^3)$. According to  \eqref{Mrr10} and \eqref{Mqq13},
\begin{equation}\label{Mqq14}
   \| \wt{\bups}_\eps \|_{H^2(\R^3)} \leqslant {\mathfrak C}_2 \| \q \|_{L_2(\O)},
\end{equation}
where ${\mathfrak C}_2 = C_\O \wh{c}_* \| \eta \|_{L_\infty} \| \eta^{-1}\|^{3/2}_{L_\infty}$.

\subsection{The first order approximation for $\bphi_\eps$}
Let $\bphi_\eps$ be the solution of equation \eqref{Mqq2}. 
By analogy with Subsection \ref{sec4.4}, we look for the first order approximation $\boldsymbol{\vartheta}_\eps$ of the solution $\bphi_\eps$ 
in the form similar to the case of $\R^3$.

Let us introduce the necessary objects. Let $W_\eta^*(\x)$ be the $\Gamma$-periodic  $(3 \times 3)$-matrix-valued function given by
\begin{equation}\label{Mqq15}
   W_\eta^*(\x) = \eta(\x)^{-1/2} \wt{\eta}(\x) (\eta^0)^{-1/2} = \eta(\x)^{1/2} (\1 +Y_{\eta}(\x)) (\eta^0)^{-1/2},
\end{equation}
where $\wt{\eta}(\x)$ is the matrix \eqref{eff2}. We put $\wh{\c}_j:= (\eta^0)^{-1/2} \e_j$, $j=1,2,3$.
 Let $\wh{\Phi}_j(\x)$ be the  $\Gamma$-periodic solution of the problem
 \begin{equation*}
   \div \eta(\x) (\nabla \wh{\Phi}_j(\x) + \wh{\c}_j)=0,
   \quad \int_\Omega \wh{\Phi}_j(\x)\, d\x =0.
\end{equation*}
Let $\wh{\f}_{lj}(\x)$ (where $l,j=1,2,3$) be the $\Gamma$-periodic solution of the problem
\begin{equation*}
\begin{aligned}
   & \eta(\x)^{-1/2} \rot \mu(\x)^{-1} \left( \rot \eta(\x)^{-1/2}  \wh{\f}_{lj}(\x) + 
   i \e_l \times (\nabla \wh{\Phi}_j(\x) + \wh{\c}_j) \right) 
   \\
   & - \eta(\x)^{1/2} \nabla \left( \div \eta(\x)^{1/2} \wh{\f}_{lj}(\x) +
   i \e_l \cdot (\eta(\x) (\nabla \wh{\Phi}_j(\x) + \wh{\c}_j) \right) = 0,
   \\
   & \int_\Omega \wh{\f}_{lj} (\x)\, d\x =0.
   \end{aligned}
\end{equation*}
Let $\wh{\Lambda}_l(\x)$ (where $l=1,2,3$) be the $\Gamma$-periodic $(3\times 3)$-matrix-valued function
with the columns $\wh{\f}_{lj}(\x)$, $j=1,2,3$. Similarly to \eqref{Mrr17a}, we have 
\begin{equation}\label{Mqq17a}
\| \wh{\Lambda}_l\|_{L_2(\Omega)} \leqslant C_{\wh{\Lambda}} |\Omega|^{1/2}.  
\end{equation}
The analog of Remark \ref{rem4.3} holds for the functions $\wh{\f}_{lj}$.
In particular, together with  \eqref{eff6b}, this implies 
\begin{equation}\label{6.21a}
\| \rot \eta^{-1/2} \wh{\Lambda}_l\|_{L_2(\Omega)} \leqslant C'_{\wh{\Lambda}} |\Omega|^{1/2}.
\end{equation}
The constants $C_{\wh{\Lambda}}$ and $C'_{\wh{\Lambda}}$ depend on  $\|\eta\|_{L_\infty}$, $\|\eta^{-1}\|_{L_\infty}$, $\|\mu\|_{L_\infty}$, $\|\mu^{-1}\|_{L_\infty}$, and the
parameters of the lattice $\Gamma$.

 Let $\wt{\bphi}_0, \wt{\bups}_\eps \in H^2(\R^3;\C^3)$ be the functions introduced in Subsections \ref{sec6.2} and \ref{sec6.3},
respectively. Let $S_\eps$ be the Steklov smoothing operator (see \eqref{S_eps}).
We look for the first order approximation $\btheta_\eps$ of the solution $\bphi_\eps$ of equation \eqref{Mqq2} in the form
\begin{equation}\label{Mqq18}
\begin{aligned}
   & \wt{\btheta}_\eps = (W_\eta^\eps)^{*} S_\eps (\wt{\bphi}_0 + \wt{\bups}_\eps)
   + \eps \sum_{l=1}^3 \wh{\Lambda}_l^\eps S_\eps D_l  (\wt{\bphi}_0 + \wt{\bups}_\eps),  
   \\
   & \btheta_\eps = \wt{\btheta}_\eps \vert_{\O}.
   \end{aligned}
\end{equation}

The following statement is completely analogous to  Lemma \ref{lem4.3}.

\begin{lemma}\label{lem6.3}
We have
$\btheta_\eps \in L_2(\O;\C^3)$, $\rot (\eta^\eps)^{-1/2} \btheta_\eps \in L_2(\O;\C^3)$,
$\div (\eta^\eps)^{1/2} \btheta_\eps \in L_2(\O)$, and 
\begin{align*}
&\| {\btheta}_\eps - (W_\eta^\eps)^{*} S_\eps (\wt{\bphi}_0 + \wt{\bups}_\eps) \|_{L_2(\O)}
\leqslant {\mathfrak C}_3 \eps \| \q \|_{L_2(\O)},
\\
&\| (\mu^\eps)^{-1}\rot (\eta^\eps)^{-1/2}{\btheta}_\eps - 
(\1 + Y_\mu^\eps) (\mu^0)^{-1} \rot (\eta^0)^{-1/2}S_\eps 
 (\wt{\bphi}_0 + \wt{\bups}_\eps) \|_{L_2(\O)}
\leqslant {\mathfrak C}_4 \eps \| \q \|_{L_2(\O)},
\\
&\| \div (\eta^\eps)^{1/2}{\btheta}_\eps  \|_{L_2(\O)}
\leqslant {\mathfrak C}_5 \eps \| \q \|_{L_2(\O)}.
\end{align*}
The constants ${\mathfrak C}_3$, ${\mathfrak C}_4$, and ${\mathfrak C}_5$ depend only on the norms $\|\eta\|_{L_\infty}$, 
$\|\eta^{-1}\|_{L_\infty}$, $\|\mu\|_{L_\infty}$, $\|\mu^{-1}\|_{L_\infty}$, the parameters of the lattice $\Gamma$, and the domain $\O$.
\end{lemma}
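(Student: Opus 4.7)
The plan is to mirror the proof of Lemma~\ref{lem4.3} line by line, swapping the roles of $\eta$ and $\mu$ together with all associated auxiliary objects: $Y_\mu \leftrightarrow Y_\eta$, $W_\mu^* \leftrightarrow W_\eta^*$, $\Psi_j \leftrightarrow \wh{\Phi}_j$, $\Lambda_l \leftrightarrow \wh{\Lambda}_l$, $\f_{lj} \leftrightarrow \wh{\f}_{lj}$, and $\wt{\bvarphi}_0+\wt{\brho}_\eps \leftrightarrow \wt{\bphi}_0+\wt{\bups}_\eps$. Each of the three inequalities is treated in turn.

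For the first inequality, I would apply Proposition~\ref{prop f^eps S_eps} to the second summand in \eqref{Mqq18}, combine it with the uniform bound \eqref{Mqq17a} on $\|\wh{\Lambda}_l\|_{L_2(\Omega)}$ to obtain a bound of order $\eps\, C_{\wh{\Lambda}}\sqrt{3}\,\| \wt{\bphi}_0+\wt{\bups}_\eps \|_{H^1(\R^3)}$, and then invoke \eqref{Mqq11} and \eqref{Mqq14} to collapse the right-hand side to $\eps \| \q \|_{L_2(\O)}$, yielding ${\mathfrak C}_3=C_{\wh{\Lambda}}\sqrt{3}({\mathfrak C}_1+{\mathfrak C}_2)$.

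For the divergence bound, using \eqref{Mqq15} and \eqref{Mqq18} I write $(\eta^\eps)^{1/2}\wt{\btheta}_\eps = \wt{\eta}^\eps(\eta^0)^{-1/2} S_\eps(\wt{\bphi}_0+\wt{\bups}_\eps)+\eps\sum_{l=1}^3 (\eta^\eps)^{1/2}\wh{\Lambda}_l^\eps S_\eps D_l(\wt{\bphi}_0+\wt{\bups}_\eps)$. Problem \eqref{eff1} gives $\div\wt{\eta}=0$, so the Leibniz rule produces three summands $\wh{J}_1(\eps)+\wh{J}_2(\eps)+\wh{J}_3(\eps)$ analogous to \eqref{lem4}. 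The third one is handled directly by Proposition~\ref{prop f^eps S_eps} and \eqref{Mqq11}, \eqref{Mqq14}. The decisive algebraic step is the analogue of identity \eqref{div_f} for $\wh{\f}_{lj}$ (with $\eta$ and $\mu$ interchanged), which collapses $\wh{J}_1(\eps)+\wh{J}_2(\eps)$ into $\div(\eta^0)^{1/2} S_\eps(\wt{\bphi}_0+\wt{\bups}_\eps)$; this differs from $\div(\eta^0)^{1/2}(\bphi_0+\bups_\eps)=0$ on $\O$ (see \eqref{Mqq4}, \eqref{Mqq11a}) by $O(\eps)$ thanks to Proposition~\ref{prop_Seps - I}.

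For the curl bound, since $Y_\eta$ has gradient columns, $\rot(Y_\eta^\eps+\1)=0$. Applying $(\mu^\eps)^{-1}\rot$ to $(\eta^\eps)^{-1/2}\wt{\btheta}_\eps$ and expanding through $\rot=\sum_j b_j D_j$, I obtain three summands $\wh{\mathbf F}_1(\eps)+\wh{\mathbf F}_2(\eps)+\wh{\mathbf F}_3(\eps)$ analogous to \eqref{lem7}. The last is estimated by Proposition~\ref{prop f^eps S_eps} and \eqref{Mqq17a}. The analogue of \eqref{rot_f} for $\wh{\f}_{lj}$ then collapses $\wh{\mathbf F}_1(\eps)+\wh{\mathbf F}_2(\eps)$ into $(\1+Y_\mu^\eps)(\mu^0)^{-1}\rot(\eta^0)^{-1/2}S_\eps(\wt{\bphi}_0+\wt{\bups}_\eps)$, leaving only $\wh{\mathbf F}_3(\eps)$ as the error term. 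The main obstacle is purely bookkeeping: one has to verify that Remark~\ref{rem4.3} remains valid under the swap $\eta\leftrightarrow\mu$, i.e.\ that $\wh{\f}_{lj}$ satisfies the analogues of \eqref{div_f} and \eqref{rot_f}. These follow at once from the defining equation for $\wh{\f}_{lj}$ by the identical computation carried out for $\f_{lj}$ in \cite[Subsection 4.4]{Su1}.
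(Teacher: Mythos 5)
Your proof is correct and follows the same route as the paper, which itself gives no explicit argument but states that Lemma~\ref{lem6.3} is ``completely analogous to Lemma~\ref{lem4.3}.'' You have faithfully carried out the $\eta\leftrightarrow\mu$ swap, identified the right analogues of \eqref{div_f}, \eqref{rot_f}, and the vanishing of $\div\wt{\eta}$ and $\rot(\1+Y_\eta)$, and invoked the correct replacements \eqref{Mqq11}, \eqref{Mqq14}, \eqref{Mqq17a} for the a priori bounds; the only nitpick is that the cell solution traded for $\wh{\Phi}_j$ is $\check{\Psi}_j$ (from Subsection~\ref{sec4.4}), not $\Psi_j$ (from \eqref{eff4}), a purely notational slip.
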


\subsection{Introduction of the boundary layer correction term}
Denote
\begin{equation}
\label{pogr0_q}
\begin{aligned}
 {\mathcal Q}_\eps[\bzeta] & :=  ((\1+Y_\mu^\eps) (\mu^0)^{-1} \rot   (\eta^0)^{-1/2}
 S_\eps   (\wt{\bphi}_0 + \wt{\bups}_\eps), \rot (\eta^\eps)^{-1/2} \bzeta )_{L_2(\O)} 
\\
&+ ((W_\eta^\eps)^* S_\eps   (\wt{\bphi}_0 + \wt{\bups}_\eps), \bzeta)_{L_2(\O)}
- i ((\eta^\eps)^{-1/2} \q, \bzeta )_{L_2(\O)}, \quad \bzeta \in \Dom \wh{\mathfrak l}_\eps.
\end{aligned} 
\end{equation}
Recall that $\Dom \wh{\mathfrak l}_\eps$ is defined by \eqref{forma_q}.
We introduce the boundary layer correction term  $\wh{\s}_\eps$ as a vector-valued function in the domain $\O$
such that
\begin{equation}
\label{pogr1_q}
\wh{\s}_\eps \in L_2(\O;\C^3),\ \div (\eta^\eps)^{1/2} \wh{\s}_\eps \in L_2(\O), \ 
\rot (\eta^\eps)^{-1/2} \wh{\s}_\eps \in L_2(\O;\C^3),
\end{equation}
satisfying the identity
\begin{equation}
\label{pogr2_q}
\begin{aligned}
&((\mu^\eps)^{-1} \rot (\eta^\eps)^{-1/2} \wh{\s}_\eps, \rot (\eta^\eps)^{-1/2} \bzeta )_{L_2(\O)}
\\
&+ ( \div (\eta^\eps)^{1/2} \wh{\s}_\eps, \div (\eta^\eps)^{1/2} \bzeta )_{L_2(\O)}
+ (\wh{\s}_\eps,  \bzeta )_{L_2(\O)} =  {\mathcal Q}_\eps[\bzeta],
\ \forall \bzeta \in \Dom \wh{\mathfrak l}_\eps,
\end{aligned} 
\end{equation}
and the boundary condition
\begin{equation}
\label{pogr3_q}
((\eta^\eps)^{-1/2} \wh{\s}_\eps )_\tau \vert_{\partial \O} = ((\eta^\eps)^{-1/2} \btheta_\eps )_\tau \vert_{\partial \O}.
\end{equation}

\begin{lemma}\label{lem6.5}
Let $\bphi_\eps$ be the solution of problem \eqref{Mqq1}. Let $\btheta_\eps$ be the first order approximation of the solution defined by  \eqref{Mqq18}. 
Let $\wh{\s}_\eps$ be the correction term satisfying 
\eqref{pogr1_q}--\eqref{pogr3_q}. We put $\wh{\V}_\eps := \bphi_\eps - \btheta_\eps + \wh{\s}_\eps$. Then  
$\wh{\V}_\eps \in \Dom \wh{\mathfrak l}_\eps$ and 
\begin{equation*}
\| \wh{\V}_\eps \|_{L_2(\O)} + \| \div (\eta^\eps)^{1/2} \wh{\V}_\eps \|_{L_2(\O)}
+ \| \rot (\eta^\eps)^{-1/2} \wh{\V}_\eps \|_{L_2(\O)} \leqslant {\mathfrak C}_6 \eps \| \q \|_{L_2(\O)}.
\end{equation*}
The constant ${\mathfrak C}_6$ depends only on the norms $\|\eta\|_{L_\infty}$, 
$\|\eta^{-1}\|_{L_\infty}$, $\|\mu\|_{L_\infty}$, $\|\mu^{-1}\|_{L_\infty}$, the parameters of the lattice $\Gamma$, and the domain $\O$.
\end{lemma}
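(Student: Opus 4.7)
The plan is to follow, step by step, the proof of Lemma~\ref{lem4.5} in Subsection~4.5, with the roles of the coefficients $\eta^\eps$ and $\mu^\eps$ and of the boundary conditions exchanged according to the symmetrization of Subsection~\ref{sim6}. All the ingredients needed are already in place: Lemma~\ref{lem6.3} is the exact analog of Lemma~\ref{lem4.3}, the functional ${\mathcal Q}_\eps[\bzeta]$ from \eqref{pogr0_q} is the analog of ${\mathcal R}_\eps[\bzeta]$, and the definition \eqref{pogr1_q}--\eqref{pogr3_q} of $\wh{\s}_\eps$ mirrors \eqref{pogr1}--\eqref{pogr3}.

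First I would verify that $\wh{\V}_\eps \in \Dom \wh{\mathfrak l}_\eps$. The membership $\wh{\V}_\eps \in L_2(\O;\C^3)$ together with $\div(\eta^\eps)^{1/2}\wh{\V}_\eps \in L_2(\O)$ and $\rot(\eta^\eps)^{-1/2}\wh{\V}_\eps \in L_2(\O;\C^3)$ follows from the corresponding properties of $\bphi_\eps$, $\btheta_\eps$ (Lemma~\ref{lem6.3}), and $\wh{\s}_\eps$ (relation \eqref{pogr1_q}). The required tangential boundary condition $((\eta^\eps)^{-1/2}\wh{\V}_\eps)_\tau\vert_{\partial\O}=0$ follows by combining $((\eta^\eps)^{-1/2}\bphi_\eps)_\tau\vert_{\partial\O}=0$ from \eqref{Mqq1} with \eqref{pogr3_q}.

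Next I would derive the main identity. The function $\bphi_\eps$ satisfies
\begin{equation*}
((\mu^\eps)^{-1}\rot (\eta^\eps)^{-1/2}\bphi_\eps, \rot (\eta^\eps)^{-1/2}\bzeta)_{L_2(\O)}
+ (\bphi_\eps, \bzeta)_{L_2(\O)}
= i((\eta^\eps)^{-1/2}\q, \bzeta)_{L_2(\O)}
\end{equation*}
for all $\bzeta \in \Dom \wh{\mathfrak l}_\eps$, together with $\div(\eta^\eps)^{1/2}\bphi_\eps=0$. Subtracting the defining identity \eqref{pogr2_q} for $\wh{\s}_\eps$ and using \eqref{pogr0_q}, all the terms $i((\eta^\eps)^{-1/2}\q,\bzeta)$ cancel, and what remains is
\begin{equation*}
\begin{aligned}
\wh{\mathfrak l}_\eps[\wh{\V}_\eps,\bzeta] + (\wh{\V}_\eps,\bzeta)_{L_2(\O)}
&= -((\mu^\eps)^{-1}\rot(\eta^\eps)^{-1/2}\btheta_\eps,\rot(\eta^\eps)^{-1/2}\bzeta)_{L_2(\O)}
\\
& -(\div(\eta^\eps)^{1/2}\btheta_\eps,\div(\eta^\eps)^{1/2}\bzeta)_{L_2(\O)}
-(\btheta_\eps,\bzeta)_{L_2(\O)}
\\
&+((\1+Y_\mu^\eps)(\mu^0)^{-1}\rot(\eta^0)^{-1/2}S_\eps(\wt{\bphi}_0+\wt{\bups}_\eps),\rot(\eta^\eps)^{-1/2}\bzeta)_{L_2(\O)}
\\
&+((W_\eta^\eps)^* S_\eps(\wt{\bphi}_0+\wt{\bups}_\eps),\bzeta)_{L_2(\O)}.
\end{aligned}
\end{equation*}
The three estimates of Lemma~\ref{lem6.3} control precisely the differences between $\btheta_\eps$ and its homogenized counterparts appearing here, yielding the bound
\begin{equation*}
|\wh{\mathfrak l}_\eps[\wh{\V}_\eps,\bzeta] + (\wh{\V}_\eps,\bzeta)_{L_2(\O)}|
\leqslant {\mathfrak C}_6'\eps\|\q\|_{L_2(\O)}\bigl(\|\bzeta\|_{L_2(\O)} + \|\div(\eta^\eps)^{1/2}\bzeta\|_{L_2(\O)} + \|\rot(\eta^\eps)^{-1/2}\bzeta\|_{L_2(\O)}\bigr)
\end{equation*}
with ${\mathfrak C}_6'={\mathfrak C}_3+{\mathfrak C}_4+{\mathfrak C}_5$, after using $\|(\mu^\eps)^{-1}\|_{L_\infty}\leqslant\|\mu^{-1}\|_{L_\infty}$ to absorb the inverse magnetic permeability into the constant.

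Finally, substituting $\bzeta=\wh{\V}_\eps$, one gets $\wh{\mathfrak l}_\eps[\wh{\V}_\eps,\wh{\V}_\eps]+\|\wh{\V}_\eps\|^2_{L_2(\O)}$ on the left, and the desired bound on the sum of three norms on the right follows at once, with ${\mathfrak C}_6 = 3\max\{1,\|\mu^{-1}\|_{L_\infty}\}{\mathfrak C}_6'$ (the factor $\|\mu^{-1}\|_{L_\infty}$ enters because $\wh{\mathfrak l}_\eps$ carries $(\mu^\eps)^{-1}$ in the $\rot$-term rather than $(\eta^\eps)^{-1}$). There is no serious obstacle: this is a direct transcription of the proof of Lemma~\ref{lem4.5}, and the only point requiring mild attention is keeping track of which coefficient appears under $\rot$ versus under $\div$, which is already handled by the definitions in Subsection~\ref{sim6}.
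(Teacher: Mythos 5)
Your proof is correct and follows precisely the route the paper intends (the paper itself says only that Lemma~\ref{lem6.5} ``is deduced from Lemma~\ref{lem6.3} similarly to the proof of Lemma~\ref{lem4.5}''), with one slip in the tracking of constants. When you substitute $\bzeta = \wh{\V}_\eps$ and use the lower bound for the quadratic form, the factor comes from the smallest eigenvalue of $(\mu^\eps)^{-1}$, which is bounded below by $\|\mu\|_{L_\infty}^{-1}$, \emph{not} by $\|\mu^{-1}\|_{L_\infty}^{-1}$; so the correct final constant is ${\mathfrak C}_6 = 3\,{\mathfrak C}_6' \max\{1, \|\mu\|_{L_\infty}\}$ (compare the paper's Section~7, where the same coercivity step yields ${\mathfrak C}_7' = 3\max\{1,\|\mu\|_{L_\infty}\}\wt{\mathfrak C}^\circ$). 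Relatedly, the parenthetical remark about ``absorbing $\|\mu^{-1}\|_{L_\infty}$'' when identifying ${\mathfrak C}_6' = {\mathfrak C}_3 + {\mathfrak C}_4 + {\mathfrak C}_5$ is unnecessary: the second estimate of Lemma~\ref{lem6.3} already controls exactly the difference of the two $\rot$-expressions appearing in the identity, so no extra factor is absorbed there. Neither point affects the validity of the lemma as stated, since the constant is only required to depend on the given set of parameters.
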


Lemma \ref{lem6.5} is deduced from Lemma \ref{lem6.3} similarly to the proof of Lemma \ref{lem4.5}.

Lemma \ref{lem6.5} shows that the difference $\btheta_\eps - \wh{\s}_\eps$ gives approximation of the solution  
$\bphi_\eps$ in the ``energy'' norm with an error of sharp order $O(\eps)$. 
However, it is difficult to control the correction term $\wh{\s}_\eps$. 
We estimate $\wh{\s}_\eps$ in the ``energy'' norm.

\begin{theorem}\label{lem6.6}
Suppose that  $\wh{\s}_\eps$ satisfies relations 
\eqref{pogr1_q}--\eqref{pogr3_q}. Suppose that $\eps_1$ is subject to Condition {\rm \ref{cond1}}.
Then for $0< \eps \leqslant \eps_1$ we have
\begin{equation}
\label{pogr6a_q}
\| \wh{\s}_\eps \|_{L_2(\O)} + \| \div (\eta^\eps)^{1/2} \wh{\s}_\eps \|_{L_2(\O)}
+ \| \rot (\eta^\eps)^{-1/2} \wh{\s}_\eps \|_{L_2(\O)} \leqslant {\mathfrak C}_7 \eps^{1/2} \| \q \|_{L_2(\O)}.
\end{equation}
The constant ${\mathfrak C}_7$ depends only on the norms  $\|\eta\|_{L_\infty}$, 
$\|\eta^{-1}\|_{L_\infty}$, $\|\mu\|_{L_\infty}$, $\|\mu^{-1}\|_{L_\infty}$, the parameters of the lattice $\Gamma$, and the domain $\O$.
\end{theorem}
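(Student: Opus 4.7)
The plan is to mirror the proof of Theorem~\ref{lem4.6} from Section~\ref{sec5}, exploiting the systematic duality between the two problems: the roles of $\eta$ and $\mu$, of $J(\O)$ and $J_0(\O)$, of $\Phi_j$ and $\Psi_j$ (hence of $Y_\eta$ and $Y_\mu$), of $\wt{\eta}$ and $\wt{\mu}$, and of normal and tangential boundary traces are all interchanged. Consistent with this, the Neumann-type scalar auxiliary problems of Section~\ref{sec5} become Dirichlet-type here, matching the orthogonal decomposition $L_2(\O;\C^3) = {\mathcal J}(\O;\eta^\eps) \oplus {\mathcal G}_0(\O;\eta^\eps)$ whose second summand consists of weighted $H^1_0$-gradients.

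First I would establish the dual of Lemma~\ref{lem5.1}: a linear operator $\wh T_\eps:\Dom\wh{\mathfrak l}_\eps\to \Dom\wh{\mathfrak l}_0$ such that $\bzeta^0_\eps := \wh T_\eps\bzeta$ preserves $\div(\eta^\eps)^{1/2}$ and $\rot(\eta^\eps)^{-1/2}$ and satisfies $L_2$ and $H^1$ bounds analogous to~\eqref{lemmm2}--\eqref{lemmm3}. Here one solves two weak \emph{Dirichlet} problems for $\div\eta^0\nabla$: find $\phi_{\eps,1},\phi_{\eps,2}\in H^1_0(\O)$ from the natural formulations with test space $H^1_0(\O)$, and set $\bzeta^0_\eps:=(\eta^0)^{1/2}\nabla\phi_{\eps,1}+(\eta^0)^{1/2}\bigl((\eta^\eps)^{-1/2}\bzeta-\nabla\phi_{\eps,2}\bigr)$. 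Since $\phi_{\eps,j}|_{\partial\O}=0$, the tangential traces of $\nabla\phi_{\eps,j}$ vanish and $\bzeta^0_\eps\in\Dom\wh{\mathfrak l}_0$; the identity $(\eta^0)^{-1/2}\bzeta^0_\eps-(\eta^\eps)^{-1/2}\bzeta=\nabla(\phi_{\eps,1}-\phi_{\eps,2})\in\nabla H^1_0(\O)$ replaces~\eqref{lemm12}.

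Next I decompose ${\mathcal Q}_\eps[\bzeta]$ into seven summands exactly paralleling~\eqref{t9}--\eqref{ttt12}. The easy analogs of $\mathcal J_\eps^{(4)},\mathcal J_\eps^{(6)}$ follow from Proposition~\ref{prop_Seps - I} with~\eqref{Mqq11},~\eqref{Mqq14}; the analog of $\mathcal J_\eps^{(1)}$ follows Lemma~\ref{lem5.3} via the cutoff~\eqref{srezka} and Lemma~\ref{lem02} with bounds~\eqref{efff},~\eqref{eff6b} now applied to $\Phi_j$; the analog of $\mathcal J_\eps^{(5)}$ requires first establishing the $\eta$-version of the skew-symmetric potential lemma ($\wt\eta_{li}-\eta^0_{li}=\sum_j\partial_j N^{(i)}_{lj}$, $N^{(i)}_{lj}=-N^{(i)}_{jl}$, $N^{(i)}_{lj}\in\wt H^1(\Omega)$) and then repeats Lemma~\ref{lem5.5}. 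The cancellations~\eqref{t10},~\eqref{ttt10}, which in Section~\ref{sec5} used $\r\in J_0(\O)$, are here ensured by $\phi_{\eps,1}-\phi_{\eps,2}\in H^1_0(\O)$ combined with $(\eta^0)^{1/2}(\bphi_0+\bups_\eps)\in J(\O)$ via~\eqref{5.1}. The analog of $\mathcal J_\eps^{(7)}$ proceeds as in Subsection~\ref{sec5.5}: the $H^1$-continuity of ${\mathcal P}_{\eta^0}$ (dual of Remark~\ref{rem5.6}) holds because the complement of $J(\O)$ in $L_2(\O;(\eta^0)^{-1})$ consists of $H^1_0$-gradients; after the $\theta_\eps$-splitting, the near-boundary part uses Lemma~\ref{lem01}, and the interior term, after integration by parts via $\eps\nabla\Phi_j^\eps=\nabla(\eps\Phi_j^\eps)$, has its gradient piece annihilated by $\q\in J(\O)$ once the factor $\Phi_j^\eps[S_\eps(\eta^0)^{-1}\p_\eps]_j$ is arranged to lie in $H^1_0(\O)$ (by slightly enlarging $\theta_\eps$'s support to absorb the Steklov kernel radius).

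The main technical obstacle is the boundary-condition step, where strict analogy with Subsection~\ref{sec5.5} breaks down. The Neumann construction there produced a weighted gradient $\f_\eps=(\mu^\eps)^{1/2}\nabla\xi_\eps$ with exactly the prescribed normal trace, but in the dual setting the tangential trace of any weighted gradient $(\eta^\eps)^{1/2}\nabla\xi_\eps$ equals $\nabla_T(\xi_\eps|_{\partial\O})$, which is an exact tangential $1$-form on $\partial\O$, whereas $((\eta^\eps)^{-1/2}\btheta_\eps)_\tau|_{\partial\O}$ is generically not exact. I plan to take $\xi_\eps\in H^1_0(\O)$ solving the weak Dirichlet problem
\[
 \int_\O\langle\eta^\eps\nabla\xi_\eps,\nabla\omega\rangle\,d\x
 =\int_\O\langle(\eta^\eps)^{1/2}\btheta_\eps,\nabla\omega\rangle\,d\x,\quad\forall\omega\in H^1_0(\O),
\]
so that $\f_\eps:=(\eta^\eps)^{1/2}\nabla\xi_\eps$ has $\rot(\eta^\eps)^{-1/2}\f_\eps=0$ and $\div(\eta^\eps)^{1/2}\f_\eps=\div(\eta^\eps)^{1/2}\btheta_\eps$ of order $O(\eps)\|\q\|_{L_2}$ by Lemma~\ref{lem6.3}, and --- by the dual of Lemma~\ref{lem5.7}, testing against $\omega=\xi_\eps$, decomposing via the $N^{(i)}_{lj}$-lemma and Proposition~\ref{prop_Seps - I}, and using that $(\eta^0)^{1/2}(\bphi_0+\bups_\eps)\in J(\O)$ kills the leading term against the $H^1_0$ test function --- $\|\f_\eps\|_{L_2(\O)}\le\mathfrak C_{19}\eps^{1/2}\|\q\|_{L_2}$. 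However, $\f_\eps$ has \emph{vanishing} tangential trace, so $\wh\s_\eps-\f_\eps$ does not land in $\Dom\wh{\mathfrak l}_\eps$. The remaining, and most delicate, task is to absorb the pointwise-large but homogenization-theoretically small tangential data of $(\eta^\eps)^{-1/2}\btheta_\eps$ on $\partial\O$: exploiting that its leading part is $(\1+Y_\eta^\eps)(\eta^0)^{-1/2}S_\eps(\wt\bphi_0+\wt\bups_\eps)$ where the effective field $(\eta^0)^{-1/2}(\bphi_0+\bups_\eps)$ satisfies the tangential BC on $\partial\O$ and the oscillatory factor $Y_\eta^\eps$ is controlled by Lemma~\ref{lem02}, one recasts the variational identity so that this tangential data enters only through boundary integrals estimated as in Subsection~\ref{sec5.5}, yielding $|\wt{\mathcal Q}_\eps[\bzeta]|\le\mathfrak C^\circ\eps^{1/2}\|\q\|_{L_2}(\|\bzeta\|_{L_2}+\|\div(\eta^\eps)^{1/2}\bzeta\|_{L_2}+\|\rot(\eta^\eps)^{-1/2}\bzeta\|_{L_2})$; substitution of the appropriate test function in $\Dom\wh{\mathfrak l}_\eps$ then yields~\eqref{pogr6a_q}.
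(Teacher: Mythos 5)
Your proposal is on the right track up to the boundary-condition step: the dual of Lemma~\ref{lem5.1} via Dirichlet problems (with the key observation $\phi_{\eps,1}-\phi_{\eps,2}\in H^1_0(\O)$ replacing $\r\in J_0(\O)$), the seven-term decomposition of ${\mathcal Q}_\eps$, the skew-symmetric potential for $\wt\eta-\eta^0$, and the $\theta_\eps$-splitting of the $\mathcal T_\eps^{(7)}$-type term are all faithfully dualized and lead to the bound \eqref{q5_q} of the paper, namely $|{\mathcal Q}_\eps[\bzeta]| \le {\mathfrak C}^\circ\eps^{1/2}\|\q\|_{L_2(\O)}\,\|\bzeta\|_*$ where $\|\bzeta\|_*$ denotes the weighted energy norm.

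The gap is in the final step, and you have correctly diagnosed \emph{why} it is a gap but not how to close it. You correctly note that $\f_\eps:=(\eta^\eps)^{1/2}\nabla\xi_\eps$ has vanishing tangential trace and so $\wh{\s}_\eps-\f_\eps\notin\Dom\wh{\mathfrak l}_\eps$, and your fix (``recast the variational identity so that this tangential data enters only through boundary integrals'') is not a construction and does not match what is actually needed: the variational method requires an explicit test-admissible reduction. What the paper does, and what is missing from your proposal, is the following. One splits $(\eta^\eps)^{-1/2}\btheta_\eps=(\eta^0)^{-1/2}(\bphi_0+\bups_\eps)+\a_\eps$, where the first term has zero tangential trace (it is in $\Dom\wh{\mathfrak l}_0$); the remainder $\a_\eps$ is the carrier of the boundary data, and the non-gradient part of $\a_\eps$ can be localized near $\partial\O$ with the cut-off $\theta_\eps$. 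One then proves, using Lemma~\ref{lem02} and Proposition~\ref{prop f^eps S_eps}, that \emph{both} $\|\a_\eps\|_{L_2(\O)}$ \emph{and} $\|\rot\a_\eps\|_{L_2(\O)}$ are $O(\eps^{1/2})\|\q\|_{L_2(\O)}$ (this needs the extra bound \eqref{6.21a} on $\rot\eta^{-1/2}\wh\Lambda_l$, which has no counterpart in Section~\ref{sec5}). Then one solves the Dirichlet problem $-\div\eta^\eps\nabla\wh\xi_\eps=\div\eta^\eps\a_\eps$, $\wh\xi_\eps\in H^1_0(\O)$ --- with $\a_\eps$, not $\btheta_\eps$, on the right --- and sets $\h_\eps:=(\eta^\eps)^{1/2}(\a_\eps+\nabla\wh\xi_\eps)$. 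This $\h_\eps$ has $\div(\eta^\eps)^{1/2}\h_\eps=0$, $\rot(\eta^\eps)^{-1/2}\h_\eps=\rot\a_\eps$, tangential trace equal to $(\a_\eps)_\tau=((\eta^\eps)^{-1/2}\btheta_\eps)_\tau$, and $\|\h_\eps\|_{L_2(\O)}\le C\eps^{1/2}\|\q\|_{L_2(\O)}$. Now $\wh{\s}_\eps-\h_\eps\in\Dom\wh{\mathfrak l}_\eps$, and testing the variational identity with $\bzeta=\wh{\s}_\eps-\h_\eps$ closes the proof. Without this ``$\a_\eps$ plus gradient-correction'' construction (or an equivalent explicit one), your argument does not yield a test function in $\Dom\wh{\mathfrak l}_\eps$, and the proof is incomplete.
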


 Section \ref{sec7} is devoted to the proof of Theorem \ref{lem6.6}.

\subsection{Approximation of the function $\bphi_\eps$} From Lemma \ref{lem6.5} and Theorem \ref{lem6.6} we deduce approximation for the function $\bphi_\eps$.  
The proof is completely analogous to the proof of Theorem~\ref{th_varphi}.
 
\begin{theorem}
\label{th_phi}
Let $\bphi_\eps$ be the solution of problem  \eqref{Mqq1}. 
Suppose that $\eps_1$ is subject to Condition {\rm \ref{cond1}}.
Then for  $0<  \eps \leqslant \eps_1$ we have
\begin{align*}
&\| \bphi_\eps - (W^\eps_\eta)^* (\bphi_0 + \bups_\eps)\|_{L_2(\O)} \leqslant {\mathfrak C}_8 \eps^{1/2} 
\|\q\|_{L_2(\O)},
\\
&\| (\mu^\eps)^{-1} \rot (\eta^\eps)^{-1/2}\bphi_\eps - (\1 + Y_\mu^\eps) 
(\mu^0)^{-1} \rot (\eta^0)^{-1/2} (\bphi_0 + \bups_\eps)\|_{L_2(\O)} 
\leqslant {\mathfrak C}_9 \eps^{1/2} \|\q\|_{L_2(\O)}.
\end{align*}
The constants  ${\mathfrak C}_8$ and ${\mathfrak C}_9$ depend only on the norms  $\|\eta\|_{L_\infty}$, 
$\|\eta^{-1}\|_{L_\infty}$, $\|\mu\|_{L_\infty}$, $\|\mu^{-1}\|_{L_\infty}$, the parameters of the lattice $\Gamma$, and the domain $\O$.
\end{theorem}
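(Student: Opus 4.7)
The plan is to mirror the proof of Theorem \ref{th_varphi} step by step, interchanging the roles of $\mu$ and $\eta$ (and of $\r$ and $\q$) throughout. The key inputs are Lemma \ref{lem6.5} (approximation of $\bphi_\eps$ by $\btheta_\eps - \wh{\s}_\eps$ in the energy norm with error $O(\eps)$), Theorem \ref{lem6.6} (the energy-norm estimate $O(\eps^{1/2})$ for the boundary layer correction $\wh{\s}_\eps$), Lemma \ref{lem6.3} (comparison of $\btheta_\eps$ with its leading $W$-term), and the standard tools from Section \ref{Sec1} for removing the Steklov smoothing operator.

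First, combining Lemma \ref{lem6.5} with Theorem \ref{lem6.6} I obtain, for $0<\eps \leqslant \eps_1$,
\begin{equation*}
\| \bphi_\eps - \btheta_\eps \|_{L_2(\O)} + \| \rot (\eta^\eps)^{-1/2}(\bphi_\eps - \btheta_\eps)\|_{L_2(\O)} \leqslant ({\mathfrak C}_6 + {\mathfrak C}_7)\, \eps^{1/2}\|\q\|_{L_2(\O)}.
\end{equation*}
Adding the first and second estimates of Lemma \ref{lem6.3} (to pass from $\btheta_\eps$ to $(W_\eta^\eps)^*S_\eps(\wt{\bphi}_0+\wt{\bups}_\eps)$ in the $L_2$ norm, and from $(\mu^\eps)^{-1}\rot(\eta^\eps)^{-1/2}\btheta_\eps$ to $(\1+Y_\mu^\eps)(\mu^0)^{-1}\rot(\eta^0)^{-1/2}S_\eps(\wt{\bphi}_0+\wt{\bups}_\eps)$) yields the intermediate smoothed inequalities
\begin{align*}
&\| \bphi_\eps - (W_\eta^\eps)^* S_\eps(\wt{\bphi}_0+\wt{\bups}_\eps)\|_{L_2(\O)} \leqslant {\mathfrak C}_8'\, \eps^{1/2}\|\q\|_{L_2(\O)},\\
&\| (\mu^\eps)^{-1}\rot(\eta^\eps)^{-1/2}\bphi_\eps - (\1+Y_\mu^\eps)(\mu^0)^{-1}\rot(\eta^0)^{-1/2} S_\eps(\wt{\bphi}_0+\wt{\bups}_\eps)\|_{L_2(\O)} \leqslant {\mathfrak C}_9'\, \eps^{1/2}\|\q\|_{L_2(\O)}.
\end{align*}

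It then remains to replace $S_\eps$ by the identity with an $O(\eps)$ loss. Using the factorization \eqref{Mqq15}, namely $(W_\eta^\eps)^* = (\eta^\eps)^{1/2}(\1+Y_\eta^\eps)(\eta^0)^{-1/2}$, I would bound $\| (W_\eta^\eps)^*(S_\eps-I)(\wt{\bphi}_0+\wt{\bups}_\eps)\|_{L_2(\O)}$ by splitting off the $\1$-part, which Proposition \ref{prop_Seps - I} estimates by $O(\eps)\|\wt{\bphi}_0+\wt{\bups}_\eps\|_{H^1(\R^3)}$, and the $Y_\eta^\eps$-part, to which I apply Lemma \ref{lem_PSu} (after estimating $\|(S_\eps-I)(\wt{\bphi}_0+\wt{\bups}_\eps)\|_{L_2(\R^3)}$ via Proposition \ref{prop_Seps - I} and $\|(S_\eps-I)\nabla(\wt{\bphi}_0+\wt{\bups}_\eps)\|_{L_2(\R^3)}$ via \eqref{S_eps <= 1}). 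The $H^2(\R^3)$-bounds \eqref{Mqq11} and \eqref{Mqq14} turn everything into an $O(\eps)$ multiple of $\|\q\|_{L_2(\O)}$. The analogous argument, with $(\1+Y_\mu^\eps)$ as the outer multiplier, handles the rotation approximation.

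The genuinely difficult step -- bounding the boundary layer correction $\wh{\s}_\eps$ in the energy norm -- is already done in Theorem \ref{lem6.6}, so no serious new obstacle arises. The only technical subtlety in the present theorem is the use of the multiplier property of $Y_\eta^\eps$ and $Y_\mu^\eps$ from $H^1(\R^3;\C^3)$ to $L_2(\R^3;\C^3)$ (Lemma \ref{lem_PSu}), which is precisely what allows the Steklov smoothing to be stripped off without degrading the $\eps^{1/2}$ order. Assembling the estimates gives the stated bounds with ${\mathfrak C}_8 = {\mathfrak C}_8' + {\mathfrak C}_8''$ and ${\mathfrak C}_9 = {\mathfrak C}_9' + {\mathfrak C}_9''$, where the primed constants come from Steps 1--2 and the double-primed ones from the $(S_\eps-I)$-removal in Step 3.
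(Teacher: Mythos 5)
Your proposal is correct and is essentially the paper's own argument: the paper proves Theorem~\ref{th_varphi} in detail and then states that Theorem~\ref{th_phi} "is completely analogous to the proof of Theorem~\ref{th_varphi}," which is precisely the $\mu \leftrightarrow \eta$, $\r \leftrightarrow \q$ transposition you carry out, using Lemma~\ref{lem6.5}, Theorem~\ref{lem6.6}, Lemma~\ref{lem6.3}, and the standard removal of $S_\eps$ via Proposition~\ref{prop_Seps - I}, \eqref{S_eps <= 1}, and Lemma~\ref{lem_PSu}.
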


\subsection{The final result in the case where $\r=0$}

Let us express the fields with index $\q$ in terms of the function $\bphi_\eps$ introduced in Subsection \ref{sim6}:
$$
\begin{aligned}
&\u_\eps^{(\q)}= (\eta^\eps)^{-1/2} \bphi_\eps,
\quad
\w_\eps^{(\q)}= (\eta^\eps)^{1/2} \bphi_\eps,
\\
&\v_\eps^{(\q)}= -(\mu^\eps)^{-1}\rot (\eta^\eps)^{-1/2} \bphi_\eps,
\quad
\z_\eps^{(\q)}= -\rot (\eta^\eps)^{-1/2} \bphi_\eps.
\end{aligned}
$$
Similarly, the effective fields with index $\q$ are related to the function  $\bphi_0$ defined in Subsection \ref{sec6.2}:
$$
\begin{aligned}
&\u_0^{(\q)}= (\eta^0)^{-1/2} \bphi_0,
\quad
\w_0^{(\q)}= (\eta^0)^{1/2} \bphi_0,
\\
&\v_0^{(\q)}= - (\mu^0)^{-1}\rot (\eta^0)^{-1/2} \bphi_0,
\quad
\z_0^{(\q)}= - \rot (\eta^0)^{-1/2} \bphi_0.
\end{aligned}
$$
The correction fields with index $\q$ are expressed in terms of  $\bups_\eps$ (see Subsection \ref{sec6.3}):
$$
\begin{aligned}
&\wh{\u}_\eps^{(\q)}= (\eta^0)^{-1/2} \bups_\eps,
\quad
\wh{\w}_\eps^{(\q)}= (\eta^0)^{1/2} \bups_\eps,
\\
&\wh{\v}_\eps^{(\q)}= - (\mu^0)^{-1}\rot (\eta^0)^{-1/2} \bups_\eps,
\quad
\wh{\z}_\eps^{(\q)}= - \rot (\eta^0)^{-1/2} \bups_\eps.
\end{aligned}
$$
Combining these relations with Theorem \ref{th_phi}, we arrive at the final result in the case where $\r =0$.

\begin{theorem}\label{main_th_r=0}
Let $(\w_\eps^{(\q)}, \z_\eps^{(\q)})$ be the solution of system  \eqref{M1} with $\r=0$ and let  
$\u_\eps^{(\q)} = (\eta^\eps)^{-1}\w_\eps^{(\q)}$,  $\v_\eps^{(\q)} = (\mu^\eps)^{-1}\z_\eps^{(\q)}$. 
Let $(\w_0^{(\q)}, \z_0^{(\q)})$ be the solution of the effective system \eqref{M1eff} with $\r=0$ and let  
$\u_0^{(\q)} = (\eta^0)^{-1}\w_0^{(\q)}$,  $\v_0^{(\q)} = (\mu^0)^{-1}\z_0^{(\q)}$. Let
$(\wh{\w}^{(\q)}_\eps, \wh{\z}^{(\q)}_\eps)$ be the solution of the correction system  \eqref{M_corr} with $\r =0$
and let $\wh{\u}_\eps^{(\q)} =  (\eta^0)^{-1}\wh{\w}^{(\q)}_\eps$,
$\wh{\v}_\eps^{(\q)} = (\mu^0)^{-1}\wh{\z}^{(\q)}_\eps$. Suppose that $Y_\eta$, $G_\eta$, $Y_\mu$, and $G_\mu$
are periodic matrix-valued functions introduced in Subsection {\rm \ref{eff_mat}}. 
Suppose that $\eps_1$ is subject to Condition  {\rm \ref{cond1}}.
Then for $0 < \eps \le \eps_1$ we have  
\begin{align*}
\| \u_\eps^{(\q)} - (\1 + Y_\eta^\eps)(\u_0^{(\q)} + \wh{\u}_\eps^{(\q)}) \|_{L_2(\O)} &\le
 {\mathfrak C}_8\|\eta^{-1}\|^{1/2}_{L_\infty} \eps^{1/2}  \| \q \|_{L_2(\O)},
\\
\| \w_\eps^{(\q)} - (\1 + G_\eta^\eps)(\w_0^{(\q)} + \wh{\w}_\eps^{(\q)}) \|_{L_2(\O)} &\le {\mathfrak C}_8 
\|\eta\|_{L_\infty}^{1/2} \eps^{1/2} 
\| \q \|_{L_2(\O)},
\\
\| \v_\eps^{(\q)} - (\1 + Y_\mu^\eps)(\v_0^{(\q)} + \wh{\v}_\eps^{(\q)}) \|_{L_2(\O)} &\le {\mathfrak C}_9
 \eps^{1/2} \| \q \|_{L_2(\O)},
\\
\| \z_\eps^{(\q)} - (\1 + G_\mu^\eps)(\z_0^{(\q)} + \wh{\z}_\eps^{(\q)}) \|_{L_2(\O)} &\le {\mathfrak C}_9 
\|\mu\|_{L_\infty} \eps^{1/2}  \| \q \|_{L_2(\O)}.
\end{align*}
The constants ${\mathfrak C}_8$ and ${\mathfrak C}_9$ depend on the norms  $\|\eta\|_{L_\infty}$, $\|\eta^{-1}\|_{L_\infty}$,
$\|\mu\|_{L_\infty}$, $\|\mu^{-1}\|_{L_\infty}$, the parameters of the lattice $\Gamma$, and the domain $\O$. 
\end{theorem}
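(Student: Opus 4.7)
The plan is to reduce all four estimates in Theorem~\ref{main_th_r=0} to the two approximation estimates for the symmetrized function $\bphi_\eps$ provided by Theorem~\ref{th_phi}, via the algebraic identities that express the physical fields in terms of $\bphi_\eps$ and that connect the correctors $W_\eta^\eps$, $Y_\eta^\eps$, $G_\eta^\eps$, $Y_\mu^\eps$, $G_\mu^\eps$.

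First, I recall the identifications from Subsection~\ref{sim6} and Subsections~\ref{sec6.2}, \ref{sec6.3}: $\u_\eps^{(\q)}=(\eta^\eps)^{-1/2}\bphi_\eps$, $\w_\eps^{(\q)}=(\eta^\eps)^{1/2}\bphi_\eps$, $\v_\eps^{(\q)}=-(\mu^\eps)^{-1}\rot(\eta^\eps)^{-1/2}\bphi_\eps$, $\z_\eps^{(\q)}=-\rot(\eta^\eps)^{-1/2}\bphi_\eps$, and the analogous relations with the effective matrices for $(\u_0^{(\q)},\w_0^{(\q)},\v_0^{(\q)},\z_0^{(\q)})$ and $(\wh\u_\eps^{(\q)},\wh\w_\eps^{(\q)},\wh\v_\eps^{(\q)},\wh\z_\eps^{(\q)})$. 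The key algebraic facts I will use are
\begin{equation*}
(\eta^\eps)^{-1/2}(W_\eta^\eps)^* = (\1+Y_\eta^\eps)(\eta^0)^{-1/2}, \qquad (\eta^\eps)^{1/2}(W_\eta^\eps)^* = \wt{\eta}^\eps(\eta^0)^{-1/2},
\end{equation*}
which follow directly from \eqref{Mqq15}, together with the identities $\1+G_\eta^\eps=\wt{\eta}^\eps(\eta^0)^{-1}$ and $\1+G_\mu^\eps=\wt{\mu}^\eps(\mu^0)^{-1}=\mu^\eps(\1+Y_\mu^\eps)(\mu^0)^{-1}$ from Subsection~\ref{eff_mat}.

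Next, for the estimate for $\u_\eps^{(\q)}$, I would multiply the first inequality of Theorem~\ref{th_phi} by $(\eta^\eps)^{-1/2}$, losing a factor of $\|\eta^{-1}\|_{L_\infty}^{1/2}$, and apply the first identity above to recognize $(\eta^\eps)^{-1/2}(W_\eta^\eps)^*(\bphi_0+\bups_\eps)=(\1+Y_\eta^\eps)(\u_0^{(\q)}+\wh{\u}_\eps^{(\q)})$. For the estimate for $\w_\eps^{(\q)}$, I multiply by $(\eta^\eps)^{1/2}$, gaining a factor $\|\eta\|_{L_\infty}^{1/2}$, and use $(\eta^\eps)^{1/2}(W_\eta^\eps)^*(\bphi_0+\bups_\eps)=\wt{\eta}^\eps(\eta^0)^{-1/2}(\bphi_0+\bups_\eps)=(\1+G_\eta^\eps)(\w_0^{(\q)}+\wh{\w}_\eps^{(\q)})$.

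For the estimate for $\v_\eps^{(\q)}$, the second inequality of Theorem~\ref{th_phi} is directly the required statement after inserting the minus signs: the approximated quantity $(\mu^\eps)^{-1}\rot(\eta^\eps)^{-1/2}\bphi_\eps = -\v_\eps^{(\q)}$ and the approximant $(\1+Y_\mu^\eps)(\mu^0)^{-1}\rot(\eta^0)^{-1/2}(\bphi_0+\bups_\eps)= -(\1+Y_\mu^\eps)(\v_0^{(\q)}+\wh{\v}_\eps^{(\q)})$. Finally, for $\z_\eps^{(\q)}=\mu^\eps\v_\eps^{(\q)}$, I multiply the $\v_\eps^{(\q)}$ estimate by $\mu^\eps$ (picking up $\|\mu\|_{L_\infty}$) and use $\mu^\eps(\1+Y_\mu^\eps)(\v_0^{(\q)}+\wh{\v}_\eps^{(\q)}) = \wt{\mu}^\eps(\mu^0)^{-1}(\z_0^{(\q)}+\wh{\z}_\eps^{(\q)}) = (\1+G_\mu^\eps)(\z_0^{(\q)}+\wh{\z}_\eps^{(\q)})$.

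There is essentially no obstacle at this stage, since the deep work has already been done in Theorem~\ref{th_phi} (which rests on Theorem~\ref{lem6.6}); the proof is a direct transcription through the symmetrizing factors $(\eta^\eps)^{\pm 1/2}$ and $\mu^\eps$, with the constants ${\mathfrak C}_8, {\mathfrak C}_9$ multiplied by the appropriate $L_\infty$-norms of the coefficients. The only mild care needed is to verify the algebraic identity $\mu^\eps(\1+Y_\mu^\eps)(\mu^0)^{-1}=\1+G_\mu^\eps$ (and its $\eta$-analog), which is immediate from the definitions of $\wt\mu$, $G_\mu$ in Subsection~\ref{eff_mat}.
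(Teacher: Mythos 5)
Your proposal is correct and follows essentially the same route as the paper: the paper also reduces Theorem~\ref{main_th_r=0} to Theorem~\ref{th_phi} via the same algebraic identities relating the physical fields to $\bphi_\eps$, $\bphi_0$, $\bups_\eps$ and relating $(W_\eta^\eps)^*$, $Y_\eta^\eps$, $G_\eta^\eps$, $Y_\mu^\eps$, $G_\mu^\eps$, picking up the same $L_\infty$-norm factors in the constants.
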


\subsection{Completion of the proof of the main theorem}
Combining Theorems \ref{main_th_q=0} and \ref{main_th_r=0}, we directly obtain the statements of Theorem \ref{main_th}. 
The constants in estimates \eqref{main_1}--\eqref{main_4} are given by 
$$
\begin{aligned}
C_1 &= \max \{{\mathcal C}_9; {\mathfrak C}_8\|\eta^{-1}\|^{1/2}_{L_\infty} \};
\quad 
C_2 =  \max \{{\mathcal C}_9 \|\eta\|_{L_\infty}; 
{\mathfrak C}_8\|\eta\|^{1/2}_{L_\infty} \};
\\
 C_3 &= \max \{ {\mathcal C}_8 \|\mu^{-1}\|_{L_\infty}^{1/2}, {\mathfrak C}_9\};
\quad C_4 = \max \{ {\mathcal C}_8 \| \mu \|_{L_\infty}^{1/2}; {\mathfrak C}_9 \| \mu \|_{L_\infty} \}.
\end{aligned}
$$

\section{Estimation of  the correction term $\wh{\s}_\eps$\label{sec7}}

This section is devoted to the proof of  Theorem \ref{lem6.6}.

\subsection{Identification of  $\Dom \wh{\mathfrak l}_\eps$ and $\Dom \wh{\mathfrak l}_0$}
The following lemma plays the key role; it is similar to Lemma  \ref{lem5.1}.

\begin{lemma}\label{lem7.1}
There exists a linear operator $\wh{T}_\eps: \Dom \wh{\mathfrak l}_\eps \to \Dom \wh{\mathfrak l}_0$ such that the function
${\wh{\bzeta}}_\eps = \wh{T}_\eps \bzeta$, $\bzeta \in \Dom \wh{\mathfrak l}_\eps$, satisfies
identities
\begin{equation}\label{lemmm1_q}
\div (\eta^0)^{1/2} \wh{\bzeta}_\eps = \div (\eta^\eps)^{1/2} \bzeta,
\quad \rot (\eta^0)^{-1/2} \wh{\bzeta}_\eps = \rot (\eta^\eps)^{-1/2} \bzeta,
\end{equation}
and estimates
\begin{align}
\label{lemmm2_q}
&\| \wh{\bzeta}_\eps \|_{L_2(\O)} \leqslant {\mathfrak C}_{10} \| \bzeta \|_{L_2(\O)},
\\
\label{lemmm3_q}
&\| \wh{\bzeta}_\eps \|_{H^1(\O)}  \leqslant   {\mathfrak C}_{11} \left(\| \bzeta \|_{L_2(\O)} 
+ \|\div (\eta^\eps)^{1/2} \bzeta \|_{L_2(\O)}
+ \|\rot (\eta^\eps)^{-1/2} \bzeta \|_{L_2(\O)}\right).
\end{align}
The constant $ {\mathfrak C}_{10}$ depends only on $\|\eta \|_{L_\infty}$,
$\|\eta^{-1}\|_{L_\infty}$, and ${\mathfrak C}_{11}$ depends on the same parameters and the domain $\O$. 
\end{lemma}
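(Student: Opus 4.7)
The plan is to mimic the proof of Lemma~\ref{lem5.1} almost verbatim, but with the two auxiliary Neumann problems replaced by Dirichlet problems: the boundary condition on $\bzeta$ is now tangential (not normal), which makes $H^1_0(\O)$ the natural space for the scalar auxiliary functions. As before, I will decompose $\wh{\bzeta}_\eps = \wh{\bzeta}_{\eps,1} + \wh{\bzeta}_{\eps,2}$ into a gradient-type piece carrying the divergence and a piece carrying the curl, both of which automatically acquire the required tangential boundary condition from $\wh{\phi}_{\eps,i}\in H^1_0(\O)$.

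For the first auxiliary problem, given $\bzeta\in\Dom\wh{\mathfrak l}_\eps$, let $\wh{\phi}_{\eps,1}\in H^1_0(\O)$ be the weak solution of $\div\eta^0\nabla\wh{\phi}_{\eps,1}=\div(\eta^\eps)^{1/2}\bzeta$, i.e.
\begin{equation*}
\int_\O\langle\eta^0\nabla\wh{\phi}_{\eps,1},\nabla\omega\rangle\,d\x
=\int_\O\langle(\eta^\eps)^{1/2}\bzeta,\nabla\omega\rangle\,d\x,\quad\forall\omega\in H^1_0(\O),
\end{equation*}
which is well-defined by Lax--Milgram (no solvability condition is needed for the Dirichlet problem, and integration by parts is justified because $\omega$ has zero trace). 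Setting $\wh{\bzeta}_{\eps,1}:=(\eta^0)^{1/2}\nabla\wh{\phi}_{\eps,1}$, I get $\rot(\eta^0)^{-1/2}\wh{\bzeta}_{\eps,1}=0$, $\div(\eta^0)^{1/2}\wh{\bzeta}_{\eps,1}=\div(\eta^\eps)^{1/2}\bzeta$, and $((\eta^0)^{-1/2}\wh{\bzeta}_{\eps,1})_\tau\vert_{\partial\O}=0$ in the sense of Definition~\ref{def2} (because $(\nabla\wh{\phi}_{\eps,1},\rot\z)_{L_2(\O)}=0$ for every admissible $\z$, via density of $C_0^\infty$ and $\div\rot=0$). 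Testing with $\omega=\wh{\phi}_{\eps,1}$ yields the $L_2$-bound $\|\wh{\bzeta}_{\eps,1}\|_{L_2(\O)}\leq\|\eta\|^{1/2}_{L_\infty}\|\eta^{-1}\|^{1/2}_{L_\infty}\|\bzeta\|_{L_2(\O)}$, and the standard $H^2$-regularity theory for the constant-coefficient Dirichlet problem on a $C^{1,1}$ domain gives $\|\wh{\bzeta}_{\eps,1}\|_{H^1(\O)}\leq c_1\|\div(\eta^\eps)^{1/2}\bzeta\|_{L_2(\O)}$.

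For the second auxiliary problem, let $\wh{\phi}_{\eps,2}\in H^1_0(\O)$ be the weak solution of
\begin{equation*}
\int_\O\langle\eta^0\nabla\wh{\phi}_{\eps,2},\nabla\omega\rangle\,d\x
=\int_\O\langle\eta^0(\eta^\eps)^{-1/2}\bzeta,\nabla\omega\rangle\,d\x,\quad\forall\omega\in H^1_0(\O),
\end{equation*}
and set $\wh{\bzeta}_{\eps,2}:=(\eta^0)^{1/2}\left((\eta^\eps)^{-1/2}\bzeta-\nabla\wh{\phi}_{\eps,2}\right)$. By construction $\div(\eta^0)^{1/2}\wh{\bzeta}_{\eps,2}=0$ and $\rot(\eta^0)^{-1/2}\wh{\bzeta}_{\eps,2}=\rot(\eta^\eps)^{-1/2}\bzeta$; the tangential condition $((\eta^0)^{-1/2}\wh{\bzeta}_{\eps,2})_\tau\vert_{\partial\O}=0$ now uses both ingredients, namely $((\eta^\eps)^{-1/2}\bzeta)_\tau\vert_{\partial\O}=0$ (the defining property of $\Dom\wh{\mathfrak l}_\eps$) and $(\nabla\wh{\phi}_{\eps,2},\rot\z)_{L_2(\O)}=0$. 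Testing with $\omega=\wh{\phi}_{\eps,2}$ gives $\|\wh{\bzeta}_{\eps,2}\|_{L_2(\O)}\leq 2\|\eta\|^{1/2}_{L_\infty}\|\eta^{-1}\|^{1/2}_{L_\infty}\|\bzeta\|_{L_2(\O)}$, and since $\wh{\bzeta}_{\eps,2}\in\Dom\wh{\mathfrak l}_0$, the coercivity estimate \eqref{coercive_q} yields $\|\wh{\bzeta}_{\eps,2}\|_{H^1(\O)}\leq c_2(\|\rot(\eta^\eps)^{-1/2}\bzeta\|_{L_2(\O)}+\|\bzeta\|_{L_2(\O)})$.

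Putting $\wh{\bzeta}_\eps:=\wh{\bzeta}_{\eps,1}+\wh{\bzeta}_{\eps,2}$ and adding the two pairs of bounds completes the proof; the identities \eqref{lemmm1_q} follow directly from the curl/divergence properties of the two pieces, while \eqref{lemmm2_q}, \eqref{lemmm3_q} are obtained with ${\mathfrak C}_{10}=3\|\eta\|^{1/2}_{L_\infty}\|\eta^{-1}\|^{1/2}_{L_\infty}$ and ${\mathfrak C}_{11}=\max\{c_1,c_2\}$. The only subtle point compared with Lemma~\ref{lem5.1} is the reversal of boundary-condition types: Dirichlet replaces Neumann, and it is exactly the vanishing trace of $\wh{\phi}_{\eps,i}$ that produces the tangential boundary condition required by $\Dom\wh{\mathfrak l}_0$. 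I expect no real obstacle; the interpretation of the generalized tangential condition for the purely gradient piece $\wh{\bzeta}_{\eps,1}$ is the one small item that deserves an explicit density argument using Definition~\ref{def2} and $\div\rot=0$.
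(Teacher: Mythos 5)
Your proposal reproduces the paper's own proof essentially verbatim: the same two Dirichlet auxiliary problems in $H^1_0(\O)$, the same decomposition $\wh{\bzeta}_\eps = \wh{\bzeta}_{\eps,1}+\wh{\bzeta}_{\eps,2}$ with one piece carrying the divergence (via $H^2$-regularity of the constant-coefficient Dirichlet problem on the $C^{1,1}$ domain) and the other carrying the curl (via coercivity \eqref{coercive_q}), and the same constants ${\mathfrak C}_{10}=3\|\eta\|^{1/2}_{L_\infty}\|\eta^{-1}\|^{1/2}_{L_\infty}$ and ${\mathfrak C}_{11}=\max\{c_1,c_2\}$. Your slightly more explicit treatment of the generalized tangential boundary condition via Definition~\ref{def2}, $\div\rot=0$, and density of $C_0^\infty$ is a harmless elaboration of the paper's one-line justification that the gradient of an $H^1_0$ function has vanishing tangential component.
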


\begin{proof}
We consider two auxiliary problems. 

\textbf{The first auxiliary problem}. Let $\bzeta \in \Dom \wh{\mathfrak l}_\eps$.
Denote $\wh{f}_\eps := \div (\eta^\eps)^{1/2} \bzeta \in L_2(\O)$.
Let $\wh{\phi}_{\eps,1} \in H^1_0(\O)$ be the solution of the Dirichlet problem 
\begin{equation}
\label{lemm1_q}
\div \eta^0 \nabla \wh{\phi}_{\eps,1}(\x) 
= \wh{f}_\eps(\x), \ \x \in \O; \quad  \wh{\phi}_{\eps,1}\vert_{\partial \O}=0. 
\end{equation}
The solution satisfies the identity
\begin{equation}
\label{lemm2_q}
\int_\O \langle \eta^0 \nabla \wh{\phi}_{\eps,1}, \nabla \omega  \rangle \, d\x = 
\int_\O \langle (\eta^\eps)^{1/2} \bzeta, \nabla \omega  \rangle \, d\x,
\quad \forall \omega \in H^1_0(\O).  
\end{equation}
We put  $\wh{\bzeta}_{\eps,1} = (\eta^0)^{1/2} \nabla \wh{\phi}_{\eps,1}$. Then we have
\begin{equation}
\label{lemm3_q}
\div (\eta^0)^{1/2}  \wh{\bzeta}_{\eps,1} = \div (\eta^\eps)^{1/2} \bzeta, 
 \ 
\rot (\eta^0)^{-1/2}  \wh{\bzeta}_{\eps,1} = 0,
\  
((\eta^0)^{-1/2}  \wh{\bzeta}_{\eps,1})_\tau \vert_{\partial \O}=0.
\end{equation}
The boundary condition is fulfilled, because the function  $\wh{\phi}_{\eps,1}$ is equal to zero on the boundary $\partial \O$, whence 
the tangential component of the gradient of this function is also equal to zero.
Substituting $\omega = \wh{\phi}_{\eps,1}$ in  \eqref{lemm2_q}, we arrive at the estimate
\begin{equation}
\label{lemm4_q}
\| \wh{\bzeta}_{\eps,1} \|_{L_2(\O)} \leqslant \|\eta\|_{L_\infty}^{1/2} \|\eta^{-1}\|_{L_\infty}^{1/2} \| \bzeta \|_{L_2(\O)}.
\end{equation}
The smoothness of the boundary ($\partial \O \in C^{1,1}$) ensures the regularity of the solution 
of problem  \eqref{lemm1_q}: we have  $\wh{\phi}_{\eps,1} \in H^2(\O)$ and 
\begin{equation}
\label{lemm5_q}
\| \wh{\bzeta}_{\eps,1} \|_{H^1(\O)} \leqslant  \wh{c}_1 \| \wh{f}_\eps \|_{L_2(\O)} = \wh{c}_1 
\| \div (\eta^\eps)^{1/2} \bzeta \|_{L_2(\O)}.
\end{equation}
The constant $\wh{c}_1$ depends only on the norms $\| \eta \|_{L_\infty}$, $\| \eta^{-1} \|_{L_\infty}$, and the domain $\O$.

\textbf{The second auxiliary problem}. Let $\bzeta \in \Dom \wh{\mathfrak l}_\eps$.
Denote  $\wh{g}_\eps := \div \eta^0 (\eta^\eps)^{-1/2} \bzeta \in H^{-1}(\O)$.
Let $\wh{\phi}_{\eps,2} \in H^1_0(\O)$ be the  solution of the Dirichlet problem
\begin{equation*}
\div \eta^0 \nabla \wh{\phi}_{\eps,2}(\x) = \wh{g}_\eps(\x), \ \x \in \O; 
\quad \wh{\phi}_{\eps,2} \vert_{\partial \O}= 0. 
\end{equation*}
The solution  $\wh{\phi}_{\eps,2}$ satisfies the integral identity
\begin{equation}
\label{lemm7_q}
\int_\O \langle \eta^0 \nabla \wh{\phi}_{\eps,2}, \nabla \omega  \rangle \, d\x = 
\int_\O \langle \eta^0 (\eta^\eps)^{-1/2} \bzeta, \nabla \omega  \rangle \, d\x,
\quad \forall \omega \in H^1_0(\O).  
\end{equation}
We put $\wh{\bzeta}_{\eps,2} = (\eta^0)^{1/2} ((\eta^\eps)^{-1/2}\bzeta - \nabla \wh{\phi}_{\eps,2})$. Then 
\begin{equation}
\label{lemm8_q}
\div (\eta^0)^{1/2}  \wh{\bzeta}_{\eps,2} = 0, 
\ 
\rot (\eta^0)^{-1/2}  \wh{\bzeta}_{\eps,2} = \rot (\eta^\eps)^{-1/2}\bzeta,
\  
((\eta^0)^{-1/2}  \wh{\bzeta}_{\eps, 2})_\tau \vert_{\partial \O}=0.
\end{equation}
 Substituting  $\omega = \wh{\phi}_{\eps,2}$ in   \eqref{lemm7_q}, we arrive at the estimate
 \begin{equation}
\label{lemm9_q}
\| (\eta^0)^{1/2} \nabla \wh{\phi}_{\eps,2}  \|_{L_2(\O)} \leqslant \|\eta\|_{L_\infty}^{1/2} \|\eta^{-1}\|_{L_\infty}^{1/2} 
\| \bzeta \|_{L_2(\O)}.
\end{equation}
Consequently,
\begin{equation}
\label{lemm10_q}
\| \wh{\bzeta}_{\eps,2}   \|_{L_2(\O)} \leqslant 2 \|\eta\|_{L_\infty}^{1/2} \|\eta^{-1}\|_{L_\infty}^{1/2} \| \bzeta \|_{L_2(\O)}.
\end{equation}
Relations \eqref{lemm8_q} and \eqref{lemm10_q} show that $\wh{\bzeta}_{\eps,2} \in 
\Dom \wh{\mathfrak l}_0 \subset H^1(\O;\C^3)$ and (see \eqref{coercive_q})
\begin{equation}
\label{lemm11_q}
\| \wh{\bzeta}_{\eps,2}   \|_{H^1(\O)} \leqslant \wh{c}_2 ( \|\rot (\eta^\eps)^{-1/2}\bzeta \|_{L_2(\O)} + \| \bzeta \|_{L_2(\O)}).
\end{equation}
The constant $\wh{c}_2$ depends only on the norms $\| \eta \|_{L_\infty}$, $\| \eta^{-1} \|_{L_\infty}$, and the domain $\O$.

We put  $\wh{\bzeta}_\eps = \wh{\bzeta}_{\eps,1} + \wh{\bzeta}_{\eps,2}$. Then \eqref{lemm3_q} and \eqref{lemm8_q}
imply  \eqref{lemmm1_q}. Combining \eqref{lemm4_q} and \eqref{lemm10_q}, we obtain estimate  
 \eqref{lemmm2_q} with the constant  ${\mathfrak C}_{10}= 3 \|\eta\|^{1/2}_{L_\infty} \| \eta^{-1}\|^{1/2}_{L_\infty}$. Finally, 
 \eqref{lemm5_q} and \eqref{lemm11_q} yield \eqref{lemmm3_q} with the constant ${\mathfrak C}_{11}=\max \{\wh{c}_1, \wh{c}_2\}$.
\end{proof}

\begin{remark}
Under the assumptions of Lemma {\rm \ref{lem7.1}}, we have
\begin{equation}
\label{lemm12_q}
 (\eta^0)^{-1/2} \wh{\bzeta}_\eps - (\eta^\eps)^{-1/2} \bzeta = \nabla (\wh{\phi}_{\eps,1} - \wh{\phi}_{\eps,2}).
 \end{equation}
\end{remark}

\subsection{Estimate for the functional  ${\mathcal Q}_\eps[\bzeta]$}
Denote the first summand in  \eqref{pogr0_q} by ${\mathcal T}_\eps[\bzeta]$:
\begin{equation}
\label{lemm13_qq}
{\mathcal T}_\eps[\bzeta] =
 ((\1+Y_\mu^\eps) (\mu^0)^{-1} \rot   (\eta^0)^{-1/2}
 S_\eps   (\wt{\bphi}_0 + \wt{\bups}_\eps), \rot (\eta^\eps)^{-1/2} \bzeta )_{L_2(\O)}
\end{equation}
and represent it as the sum of four terms:
\begin{equation}
\label{rhs1_q}
{\mathcal T}_\eps[\bzeta] = \sum_{l=1}^4 {\mathcal T}_\eps^{(l)}[\bzeta], \quad \bzeta \in \Dom \wh{\mathfrak l}_\eps,
 \end{equation}
where
\begin{align}
\nonumber
{\mathcal T}_\eps^{(1)}[\bzeta] &=
(Y_\mu^\eps (\mu^0)^{-1} \rot   (\eta^0)^{-1/2}
 S_\eps   (\wt{\bphi}_0 + \wt{\bups}_\eps), \rot (\eta^\eps)^{-1/2} \bzeta )_{L_2(\O)},
\\
\label{rhs3_q}
{\mathcal T}_\eps^{(2)}[\bzeta] &=
( (\mu^0)^{-1} \rot   (\eta^0)^{-1/2}
  {\bphi}_0, \rot (\eta^\eps)^{-1/2} \bzeta )_{L_2(\O)},
\\
\label{rhs4_q}
{\mathcal T}_\eps^{(3)}[\bzeta] &=
((\mu^0)^{-1} \rot   (\eta^0)^{-1/2}
 {\bups}_\eps, \rot (\eta^\eps)^{-1/2} \bzeta )_{L_2(\O)},
\\
\label{rhs4a_q}
{\mathcal T}_\eps^{(4)}[\bzeta] &=
( (\mu^0)^{-1} \rot   (\eta^0)^{-1/2}
 (S_\eps-I) (\wt{\bphi}_0+  \wt{\bups}_\eps), \rot (\eta^\eps)^{-1/2} \bzeta )_{L_2(\O)}.
\end{align}

Similarly to the proof of Lemma \ref{lem5.3}, we check that  
\begin{equation}
\label{t0_q}
|{\mathcal T}_\eps^{(1)}[\bzeta]| \leqslant {\mathfrak C}_{12} \eps^{1/2} \|\q \|_{L_2(\O)}
\|  \rot (\eta^\eps)^{-1/2} \bzeta \|_{L_2(\O)}, \quad 0< \eps \leqslant \eps_1. 
\end{equation}
The term \eqref{rhs4a_q} is estimated by analogy with  \eqref{t44}:
\begin{equation}
\label{t44_q}
|{\mathcal T}_\eps^{(4)}[\bzeta]| 
\leqslant {\mathfrak C}_{13} \eps \| \q \|_{L_2( \O)}  \| \rot (\eta^\eps)^{-1/2} \bzeta \|_{L_2(\O)}.
\end{equation}
The constants  ${\mathfrak C}_{12}$ and ${\mathfrak C}_{13}$ depend on the norms  
$\| \eta \|_{L_\infty}$,   $\| \eta^{-1} \|_{L_\infty}$,
$\| \mu \|_{L_\infty}$, $\| \mu^{-1} \|_{L_\infty}$, the domain $\O$, and the parameters of the lattice $\Gamma$.

We transform the term \eqref{rhs3_q}, using Lemma~\ref{lem7.1} and the fact that $\bphi_0$ is the  solution of problem \eqref{Mqq4}:
\begin{equation}
\begin{aligned}
\label{t5_q}
{\mathcal T}_\eps^{(2)}[\bzeta] &= 
((\mu^0)^{-1} \rot (\eta^0)^{-1/2} \bphi_0, \rot (\eta^0)^{-1/2} \wh{\bzeta}_\eps)_{L_2(\O)}
\\
&= - ( \bphi_0,  \wh{\bzeta}_\eps)_{L_2(\O)} + i ((\eta^0)^{-1/2} \q,  \wh{\bzeta}_\eps)_{L_2(\O)}.
\end{aligned}
\end{equation}
The term \eqref{rhs4_q} is transformed similarly:
\begin{equation}
\label{t7_q}
\begin{aligned}
{\mathcal T}_\eps^{(3)}[\bzeta] &= 
((\mu^0)^{-1} \rot (\eta^0)^{-1/2} \bups_\eps, \rot (\eta^0)^{-1/2} \wh{\bzeta}_\eps)_{L_2(\O)}
\\
&=- ( \bups_\eps, \wh{\bzeta}_\eps)_{L_2(\O)} + i ((\eta^0)^{-1/2} \q_\eps, \wh{\bzeta}_\eps)_{L_2(\O)}.
\end{aligned}
\end{equation}
We have taken into account that  $\bups_\eps$ is the solution of problem  \eqref{Mqq11a}.

Combining \eqref{pogr0_q}, \eqref{lemm13_qq}, \eqref{rhs1_q}, \eqref{t5_q}, and \eqref{t7_q}, we arrive at the following representation:
\begin{equation}
\label{t9_q}
\begin{aligned}
{\mathcal Q}_\eps [\bzeta] =& {\mathcal T}_\eps^{(1)}[\bzeta] + {\mathcal T}_\eps^{(4)}[\bzeta]
+ ((W_\eta^\eps)^* S_\eps (\wt{\bphi}_0 + \wt{\bups}_\eps ), \bzeta)_{L_2(\O)}
\\
& - ({\bphi}_0 , \wh{\bzeta}_\eps)_{L_2(\O)} - ({\bups}_\eps , \wh{\bzeta}_\eps)_{L_2(\O)} + i ((\eta^0)^{-1/2} {\q}_\eps , \wh{\bzeta}_\eps)_{L_2(\O)}
\\
&+ i ({\q} , (\eta^0)^{-1/2} \wh{\bzeta}_\eps - (\eta^\eps)^{-1/2} \bzeta)_{L_2(\O)}.
\end{aligned}
\end{equation}

By \eqref{lemm12_q}, the last term in \eqref{t9_q} is equal to zero: 
\begin{equation}
\label{t10_q}
(\q, (\eta^0)^{-1/2} \wh{\bzeta}_\eps - (\eta^\eps)^{-1/2} \bzeta)_{L_2(\O)} =
(\q, \nabla (\wh{\phi}_{\eps,1} - \wh{\phi}_{\eps,2}))_{L_2(\O)}=0,
\end{equation}
because $\q \in J(\O)$ and $\wh{\phi}_{\eps,1} - \wh{\phi}_{\eps,2} \in H^1_0(\O)$ (see  \eqref{5.1}).

From \eqref{Mqq15} it follows that the third summand in \eqref{t9_q} is represented as
\begin{equation}
\label{t11_q}
\begin{aligned}
&((W^\eps_\eta)^* S_\eps (\wt{\bphi}_0 + \wt{\bups}_\eps),\bzeta)_{L_2(\O)} =
( \wt{\eta}^\eps (\eta^0)^{-1/2} S_\eps (\wt{\bphi}_0 + \wt{\bups}_\eps), (\eta^\eps)^{-1/2}\bzeta)_{L_2(\O)}
\\
&= 
{\mathcal T}_\eps^{(5)}[\bzeta] + {\mathcal T}_\eps^{(6)}[\bzeta]
+ 
(  (\eta^0)^{1/2}  ({\bphi}_0 + {\bups}_\eps), (\eta^\eps)^{-1/2}\bzeta)_{L_2(\O)},
\end{aligned}
\end{equation}
where 
\begin{align}
\label{t12_q}
{\mathcal T}_\eps^{(5)}[\bzeta] &=( (\wt{\eta}^\eps - \eta^0) (\eta^0)^{-1/2} S_\eps (\wt{\bphi}_0 + \wt{\bups}_\eps), (\eta^\eps)^{-1/2}\bzeta)_{L_2(\O)},
\\
\label{t13_q}
{\mathcal T}_\eps^{(6)}[\bzeta] &= ( (\eta^0)^{1/2} (S_\eps -I) (\wt{\bphi}_0 + \wt{\bups}_\eps), (\eta^\eps)^{-1/2}\bzeta)_{L_2(\O)}.
\end{align}

Now, relations  \eqref{t9_q}--\eqref{t11_q} imply that
\begin{equation}
\label{ttt9_q}
\begin{aligned}
&{\mathcal Q}_\eps[\bzeta]=
{\mathcal T}_\eps^{(1)}[\bzeta] + {\mathcal T}_\eps^{(4)}[\bzeta] 
 + {\mathcal T}_\eps^{(5)}[\bzeta] + {\mathcal T}_\eps^{(6)}[\bzeta]
 \\
 &+ i ((\eta^0)^{-1/2} \q_\eps, \wh{\bzeta}_\eps)_{L_2(\O)}
+ ((\eta^0)^{1/2}  ({\bphi}_0 + {\bups}_\eps), (\eta^\eps)^{-1/2} \bzeta - (\eta^0)^{-1/2} \wh{\bzeta}_\eps)_{L_2(\O)}.
\end{aligned}
\end{equation}
By \eqref{lemm12_q}, the last term in  \eqref{ttt9_q} is equal to zero: 
\begin{equation}
\label{ttt10_q}
\begin{aligned}
&((\eta^0)^{1/2}  ({\bphi}_0 + {\bups}_\eps), (\eta^\eps)^{-1/2} \bzeta - (\eta^0)^{-1/2} \wh{\bzeta}_\eps)_{L_2(\O)}
\\
&=((\eta^0)^{1/2}  ({\bphi}_0 + {\bups}_\eps), \nabla (\wh{\phi}_{\eps,2} - \wh{\phi}_{\eps,1}))_{L_2(\O)}=0,
\end{aligned}
\end{equation}
since  $ (\eta^0)^{1/2}({\bphi}_0 + {\bups}_\eps)  \in J(\O)$ and $\wh{\phi}_{\eps,2} - \wh{\phi}_{\eps,1} \in H^1_0(\O)$.

According to \eqref{ttt9_q} and \eqref{ttt10_q}, 
\begin{align}
\label{ttt11_q}
& {\mathcal Q}_\eps[\bzeta]=
{\mathcal T}_\eps^{(1)}[\bzeta] + {\mathcal T}_\eps^{(4)}[\bzeta] 
 + {\mathcal T}_\eps^{(5)}[\bzeta] + {\mathcal T}_\eps^{(6)}[\bzeta]
 +{\mathcal T}_\eps^{(7)}[\bzeta],
 \\
 \label{ttt12_q}
 & {\mathcal T}_\eps^{(7)}[\bzeta] :=
 i ((\eta^0)^{-1/2} \q_\eps,  \wh{\bzeta}_\eps)_{L_2(\O)}.
\end{align}

Similarly to the proof of Lemma~\ref{lem5.5} and estimate \eqref{t30},
we obtain the following estimates for the terms \eqref{t12_q} and \eqref{t13_q}:
\begin{align}
\label{t19_q}
\begin{split}
|&{\mathcal T}_\eps^{(5)}[\bzeta]| \leqslant {\mathfrak C}_{14} \eps^{1/2} \| \q \|_{L_2(\O)}
\\
&\times 
\left( \|\bzeta\|_{L_2(\O)} + \|\div (\eta^\eps)^{1/2} \bzeta\|_{L_2(\O)}
+ \|\rot (\eta^\eps)^{-1/2} \bzeta\|_{L_2(\O)} \right),\quad 0< \eps \le \eps_1,
\end{split}
\\
\label{t30_q}
&| {\mathcal T}_\eps^{(6)}[\bzeta] | \leqslant {\mathfrak C}_{15} \eps 
\| \q \|_{L_2(\O)} \| \bzeta\|_{L_2(\O)}.
\end{align}
The constants ${\mathfrak C}_{14}$ and ${\mathfrak C}_{15}$ depend on the norms  
$\| \eta \|_{L_\infty}$,   $\| \eta^{-1} \|_{L_\infty}$,
$\| \mu \|_{L_\infty}$, $\| \mu^{-1} \|_{L_\infty}$, the domain  $\O$, and the parameters of the lattice $\Gamma$.

The term \eqref{ttt12_q} is analyzed by analogy with Subsection \ref{sec5.5} (but there are some differences).
Using \eqref{right}, we represent the term  \eqref{ttt12_q} as 
\begin{equation}\label{t31_q}
 {\mathcal T}_\eps^{(7)}[\bzeta] =
 i ((\eta^0)^{-1/2} {\mathcal P}_{\eta^0} S_\eps (Y_\eta^\eps)^* \wt{\q}, \wh{\bzeta}_\eps)_{L_2(\O)}.
\end{equation}

\begin{remark}\label{rem7.6}
Let ${\mathcal P}_{\eta^0}$ be the orthogonal projection of  $L_2(\O;(\eta^0)^{-1})$ onto $J(\O)$.
It is easily seen that   ${\mathcal P}_{\eta^0} \f \in H^1(\O;\C^3)$ provided that  $\f \in H^1(\O;\C^3)$.
We have
\begin{equation}\label{t32_q}
 \| {\mathcal P}_{\eta^0} \f \|_{H^1(\O)} \leqslant \wh{\mathfrak c} \| \f \|_{H^1(\O)}.
\end{equation}
The constant $\wh{\mathfrak c}$ depends only on the norms  $\| \eta \|_{L_\infty}$, $\| \eta^{-1} \|_{L_\infty}$, and the domain~$\O$.
\end{remark}

 Since the operator ${\mathcal P}_{\eta^0}$ is selfadjoint in the weighted space  $L_2(\O;(\eta^0)^{-1})$,
 the functional \eqref{t31_q} can be represented in the form 
\begin{equation}\label{t33_q}
 {\mathcal T}_\eps^{(7)}[\bzeta] =
 i ((\eta^0)^{-1}  S_\eps (Y_\eta^\eps)^* \wt{\q},
 {\mathcal P}_{\eta^0} (\eta^0)^{1/2}  \wh{\bzeta}_\eps)_{L_2(\O)}.
\end{equation}

Let ${\theta}_\eps$ is the cut-off function satisfying \eqref{srezka}.
We write the term \eqref{t33_q} as the sum of two summands:
\begin{align}\label{t34_q}
 &{\mathcal T}_\eps^{(7)}[\bzeta] = \wh{\mathcal T}_\eps^{(7)}[\bzeta] + \wt{\mathcal T}_\eps^{(7)}[\bzeta],
 \\
\label{t35_q} 
&\wh{\mathcal T}_\eps^{(7)}[\bzeta]: =
 i ((\eta^0)^{-1}  S_\eps (Y_\eta^\eps)^* \wt{\q},
 {\theta}_\eps {\mathcal P}_{\eta^0} (\eta^0)^{1/2} \wh{\bzeta}_\eps)_{L_2(\O)},
\\
\label{t36_q}
&\wt{\mathcal T}_\eps^{(7)}[\bzeta] :=
 i ((\eta^0)^{-1}  S_\eps (Y_\eta^\eps)^* \wt{\q}, (1- {\theta}_\eps)
 {\mathcal P}_{\eta^0} (\eta^0)^{1/2} \wh{\bzeta}_\eps)_{L_2(\O)}.
 \end{align}

 The term \eqref{t35_q} is estimated with the help of  Proposition \ref{prop f^eps S_eps}, \eqref{eff6b}, and Lemma~\ref{lem01}:
 $$
 \begin{aligned}
 &| \wh{\mathcal T}_\eps^{(7)}[\bzeta] | \leqslant \|\eta^{-1}\|_{L_\infty} \| S_\eps (Y_\eta^\eps)^* \wt{\q} \|_{L_2(\R^3)}
 \| {\mathcal P}_{\eta^0} (\eta^0)^{1/2} \wh{\bzeta}_\eps \|_{L_2(B_{2\eps})}
  \\
  &\leqslant \eps^{1/2} 
  \|\eta \|^{1/2}_{L_\infty} \|\eta^{-1}\|^{3/2}_{L_\infty} \beta_0^{1/2} \|\q\|_{L_2(\O)}
  \|{\mathcal P}_{\eta^0} (\eta^0)^{1/2} \wh{\bzeta}_\eps \|_{H^1(\O)}, \quad 0<  \eps \le \eps_0.
 \end{aligned}
 $$
Taking  \eqref{lemmm3_q} and \eqref{t32_q} into account, we arrive at
\begin{equation}
\label{q00_q}
 \begin{aligned}
 &| \wh{\mathcal T}_\eps^{(7)}[\bzeta] | \leqslant 
 {\mathfrak C}_{16} \eps^{1/2} \| \q \|_{L_2(\O)}
 \\
 &\times 
 \left( \| \bzeta\|_{L_2(\O)} + \| \div (\eta^\eps)^{1/2}\bzeta\|_{L_2(\O)}
 + \| \rot (\eta^\eps)^{-1/2} \bzeta\|_{L_2(\O)} \right),\quad 0< \eps \le \eps_0,
 \end{aligned}
 \end{equation}
where ${\mathfrak C}_{16} =  \wh{\mathfrak c} \|\eta \|_{L_\infty} \|\eta^{-1}\|^{3/2}_{L_\infty}  \beta_0^{1/2} {\mathfrak C}_{11}$.

Now, we consider the term  \eqref{t36_q}.
The function $(1- {\theta}_\eps){\mathcal P}_{\eta^0} (\eta^0)^{1/2} \wh{\bzeta}_\eps$ belongs to $H^1(\O;\C^3)$ and is equal to zero in
the $\eps$-neighborhood of the boundary $\partial \O$. We extend this function by zero to $\R^3 \setminus \O$; the extended function 
is denoted by $\wh{\p}_\eps$.  Note that  $\wh{\p}_\eps \in H^1(\R^3;\C^3)$ and
 $\operatorname{supp} \wh{\p}_\eps \subset \O \setminus B_\eps$.
The term  \eqref{t36_q} can be represented as 
\begin{equation*}
\wt{\mathcal T}_\eps^{(7)}[\bzeta] :=
 i ( {\q}, Y_\eta^\eps S_\eps (\eta^0)^{-1} \wh{\p}_\eps )_{L_2(\O)}.
\end{equation*}
The columns of the matrix $Y_\eta^\eps$ are given by $\eps \nabla \Phi_j^\eps$, $j=1,2,3$.
Consequently,
\begin{equation*}
\wt{\mathcal T}_\eps^{(7)}[\bzeta] =
  i \eps \sum_{j=1}^3 ( {\q}, \nabla \left( \Phi_j^\eps [S_\eps (\eta^0)^{-1} \wh{\p}_\eps]_j\right) )_{L_2(\O)}
- i \eps \sum_{j=1}^3 ( {\q},  \Phi_j^\eps \nabla [S_\eps (\eta^0)^{-1} \wh{\p}_\eps]_j )_{L_2(\O)}.
\end{equation*}
The first summand on the right is equal to zero, because  $\q \in J(\O)$ and $\Phi_j^\eps [S_\eps (\eta^0)^{-1} \wh{\p}_\eps]_j \in H^1_0(\O)$.
(Here we use the property of the smoothing operator $S_\eps$: the function $S_\eps f$ is equal to zero outside the $\eps$-neighborhood of $\operatorname{supp} f$).  Next, by analogy with the proof of  \eqref{q3}, we obtain
\begin{equation}
\label{q3_q}
\begin{aligned}
&| \wt{\mathcal T}_\eps^{(7)}[\bzeta] | \leqslant {\mathfrak C}_{17} \eps^{1/2}
\| \q \|_{L_2(\O)}
 \\
 &\times 
 \left( \| \bzeta\|_{L_2(\O)} + \| \div (\eta^\eps)^{1/2}\bzeta \|_{L_2(\O)} 
 + \| \rot (\eta^\eps)^{-1/2}\bzeta \|_{L_2(\O)}\right),\quad 0< \eps \le \eps_0.
\end{aligned}
\end{equation}
The constant ${\mathfrak C}_{17}$ depends on $\|\eta\|_{L_\infty}$, $\|\eta^{-1} \|_{L_\infty}$, 
the parameters of the lattice $\Gamma$, and the domain~$\O$.

Now, relations  \eqref{t34_q}, \eqref{q00_q}, and \eqref{q3_q} imply that
\begin{equation}
\label{q4_q}
\begin{aligned}
&| {\mathcal T}_\eps^{(7)}[\bzeta] | \leqslant ({\mathfrak C}_{16} + {\mathfrak C}_{17}) \eps^{1/2}
\| \q \|_{L_2(\O)}
 \\
 &\times \left( \| \bzeta\|_{L_2(\O)} + \| \div (\eta^\eps)^{1/2}\bzeta \|_{L_2(\O)} 
 + \| \rot (\eta^\eps)^{-1/2}\bzeta \|_{L_2(\O)}\right),\quad 0< \eps \le \eps_0.
 \end{aligned}
 \end{equation}

\subsection{Taking the boundary condition into account. Completion of the proof of Theorem \ref{lem6.6}}
As a result, relations \eqref{t0_q}, \eqref{t44_q}, \eqref{ttt11_q}, \eqref{t19_q}, \eqref{t30_q}, and \eqref{q4_q} 
imply the following estimate for the functional~\eqref{pogr0_q}:
\begin{equation}
\label{q5_q}
| {\mathcal Q}_\eps [\bzeta] | \leqslant {\mathfrak C}^\circ \eps^{1/2}
\| \q \|_{L_2(\O)}
 \left( \| \bzeta\|_{L_2(\O)} + \| \div (\eta^\eps)^{1/2}\bzeta \|_{L_2(\O)} 
 + \| \rot (\eta^\eps)^{-1/2}\bzeta \|_{L_2(\O)}\right),\ 0<  \eps \le \eps_1,
 \end{equation}
where ${\mathfrak C}^\circ={\mathfrak C}_{12} + {\mathfrak C}_{13} + {\mathfrak C}_{14} + {\mathfrak C}_{15}
+ {\mathfrak C}_{16} + {\mathfrak C}_{17}$.

It remains to take the boundary condition  \eqref{pogr3_q} into account.
By \eqref{Mqq18}, we have
\begin{equation*}
\begin{aligned}
   (\eta^\eps)^{-1/2}{\btheta}_\eps &= (\1+ Y_\eta^\eps) S_\eps (\eta^0)^{-1/2} (\wt{\bphi}_0 + \wt{\bups}_\eps)
 + \eps \sum_{l=1}^3 (\eta^\eps)^{-1/2}\wh{\Lambda}_l^\eps S_\eps D_l  (\wt{\bphi}_0 + \wt{\bups}_\eps)
   \\
   &  = (\eta^0)^{-1/2} ({\bphi}_0 + {\bups}_\eps) +
(S_\eps -I) (\eta^0)^{-1/2} (\wt{\bphi}_0 + \wt{\bups}_\eps)
\\
& + \eps \sum_{l=1}^3 (\eta^\eps)^{-1/2}\wh{\Lambda}_l^\eps S_\eps D_l  (\wt{\bphi}_0 + \wt{\bups}_\eps)
+ \eps \sum_{j=1}^3 \nabla \left( \Phi_j^\eps  S_\eps b_{\eps,j}\right) 
- \eps \sum_{j=1}^3 \Phi_j^\eps S_\eps \nabla b_{\eps,j},
   \end{aligned}
\end{equation*}
where  $b_{\eps,j}= [(\eta^0)^{-1/2} (\wt{\bphi}_0 + \wt{\bups}_\eps)]_j$.
We have taken into account that  $Y_\eta^\eps$ is the matrix with the columns
$\eps \nabla \Phi_j^\eps$, $j=1,2,3$. The first summand on the right satisfies the homogeneous boundary condition
$((\eta^0)^{-1/2} ({\bphi}_0 + {\bups}_\eps))_\tau \vert_{\partial \O}=0$. Therefore, we have
\begin{equation}\label{aaa2}
(   (\eta^\eps)^{-1/2}{\btheta}_\eps)_\tau \vert_{\partial \O} = (\a_\eps)_\tau \vert_{\partial \O}, 
\end{equation}
where
\begin{align}
\label{aaa3}
\a_\eps &= \sum_{k=1}^4 \a_\eps^{(k)},
\\
\label{aaa4}
\a_\eps^{(1)} &= (S_\eps -I) (\eta^0)^{-1/2} (\wt{\bphi}_0 + \wt{\bups}_\eps),
\\
\label{aaa5}
\a_\eps^{(2)} &= 
\eps {\theta}_\eps \sum_{l=1}^3 (\eta^\eps)^{-1/2}\wh{\Lambda}_l^\eps S_\eps D_l  (\wt{\bphi}_0 + \wt{\bups}_\eps)
\\
\label{aaa6}
\a_\eps^{(3)} &= \eps \sum_{j=1}^3 \nabla \left( {\theta}_\eps \Phi_j^\eps  S_\eps b_{\eps,j}\right),
\\
\label{aaa7}
\a_\eps^{(4)} &= - \eps {\theta}_\eps \sum_{j=1}^3 \Phi_j^\eps S_\eps \nabla b_{\eps,j}.
\end{align}
Here ${\theta}_\eps$ is the cut-off function satisfying \eqref{srezka}.

\begin{lemma}
For $0< \eps \le \eps_1$ we have
\begin{align}
\label{aaa8}
\|\a_\eps \|_{L_2(\O)} \le {\mathfrak C}_{18} \eps^{1/2} \| \q\|_{L_2(\O)},
\\
\label{aaa9}
\| \rot \a_\eps \|_{L_2(\O)} \le {\mathfrak C}_{19} \eps^{1/2} \| \q\|_{L_2(\O)}.
\end{align}
The constants ${\mathfrak C}_{18}$ and ${\mathfrak C}_{19}$ depend on $\|\eta\|_{L_\infty}$,  
$\|\eta^{-1}\|_{L_\infty}$, $\|\mu\|_{L_\infty}$, $\|\mu^{-1}\|_{L_\infty}$, the parameters of the lattice $\Gamma$, and the domain~$\O$.
\end{lemma}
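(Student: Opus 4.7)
The proof proceeds by estimating each of the four pieces $\a_\eps^{(k)}$ in \eqref{aaa4}--\eqref{aaa7} separately, in both the $L_2(\O)$-norm and after taking the curl. The main tools are Propositions~\ref{prop_Seps - I} and~\ref{prop f^eps S_eps} for the Steklov smoothing, Lemma~\ref{lem02} for the boundary layer estimate (which supplies the crucial $\eps^{1/2}$ gain when the integrand is localized in $(\partial\O)_{2\eps}$), the $H^2$-bounds \eqref{Mqq11} and \eqref{Mqq14} on $\wt{\bphi}_0$ and $\wt{\bups}_\eps$, and the cell-problem bounds \eqref{efff}, \eqref{eff6b}, \eqref{Mqq17a}, \eqref{6.21a} on the periodic correctors $\Phi_j$, $Y_\eta$ and $\wh{\Lambda}_l$.

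For the $L_2$-estimate \eqref{aaa8}: the term $\a_\eps^{(1)}$ is bounded by a direct application of Proposition~\ref{prop_Seps - I}, giving $O(\eps)$. For $\a_\eps^{(2)}$ and $\a_\eps^{(4)}$ the factor $\theta_\eps$ restricts the integration to $(\partial\O)_{2\eps}$, so Lemma~\ref{lem02} (applied with the periodic function $(\eta^{-1/2}\wh{\Lambda}_l)_{ij}$, respectively $\Phi_j$, whose required $L_2(\Omega)$-bounds come from \eqref{Mqq17a} and \eqref{efff}) gains an extra $\eps^{1/2}$ beyond the explicit prefactor $\eps$, yielding $O(\eps^{3/2})$. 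For $\a_\eps^{(3)}$, I expand the gradient $\eps\nabla(\theta_\eps\Phi_j^\eps S_\eps b_{\eps,j})$ by the Leibniz rule into three pieces and treat each exactly as in the derivation of \eqref{t2a}; the three contributions are all $O(\eps^{1/2})$ or better.

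For the curl estimate \eqref{aaa9}: first note $\rot\a_\eps^{(3)}\equiv 0$ since $\a_\eps^{(3)}$ is a gradient. For $\a_\eps^{(1)}$ I use commutativity of $S_\eps$ with derivatives to rewrite $\rot\a_\eps^{(1)}=(S_\eps-I)\rot\bigl((\eta^0)^{-1/2}(\wt{\bphi}_0+\wt{\bups}_\eps)\bigr)$ and again apply Proposition~\ref{prop_Seps - I}, now exploiting the $H^2$-regularity. The main obstacle is $\rot\a_\eps^{(2)}$ and $\rot\a_\eps^{(4)}$, where a priori the curl brings an $\eps^{-1}$ from the rapidly oscillating factors. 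For $\a_\eps^{(2)}$, expanding $\rot$ by Leibniz produces three contributions: (i) a $\nabla\theta_\eps$ term in which $|\eps\nabla\theta_\eps|\le\kappa$ leaves a bounded factor times a function supported in $(\partial\O)_{2\eps}$, so that Lemma~\ref{lem02} gives $O(\eps^{1/2})$; (ii) a term where $\rot$ hits $(\eta^{-1/2}\wh{\Lambda}_l)^\eps$, producing $(\rot\eta^{-1/2}\wh{\Lambda}_l)^\eps$ in which the explicit $\eps$ is absorbed by the chain rule — the bound \eqref{6.21a} together with Lemma~\ref{lem02} then yields $O(\eps^{1/2})$; and (iii) a remainder in which derivatives fall on $S_\eps D_l(\wt{\bphi}_0+\wt{\bups}_\eps)$, controlled at order $O(\eps)$ via Proposition~\ref{prop f^eps S_eps} and the $H^2$-bounds. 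The analysis of $\rot\a_\eps^{(4)}$ is identical in spirit, with $(\nabla\Phi_j)^\eps$ (the $j$-th column of $Y_\eta^\eps$) replacing $(\rot\eta^{-1/2}\wh{\Lambda}_l)^\eps$ and the $L_2(\Omega)$-bound \eqref{eff6b} on $Y_\eta$ playing the role of \eqref{6.21a}; the $L_\infty$-bound on $\Phi_j$ from Remark~\ref{rem2.1}(4) controls the term in which $\rot$ acts on $S_\eps\nabla b_{\eps,j}$ before Lemma~\ref{lem02} is invoked.
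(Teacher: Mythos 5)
Your proposal follows essentially the same route as the paper (Leibniz expansion of each $\a_\eps^{(k)}$, Steklov-smoothing propositions, boundary-layer Lemma~\ref{lem02} with the cut-off $\theta_\eps$, and the cell-problem bounds), and the overall structure is correct. There is one error, in the curl estimate of $\a_\eps^{(4)}$: you claim there is a third contribution where $\rot$ falls on $S_\eps\nabla b_{\eps,j}$, and propose to control it with the $L_\infty$-bound on $\Phi_j$ from Remark~\ref{rem2.1}(4). That term is identically zero. Since $\Phi_j^\eps$ is a scalar, $\rot(\Phi_j^\eps S_\eps\nabla b_{\eps,j}) = \nabla\Phi_j^\eps\times S_\eps\nabla b_{\eps,j} + \Phi_j^\eps\,\rot(S_\eps\nabla b_{\eps,j})$, and the last factor vanishes because $S_\eps$ commutes with derivatives and $\rot\nabla=0$. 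So $\rot\a_\eps^{(4)}$ splits into only two pieces (the $\eps\nabla\theta_\eps$ piece and the $(\nabla\Phi_j)^\eps$ piece), both handled by Lemma~\ref{lem02} exactly as you describe for the other two; the invocation of Remark~\ref{rem2.1}(4) is spurious here. This contrasts with $\rot\a_\eps^{(2)}$, where the genuinely third piece survives because $(\eta^\eps)^{-1/2}\wh{\Lambda}_l^\eps$ is a matrix, so $\rot$ does not annihilate $S_\eps D_l(\wt{\bphi}_0+\wt{\bups}_\eps)$. The confusion is harmless since the term you are trying to bound does not exist, but the proof as written is incorrect at that point.

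One further, purely cosmetic, deviation: for the $L_2$-estimates of $\a_\eps^{(2)}$ and $\a_\eps^{(4)}$ you exploit the $\theta_\eps$-localization via Lemma~\ref{lem02} to get $O(\eps^{3/2})$. The paper simply applies Proposition~\ref{prop f^eps S_eps} over the whole domain and is content with $O(\eps)$. Your bound is sharper than needed; both suffice for \eqref{aaa8}.
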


\begin{proof}
The term \eqref{aaa4} is estimated with the help of Proposition \ref{prop_Seps - I} and \eqref{Mqq11}, \eqref{Mqq14}:
\begin{equation}
\label{aaa10}
\| \a_\eps^{(1)}\|_{L_2(\O)} 
\le \eps r_1 \| \eta^{-1}\|^{1/2}_{L_\infty} \| \wt{\bphi}_0 + \wt{\bups}_\eps \|_{H^1(\R^3)}
\le {\mathfrak C}_{18}^{(1)} \eps \| \q\|_{L_2(\O)},
\end{equation}
where ${\mathfrak C}_{18}^{(1)} = r_1 \| \eta^{-1}\|^{1/2}_{L_\infty} ({\mathfrak C}_1 + {\mathfrak C}_2)$.
The term \eqref{aaa5} is estimated by Proposition
\ref{prop f^eps S_eps} and relations \eqref{Mqq11}, \eqref{Mqq14}, \eqref{Mqq17a}:
\begin{equation}
\label{aaa11}
\| \a_\eps^{(2)}\|_{L_2(\O)} 
\le \eps \| \eta^{-1}\|^{1/2}_{L_\infty}  C_{\wh{\Lambda}} \sqrt{3}
\| \wt{\bphi}_0 + \wt{\bups}_\eps \|_{H^1(\R^3)}
\le {\mathfrak C}_{18}^{(2)} \eps \| \q\|_{L_2(\O)},
\end{equation}
where ${\mathfrak C}_{18}^{(2)} =  \| \eta^{-1}\|^{1/2}_{L_\infty}   C_{\wh{\Lambda}} \sqrt{3}({\mathfrak C}_1 + {\mathfrak C}_2)$.

By analogy with the proof of estimate  \eqref{t2a},
we consider the term \eqref{aaa6}:
$$
\eps \nabla \left( {\theta}_\eps \Phi_j^\eps  S_\eps b_{\eps,j}\right)=
(\eps \nabla  {\theta}_\eps) \Phi_j^\eps  S_\eps b_{\eps,j}
+ {\theta}_\eps (\nabla  \Phi_j)^\eps  S_\eps b_{\eps,j}
+ \eps  {\theta}_\eps \Phi_j^\eps  S_\eps \nabla b_{\eps,j}.
$$
The first term is estimated with the help of \eqref{srezka}, Lemma~\ref{lem02}, and \eqref{efff}.
The second term is estimated by using \eqref{srezka}, Lemma~\ref{lem02}, and \eqref{eff6b}.
The third term is estimated by Proposition \ref{prop f^eps S_eps} and \eqref{efff}.
We also take the inequalities  \eqref{Mqq11} and \eqref{Mqq14} into account. 
These arguments imply the following estimate
\begin{equation}
\label{aaa12}
\| \a_\eps^{(3)}\|_{L_2(\O)} 
\le {\mathfrak C}_{18}^{(3)} \eps^{1/2} \| \q\|_{L_2(\O)}, \quad 0< \eps \le \eps_1, 
\end{equation}
where the constant ${\mathfrak C}_{18}^{(3)}$ depends on  $\|\eta\|_{L_\infty}$,  
$\|\eta^{-1}\|_{L_\infty}$, $\|\mu\|_{L_\infty}$, $\|\mu^{-1}\|_{L_\infty}$, the parameters of the lattice $\Gamma$, and the domain $\O$.

The term \eqref{aaa7} is estimated by using Proposition \ref{prop f^eps S_eps} and relations \eqref{efff}, \eqref{Mqq11}, \eqref{Mqq14}:
\begin{equation}
\label{aaa13}
\| \a_\eps^{(4)}\|_{L_2(\O)} 
\le \eps  (2r_0)^{-1} \|\eta \|_{L_\infty}^{1/2} \|\eta^{-1} \|_{L_\infty} \sqrt{3} \| \wt{\bphi}_0 + \wt{\bups}_\eps\|_{H^1(\R^3)}
\le
{\mathfrak C}_{18}^{(4)} \eps \| \q\|_{L_2(\O)}, 
\end{equation}
where ${\mathfrak C}_{18}^{(4)} = (2r_0)^{-1} \|\eta \|_{L_\infty}^{1/2} \|\eta^{-1} \|_{L_\infty} \sqrt{3} 
({\mathfrak C}_1 + {\mathfrak C}_2)$.

Now, relations  \eqref{aaa3}, \eqref{aaa10}--\eqref{aaa13} imply the required estimate \eqref{aaa8} with the constant
${\mathfrak C}_{18} = \sum_{k=1}^4 {\mathfrak C}_{18}^{(k)}$.

Consider the curl of the function \eqref{aaa3}:
\begin{equation}
\label{aaa14}
\rot \a_\eps = 
\rot \a_\eps^{(1)} + \rot \a_\eps^{(2)} + \rot \a_\eps^{(4)}.
\end{equation}
We have taken into account that the curl of the function \eqref{aaa6} is equal to zero.
The first summand is estimated with the help of Proposition  \ref{prop_Seps - I} and relations \eqref{Mqq11}, \eqref{Mqq14}:
\begin{equation}
\label{aaa15}
\| \rot \a_\eps^{(1)}\|_{L_2(\O)} 
\le \eps r_1 \| \eta^{-1}\|^{1/2}_{L_\infty} \| \wt{\bphi}_0 + \wt{\bups}_\eps \|_{H^2(\R^3)}
\le {\mathfrak C}_{19}^{(1)} \eps \| \q\|_{L_2(\O)},
\end{equation}
where ${\mathfrak C}_{19}^{(1)} = r_1 \| \eta^{-1}\|^{1/2}_{L_\infty} ({\mathfrak C}_1 + {\mathfrak C}_2)$.

Next, we have
$$
\begin{aligned}
\rot \a_\eps^{(2)} &= \sum_{l=1}^3
(\eps \nabla {\theta}_\eps) \times \left( (\eta^\eps)^{-1/2}\wh{\Lambda}_l^\eps S_\eps D_l  (\wt{\bphi}_0 + \wt{\bups}_\eps)\right) 
\\
&+
 {\theta}_\eps 
 \sum_{l=1}^3 [\rot \eta^{-1/2}\wh{\Lambda}_l]^\eps S_\eps D_l  (\wt{\bphi}_0 + \wt{\bups}_\eps)
 +  \eps \, {\theta}_\eps 
 \sum_{l,j =1}^3 b_j  (\eta^\eps)^{-1/2} \wh{\Lambda}_l^\eps S_\eps D_l D_j (\wt{\bphi}_0 + \wt{\bups}_\eps). 
\end{aligned}
$$
The first term is estimated with the help of  \eqref{srezka}, Lemma~\ref{lem02}, and \eqref{Mqq17a}.
The second one is estimated by using  \eqref{srezka}, Lemma \ref{lem02}, and \eqref{6.21a}.
The third summand is estimated by Proposition~\ref{prop f^eps S_eps} and 
\eqref{Mqq17a}. We also use the inequalities \eqref{Mqq11} and \eqref{Mqq14}.
These arguments imply the estimate
\begin{equation}
\label{aaa16}
\| \rot \a_\eps^{(2)}\|_{L_2(\O)} 
\le {\mathfrak C}_{19}^{(2)} \eps^{1/2} \| \q\|_{L_2(\O)}, \quad 0 <  \eps \le \eps_1,
\end{equation}
where the constant ${\mathfrak C}_{19}^{(2)}$ depends on  $\|\eta\|_{L_\infty}$,
$\|\eta^{-1} \|_{L_\infty}$, $\|\mu\|_{L_\infty}$, $\|\mu^{-1}\|_{L_\infty}$, the parameters of the lattice $\Gamma$, and the domain~$\O$.

Now, we consider the curl of the function \eqref{aaa7}:
$$
\rot 
\a_\eps^{(4)} = - \sum_{j=1}^3 (\eps \nabla {\theta}_\eps) \times  (\Phi_j^\eps S_\eps \nabla b_{\eps,j})
- {\theta}_\eps \sum_{j=1}^3 ( \nabla \Phi_j)^\eps \times (S_\eps \nabla b_{\eps,j}).
$$
The first term is estimated with the help of \eqref{srezka}, Lemma \ref{lem02}, and \eqref{efff}.
The second term is estimated by using 
\eqref{srezka}, Lemma \ref{lem02}, and \eqref{eff6b}. We also use  \eqref{Mqq11} and \eqref{Mqq14}.
This implies
\begin{equation}
\label{aaa17}
\| \rot \a_\eps^{(4)}\|_{L_2(\O)} 
\le {\mathfrak C}_{19}^{(4)} \eps^{1/2} \| \q\|_{L_2(\O)}, \quad 0 <  \eps \le \eps_1,
\end{equation}
where the constant ${\mathfrak C}_{19}^{(4)}$ depends on $\|\eta\|_{L_\infty}$,  
$\|\eta^{-1}\|_{L_\infty}$, $\|\mu\|_{L_\infty}$, $\|\mu^{-1}\|_{L_\infty}$, the parameters of the lattice $\Gamma$, and the domain~$\O$.

As a result, relations  \eqref{aaa14}--\eqref{aaa17} imply the required inequality \eqref{aaa9} with the constant 
${\mathfrak C}_{19}= {\mathfrak C}_{19}^{(1)} + {\mathfrak C}_{19}^{(2)} + {\mathfrak C}_{19}^{(4)}$.
\end{proof}

Let $\wh{\xi}_\eps\in H^1_0(\O)$ be the solution of the Dirichlet problem
\begin{equation*}
- \div \eta^\eps \nabla \wh{\xi}_\eps = 
\div \eta^\eps {\mathbf a}_\eps, \quad \wh{\xi}_\eps \vert_{\partial \O}=0.
\end{equation*}
The solution satisfies the identity
\begin{equation}
\label{Dir2}
\int_\O \langle \eta^\eps \nabla \wh{\xi}_\eps, \nabla \omega \rangle \, d\x  =
- \int_\O \langle \eta^\eps \a_\eps, \nabla \omega \rangle \, d\x,\quad \forall \omega \in H^1_0(\O).
\end{equation}
Substituting $\omega= \wh{\xi}_\eps$ in \eqref{Dir2} and using \eqref{aaa8}, we obtain
\begin{equation}
\label{Dir3}
\| ( \eta^\eps)^{1/2} \nabla \wh{\xi}_\eps \|_{L_2(\O)}
\le {\mathfrak C}_{18} \| \eta \|^{1/2}_{L_\infty} \eps^{1/2}\| \q \|_{L_2(\O)},
\quad 0 < \eps \le \eps_1.
\end{equation}

Let ${\mathbf h}_\eps = (\eta^\eps)^{1/2} \a_\eps + ( \eta^\eps)^{1/2} \nabla \wh{\xi}_\eps$. 
Then
\begin{equation}
\label{aaa18}
\rot (\eta^\eps)^{-1/2}{\mathbf h}_\eps = \rot \a_\eps, \quad 
\div (\eta^\eps)^{1/2}{\mathbf h}_\eps = 0, 
\quad 
((\eta^\eps)^{-1/2}{\mathbf h}_\eps)_\tau\vert_{\partial \O}=
({\mathbf a}_\eps)_\tau\vert_{\partial \O}.
\end{equation}
By \eqref{aaa8} and \eqref{Dir3}, 
\begin{equation}
\label{aaa19}
\| {\mathbf h}_\eps \|_{L_2(\O)} 
\le {\mathfrak C}_{20} \eps^{1/2} \| \q\|_{L_2(\O)}, \quad 0 < \eps \le \eps_1.
\end{equation}
where ${\mathfrak C}_{20} = 2 {\mathfrak C}_{18} \|\eta\|^{1/2}_{L_\infty}$.

Now, relations \eqref{pogr1_q}--\eqref{pogr3_q}, \eqref{aaa2}, and \eqref{aaa18} imply that 
$\wh{\s}_\eps - {\mathbf h}_\eps \in \Dom \wh{\mathfrak l}_\eps$ and
\begin{equation}
\label{aaa20}
\wh{\mathfrak l}_\eps [\wh{\s}_\eps - {\mathbf h}_\eps, \bzeta] +  (\wh{\s}_\eps - {\mathbf h}_\eps, \bzeta)_{L_2(\O)}
= \wt{\mathcal Q}_\eps[\bzeta], \quad \forall \bzeta \in \Dom \wh{\mathfrak l}_\eps,
\end{equation}
where
\begin{equation*}
\wt{\mathcal Q}_\eps[\bzeta] = {\mathcal Q}_\eps[\bzeta]
- ((\mu^\eps)^{-1} \rot {\mathbf a}_\eps, \rot (\eta^\eps )^{-1/2} \bzeta)_{L_2(\O)}
- ( {\mathbf h}_\eps,  \bzeta)_{L_2(\O)}.
\end{equation*}
By \eqref{q5_q}, \eqref{aaa9}, and  \eqref{aaa19}, we have
\begin{equation}
\label{aaa22}
| \wt{\mathcal Q}_\eps[\bzeta] | \le 
\wt{\mathfrak C}^\circ \eps^{1/2} \| \q \|_{L_2(\O)}
\left( \| \bzeta \|_{L_2(\O)} + \| \div (\eta^\eps)^{1/2} \bzeta \|_{L_2(\O)} 
+ \| \rot (\eta^\eps)^{-1/2} \bzeta \|_{L_2(\O)} \right)
\end{equation}
for $0< \eps \le \eps_1$, where 
$\wt{\mathfrak C}^\circ = {\mathfrak C}^\circ + {\mathfrak C}_{20} + \|\mu^{-1}\|_{L_\infty} {\mathfrak C}_{19}$.
Substituting $\bzeta = \wh{\s}_\eps - {\mathbf h}_\eps$ in \eqref{aaa20} and using  \eqref{aaa22}, we obtain
\begin{equation}
\label{aaa23}
\begin{aligned}
&\| \wh{\s}_\eps - {\mathbf h}_\eps \|_{L_2(\O)} +
\| \div (\eta^\eps)^{1/2}(\wh{\s}_\eps - {\mathbf h}_\eps)\|_{L_2(\O)}
\\
&+
\| \rot (\eta^\eps)^{-1/2}(\wh{\s}_\eps - {\mathbf h}_\eps)\|_{L_2(\O)}
\leqslant {\mathfrak C}_{7}' \eps^{1/2}  \| \q \|_{L_2(\O)}
\end{aligned}
\end{equation}
for $0< \eps \le \eps_1$, where  ${\mathfrak C}_{7}' = 3 \max \{ 1, \|\mu\|_{L_\infty} \} \wt{\mathfrak C}^\circ$.

Combining  \eqref{aaa9}, \eqref{aaa18}, \eqref{aaa19}, and \eqref{aaa23}, we arrive at the required inequality
\eqref{pogr6a_q} with the constant
${\mathfrak C}_7 = {\mathfrak C}_7' + {\mathfrak C}_{19} + {\mathfrak C}_{20}$. 
This \textit{completes the proof of Theorem} \ref{lem6.6}.

\end{document}